\colorlet{darkishRed}{red!60!black}
\colorlet{darkishBlue}{blue!60!black}
\colorlet{darkishGreen}{green!60!black}
\renewcommand{\PrintDOI}[1]{\doi{#1}}
\let\setminus=\smallsetminus
\let\setminus=\smallsetminus
\renewcommand{\leq}{\leqslant}
\renewcommand{\geq}{\geqslant}
\def\namedlabel#1#2{\begingroup
   \def\@currentlabel{#2}%
   \label{#1}\endgroup
}
\let\eps=\varepsilon
\let\rho=\varrho
\let\phi=\varphi
\newcommand{\crit}{\normalfont\text{crit}}
\newcommand{\torso}{\normalfont\text{torso}}
\newcommand{\cdeg}{\Delta}
\newcommand{\Dom}{\normalfont{\text{Dom}}}
\newcommand{\dom}{\normalfont{\text{dom}}}
\DeclareMathOperator{\interior}{int}
\newcommand{ \N } { \mathbb{N} }
\newcommand{\defn}[1]{{\color{darkishRed}{\emph{#1}}}}
\newcommand{\defnm}[1]{{\color{darkishRed}{#1}}}
\newcommand{\abs}[1]{\lvert#1\rvert}
\newcommand{\COMMENT}[1]{}
\def\calCommandfactory#1{%
   \expandafter\def\csname c#1\endcsname{\mathcal{#1}}}
\def\frakCommandfactory#1{%
   \expandafter\def\csname frak#1\endcsname{\mathfrak{#1}}}
\newcounter{ctr}
  \edef\X{\@Alph\c@ctr}
  \edef\Y{\@alph\c@ctr}
\newtheorem{theorem}{Theorem}[section] 
\newtheorem{proposition}[theorem]{Proposition}
\newtheorem{lemma}[theorem]{Lemma}
\newtheorem{mainresult}{Theorem} 
\crefname{mainresult}{Theorem}{Theorems}
\newtheorem{LEM}[theorem]{Lemma}
\crefname{LEM}{Lemma}{Lemmas}
\newtheorem{THM}[theorem]{Theorem}
\crefname{THM}{Theorem}{Theorems}
\newtheorem{COR}[theorem]{Corollary}
\newtheorem{PROP}[theorem]{Proposition}
\crefname{PROP}{Proposition}{Propositions}
\theoremstyle{definition}
\newtheorem{claim}{Claim}
\crefname{claim}{Claim}{Claims}
\newenvironment{claimproof}{\noindent\textit{Proof.}}{\hfill\ensuremath{\blacksquare}\medskip}
\newlist{thmlist}{enumerate}{1}
\setlist[thmlist]{label=(\roman{thmlisti}), ref=\thethm.(\roman{thmlisti}),noitemsep}
\def\td{tree-decom\-posi\-tion}
\def\tds{tree-decom\-posi\-tions}
\newenvironment{customthm}[1]
  {\innercustomthm}
  {\endinnercustomthm}
\theoremstyle{definition}
\newtheorem{example}[theorem]{Example}
\newtheorem{construction}[theorem]{Construction}
\theoremstyle{remark}
\newcommand\footnoteref[1]{\protected@xdef\@thefnmark{\ref{#1}}\@footnotemark}
\def\bigskip{\vspace{14pt}}
\title{Tangle-tree duality in infinite graphs}
\author{Sandra Albrechtsen}
\address{University of Hamburg, Department of Mathematics, Bundesstraße 55 (Geomatikum), 20146 Hamburg, Germany\\
(now at Leipzig University)}
\email{sandra@albrechtsen-mail.de}
\subjclass{05C83, 05C63, 05C40, 05C05}
\keywords{Tangle-tree duality, tree-decomposition, tree-width, infinite graphs, tree of tangles, bramble, branch-width.}
\begin{document}

\begin{abstract}
    We extend Robertson and Seymour's tangle-tree duality theorem to infinite graphs. 
\end{abstract}

\maketitle

\section{Introduction}

\emph{Tree-decompositions} are a central object in structural graph theory. They were not only a crucial tool in the Graph Minor Project of Robertson and Seymour \cite{GM}, but also attracted attention as several computationally hard problems can be solved efficiently on graphs of small tree-width.
Because of this, the question arose which graphs have \emph{small tree-width}, that is, admit a \td\ into bags that all contain only few vertices, and, conversely, what kind of substructures prevent a graph from having small tree-width. 

There are a number of substructures, e.g.\ large grid or clique minors, or $k$-blocks for large $k$, that are known to force a graph to have large tree-width. While these objects differ in their concrete shape, they have one thing in common: they witness high cohesion somewhere in the graph.

In their Graph Minors Project \cite{GM}, Robertson and Seymour introduced \emph{tangles} as a unified way to capture all such highly cohesive substructures in a graph.
A $k$-tangle in a graph~$G$ is a certain orientation of all its separations of order less than $k$. 
The idea is that every highly cohesive substructure of~$G$ will lie mostly on one side of such a low-order separation, and therefore orient it towards that side. All these orientations, collectively, are then called a tangle.

One of the two major theorems in Robertson and Seymour's  original work on tangles is the following duality between tangles of high order and small tree-width \cite{GMX}, rephrased here in the terminology of \cite{DiestelBook16noEE}: 

\begin{mainresult}\label{main:TTDFiniteGraphs} 
For every finite graph $G$ and $k \in \N$, exactly one of the following assertions holds:
\begin{enumerate}[label=\rm{(\roman*)}]
    \item\label{itm:TTDFiniteGraphs:Tangle} There exists a $k$-tangle in $G$.
    \item\label{itm:TTDFiniteGraphs:Tree} There exists an $S_k$-tree over $\cT^*$.
\end{enumerate}
\end{mainresult}

\noindent \cref{main:TTDFiniteGraphs} is known as the \emph{tangle-tree duality theorem}. For a definition of $S_k$-trees over~$\cT^*$ we refer the reader to \cref{sec:Prelims}; see also \cref{fig:Intro:Finite}. Note that a finite graph $G$ has an $S_k$-tree over~$\cT^*$ if and only if~$G$ has branch-width $<k$.

\cref{main:TTDFiniteGraphs} implies an approximate duality for tangles and tree-width: every graph with a $k$-tangle has tree-width at least $k-1$, while a graph with a tree as in \ref{itm:TTDFiniteGraphs:Tree} has tree-width at most~$\lfloor 3k/2\rfloor-1$ \cite{GMX}*{(5.1)}. 
\medskip

\begin{figure}[ht]
    \centering
    \includegraphics[width=0.5\linewidth]{TTDInfGraphsIntro.png}
    \caption{An $S_k$-tree is a pair $(T, \alpha)$ of a tree $T$ and an order-preserving map $\alpha$ which maps, for every edge $e$ of $T$, its orientations $\ve, \ev$ to the two orientations $(A,B), (B,A)$ of a separation $\{A,B\}$ of $G$ of order $|A \cap B|<k$ (cf.\ \cref{subsec:STrees}). 
    If $(T, \alpha)$ is over $\cT^*$, then $T$ has maximum degree~$3$, and for every node $t$ of $T$, the $A_i$-sides of the separations $(A_i,B_i) := \alpha(t't)$, for $t' \in N_T(t)$, cover $G$.\\
    $(T, \alpha)$ induces a \td\ with decomposition tree $T$ whose bags $V_t$ are the intersections $\bigcap_{i \in [3]} B_i$ of the $B_i$-sides of the separations $(A_i,B_i) := \alpha(t't)$, for $t' \in N_T(t)$. This \td\ has adhesion $<k$, and width $\leq 3k-4$.}
    \label{fig:Intro:Finite}
\end{figure}

The definition of a tangle extends verbatim to infinite graphs. There are several papers that extend results about tangles in finite graphs to infinite ones, or which deal with new questions that arise from tangles in infinite graphs only \cites{ToTinfOrder, EndsAndTangles, CanonicalTreesofTDs, InfiniteSplinters, PRToTsInLocFinGraphs,KPCompactification,KPEndsTanglesCVSets}. 
In this paper we contribute to this a duality statement about tangles and small tree-width in infinite graphs, which extends \cref{main:TTDFiniteGraphs} to infinite graphs. 

The first thing to notice is that, in infinite graphs, high-order tangles no longer force the tree-width up.
Indeed, every infinite graph $G$ contains a tangle of infinite order~\cite{EndsAndTangles}\COMMENT{Proposition 4.2}, an orientation of all the finite-order separations of $G$. By restricting it to only those oriented separations that have order less than $k$, every such tangle induces a $k$-tangle for every $k \in \N$. Hence, every infinite graph has a $k$-tangle for every $k \in \N$.
However, there are infinite graphs, e.g.\ infinite trees, that have small tree-width.
Thus, in contrast to finite graphs, high-order tangles in infinite graphs are in general not an obstruction to small tree-width. 
Specifically, infinite locally finite trees are an example of graphs that have both a $3$-tangle and an $S_3$-tree over $\cT^*$, thus witnessing that \cref{main:TTDFiniteGraphs} fails for infinite graphs.

So what is the difference between finite and infinite graphs that causes high-order tangles to force large tree-width in finite graphs but not in infinite graphs?
In finite graphs, tangles arise \emph{only} from highly connected substructures (which may be fuzzy) as indicated earlier. 
In infinite graphs, however, there are also tangles that arise from infinite phenomena of the graph that do not reflect high local cohesion.

Let us first consider locally finite graphs. Every infinite locally finite, connected graph $G$ has an \emph{end}, an equivalence class of rays in $G$ where two rays are equivalent if they cannot be separated by deleting finitely many vertices. Every end induces an infinite tangle by orienting every finite-order separation to the side which contains a tail of one (equivalently each) of its rays \cite{EndsAndTangles}. 
The \emph{degree} of an end is the maximum number of disjoint rays in it.

Ends of large degree do force the tree-width up:
It is not difficult to see that every graph with an end of degree at least~$k$ has tree-width at least~$k$.
But this is sharp in the sense of \cref{main:TTDFiniteGraphs}:
For every $k \in \N$, there exists a locally finite graph (e.g.\ the rectangular $(k-1) \times \infty$ grid) whose single end has degree $k-1$, and that has an $S_k$-tree over~$\cT^*$.
In particular, ends of small degree do not force the tree-width up.
So if we want to extend \cref{main:TTDFiniteGraphs} to locally finite graphs in a way that retains its duality between tree structure on the one hand and the existence of high local cohesion on the other hand, we need to adjust \cref{itm:TTDFiniteGraphs:Tangle} to ban tangles that are induced by ends of small degree. 

We also have to adjust \cref{itm:TTDFiniteGraphs:Tree} of \cref{main:TTDFiniteGraphs}, for a different reason. 
Since no infinite graph has a finite $S_k$-tree over $\cT^*$, we have to allow infinite $S_k$-trees in \cref{itm:TTDFiniteGraphs:Tree} when we extend \cref{main:TTDFiniteGraphs} to infinite graphs.
But this creates another problem. For example, consider the graph~$G$ which is obtained from a ray on vertex set $\N$ by gluing a large clique $K$ on to $0$. 
Then $G$ has an infinite $S_4$-tree $(R, \alpha)$ over $\cT^*$, where~$R$ is the natural ray on vertex set $\N$ and $\alpha : \vE(R) \rightarrow \vS_4$ with $\alpha(i, i+1) = (\{0, \dots, i+1\}, \N_{\geq i} \cup V(K))$. But $G$ has large tree-width (and a high-order tangle), as witnessed by the large clique~$K$. 
The problem is that, in contrast to finite $S_k$-trees, $(R, \alpha)$ does not induce a \td. 

Since it is our aim to extend \cref{main:TTDFiniteGraphs} to infinite graphs in a way that retains its duality between tree structure and the existence of high local cohesion, we need to exclude such $S_k$-trees from \cref{itm:TTDFiniteGraphs:Tree}.
This will be formalised by `weakly exhaustive' $S_k$-trees in \cref{sec:Prelims}: every weakly exhaustive $S_k$-tree induces a \td\ (cf.\ \cref{fig:Intro:Infinite}).

Our tangle-tree duality theorem for locally finite graphs then reads as follows:

\begin{mainresult} \label{main:TTDLocFinGraphs}
    For every locally finite, connected graph $G$ and $k \in \N$, exactly one of the following assertions holds:
\begin{enumerate}[label=\rm{(\roman*)}]
    \item\label{itm:TTDLocFin:Tangle} There exists a $k$-tangle in $G$ that is not induced by an end of degree $<k$.
    \item\label{itm:TTDLocFin:Tree} There exists a weakly exhaustive $S_k$-tree over $\cT^*$.
\end{enumerate}
\end{mainresult}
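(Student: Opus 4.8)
The plan is to establish the two implications behind the word ``exactly'': that \ref{itm:TTDLocFin:Tangle} and \ref{itm:TTDLocFin:Tree} exclude one another, and that one of them always holds. For the first, easy, direction, suppose $(T,\alpha)$ is a weakly exhaustive $S_k(G)$-tree over $\cT^*$ and $\tau$ is a $k$-tangle of $G$; I would show that $\tau$ is induced by an end of degree $<k$. Pulling the orientation of $\vS_k(G)$ given by $\tau$ back along $\alpha$ yields an orientation of the edges of $T$ that is consistent, in that no two of its edges point away from one another along a path of $T$, because $\alpha$ respects the separation order. Since $\tau$ avoids $\cT^*$, this orientation has no sink, so $T$ has a ray $t_0 t_1 t_2 \dots$ along which every edge points forward; then $(A_i,B_i):=\alpha(t_i,t_{i+1})$ is a nested increasing sequence of separations of order $<k$, all lying in $\tau$. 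Now the definition of ``weakly exhaustive'' is exactly what forces $\bigcap_i B_i$ to contain no vertex of $G$, so the separators $A_i\cap B_i$ have size $<k$ and leave every vertex after finitely many steps, i.e.\ form a defining sequence of an end $\omega$ of degree $<k$; consistency of $\tau$ then shows that any order-$<k$ separation separating $\omega$ off is eventually crossed by the $(A_i,B_i)$ and hence oriented towards $\omega$ by $\tau$, so $\tau$ is the tangle induced by $\omega$.

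For the converse direction I would prove that if \ref{itm:TTDLocFin:Tangle} fails, then \ref{itm:TTDLocFin:Tree} holds. So assume that every $k$-tangle of $G$ is induced by an end of degree $<k$. Since each end of $G$ itself induces a $k$-tangle, and two distinct ends are separated by a separation of order $<k$ as soon as one of them has degree $<k$, it follows that \emph{every} end of $G$ has degree $<k$, and moreover that $G$ has no ``genuinely cohesive'' $k$-tangle. The goal is to build from this a weakly exhaustive $S_k(G)$-tree over $\cT^*$.

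Here the engine is an exhaustion together with the finite duality theorem. Fix a vertex $v_0$, let $W_n$ be the ball of radius $n$ about $v_0$ --- finite and connected, with $\bigcup_n W_n=V(G)$ --- and note that, $G$ being locally finite, $G-W_n$ has only finitely many components, each with finite neighbourhood contained in $W_n$. Let $G_n$ be the finite minor of $G$ obtained by contracting each component of $G-W_n$ to a single vertex, and apply \cref{main:TTDFiniteGraphs} to $G_n$ and $k$. I claim $G_n$ cannot have a $k$-tangle: combining such a finite tangle with decompositions of the contracted blobs obtained by applying the same argument recursively to those blobs (each of which, together with its finite attachment, inherits the absence of genuinely cohesive $k$-tangles from $G$), one would extend it to a genuinely cohesive $k$-tangle of $G$, contrary to assumption. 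Hence $G_n$ admits an $S_k(G_n)$-tree over $\cT^*$ for every $n$.

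It remains to patch these finite structures, together with the recursive blob-decompositions, into a single weakly exhaustive $S_k(G)$-tree over $\cT^*$; this is the step I expect to be the main obstacle. The finite decompositions of the $G_n$ need not be compatible as $n$ grows, but all their bags have at most $3k-3$ vertices, so a K\"onig--compactness argument over the finitely many possible bags lets one pass to a limit decomposition of $G$ of adhesion $<k$ whose parts are $\cT^*$-small; the delicate point is to thread the blob-recursion through this limit in a way that keeps every part $\cT^*$-small and, crucially, makes the decomposition weakly exhaustive --- which is where one uses that the nested blobs along any ray of the limit tree shrink to an end, necessarily of degree $<k$ by the case hypothesis, so that these rays are exactly the ones ``weakly exhaustive'' asks about. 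The dual subtlety is the lifting step in the previous paragraph: turning a finite $k$-tangle into a \emph{genuinely cohesive} infinite one without accidentally producing merely a small-degree end-tangle, which is naturally organised by a tree-of-tangles argument keeping track of a bramble of order $\geq k$. Putting the two pieces together yields \ref{itm:TTDLocFin:Tree}, completing the proof.
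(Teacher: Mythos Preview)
Your ``not both'' direction is essentially correct and matches the paper's argument (which appears in the proof of the more general \cref{main:TTDInfGraphsFTangles}, via \cref{lem:TangleIsInducedByAnEnd}).

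Your ``at least one'' direction, however, has two genuine gaps. First, the claim that the finite minor $G_n$ has no $k$-tangle is not established: you propose to lift a hypothetical tangle of $G_n$ to a ``genuinely cohesive'' $k$-tangle of $G$ by recursing into the contracted blobs, but this recursion is the very statement you are proving, and you give no termination argument or independent way to orient the separations of $G$ that live inside a blob. A tangle of a minor does not in general lift to a tangle of the original graph, and the obstruction---that the lift might merely be an end-tangle of small degree---is exactly the subtlety you acknowledge but do not resolve. Second, the compactness step is only gestured at: the $S_k(G_n)$-trees over $\cT^*$ for different $n$ live in incomparable separation systems and are highly non-unique, so there is no inverse system to feed into K\"onig's lemma; the remark about ``finitely many possible bags'' does not help, since bags of size $\leq 3k-3$ range over all of $V(G)$.

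The paper avoids both problems by taking a completely different route. It derives \cref{main:TTDLocFinGraphs} from \cref{main:TTDInfGraphs} in two lines (in a locally finite connected graph $\cU_k^\infty=\emptyset$ and every $k$-tangle is principal), and proves \cref{main:TTDInfGraphs} not by compactness but by \emph{refinement}: one starts from a tree-decomposition of $G$ into finite parts with controlled behaviour at ends and infinite-degree nodes, supplied by \cref{thm:StartingTD} (imported from \cite{LinkedTDInfGraphs}), and then refines each finite-degree part individually. The refinement (\cref{lem:ReflemForInfGraphsInessStars}) applies the abstract finite duality theorem (\cref{thm:TTDinASS}) to a finite sub-system $S_k^\sigma(G)$, using the ``left-$\ell$-robust'' condition on the adhesion separations to guarantee that an $\cF$-tangle of the finite system extends to one of all of $S_k(G)$. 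The global tree is then obtained by gluing the local refinements along the starting decomposition (\cref{constr:StickingSkTreesTogether}), and weak exhaustiveness is inherited from the fact that the starting decomposition was already a genuine tree-decomposition displaying the ends. So the heavy lifting is done by the external structure theorem, not by a limiting argument.
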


\noindent We remark that a countable graph $G$ has an $S_k$-tree as in \ref{itm:TTDLocFin:Tree} if and only if $G$ has branch-width $<k$.
\medskip

Let us now consider arbitrary infinite graphs. There is another type of tangle that can occur in infinite graphs which also does not reflect any highly cohesive substructure. 
For example, let $G$ be the (infinite) star with center $c$ and $\N$ the set of leaves. Then all `non-trivial' $1$-separations of $G$ are of the form $\{A,B\}$ with $A \cap B = \{c\}$ and $A\setminus B$ and $B\setminus A$ a bipartition of $\N$. Let $\beta$ be a non-principal ultra filter on $\N$, and orient every non-trivial $1$-separation of $G$ towards its side in $\beta$.
This is a $2$-tangle\footnote{For this, we also need to orient all `trivial' $1$-separations (of the form $\{V(G), \{v\}\}$, for $v \in V(G)$) towards their side~$V(G)$.} in $G$, since $\N$ is not a union of three subsets not in~$\beta$. 

More generally, a $k$-tangle is \emph{principal} if for every set $X$ of fewer than $k$ vertices it orients a separation of the form $\{V(G) \setminus V(C), V(C) \cup X\}$, with~$C$ a component of $G - X$, towards the side $V(C) \cup X$. 
As in our example, an infinite graph $G$ contains a non-principal $k$-tangle if there is a set~$X$ of fewer than $k$ vertices of $G$ whose deletion separates $G$ into infinitely many components; and every such tangle, one for each non-principal ultrafilter on the set of components of $G-X$, orients all the separations of the form $\{V(C) \cup X, V(G)\setminus V(C)\}$, for components~$C$ of $G-X$, towards their side $V(G)\setminus V(C)$~\cite{EndsAndTangles}.\footnote{Note that such tangles cannot exist in connected locally finite graphs, where deleting finitely many vertices never leaves infinitely many components.} As we have seen, such non-principal tangles do not force a graph to have large tree-width, and hence give rise to counterexamples to \cref{main:TTDFiniteGraphs,main:TTDLocFinGraphs} for arbitrary infinite graphs. 
Hence, for graphs that are not locally finite, we shall have to adjust \cref{itm:TTDLocFin:Tangle} again, to ban non-principal tangles.

We shall have to adjust \cref{itm:TTDLocFin:Tangle} in another way too. A vertex $v$ of $G$ \emph{dominates} an end $\eps$ of $G$ if no finite set of vertices other than $v$ separates $v$ from a ray in $\eps$. 
The \emph{combined degree} of an end is the sum of its degree and the number of its dominating vertices.\footnote{Note that in locally finite graphs a vertex cannot dominate an end, so the combined degree of an end is simply its degree.} 
Similarly to ends of large degree, also ends of large combined degree force the tree-width up: It is not difficult to see that every graph with an end of combined degree~$k$ has tree-width at least $k$. Hence, we need to adjust \cref{itm:TTDLocFin:Tangle} as follows:

\begin{mainresult} \label{main:TTDCountableGraphs}
    For every countable graph $G$ and $k \in \N$, exactly one of the following assertions holds:
    \begin{enumerate}[label=\rm{(\roman*)}]
    \item\label{itm:TTDCountable:Tangle} There exists a principal $k$-tangle in $G$ that is not induced by an end of combined degree $<k$.
    \item\label{itm:TTDCountable:Tree} There exists a weakly exhaustive $S_k$-tree over $\cT^*$.
\end{enumerate}
\end{mainresult}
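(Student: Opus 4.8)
\emph{Proof strategy.} The two assertions are complementary, and, as is typical for such statements, the short direction is that they cannot both hold, while the substance lies in showing that at least one of them does.

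\emph{Both cannot hold.} Suppose \ref{itm:TTDCountable:Tangle} and \ref{itm:TTDCountable:Tree} held simultaneously, witnessed by a principal $k$-tangle~$\tau$ that is not induced by any end of combined degree~$<k$ and by a weakly exhaustive $S_k(G)$-tree $(T,\alpha)$ over~$\cT^*$. As in the finite case, $\tau$ induces an orientation~$O$ of~$\vec E(T)$ by letting~$O$ contain, for each edge $st$ of~$T$, the orientation $(s,t)$ with $\alpha(s,t)\in\tau$. The consistency of~$\tau$ together with the definition of an $S_k(G)$-tree makes~$O$ a consistent orientation of~$T$, so~$O$ points either to a node~$t$ of~$T$ or to an end~$\eta$ of~$T$. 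If~$O$ points to~$t$, then the star~$\sigma_t$ at~$t$ is contained in~$\tau$; but $\sigma_t\in\cT^*$, which is incompatible with~$\tau$ being a principal $k$-tangle orienting every separation of~$\sigma_t$ towards~$t$ (\cref{sec:Prelims}) -- a contradiction. If~$O$ points to an end~$\eta$, then the separations $\alpha(\vec e)\in\tau$ along the ray of~$T$ towards~$\eta$ form a chain whose separators have order~$<k$, and by the properties of weakly exhaustive $S_k(G)$-trees and the characterisation of combined degree developed in \cref{sec:Prelims}, this forces~$\tau$ to equal the end-tangle~$\tau_\omega$ of an end~$\omega$ of~$G$ of combined degree~$<k$ -- again a contradiction.

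\emph{At least one holds.} Assume \ref{itm:TTDCountable:Tangle} fails. I would first extract two consequences. Every end~$\omega$ of~$G$ induces a \emph{principal} $k$-tangle~$\tau_\omega$: for every set~$X$ of fewer than~$k$ vertices the rays of~$\omega$ have a tail in a unique component~$C$ of~$G-X$, so $\tau_\omega$ contains $(V(G)\setminus V(C),\,V(C)\cup X)$; hence the failure of \ref{itm:TTDCountable:Tangle} implies that \emph{every} end of~$G$ has combined degree~$<k$. Moreover, the failure of \ref{itm:TTDCountable:Tangle} says directly that every principal $k$-tangle of~$G$ is the end-tangle of some (necessarily small-combined-degree) end. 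I would then build a weakly exhaustive $S_k(G)$-tree over~$\cT^*$ by a recursion along a fixed enumeration $v_0,v_1,\dots$ of~$V(G)$; this is where countability enters. The recursion maintains a finite $S_k(G)$-tree together with, at each leaf, the ``part''~$P$ on the far side of the corresponding separation, marked \emph{final} (its star is already a $\cT^*$-star) or \emph{active}. At stage~$n$ I take an active part~$P$ that has not yet absorbed~$v_n$ and attempt to refine it by a separation of~$G$ of order~$<k$ that is nested with the separations already placed. If such a nontrivial refinement exists I perform it, resolving in one step -- via the corresponding infinite star with small centre -- the case where the boundary of~$P$ has infinitely many sides, which is how the non-principal $k$-tangles get dealt with; arguing as in the proof of \cref{main:TTDFiniteGraphs}, iterating this inside a finite part terminates in $\cT^*$-leaves, since a non-refinable finite part would host a $k$-tangle that is principal but not an end-tangle, contradicting the failure of \ref{itm:TTDCountable:Tangle}. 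If no such refinement exists, then the separations pointing ``into~$P$'' extend, by a submodularity argument of the kind used in the proof of \cref{main:TTDFiniteGraphs}, to a principal $k$-tangle of~$G$; by the dichotomy above this tangle is an end-tangle~$\tau_\omega$, and I replace~$P$ by an infinite branch of parts converging on~$\omega$, all of whose separators have order~$<k$ precisely because~$\omega$ has combined degree~$<k$, declaring that branch final. The limit $(T,\alpha)$ of the finite $S_k(G)$-trees so produced is an $S_k(G)$-tree over~$\cT^*$, and the bookkeeping in the enumeration makes every vertex of~$G$ end up in a final part or on a final branch, which is exactly weak exhaustiveness.

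\emph{Main obstacle.} I expect the crux to be everything that is genuinely new relative to \cref{main:TTDFiniteGraphs} and \cref{main:TTDLocFinGraphs}, namely the passage to the ends: laying down, for an end~$\omega$ of combined degree~$<k$, a branch of the tree all of whose node-stars lie in~$\cT^*$, and certifying that the limit tree is weakly exhaustive rather than ``running off'' into a region of~$G$ that it never absorbs. This is where the dominating vertices must be handled -- they persist in infinitely many bags along the branch, and together with a maximal disjoint family of rays of~$\omega$ they have to be displayed simultaneously by the adhesion sets, so that the bound on the combined degree is exactly what keeps the adhesion below~$k$ -- and it is where countability is needed a second time, to exhaust each such branch. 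Establishing the required facts about ends of small combined degree (a Menger-type characterisation and the accompanying defining sequences of separations of order~$<k$), which also underlie the ``both cannot hold'' direction, is the part I would expect to demand the most care.
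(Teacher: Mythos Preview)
Your ``not both'' direction is fine and matches what the paper does (inside the proof of \cref{main:TTDInfGraphsFTangles}).

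Your ``at least one holds'' direction, however, takes a route quite different from the paper and has a concrete gap. The paper does \emph{not} build the $S_k(G)$-tree over $\cT^*$ from scratch by a recursion along an enumeration of $V(G)$. Instead it first invokes \cref{main:TTDInfGraphs} to obtain a weakly exhaustive $S_k(G)$-tree $(T,\alpha)$ over $\cT^*\cup\cU_k^\infty$, and then performs \emph{tree surgery} to eliminate the $\cU_k^\infty$-stars: after pruning to make the tree irredundant (so every node has countable degree, using that $G$ is countable), each infinite-degree node $t$ with star $\sigma_t\in\cU_k^\infty$ is replaced by a ray $r_2^t r_3^t\dots$ whose $i$th vertex takes the neighbour $s_i^t$ and is assigned the supremum $\bigvee_{j\ge i}\alpha(s_j^t,t)$; this spreads the infinite star out into degree-$3$ nodes with stars in $\cU_k$, and a final subdivision step pushes everything into $\cT^*$.

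The gap in your sketch is exactly here. You write that you ``resolve in one step, via the corresponding infinite star with small centre, the case where the boundary of $P$ has infinitely many sides''. But placing an infinite star with small interior creates a node whose star lies in $\cU_k^\infty$, not in $\cT^*$; the tree you describe is over $\cT^*\cup\cU_k^\infty$, not over $\cT^*$. You never say how to convert such a node into nodes with $\cT^*$-stars, and this conversion is precisely the content of the countable case --- it is where countability is genuinely used (to enumerate the infinitely many neighbours so they can be strung along a ray). Your second use of countability (to exhaust along an enumeration of $V(G)$) is not what the paper needs; the exhaustiveness is inherited from the $(T,\alpha)$ produced by \cref{main:TTDInfGraphs}, together with the observation that the new rays introduced by the surgery are themselves weakly exhaustive because $\interior(\sigma_t)$ is finite.

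More broadly, your recursive scheme would, if it could be made to work, amount to re-proving \cref{main:TTDInfGraphs} and the conversion simultaneously; the paper separates these, and the heavy lifting (the refinement \cref{lem:ReflemForInfGraphsInessStars} and the starting decomposition \cref{thm:StartingTD}) sits entirely inside the proof of \cref{main:TTDInfGraphs}, leaving \cref{main:TTDCountableGraphs} as a short deduction.
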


Note that we restricted the graphs in \cref{main:TTDCountableGraphs} to those that are countable. We did so for a reason: There is no uncountable graph $G$ that has a weakly exhaustive $S_k$-tree over $\cT^*$ for any $k \in \N$. Indeed, let $(T, \cV)$ be the \td\ induced by any weakly exhaustive $S_k$-tree over $\cT^*$. Then the definition of $\cT^*$ ensures that the bags $V_t$ of $(T, \cV)$ all have size $\leq 3k-3$, and that the tree $T$ has maximum degree at most $3$, and hence is countable. But then $G$ is countable.

However, there are uncountable graphs, e.g.\ stars with uncountably many leaves, that have no $3$-tangles as in \cref{itm:TTDCountable:Tangle} of \cref{main:TTDCountableGraphs}, and that even have tree-width $1$.
We could now try to update \cref{itm:TTDCountable:Tangle} again, so that our duality theorem always outputs \cref{itm:TTDCountable:Tangle} if the graph is uncountable; but
then \cref{itm:TTDCountable:Tangle} would no longer capture high local cohesion in a graph, which remains our aim. So we need to adjust~\cref{itm:TTDCountable:Tree}.

As indicated earlier, the definition of $S_k$-trees over $\cT^*$ is too restrictive to capture uncountable graphs of small tree-width, as those $S_k$-trees have maximum degree at most $3$. Hence, we will allow the $S_k$-tree in~\cref{itm:TTDInf:Tree} to have infinite-degree nodes. For this, we allow $S_k$-trees over $\cT^* \cup \cU^\infty_k$ rather than just~$\cT^*$, where $\cU^\infty_k$ is the set of all infinite stars of separations of order $<k$ whose interior has size~$<k$.\footnote{Formally, $\cU_k^\infty := \{\sigma = \{(A_i, B_i) : i \in I\} \subseteq \vS_k : \sigma \text{ is a star, } |\bigcap_{i \in I} B_i| < k \text{ and } |\sigma| = \infty\}$; see \cref{sec:Prelims} for details.} 
Now a \td\ induced by a weakly exhaustive $S_k$-tree over $\cT^* \cup \cU^\infty_k$ may have nodes $t$ of infinite degree as long as their bags $V_t$ have size less than $k$.
This modification of \cref{itm:TTDCountable:Tree} will be in line with our aim that \cref{itm:TTDInf:Tree} describes graphs that have no large highly cohesive substructures: such graphs may have non-principal tangles, and these can now happily live in the $S_k$-tree in its nodes of infinite degree.

Our tangle-tree duality theorem for arbitrary graphs now reads as follows (see \cref{fig:Intro:Infinite}):

\begin{mainresult}\label{main:TTDInfGraphs}
For every graph $G$ and $k \in \N$, exactly one of the following assertions holds:
\begin{enumerate}[label=\rm{(\roman*)}]
    \item\label{itm:TTDInf:Tangle} There exists a principal $k$-tangle in $G$ that is not induced by an end of combined degree $<k$.
    \item\label{itm:TTDInf:Tree} There exists a weakly exhaustive $S_k$-tree over $\cT^* \cup \cU_k^\infty$.
\end{enumerate}
\end{mainresult}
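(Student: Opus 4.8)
My plan is to prove the two implications of the ``exactly one'' separately, and I begin with the (easier) incompatibility of the two assertions. Suppose $(T,\alpha)$ is a weakly exhaustive $S_k(G)$-tree over $\cT^*\cup\cU_k^\infty$; by weak exhaustiveness it induces a \td\ $(T,\cV)$ of $G$ all of whose adhesion sets have order $<k$. Assume for contradiction that there is also a principal $k$-tangle $\tau$ in $G$ that is not induced by an end of combined degree $<k$. Orienting each edge of $T$ by $\tau$ via the separation it induces, and using that $\tau$, being a $k$-tangle, is a consistent profile, one checks that the resulting orientation of $\vE(T)$ has at most one out-edge at each node (two out-edges at a node $v$ would, via the profile property, force $(V(G),Y)\in\tau$ for some $Y$ with $|Y|<k$, which is impossible); hence it has either a unique sink $t$, or it points along a unique ray towards an end $\omega$ of $T$. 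In the first case $\tau$ contains the whole star $\sigma_t$ at $t$, and $\sigma_t\in\cT^*\cup\cU_k^\infty$. If $\sigma_t\in\cT^*$ this contradicts that a $k$-tangle contains no star from $\cT^*$. If $\sigma_t=\{(A_i,B_i):i\in I\}\in\cU_k^\infty$, put $X:=\bigcap_{i\in I}B_i$, so that $|X|<k$ and the sets $A_i\setminus X$ are pairwise disjoint, exhaust $V(G)\setminus X$, and are unions of components of $G-X$. Since $\tau$ is principal it contains $(V(G)\setminus V(C),V(C)\cup X)$ for some component $C$ of $G-X$; as $V(C)\subseteq A_{i_0}$ for a unique $i_0$ and $X\subseteq B_{i_0}$, the profile property applied to $(V(G)\setminus V(C),V(C)\cup X)$ and $(A_{i_0},B_{i_0})\in\tau$ forces $(V(G),X)\in\tau$ — impossible since $|X|<k$. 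In the second case the separations along the ray to $\omega$ form an increasing sequence in $\tau$ of orders $<k$, and weak exhaustiveness then guarantees that this ray traces a ray of $G$, that $\tau$ is the tangle induced by the corresponding end $\eps$ of $G$, and that the combined degree of $\eps$ is at most the eventual order of these separations, hence $<k$ — again a contradiction.

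For the other implication, assume assertion \ref{itm:TTDInf:Tangle} fails; I must build a weakly exhaustive $S_k(G)$-tree over $\cT^*\cup\cU_k^\infty$. The cases $k\le1$ are routine (for nonempty $G$ assertion \ref{itm:TTDInf:Tangle} then holds), so assume $k\ge2$; and since an $S_k$-tree for $G$ may be assembled from $S_k$-trees for the components of $G$ by hanging the latter below a common root whose star over the component-separations lies in $\cT^*$ or $\cU_k^\infty$ according as $G$ has finitely or infinitely many components, we may assume $G$ connected. If $G$ is countable we are done by \cref{main:TTDCountableGraphs}, which even supplies a weakly exhaustive $S_k(G)$-tree over $\cT^*$. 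So assume $G$ uncountable; the plan is to reduce this to the countable case. The key structural step is to show that, because assertion \ref{itm:TTDInf:Tangle} fails, $G$ admits a \td\ $(S,\cW)$ along separations of order $<k$ in which every part spans a \emph{countable} subgraph and whose nodes of infinite degree occur exactly at separators $X$ with $|X|<k$ for which $G-X$ has infinitely many components, so that — arranging for the star at each such node to have interior exactly $X$ — those nodes contribute $\cU_k^\infty$-stars. Each part $W$ of $(S,\cW)$ inherits the failure of \ref{itm:TTDInf:Tangle} (a principal $k$-tangle of $G[W]$ not induced by an end of small combined degree would extend to one of $G$), so \cref{main:TTDCountableGraphs} supplies a weakly exhaustive $S_k(G[W])$-tree over $\cT^*$ for each $W$; gluing these along $(S,\cW)$ yields an $S_k(G)$-tree over $\cT^*\cup\cU_k^\infty$. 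It then remains to verify that the assembled tree is weakly exhaustive: any ray of it is eventually confined to a single part $W$, where weak exhaustiveness is already known, or it runs through infinitely many $\cU_k^\infty$-nodes, and then the associated separators shrink and one reads off an end of $G$ of combined degree $<k$.

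The main obstacle is the structural step just sketched: proving that the failure of \ref{itm:TTDInf:Tangle} really forces $G$ to decompose, along order-$<k$ separations, into countable parts — equivalently, that every uncountable region of $G$ admitting no order-$<k$ separation with infinite fan-out must carry a principal $k$-tangle that is not induced by an end of combined degree $<k$. This is an uncountable analogue of the classical fact that a graph without a tree-decomposition into small parts contains a highly cohesive substructure, and its proof has to navigate carefully between ends, their dominating vertices, and non-principal tangles — the last being exactly the objects one wants to house in the infinite-degree nodes of the $S_k$-tree rather than let them obstruct the construction. A secondary, more technical difficulty is the bookkeeping in the gluing step needed to secure weak exhaustiveness, in particular to exclude rays of the assembled tree that oscillate among infinitely many parts without following any ray of $G$.
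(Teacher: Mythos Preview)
Your incompatibility argument is essentially the same as the paper's (the paper packages the ray case into \cref{lem:TangleIsInducedByAnEnd} and the sink case into the observation that principal $\cP'_k$-tangles avoid $\cU_k$, but the content is identical).

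The existence direction, however, has a genuine gap. You invoke \cref{main:TTDCountableGraphs} for the countable case, but in this paper \cref{main:TTDCountableGraphs} is \emph{derived from} \cref{main:TTDInfGraphs}, not the other way around (see the proof of \cref{main:TTDCountableGraphs} in \cref{sec:TTDInfGraphs}, which starts ``By \cref{main:TTDInfGraphs}, it is enough to show\ldots''). So your argument is circular unless you supply an independent proof of the countable case, which you do not. Moreover, even granting the countable case, your ``structural step'' --- that the failure of \ref{itm:TTDInf:Tangle} forces an uncountable graph to decompose along order-$<k$ separations into countable pieces with $\cU_k^\infty$-stars at the infinite-degree nodes --- is only asserted, not proved; you yourself flag it as the main obstacle. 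This is not a minor bookkeeping issue: it is essentially the whole content of the theorem in the uncountable case.

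The paper's route is quite different and avoids both problems. It proves the more general \cref{main:TTDInfGraphsFTangles} directly for arbitrary graphs (no countable/uncountable split), using two ingredients: a starting \td\ into finite parts with controlled infinite-degree nodes (\cref{thm:StartingTD}, imported from \cite{LinkedTDInfGraphs}), and a refinement lemma (\cref{lem:ReflemForInfGraphsInessStars}) that produces a finite $S_k(G)$-tree over $\cF'$ for each finite-degree node. These are then glued along the starting decomposition via \cref{constr:StickingSkTreesTogether}. \cref{main:TTDInfGraphs} then follows in one line by specialising $\cF$ to $\cT^*_k$. The structural work you defer to your unproven step is thus replaced by the (non-trivial) input \cref{thm:StartingTD} together with the shifting machinery behind \cref{lem:ReflemForInfGraphsInessStars}.
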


\noindent We remark that \cref{main:TTDLocFinGraphs,main:TTDCountableGraphs} are simple applications of \cref{main:TTDInfGraphs} (see \cref{sec:TTDInfGraphs} for details).
In particular, \cref{main:TTDInfGraphs} contains \cref{main:TTDFiniteGraphs} as a special case. Indeed, in finite graphs all tangles are principal, so a finite graph satisfies \cref{itm:TTDInf:Tangle} of \cref{main:TTDInfGraphs} if and only if it satisfies~\cref{itm:TTDFiniteGraphs:Tangle} of \cref{main:TTDFiniteGraphs}. Moreover, in finite graphs the set $\cU^\infty_k$ is empty, and every finite $S_k$-tree over~$\cT^*$ is weakly exhaustive. So a finite graph satisfies \cref{itm:TTDInf:Tree} of \cref{main:TTDInfGraphs} if and only if it satisfies \cref{itm:TTDFiniteGraphs:Tree} of \cref{main:TTDFiniteGraphs}. 

Moreover, similarly to \cref{main:TTDFiniteGraphs},
every graph with a tangle as in \cref{itm:TTDInf:Tangle} has tree-width at least $k-1$, while every graph with a tree as in \cref{itm:TTDInf:Tree} has tree-width at most $\lfloor 3k/2\rfloor -1$.
\medskip

\begin{figure}[ht]
    \centering
    \includegraphics[width=0.6\linewidth]{TTDInfGraphsIntro1.png}
    \caption{In addition to nodes of degree $\leq 3$, an $S_k$-tree $(T, \alpha)$ over $\cT^* \cup \cU^\infty_k$ can also have nodes $s$ of infinite degree, but then the intersection $\bigcap_{i \in \N} D_i$ of the $D_i$-sides of the separations $(C_i,D_i) := \alpha(s's)$ for $s' \in N_T(s)$ must have size $<k$. \\ 
    If $(T, \alpha)$ is weakly exhaustive, i.e.\ $\bigcap_{i \in \N} F_i \setminus E_i$ is empty for all rays $R = v_1v_2\dots$ in $T$ with $\alpha(v_iv_{i+1}) = (E_i,F_i)$, then $(T, \alpha)$ induces a \td\ (of adhesion $<k$ and width $\leq 3k-4$).} 
    \label{fig:Intro:Infinite}
\end{figure}

Diestel and Oum \cite{TangleTreeAbstract} generalized \cref{main:TTDFiniteGraphs} to so-called `$\cF$-tangles'.
The `standard' $k$-tangles are orientations of the separations of a graph $G$ of order $<k$ that \emph{avoid} the set $\cT^* \subseteq 2^{\vS_k}$: a $k$-tangle does not contain an element of $\cT^*$ as a subset. This set $\cT^*$ can be replaced by more general sets of separations, leading to the more general notion of $\cF$-tangles.

We will in fact prove \cref{main:TTDInfGraphs} more generally for $\cF$-tangles (see \cref{main:TTDInfGraphsFTangles} in \cref{sec:TTDInfGraphs} for the precise statement). As an application, we obtain the following exact characterization of graphs that have tree-width $k \in \N$, which generalizes a result of Diestel and Oum \cite{TangleTreeGraphsMatroids}:

\begin{mainresult} \label{main:TangleBrambleTreeDualityInfGraphs}
    The following assertions are equivalent for all graphs $G$ and $k \in \N$:
    \begin{enumerate}[label=\rm{(\roman*)}]
        \item \label{itm:TBTDuality:Tangle} $G$ has a $\cU_k$-tangle of order $k$ that is not induced by an end of combined degree $<k$.
        \item \label{itm:TBTDuality:Bramble} $G$ has a finite bramble of order at least $k$.
        \item \label{itm:TBTDuality:Sktree} $G$ has no weakly exhaustive $S_k$-tree over $\cU_k$.
        \item \label{itm:TBTDuality:TW} $G$ has tree-width at least $k-1$.
    \end{enumerate}
\end{mainresult}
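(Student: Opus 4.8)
The plan is to derive \cref{main:TangleBrambleTreeDualityInfGraphs} from the $\cF$-tangle version \cref{main:TTDInfGraphsFTangles} of the main theorem, applied with $\cF:=\cU_k$, together with a few elementary ingredients: a lemma that every $\cU_k$-tangle is principal, the standard translation between weakly exhaustive $S_k(G)$-trees over $\cU_k$ and \tds\ of width $<k-1$, the Helly property of subtrees of a tree, and the Seymour--Thomas bramble duality for finite graphs. Concretely, I would show that \cref{itm:TBTDuality:Tangle} $\Leftrightarrow$ \cref{itm:TBTDuality:Sktree}, that \cref{itm:TBTDuality:Sktree} $\Leftrightarrow$ \cref{itm:TBTDuality:TW}, and that \cref{itm:TBTDuality:Bramble} $\Leftrightarrow$ \cref{itm:TBTDuality:TW}.

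For \cref{itm:TBTDuality:Tangle} $\Leftrightarrow$ \cref{itm:TBTDuality:Sktree}, recall that $\cU_k$ is the set of all stars of separations of order $<k$ whose interior has size $<k$; in particular $\cU_k^\infty\subseteq\cU_k$, so an $S_k(G)$-tree over $\cU_k$ is the same thing as one over $\cU_k\cup\cU_k^\infty$. The key point is the lemma that every $\cU_k$-tangle $\tau$ of order $k$ is principal: for a vertex set $X$ with $|X|<k$, the family $\sigma_X:=\{(V(C)\cup X,\,V(G)\setminus V(C)):C\text{ a component of }G-X\}$ is a star of separations, each of order at most $|X|<k$, whose interior $\bigcap_C(V(G)\setminus V(C))$ equals $X$; hence $\sigma_X\in\cU_k$, and since $\tau$ avoids $\cU_k$ it cannot contain all of $\sigma_X$, so a separation of $\sigma_X$ missing from $\tau$ has its reverse in $\tau$ --- precisely the separation that principality requires for $X$. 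Thus for $\cF=\cU_k$ the word ``principal'' in \cref{main:TTDInfGraphsFTangles} is automatic, so that theorem says that exactly one of \cref{itm:TBTDuality:Tangle} and the existence of a weakly exhaustive $S_k(G)$-tree over $\cU_k$ holds; since the latter is the negation of \cref{itm:TBTDuality:Sktree}, the equivalence follows.

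For \cref{itm:TBTDuality:Sktree} $\Leftrightarrow$ \cref{itm:TBTDuality:TW}, recall (\cref{sec:Prelims}) that every weakly exhaustive $S_k(G)$-tree induces a \td\ of $G$ in which the bag at each node is the interior of the star at that node. If the tree is over $\cU_k$, these bags all have size $<k$, so the \td\ has width $\le k-2<k-1$; this is the implication ``\cref{itm:TBTDuality:Sktree} fails $\Rightarrow$ \cref{itm:TBTDuality:TW} fails''. For the converse, I would take a \td\ of $G$ of width $<k-1$, pass to a suitably reduced \td, and read off a weakly exhaustive $S_k(G)$-tree over $\cU_k$; nodes of infinite degree are allowed here, their stars being infinite stars of interior size $<k$ and hence lying in $\cU_k$, which is exactly what lets uncountable graphs of small tree-width fail \cref{itm:TBTDuality:Sktree}. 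I expect the verification that the $S_k(G)$-tree produced this way is genuinely \emph{weakly exhaustive} to be the most delicate --- though still routine --- bookkeeping step of the whole argument.

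Finally, \cref{itm:TBTDuality:Bramble} $\Leftrightarrow$ \cref{itm:TBTDuality:TW}. For ``\cref{itm:TBTDuality:Bramble} $\Rightarrow$ \cref{itm:TBTDuality:TW}'', suppose for contradiction that $G$ has a \td\ $(T,\cV)$ of width $<k-1$ and also a finite bramble $\cB$ of order $\ge k$. For $B\in\cB$ the set $T_B:=\{t\in V(T):V_t\cap B\neq\emptyset\}$ is a subtree of $T$ since $B$ is connected, and the subtrees $\{T_B:B\in\cB\}$ pairwise intersect since the members of $\cB$ pairwise touch; as $\cB$ is finite, the Helly property of trees yields a node $t$ lying in every $T_B$, and choosing a vertex of $V_t\cap B$ for each $B\in\cB$ gives a set meeting all of $\cB$ of size $\le|V_t|<k$, contradicting that $\cB$ has order $\ge k$ --- this is the one place where finiteness of the bramble is used. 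For ``\cref{itm:TBTDuality:TW} $\Rightarrow$ \cref{itm:TBTDuality:Bramble}'', if $\mathrm{tw}(G)\ge k-1$ then by the compactness of tree-width some finite subgraph $H\subseteq G$ already has $\mathrm{tw}(H)\ge k-1$, so by Seymour--Thomas $H$ has a bramble of order $\ge k$, which is a finite bramble of $G$ of order still $\ge k$ (a hitting set in $G$ restricts to a hitting set in $H$ of no larger size). Alternatively, and perhaps more in keeping with the infinite setting (it handles uncountable $G$ with no appeal to compactness of tree-width), one can obtain \cref{itm:TBTDuality:Bramble} from \cref{itm:TBTDuality:Tangle} via the equivalences already established: from a $\cU_k$-tangle of order $k$ that is not induced by an end of combined degree $<k$ one extracts a finite bramble of order $\ge k$, the combined-degree hypothesis being exactly what prevents the tangle from ``escaping into a thin end'' and so permits a haven-to-bramble construction with only finitely many sets. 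I would expect this extraction --- in effect, distinguishing whether the tangle ``lives on a finite part of $G$'' or ``converges to an end'' and handling the two cases separately --- to be the main obstacle along that route.
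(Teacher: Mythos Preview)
Your equivalences \ref{itm:TBTDuality:Tangle}~$\Leftrightarrow$~\ref{itm:TBTDuality:Sktree} and \ref{itm:TBTDuality:Sktree}~$\Leftrightarrow$~\ref{itm:TBTDuality:TW} are correct and coincide with the paper's proof: the paper also deduces the first from \cref{main:TTDInfGraphsFTangles} (implicitly using, as you make explicit, that every $\cU_k$-tangle is automatically principal) and refers to \cite{TangleTreeGraphsMatroids}*{Lemma~6.3} for the second.

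Your handling of \ref{itm:TBTDuality:Bramble}, however, contains a genuine gap. In this paper a \emph{finite} bramble is one whose \emph{elements} are finite sets; the collection $\cB$ itself may be infinite. Hence ``as $\cB$ is finite, the Helly property of trees yields a common node'' does not apply: Helly for subtrees fails for infinite families (the tails of a ray already give a counterexample), and the individual subtrees $T_B$ need not be finite either, since a single vertex of $G$ may lie in infinitely many bags. With this step gone, your cycle of implications is open: nothing in your scheme takes you from \ref{itm:TBTDuality:Bramble} back to any of \ref{itm:TBTDuality:Tangle}, \ref{itm:TBTDuality:Sktree} or \ref{itm:TBTDuality:TW}.

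The paper closes the cycle by proving \ref{itm:TBTDuality:Bramble}~$\Leftrightarrow$~\ref{itm:TBTDuality:Tangle} directly. For \ref{itm:TBTDuality:Bramble}~$\Rightarrow$~\ref{itm:TBTDuality:Tangle} the bramble orients $S_k(G)$ in the obvious way, and the actual use of element-finiteness is in showing that the resulting $\cU_k$-tangle is not induced by a thin end, via the sequence of \cref{prop:DegreeOfAnEndWitnessedBySeps}~\cref{itm:DegreeWitnessingSeq:DegImpliesSeq} whose separators meet pairwise only in $\Dom(\eps)$. For \ref{itm:TBTDuality:Tangle}~$\Rightarrow$~\ref{itm:TBTDuality:Bramble} the paper first proves (using \cref{lem:TangleIsInducedByAnEnd} and the `not induced by a thin end' hypothesis) that above every separation in the tangle there is a $\le$-maximal one, and then assembles the bramble from finite connected witnesses chosen inside the strict big sides of these maximal separations, with a well-ordering of $V(G)$ to make them pairwise touch. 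Your compactness-plus-finite-Seymour--Thomas route for \ref{itm:TBTDuality:TW}~$\Rightarrow$~\ref{itm:TBTDuality:Bramble} is valid, but the paper deliberately avoids it so that the argument does not rest on the finite theorem it is meant to generalise; your sketched ``alternative'' extraction \ref{itm:TBTDuality:Tangle}~$\Rightarrow$~\ref{itm:TBTDuality:Bramble} is exactly the direction the paper carries out in full.
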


\noindent (See \cref{sec:Prelims} for definitions.) The equivalence of \cref{itm:TBTDuality:Bramble} and \cref{itm:TBTDuality:TW} yields a generalization of the `bramble-treewidth duality theorem' of Seymour and Thomas \cite{ST1993GraphSearching} (see \cref{thm:BrambleTreeDualityInfGraphs} in \cref{sec:BrambleTWDuality}), which also includes their finite version as a corollary (without using it in the proof).
\medskip

The other major theorem about tangles which Robertson and Seymour \cite{GMX} proved is the \emph{tree-of-tangles theorem}. 
Recall that a separation $\{A,B\}$ of a graph $G$ \emph{distinguishes} two tangles in $G$ if they orient $\{A,B\}$ differently. It distinguishes them \emph{efficiently} if they are not distinguished by any separation of smaller order.

The tree-of-tangles theorem for fixed $k \in \N$ asserts that every finite graph $G$ has a \td\ $(T, \cV)$ which efficiently distinguishes all its $k$-tangles: for every pair $\tau, \tau'$ of $k$-tangles in $G$, there is an edge~$e$ of $T$ such that the separation induced by $e$ distinguishes $\tau$ and $\tau'$ efficiently. 

Following \cite{ToTinfOrder}, we call two $k$-tangles \emph{combinatorially} distinguishable if there is a finite set $X \subseteq V(G)$ and a component $C$ of $G-X$ such that $\{V(C) \cup X, V(G-C)\}$ distinguishes them. In particular, if two $k$-tangles are combinatorially indistinguishable, then they are both non-principal.
For instance, in the example mentioned above, where $G$ is the edgeless graph on vertex set $\N$, no two $1$-tangles in $G$ are combinatorially distinguishable.

We show that if a graph $G$ has no $k$-tangle as in \cref{itm:TTDInf:Tangle} of \cref{main:TTDInfGraphs}, then it has an $S_k$-tree as in \cref{itm:TTDInf:Tree} which additionally distinguishes all the combinatorially distinguishable $k$-tangles that $G$ may have: those that are not of the form as in \cref{itm:TTDInf:Tangle}:

\begin{mainresult} \label{main:TTDPlusToT}
    Let $G$ be a graph and $k \in \N$. Suppose that all principal $k$-tangles in $G$ are induced by ends of combined degree $< k$. Then $G$ has a weakly exhaustive $S_k$-tree $(T, \alpha)$ over $\cT^* \cup \cU^\infty_k$ such that
    \begin{enumerate}[label=\rm{(\roman*)}]
        \item \label{itm:TTDPlusToT:Ends} every end of $G$ lives in an end of $T$, and conversely every end of $T$ is home to an end of $G$, 
        \item \label{itm:TTDPlusToT:Ultrafilter} every non-principal $k$-tangle lives at a node $t$ of $T$ with $\{\alpha(t',t) : t' \in N_T(t)\} \in \cU^\infty_k$, and
        \item \label{itm:TTDPlusToT:ToT} for every pair $\tau, \tau'$ of combinatorially distinguishable $k$-tangles in $G$ there is an edge $e$ of $T$ such that $\alpha(e)$ distinguishes $\tau$ and $\tau'$ efficiently.
    \end{enumerate}
\end{mainresult}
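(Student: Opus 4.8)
The plan is to revisit the construction behind \cref{main:TTDInfGraphsFTangles} rather than to invoke it as a black box. Since the hypothesis is exactly the negation of \cref{itm:TTDInf:Tangle} of \cref{main:TTDInfGraphs}, that theorem (in its $\cF$-tangle form, applied with $\cF := \cT^* \cup \cU_k^\infty$) already yields \emph{some} weakly exhaustive $S_k(G)$-tree over $\cF$; the task is to produce one that additionally satisfies (i)--(iii). Two consequences of the hypothesis organize the argument. First, every end of $G$ has combined degree $<k$, since an end of combined degree $\geq k$ would induce a principal $k$-tangle that is not induced by an end of combined degree $<k$; moreover every principal $k$-tangle of $G$ is then $\tau_\eps$ for a unique such end $\eps$. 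Second, any two distinct $k$-tangles of $G$ are combinatorially distinguishable \emph{unless} both are non-principal and combinatorially indistinguishable, so the pairs in (iii) are exactly those, which is the best one could hope for.

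Property (i) comes almost for free. Every star in $\cF$ has finite interior and (for the $\cT^*$ part) induces a bag of size at most $3k-3$, so every bag of the tree-decomposition $(T,\cV)$ induced by \emph{any} $S_k(G)$-tree over $\cF$ is finite. Hence no ray of $G$ can have a tail inside a single bag, so every ray of $G$ follows a ray of $T$ and therefore lives in an end of $T$; two equivalent rays of $G$ follow the same end of $T$, since otherwise the finite-order separation $\alpha$ at the node where their $T$-rays split would separate them. Conversely, weak exhaustiveness excludes the ``clique glued to a ray'' pathology from the introduction (along every ray of $T$ the images under $\alpha$ exhaust the far side of the graph), and then a König-type argument, using that $G$ has finite bags (components of $G$ are handled separately), produces for every end of $T$ a ray of $G$ following it. Thus (i) holds for \emph{every} weakly exhaustive $S_k(G)$-tree over $\cF$, and it remains to arrange (ii) and (iii).

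To get (iii) and (ii), fold a tree-of-tangles construction into the construction of the $S$-tree. For each combinatorially distinguishable pair $\{\tau,\tau'\}$ choose an efficiently distinguishing separation of the combinatorial form $\{V(C)\cup X, V(G-C)\}$; by an uncrossing argument in the style of the finite tree-of-tangles theorem, carried out for infinite graphs as in \cite{ToTinfOrder}, these can be chosen so that the resulting set $N$ is nested (for pairs of end-tangles this amounts to distinguishing distinct ends of combined degree $<k$ efficiently and compatibly, which uses submodularity of the order function together with the bound on combined degrees). The set $N$ cuts $G$ into pieces; one then runs the construction of \cref{main:TTDInfGraphsFTangles} relative to $N$, forcing every separation of $N$ to occur as an edge separation of the $S$-tree and making the remaining choices inside the pieces --- legitimate because each separation in $N$ has order $<k$, so cutting along it leaves on each side an instance to which \cref{main:TTDInfGraphsFTangles} again applies. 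This gives (iii). Finally, the $k$-tangles that survive into a single piece are pairwise combinatorially indistinguishable, hence --- if there is more than one --- all non-principal; they share a single finite set $X$ with $G-X$ having infinitely many components, and one arranges the $S$-tree on that piece so that the star at the node for $X$ is exactly $\{(V(C)\cup X, V(G-C)) : C \text{ a component of } G-X\}$, which lies in $\cU_k^\infty$ because $|X|<k$ and there are infinitely many such $C$. Then every non-principal $k$-tangle has a home at a $\cU_k^\infty$-node, which is (ii).

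The main obstacle is making the tree-of-tangles data $N$ coexist with the duality construction while keeping the result \emph{weakly exhaustive}. The construction of \cref{main:TTDInfGraphsFTangles} is a transfinite recursion whose limit steps are delicate --- the partially built $S$-trees must converge to an $S$-tree that is still over $\cF$ and still weakly exhaustive, for arbitrary, possibly uncountable, $G$ --- and the three extra requirements (the separations of $N$ occur as edges; non-principal tangles sit at the prescribed $\cU_k^\infty$-nodes; ends of $G$ and of $T$ match up) must all be maintained through every limit step at once. A secondary obstacle is the uncrossing step itself: one must check that efficient \emph{combinatorial} distinguishers can be uncrossed to a nested set without leaving the combinatorial class and without losing efficiency --- which for end-tangles is precisely the ``efficiently distinguishing all ends'' phenomenon and is where the hypothesis (all ends have combined degree $<k$) is used.
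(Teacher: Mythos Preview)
Your approach is genuinely different from the paper's, and it has two real gaps.

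\textbf{The converse of (i) is not automatic.} You assert that for \emph{every} weakly exhaustive $S_k(G)$-tree over $\cT^*\cup\cU_k^\infty$, each end of $T$ is home to an end of $G$. This is false. Take any such tree and append to a leaf an infinite path $r_1r_2\ldots$ with $\alpha(r_i,r_{i+1})=(V(G),X)$ for a fixed set $X$ of fewer than $k$ vertices; the stars along this path are of the form $\{(V(G),X),(X,V(G))\}\in\cT^*$, the sequence is trivially weakly exhaustive since $X\setminus V(G)=\emptyset$, yet no end of $G$ can have $(V(G),X)$ in its tangle. Your K\"onig argument breaks because it tacitly assumes the strict big sides $B_i\setminus A_i$ along the ray are nonempty, which nothing forces. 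The paper does \emph{not} claim this property comes for free: it is explicitly built into the construction (via \cref{thm:StartingTD}\ref{itm:StartingTDends} and the ``in particular'' clause of \cref{lem:ReflemInfInessStars}) and then preserved through the refinement by \cref{thm:RefToTsInfGraphs}\ref{itm:RefToTsInfGraphsEnds2}.

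\textbf{The main obstacle is named but not resolved.} You correctly identify that making a nested distinguishing set $N$ coexist with the duality construction, while keeping weak exhaustiveness, is the crux --- but you then stop. There is no transfinite recursion in the paper's proof of \cref{main:TTDInfGraphsFTangles}; the construction goes via the tree-decomposition of \cref{thm:StartingTD} followed by node-by-node refinement (\cref{lem:ReflemForInfGraphsInessStars} and \cref{constr:StickingSkTreesTogether}), and there is no obvious place to ``force $N$ in'' without re-proving those lemmas relative to $N$.

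\textbf{What the paper does instead.} The paper separates concerns completely. It first imports, from \cite{LinkedTDInfGraphs}, a tree-decomposition $(T',\cV')$ that already efficiently distinguishes all combinatorially distinguishable $\aleph_0$-tangles, already has every end of $T'$ home to an end of $G$, and already hosts every non-principal tangle at a node (\cref{thm:ToTInfTangles}). Contracting the edges that do not efficiently distinguish $k$-tangles gives $(\tilde T,\tilde\cV)$ satisfying the premise of \cref{thm:RefToTsInfGraphs}; that theorem then refines each node using \cref{lem:ReflemForInfGraphsBoundedEssStars} and \cref{lem:ReflemInfInessStars}, and properties (i)--(iii) are read off from \cref{thm:RefToTsInfGraphs}(i)--(iv). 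The uncrossing you propose and the delicate ``limit-step'' bookkeeping are entirely absorbed into the cited external result, which is why the paper's proof of \cref{main:TTDPlusToT} itself is only a paragraph.
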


\noindent In particular, \cref{itm:TTDPlusToT:ToT} ensures that no two ends of $G$ live in the same end of $T$, and that no two combinatorially distinguishable non-principal $k$-tangles live at the same node of $T$. 
We remark that it is not possible to strengthen \cref{itm:TTDPlusToT:ToT} so that all $k$-tangles are efficiently distinguished by a separation of the form $\alpha(e)$ for an edge $e$ of $T$ \cite{ToTinfOrder}*{Corollary~3.4}.

We will obtain \cref{main:TTDPlusToT} as a corollary of a more general theorem (see \cref{thm:RefToTsInfGraphs} in \cref{sec:RefiningEssStars}) that yields a \td\ with similar properties as the $S_k$-tree in \cref{main:TTDPlusToT} even if $G$ has other $k$-tangles than those allowed in the premise of \cref{main:TTDPlusToT}.
\medskip

This paper is organised as follows. We first give a brief introduction to infinite graphs and their tangles in \cref{sec:Prelims,sec:EndsAndCVs}. 
In \cref{sec:RefiningInessStars}, we first sketch the proof of \cref{main:TTDInfGraphs} briefly, and then prove \cref{lem:ReflemForInfGraphsInessStars}, which is one of the two main ingredients to the proof of \cref{main:TTDInfGraphs}. In \cref{sec:TTDInfGraphs}, we prove \cref{main:TTDInfGraphs}, and then derive \cref{main:TTDLocFinGraphs,main:TTDCountableGraphs} from it.
In \cref{sec:BrambleTWDuality} we deduce \cref{main:TangleBrambleTreeDualityInfGraphs}.
In \cref{sec:RefiningEssStars}, we use the tools developed in \cref{sec:RefiningInessStars} to show a `refined' version of the \emph{tree-of-tangles theorem}, which generalizes a result of \cite{SARefiningEssParts} to infinite graphs, and which contains \cref{main:TTDPlusToT} as a special case.

\section{Preliminaries}\label{sec:Prelims}

We mainly follow the notions from \cite{DiestelBook16noEE}. In what follows, we recap some important definitions which we need later.
All graphs in this paper may be infinite unless otherwise stated. 

\subsection{Infinite graphs}\label{subsec:InfiniteGraphs}

A \defn{ray} in a graph is a one-way infinite path. A graph is \defn{rayless} if it contains no ray. A graph is \defn{tough} if deleting finitely many vertices never leaves infinitely many components. 

The following theorem was first proved by Polat \cite{PolatEME1}; see \cite{LinkedTDInfGraphs}*{Theorem~2.5} for a short proof.

\begin{THM} 
\label{prop:RaylessToughGraphsAreFinite}
    Every tough, rayless graph is finite.
\end{THM}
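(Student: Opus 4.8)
The plan is to reduce everything to a transfinite induction via Schmidt's structural description of rayless graphs. Recall that the class of rayless graphs is the smallest class of graphs that contains every finite graph and that contains a graph $H$ whenever $H$ has a finite set $S\subseteq V(H)$ of vertices such that every component of $H-S$ belongs to the class; equivalently, every rayless graph $H$ carries an ordinal \emph{rank}, where $\operatorname{rank}(H)=0$ exactly if $H$ is finite, and, for $\alpha>0$, $\operatorname{rank}(H)\le\alpha$ exactly if there is a finite $S\subseteq V(H)$ such that every component of $H-S$ has rank $<\alpha$. Given this, I would prove the theorem by transfinite induction on $\operatorname{rank}(G)$.

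For the base case $\operatorname{rank}(G)=0$ there is nothing to show, as then $G$ is finite by definition. So suppose $\operatorname{rank}(G)=\alpha>0$, and assume inductively that every tough rayless graph of rank $<\alpha$ is finite. Pick a finite set $S\subseteq V(G)$ witnessing $\operatorname{rank}(G)\le\alpha$, i.e.\ with $\operatorname{rank}(C)<\alpha$ for every component $C$ of $G-S$. The key point is that $G-S$, and each of its components, inherits toughness from $G$. Indeed, for any finite $F\subseteq V(G)\setminus S$ we have $(G-S)-F=G-(S\cup F)$, and $S\cup F$ is finite, so $G-(S\cup F)$ has only finitely many components by toughness of $G$; applying this with $F=\varnothing$ shows in particular that $G-S$ has only finitely many components, say $C_1,\dots,C_m$.

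Now fix a component $C_i$ and a finite $F\subseteq V(C_i)$. Every component $D$ of $C_i-F$ is connected in $G-(S\cup F)$, hence lies in a unique component $D^\ast$ of $G-(S\cup F)$; since $D^\ast$ is connected in $G-S$ and meets the component $C_i$ of $G-S$, we get $D^\ast\subseteq V(C_i)$, and therefore $D^\ast\subseteq V(C_i)\setminus F$, so in fact $D^\ast=D$. Thus the components of $C_i-F$ are precisely the components of $G-(S\cup F)$ that are contained in $V(C_i)$, of which there are only finitely many. Hence each $C_i$ is tough. As $C_i$ also has rank $<\alpha$ (and is rayless, being a subgraph of $G$), the induction hypothesis gives that $C_i$ is finite. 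Therefore $G-S=C_1\cup\dots\cup C_m$ is finite, and so is $G=(G-S)\cup S$, completing the induction.

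I do not anticipate a genuine obstacle; the only subtlety worth flagging is that toughness has to be transferred carefully to the auxiliary subgraphs. It does pass to $G-S$ for finite $S$ and then to the components of such a subgraph, but it need not pass to arbitrary induced subgraphs, so the argument must route through these particular subgraphs rather than general ones. Everything else is a routine transfinite induction once Schmidt's rank is in hand.
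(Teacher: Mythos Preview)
Your argument is correct. The transfinite induction on Schmidt's rank is clean, and the one delicate step---that each component of $G-S$ inherits toughness from $G$---is handled properly: you correctly identify the components of $C_i-F$ with those components of $G-(S\cup F)$ that lie inside $C_i$, of which there are only finitely many.

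There is, however, nothing to compare against: the paper does not prove this statement at all. It records the result as due to Polat and points to \cite{LinkedTDInfGraphs}*{Theorem~2.5} for a short proof, without reproducing any argument. Your Schmidt-rank induction is in fact the standard modern route to this fact and is essentially what the cited short proof does; so in effect you have supplied the proof the paper chose to outsource.
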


\subsection{Separations}

Let $G$ be any graph. A \defn{separation} of $G$ is a set $\{A, B\}$ of subsets of $V(G)$ such that $A \cup B = V(G)$ and there is no edge in $G$ between $A\setminus B$ and $B \setminus A$. A separation $\{A,B\}$ of $G$ is \defn{proper} if neither $A$ nor $B$ equals $V(G)$.  
The \defn{order} $|\{A,B\}|$ of a separation $\{A, B\}$ is the size $\abs{A \cap B}$ of its \defn{separator} $A \cap B$.
For every $k \in \N \cup \{\aleph_0\}$, we define \defn{$S_k$} to be the set of all separations of $G$ of order $< k$. 

The \defn{orientations} of a separation~$\{A, B\}$ are the \defn{oriented separations} $(A,B)$ and $(B,A)$. We refer to~$A\, (\setminus B)$ as the \defn{(strict) small side} of $(A,B)$ and to $B\, (\setminus A)$ as the \defn{(strict) big side} of $(A,B)$.
Given a set~$S$ of separations of $G$, we write $\defnm{\vS} := \{(A, B) : \{A, B\} \in S\}$ for the set of all their orientations. We will use terms defined for unoriented separations also for oriented ones and vice versa if that is possible without causing ambiguities. Moreover, if the context is clear, we will simply refer to both oriented and unoriented separations as `separations'.
If we do not need to know about the sides of a separation, we sometimes denote separations with~$s$, and their orientations with $\vs, \sv$. Note that there are no default orientations: Once we denoted one orientation by $\vs$ the other one will be $\sv$, and vice versa.

A separation $(A,B)$ of $G$ is \defn{tight on the small side} (\defn{tight on the big side}) if the neighbourhood in $G$ of some component of $G[A\setminus B]$ ($G[B\setminus A]$) equals $A \cap B$.
Moreover, $\{A,B\}$ is \defn{tight} if $(A,B)$ is tight on the left and right side. 

The oriented separations of a graph $G$ are partially ordered by $(A, B) \leq (C,D)$ if $A \subseteq C$ and $B \supseteq D$.
A separation $\vr$ of $G$ is \defn{trivial} in $\vS \subseteq \vS_{\aleph_0}$ if there exists $s \in S$ such that $\vr < \vs$ as well as $\vr < \sv$. The separations of $G$ that are trivial in some $S\subseteq\vS_{\aleph_0}$ are those of the form $\vr =(X,V(G)) \in \vS$ for which there exists $s = \{A,B\} \in S_{\aleph_0}\setminus \{r\}$ with $X \subseteq A \cap B$.

An infinite increasing sequence $((A_i, B_i))_{i \in \N}$ of separations of a graph $G$ is \defn{weakly exhaustive} if the intersection of their strict big sides is empty, i.e.\ if $\bigcap_{i \in \N} (B_i \setminus A_i) = \emptyset$.

A set $\sigma \subseteq \vS_{\aleph_0} \setminus \{(V,V)\}$ of separations is called a \defn{star} if for any $(A,B), (C,D) \in \sigma$ it holds that $(A,B) \leq (D,C)$. 
The \defn{interior} of a star $\sigma \subseteq \vS_{\aleph_0}$ is the intersection $\interior(\sigma) := \bigcap_{(A, B) \in \sigma} B$, and the \defn{torso} \defn{of $\sigma$}, denoted by $\torso(\sigma)$, is the graph that is obtained from $G[\interior(\sigma)]$ by adding an edge $\{v,u\}$ whenever $v \neq u \in \interior(\sigma)$ lie together in some separator of a separation in $\sigma$.

The partial order on $\vS_{\aleph_0}$ also relates the proper stars in $\vS_{\aleph_0}$: If $\sigma, \tau \subseteq \vS_{\aleph_0}$ are stars of proper separations, then $\sigma \leq \tau$ if and only if for every $\vs \in \sigma$ there exists some $\vr \in \tau$ such that $\vs \leq \vr$. Note that this relation is again a partial order \cite{TreeSets}.

Two separations $\{A,B\}$ and $\{C,D\}$ of $G$ are \defn{nested} if they have orientations which can be compared; otherwise they \defn{cross}.

For any pair of separations $(A, B)$ and $(C,D)$ also their \defn{infimum} $(A, B) \wedge (C,D) := (A \cap C, B \cup D)$ and their \defn{supremum} $(A, B) \vee (C,D) := (A \cup C, B \cap D)$ are separations of $G$; we call $\{A \cap C, B \cup D\}$, $\{A \cup C, B \cap D\}$, $\{B \cap C, A \cup D\}$ and $\{B \cup C, A \cap D\}$ the \defn{corner separations} of $\{A, B\}$ and $\{C,D\}$.

\begin{LEM}{\cite{DiestelBook16noEE}*{Lemma 12.5.5}}\label{lem:Fishlemma}
    Let $r, s$ be two crossing separations of a graph $G$. Every separation of $G$ that is nested with both $r$ and $s$ is also nested with all four corner separations of $r$ and $s$.
\end{LEM}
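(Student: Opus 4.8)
The plan is to chase orientations of the separations involved. Write $r=\{A,B\}$ and $s=\{C,D\}$ for the two crossing separations, and let $t=\{E,F\}$ be a separation of $G$ that is nested with both. First I would use the nestedness of $t$ with $r$: since $t$ and $r$ have comparable orientations, after choosing which sides of $t$ to call $E$ and $F$ and which sides of $r$ to call $A$ and $B$, I may assume that $(E,F)\le(A,B)$, that is, $E\subseteq A$ and $F\supseteq B$.

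Next I would exploit the nestedness of $t$ with $s$. Some orientation of $t$ is comparable with some orientation of $s$; naming that orientation of $s$ by $(C,D)$ (the set of four corner separations is unaffected by swapping the names $C,D$), there are two cases: $(E,F)\le(C,D)$ or $(C,D)\le(E,F)$. The second case is impossible, since it would give $(C,D)\le(E,F)\le(A,B)$, hence $(C,D)\le(A,B)$, so $r$ and $s$ would be nested, contrary to assumption. Therefore $(E,F)\le(C,D)$, i.e.\ $E\subseteq C$ and $F\supseteq D$. Combining this with $E\subseteq A$ and $F\supseteq B$ yields $E\subseteq A\cap C$ and $F\supseteq B\cup D$, that is, $(E,F)\le(A\cap C,B\cup D)$.

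Finally I would observe that the oriented separation $(A\cap C,B\cup D)$ lies below a suitable orientation of each of the four corner separations of $r$ and $s$: namely $(A\cap C,B\cup D)\le(A\cap C,B\cup D)$, and $(A\cap C,B\cup D)\le(A\cup C,B\cap D)$, $(A\cap C,B\cup D)\le(A\cup D,B\cap C)$ and $(A\cap C,B\cup D)\le(B\cup C,A\cap D)$ — each of these being immediate from the trivial inclusions $X\cap Y\subseteq X,Y\subseteq X\cup Y$. Since $(E,F)\le(A\cap C,B\cup D)$, it follows that the orientation $(E,F)$ of $t$ is comparable with an orientation of each corner separation, so $t$ is nested with all four of them, as desired. (Here I use that the corner separations are themselves separations of $G$, which was recorded just before the statement.)

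I do not expect a serious obstacle: the whole argument is a short manipulation in the partial order on oriented separations. The only step needing a little care is the case distinction in the second paragraph, where the hypothesis that $r$ and $s$ cross is exactly what is needed to discard the ``wrong'' alignment $(C,D)\le(E,F)$ of $t$ relative to $s$; everything else is routine bookkeeping with the definitions of $\le$ and of the corner separations.
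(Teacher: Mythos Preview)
Your argument is correct and is essentially the standard proof of this lemma; the paper itself does not prove it but cites it from Diestel's textbook. One small point worth tightening: in the second paragraph you pass silently from ``some orientation of $t$ is comparable with some orientation of $s$'' to the two cases $(E,F)\le(C,D)$ or $(C,D)\le(E,F)$ with the already-fixed orientation $(E,F)$; this is fine because $(F,E)\le(C,D)$ is equivalent to $(D,C)\le(E,F)$, so any comparability involving $(F,E)$ can be rewritten as one involving $(E,F)$ after renaming the sides of~$s$---but it would not hurt to say so explicitly.
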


Moreover, it is easy to check that if two separations $\{A,B\}, \{C,D\}$ of a graph cross, then 
\[
\abs{(A \cap C) \cap (B \cup D)} + \abs{(A \cup C) \cap (B \cap D)} = \abs{A \cap B} + \abs{C \cap D}.
\]
Here `$\leq$' is the important part, which is called \defn{submodularity}.

\subsection{Profiles and tangles}

An \defn{orientation} of a set $S \subseteq S_{\aleph_0}$ is a set $O \subseteq \vS$ which contains, for every $\{A,B\} \in S$, exactly one of its orientations $(A,B)$ and $(B,A)$. It is \defn{consistent} if it does not contain both $(B,A)$ and $(C,D)$ whenever $(A,B) < (C,D)$ for distinct $\{A,B\}, \{C,D\} \in S$. An orientation is \defn{regular} if it does not contain $(V(G), A)$ for any subset $A \subseteq V(G)$.

An orientation $O$ of $S$ \defn{lives} in a star $\sigma \subseteq \vS$ (or equivalently $\sigma$ \defn{is home} to $O$) if $\sigma \subseteq O$.
If $\cO$ is a set of consistent orientations of $S$, we call a star $\sigma \subseteq \vS$ \defn{essential (for $\cO$)} if some~$O \in \cO$ lives in $\sigma$. Otherwise $\sigma$ is called \defn{inessential (for $\cO$)}. 

A separation $\{A,B\} \in S$ \defn{distinguishes} two orientations of $S$ if they orient~$\{A,B\}$ differently. $\{A,B\}$ distinguishes them \defn{efficiently} if they are not distinguished by any separation of smaller order. A set of separations $N \subseteq S$ \defn{(efficiently) distinguishes} a set $\cO$ of consistent orientations of $S$ if any two distinct orientations in $\cO$ are (efficiently) distinguished by some separation in $N$.
\smallskip

Let $S \subseteq S_{\aleph_0}$, and let $\cF$ be a set of subsets of~$\vS_{\aleph_0}$. We call an orientation~$O$ of~$S$ an \defn{$\cF$-tangle of~$S$} if~$O$ is consistent and \defn{avoids}~$\cF$, i.e.\ if~$O$ does not contain any element of $\cF$ as a subset. 

For some $k \in \N \cup \{\aleph_0\}$, a \defn{$k$-tangle} in $G$ (or \defn{tangle of order $k$}) is an orientation $\tau$ of $S_k$ with the property that no three small sides  of separations in $\tau$ cover $G$, i.e.\ $G[A_1] \cup G[A_2] \cup G[A_3] \neq G$ for all $(A_1, B_1), (A_2, B_2), (A_3, B_3) \in \tau$. Thus, every $k$-tangle avoids 
\[
\defnm{\cT_k} := \big\{\{(A_1, B_1), (A_2, B_2), (A_3, B_3)\} \subseteq \vS_k : G[A_1] \cup G[A_2] \cup G[A_3] = G\big\},
\]
and hence the $k$-tangles in $G$ are precisely the $\cT_k$-tangles of $S_k$.
We denote by~\defn{$\cT^*_k$} the set of all stars in~$\cT_k$, and we abbreviate $\cT_{\aleph_0}, \cT^*_{\aleph_0}$ by $\defnm{\cT}, \defnm{\cT^*}$, respectively. 
\smallskip

For some $k \in \N$, a consistent orientation $P$ of $S_k$ is a \defn{$k$-profile in $G$} if for all $(A,B), (C,D) \in P$, their oriented corner $(B \cap D, A \cup C)$ does not lie in $P$ (see \cref{subfig:Profile1}). Note that this can be satisfied in two ways: either by $(A \cup C, B \cap D) \in P$, or because $\{A\cup C, B \cap D\}$ has order $\geq k$, and is hence not oriented by $P$ at all.
Note that every $k$-tangle is a $k$-profile.

\begin{figure}[ht]
    \centering
    \begin{subfigure}{0.4\linewidth}
        \centering
        \includegraphics[width=0.6\linewidth]{TTDInfGraphsProfile.png}
        \caption{}
        \label{subfig:Profile1}
    \end{subfigure}
    \begin{subfigure}{0.4\linewidth}
        \centering
        \includegraphics[width=0.6\linewidth]{TTDInfGraphsProfile2.png}
        \caption{}
        \label{subfig:Profile2}
    \end{subfigure}
    \caption{Depicted are separations $\{A,B\}, \{C,D\} \in S_k$, their corners $\{A \cup C, B \cap D\}$ (in blue) and $\{A \cup D, B \cap C\}$ (right, in pink), and an orientation $P$ of $S_k$. In the left figure, if $P$ is a profile, then $(A \cup C, B \cap D) \in P$. In the right figure, if $P$ avoids $\cP_k$, then again $(A \cup C, B \cap D) \in P$ because the star $\sigma := \{(A,B), (B \cap C, A \cup D), (B \cap D, A \cup C)$ is in $\cP_k$. If the coloured area in the right figure has size $<k$, then $\sigma$ is contained in $\cP'_k$.}
    \label{fig:Profile}
\end{figure}

One can represent $k$-profiles as $\cF$-tangles for a set $\cF$ of stars as follows. Let $(A,B), (C,D) \in S_k$, and assume that their supremum $(A \cup C, B \cap D)$ has order $<k$. Additionally, by submodularity, at least one of their `opposite' corners $(A \cap D, B \cup C)$ and $(B \cap C, A \cup D)$ has order $<k$. Hence, this separation together with $(B \cap D, A \cup C)$ and one of $(A,B)$ or $(C,D)$ forms a star in $\vS_k$ (see \cref{subfig:Profile2}). An orientation of $S_k$ is a $k$-profile in $G$ if and only if it contains no such star, which is precisely the case if it avoids the set
\[
\defnm{\cP_k} := \big\{\{(A,B), (B \cap C, A \cup D), (B \cap D, A \cup C)\} \subseteq \vS_k : (A,B), (C,D) \in \vS_{k}\big\}
\]
of all these stars. Hence, the $k$-profiles in $G$ are precisely the $\cP_k$-tangles of $S_k$ (cf.\ \cite{ProfileDuality}*{Lemma 11}).

We also need the following subset of $\cP_k$, which contains precisely the stars in $\cP_k$ whose interior has size~$<k$ (i.e.\ the coloured area in \cref{subfig:Profile2} has size $<k$):
\[
\defnm{\cP'_k} := \big\{\sigma \in \cP_k : |\interior(\sigma)| < k\big\}
\]
Tangles and profiles of unspecified order are referred to as \defn{tangles / profiles in $G$}.

\begin{lemma}{\cite{InfiniteSplinters}*{Lemma~6.1}} \label{lem:EffYieldsTight}
    Let $P, P'$ be two regular profiles in a graph $G$. If $\{A, B\}$ is a separation of finite order that efficiently distinguishes $P$ and $P'$, then $\{A,B\}$ is tight.
\end{lemma}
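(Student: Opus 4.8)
The plan is to derive a contradiction from the defining \emph{profile property} together with regularity. Suppose $\{A,B\}$ efficiently distinguishes $P$ and $P'$ but is not tight. Exchanging $A$ with $B$ (and $P$ with $P'$) if necessary, we may assume $(A,B)\in P$, $(B,A)\in P'$, and that $(A,B)$ is not right-tight, i.e.\ no component of $G[B\setminus A]$ has neighbourhood exactly $Z:=A\cap B$; write $\ell:=|Z|$, which is finite as $\{A,B\}$ has finite order. Since $P$ and $P'$ orient $\{A,B\}$, we may assume (by restricting both to $S_{\ell+1}(G)$, which preserves regularity, the profile property, and the fact that $\{A,B\}$ efficiently distinguishes them) that $P$ and $P'$ are $(\ell+1)$-profiles; in particular they orient every separation of order $\le\ell$. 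By regularity $A\neq V(G)\neq B$, so $A\setminus B\neq\emptyset\neq B\setminus A$; by efficiency $P$ and $P'$ orient every separation of order $<\ell$ alike; and by the choice of $(A,B)$, every component of $G[B\setminus A]$ has neighbourhood a \emph{proper} subset of $Z$.

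The idea is to ``merge'' the components of $G[B\setminus A]$, grouped by neighbourhood. As $Z$ is finite, only finitely many sets $N\subsetneq Z$ occur as $N_G(V(C))$ for a component $C$ of $G[B\setminus A]$; for each such $N$ let $W_N$ be the union of the vertex sets of all components with neighbourhood $N$, so that $\{W_N\}_N$ partitions $B\setminus A$ and $N_G(W_N)=N$, and put $s_N:=(V(G)\setminus W_N,\,W_N\cup N)$, a separation of order $|N|<\ell$. First I would show $(W_N\cup N,\,V(G)\setminus W_N)\in P$ for each such $N$: since $s_N$ has order $<\ell$, $P$ and $P'$ orient it alike, so if $s_N\in P$ then also $s_N\in P'$, and then the profile property applied to $(B,A)$ and $s_N$ in $P'$ forces $(N,V(G))\notin P'$ — contradicting regularity, since $P'$ orients $\{N,V(G)\}$ (of order $<\ell+1$) but cannot contain $(V(G),N)$.

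Next I would merge these finitely many separations by induction on subsets $\mathcal M$ of the set of occurring neighbourhoods: with $W_{\mathcal M}:=\bigcup_{N\in\mathcal M}W_N$ and $N_{\mathcal M}:=\bigcup_{N\in\mathcal M}N\subseteq Z$, the claim is $(W_{\mathcal M}\cup N_{\mathcal M},\,V(G)\setminus W_{\mathcal M})\in P$. The singleton case is the previous paragraph; for $\mathcal M=\mathcal M'\cup\{N_0\}$, applying the profile property to the two separations $(W_{\mathcal M'}\cup N_{\mathcal M'},\,V(G)\setminus W_{\mathcal M'})$ and $(W_{N_0}\cup N_0,\,V(G)\setminus W_{N_0})$ of $P$ shows that the corner separation $(V(G)\setminus W_{\mathcal M},\,W_{\mathcal M}\cup N_{\mathcal M})$ — whose separator is $N_{\mathcal M}\subseteq Z$, hence of order $\le\ell$ — is not in $P$; therefore $P$ orients this separation and must contain its reverse $(W_{\mathcal M}\cup N_{\mathcal M},\,V(G)\setminus W_{\mathcal M})$, as claimed.

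Taking $\mathcal M$ to be \emph{all} occurring neighbourhoods, so that $W_{\mathcal M}=B\setminus A$ and $V(G)\setminus W_{\mathcal M}=A$, this gives $((B\setminus A)\cup N_0,\,A)\in P$ for some $N_0\subseteq Z$. Finally, applying the profile property to $(A,B)$ and $((B\setminus A)\cup N_0,\,A)$ in $P$ shows that $(Z,V(G))\notin P$; but $\{Z,V(G)\}$ has order $\ell<\ell+1$, so $P$ orients it and, by regularity, cannot contain $(V(G),Z)$ — a contradiction. Hence $\{A,B\}$ is tight. The one genuine obstacle is that $G[B\setminus A]$ may have infinitely many components, which is exactly what the grouping by neighbourhood is designed to circumvent (turning the merge into a finite induction); the rest is routine bookkeeping, checking that every separation fed to the profile property, and every corner separation it produces, has order $\le\ell$ and so lies in the common domain $S_{\ell+1}(G)$ of $P$ and $P'$.
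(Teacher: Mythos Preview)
Your argument is correct. The paper itself does not prove this lemma but merely cites it from \cite{InfiniteSplinters}, so there is no in-paper proof to compare against; your proof is a clean self-contained version of the standard argument. A couple of minor remarks: you reuse the symbol $N_0$ once for a generic element of $\mathcal M$ in the induction step and once for the final union $N_{\mathcal M}$, which is slightly confusing; and you could shorten the last step by noting that if $N_{\mathcal M}\subsetneq Z$ then $((B\setminus A)\cup N_{\mathcal M},A)$ already has order $<\ell$ and distinguishes $P$ from $P'$ (since $(A,(B\setminus A)\cup N_{\mathcal M})>(A,B)$ lies in $P'$ by consistency), contradicting efficiency directly---but your route via the profile property and regularity handles both cases $N_{\mathcal M}\subsetneq Z$ and $N_{\mathcal M}=Z$ uniformly and is perfectly fine.
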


An orientation of some $S_k$ is \defn{principal} if it contains for every set $X$ of fewer than $k$ vertices a separation of the form $(V(G-K), V(K) \cup X)$ where $K$ is a component of $G - X$. It is easy to check that the regular, principal $\cP'_k$-tangles of $S_k$ avoid 
\[
\defnm{\cU_k} := \big\{\sigma \subseteq \vS_k : \sigma \text{ is a star and } |\interior(\sigma)| < k\big\}. 
\]
Write $\defnm{\cU^\infty_k} := \{\sigma \in \cU_k : |\sigma| = \infty\}$ for the set of all infinite stars in $\cU_k$.

\begin{LEM} \label{lem:NonPrincipalPkTanglesAreInducedByUFTangles}
    Let $\tau$ be a regular, non-principal $\cP'_k$-tangle of order $k \in \N$ in a graph $G$, and let $\sigma \subseteq \vS_k$ be a finite star with finite interior. Then $\sigma \not\subseteq \tau$.
\end{LEM}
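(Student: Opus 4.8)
The claim is that a regular, non-principal $\cP'_k$-tangle $\tau$ cannot live in any finite star $\sigma$ with finite interior. The natural strategy is to argue by contradiction: assume $\sigma = \{(A_i,B_i) : i \in [n]\} \subseteq \tau$ is such a star, and exploit non-principality at a suitable small vertex set $X$ to derive that $\tau$ must contain an element of $\cP'_k$, contradicting that it is a $\cP'_k$-tangle. The key point is to find the right set $X$ of fewer than $k$ vertices: the interior $\interior(\sigma) = \bigcap_i B_i$ is finite, and since $\sigma$ is a star whose separations have order $<k$, the separators $A_i \cap B_i$ are small. One wants to take $X$ to be (contained in) $\interior(\sigma)$, or perhaps $X := \bigcap_i (A_i \cap B_i)$ together with enough of the interior — the constraint is $|X| < k$, which should follow because $\interior(\sigma)$ has size $<k$ by hypothesis (indeed $\sigma \in \cU_k$-like). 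Actually, since $\sigma$ is a star with finite interior but we are \emph{not} told $|\interior(\sigma)| < k$, I would first reduce to that case or handle it directly: if $|\interior(\sigma)| \geq k$ we cannot use $X = \interior(\sigma)$, so the argument must instead locate a component structure.

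The heart of the argument: by non-principality applied to $X := \interior(\sigma)$ (assuming $|X| < k$; I will need to justify or arrange this), the orientation $\tau$ does \emph{not} contain $(V(G-K), V(K)\cup X)$ for some component $K$ of $G - X$. Hence $\tau$ orients this separation the other way, i.e.\ $(V(K)\cup X, V(G-K)) \in \tau$. Now, each $(A_i, B_i) \in \sigma \subseteq \tau$ has $B_i \supseteq \interior(\sigma) = X$, and $A_i \setminus B_i$ is separated from $B_i \setminus A_i$ in $G$; the component $K$ of $G - X$ lies entirely inside some $B_i \setminus A_i$ or is disjoint from it. Using consistency of $\tau$ together with the star property, one should be able to show the corner separations $(A_i, B_i) \wedge (V(K)\cup X, V(G-K))$ or their counterparts, together with $(A_i,B_i)$, form a triple in $\cP'_k$ sitting inside $\tau$ — its interior is contained in $\interior(\sigma)$ or in $X$, hence has size $<k$. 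This is where the definition of $\cP'_k$ (triples $\{(A,B), (B\cap C, A\cup D), (B\cap D, A\cup C)\}$ with interior $<k$) gets used: set $(C,D) = (V(K)\cup X, V(G-K))$ and check the three separations all lie in $\tau$ by consistency/profile property, contradicting that $\tau$ avoids $\cP'_k$.

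**Main obstacle.** The delicate step is bounding the interior of the offending $\cP'_k$-triple by $k$ and simultaneously ensuring the triple genuinely lies in $\tau$. The interior of $\{(A,B),(B\cap C, A\cup D),(B\cap D,A\cup C)\}$ is $B \cap (A\cup D)\cap(A\cup C) = B\cap(A\cup(C\cap D)) \subseteq (A\cap B) \cup (C\cap D)$; with $(C,D)$ chosen from a component of $G - X$ we get $C \cap D \subseteq X = \interior(\sigma)$, and $A\cap B$ is the separator of one of the $(A_i,B_i)$, which has order $<k$ — but this gives interior at most $|{A_i\cap B_i}| + |X|$, which may exceed $k$. So the real work is choosing $X$ \emph{as a subset of a separator}, not the whole interior: take $X := A_i\cap B_i$ for a well-chosen $i$ (this has size $<k$ automatically), apply non-principality there, and show the resulting triple with that $(A_i,B_i)$ has interior exactly inside $A_i \cap B_i$, hence $<k$. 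One then needs the star structure to guarantee $K$ interacts with $\sigma$ in the required way, and consistency of $\tau$ (it is a $\cP'_k$-tangle, so in particular consistent and a $k$-profile by the remarks preceding the lemma) to pin down the orientations. Verifying that all three separations of the triple land in $\tau$ — rather than just two — is the step I expect to require the most care, and it should use that $\tau$ is a \emph{profile} (the $\cP_k$-avoidance) in addition to consistency.
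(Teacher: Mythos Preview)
Your proposal rests on a misreading of the quantifiers in the definition of \emph{non-principal}. An orientation $\tau$ of $S_k(G)$ is principal if for \emph{every} $X$ with $|X|<k$ there \emph{exists} a component $K$ of $G-X$ with $(V(G-K),\,V(K)\cup X)\in\tau$. Hence non-principality gives you a \emph{specific witness} $X$ with $|X|<k$ such that for \emph{all} components $K$ of $G-X$ one has $(V(K)\cup X,\,V(G-K))\in\tau$. You cannot choose $X$ yourself --- in particular you cannot set $X:=\interior(\sigma)$ or $X:=A_i\cap B_i$ and then ``apply non-principality there''. Your entire strategy, including the later attempt to pick $X$ as a separator of one of the $\vs_i$, collapses at this point. (A secondary issue: you repeatedly try to arrange $|\interior(\sigma)|<k$, but the hypothesis says only that the interior is finite.)

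The paper's argument proceeds from the other direction. It starts with the witness $X$ supplied by non-principality, so that $(V(K)\cup X,\,V(G-K))\in\tau$ for every component $K$ of $G-X$. For each $(A,B)\in\sigma$ let $\cK_{(A,B)}$ be the set of components of $G-X$ contained in $G[A\setminus B]$; consistency of $\tau$ forces $\bigl(V\bigl(\bigcup\cK_{(A,B)}\bigr)\cup X,\,V\bigl(G-\bigcup\cK_{(A,B)}\bigr)\bigr)\in\tau$. All these separations have separator contained in $X$, so their pairwise suprema still have order $<k$; since $|\sigma|<\infty$ one may inductively apply the $\cP'_k$-avoidance (the profile property with interior $<k$) to combine them into $\bigl(V\bigl(\bigcup\cK\bigr)\cup X,\,V\bigl(G-\bigcup\cK\bigr)\bigr)\in\tau$ with $\cK=\bigcup_{(A,B)\in\sigma}\cK_{(A,B)}$. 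Finiteness of $\interior(\sigma)$ now enters: only finitely many components of $G-X$ meet $\interior(\sigma)$ and hence lie outside $\cK$, so the same induction absorbs them as well and yields $(V(G),X)\in\tau$, contradicting regularity. The key device you are missing is that one works at the fixed witness $X$ and uses $\sigma$ only to bundle the possibly infinitely many components of $G-X$ into finitely many packets.
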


\begin{proof}
    Suppose for a contradiction that $\sigma \subseteq \tau$. 
    Since $\tau$ is non-principal, there is some $X \subseteq V(G)$ such that $(V(K) \cup X, V(G-K)) \in \tau$ for all components~$K$ of $G-X$; note that $|X| < k$ as $\tau$ has order $k$.
    By the consistency of $\tau$, we have $\big(V\big(\bigcup \cK_{(A,B)}\big) \cup X, V\big(G-\bigcup \cK_{(A,B)}\big)\big) \in \tau$ for all $(A,B) \in \sigma$ where~$\cK_{(A,B)}$ is the set of all components of $G-X$ that are contained in $G[A\setminus B]$.
    Since $|\sigma|$ is finite, inductively applying that $\tau$ is a $\cP'_k$-tangle yields that $\big(V\big(\bigcup \cK\big) \cup X, V\big(G-\bigcup\cK\big)\big) \in \tau$ where $\cK := \bigcup_{(A,B) \in \sigma} \cK_{(A,B)}$.
    As $\interior(\sigma)$ is finite, at most finitely many components of $G-X$ are not in $\cK$, so the same inductive argument yields that $(V(G), X) \in \tau$, which contradicts that $\tau$ is regular.
\end{proof}

\subsection{Nice sets of stars}

Let $S \subseteq S_{\aleph_0}$, $\vr \in \vS_{\aleph_0}$, and set $S_{\geq \vr} := \{x \in S : \vx \geq \vr \text{ or } \xv \geq \vr\}$. 
Further, let $\vs \in \vS$. We say that \defn{$\vs$ emulates $\vr$ in $\vS$} if $\vs \geq \vr$ and for every $\vx \in \vS$ with $\vx \geq \vr$ it holds that $\vs \vee \vx \in \vS$.
Given a set $\cF$ of stars in $\vS_{\aleph_0}$, we say that \defn{$\vs$ emulates $\vr$ in $\vS$ for $\cF$} if $\vs$ emulates $\vr$ in $\vS$ and for every star $\sigma \in \cF$ with $\sigma \subseteq \vS_{\geq \vr} \setminus \{\rv\}$ that contains an element $\vt \geq \vr$ it holds that~$\{\vs \vee \vt\} \cup \{\sv \wedge \vt' : \vt' \in \sigma\setminus \{\vt\}\}$ is again a star in $\cF$.

A set $\cF$ of stars in $\vS_{\aleph_0}$ is \defn{closed under shifting in $\vS$} if whenever $\vs \in \vS$ emulates some non-trivial $\vr \in \vS \setminus \{(V(G),V(G))\}$ in $S$ with $\{\rv\} \notin \cF$, then it also emulates $\vr$ in $\vS$ for $\cF$.
Further, if $S = S_k$ for some $k \in \N$, then $\cF$ is \defn{strongly closed under shifting in $\vS_k$} if whenever $\vs \in \vS_k$ emulates some $\vr \in \vS_{2k-1}$ in $S_k$, then it also emulates $\vr$ in $\vS_k$ for $\cF$.

\begin{PROP}\label{prop:ShiftingForbiddenStars}
    Let $G$ be a graph, $k \in \N$, and let $\cF$ be a set of stars in $\vS_k$ that is strongly closed under shifting. Further, let~$P$ be a $k$-profile in $G$, let $\sv \in P$, and let $\{\vt\} \cup \{\vr_i : i \in I\} \in \cF$ be such that $\tv \in P$. Suppose further that $\vs \vee \vx \in \vS_k$ for all $\xv \in P$. Then $\{\vs \vee \vt\} \cup \{\sv \wedge \vr_i : i \in I\}$ is a star in $\cF$.
\end{PROP}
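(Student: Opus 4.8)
The plan is to verify this by a direct case analysis, using the definition of \emph{strongly closed under shifting} together with the profile property of $P$ to land inside the regime where that definition applies. The crucial observation is that we want to apply the definition with the separation $\vr$ in the proposition taken to be $\vr := \vt$, but $\vt$ is only known to lie in $\vS_k(G)$, whereas ``strongly closed'' lets us shift relative to any $\vr \in \vS_{2k-1}(G)$; this slack is exactly what we shall need, because the relevant emulation will take place along $\vt$ itself, whose order is $<k<2k-1$. So first I would unpack what needs to be shown: that $\vs$ emulates $\vt$ in $\vS_k(G)$, i.e.\ that $\vs \ge \vt$ — wait, this need not hold. Let me restructure.

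Since $\sv \in P$ and $\tv \in P$ and $P$ is consistent, the separations $s$ and $t$ cannot be oriented so that $\vs \le \tv$ would force $\vt \le \sv$ with $\tv \in P$: consistency rules out $\sv, \tv \in P$ only if $\vs < \vt$. Hence $\vs \not< \vt$, and similarly $\vt \not< \vs$; more usefully, the hypothesis ``$\vs \vee \vx \in \vS_k(G)$ for all $\xv \in P$'' is precisely the statement that $\vs$ emulates $\vr$ in $S_k(G)$ for the choice $\vr := \vs$ — every $\vx \in \vS_k(G)$ with $\vx \ge \vs$ has $\xv \le \sv \in P$, so $\xv \notin P$, so $\vx \in P$, whence $\vs \vee \vx \in \vS_k(G)$ by hypothesis. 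Thus $\vs$ emulates $\vs$ in $S_k(G)$. Now the star $\{\vt\} \cup \{\vr_i : i \in I\} \in \cF$ satisfies $\vt \ge \vs$: indeed $\vt \vee \vs \in \vS_k(G)$ and, since $P$ is a profile and $\sv,\tv \in P$, the supremum's small side cannot be reoriented into $P$, forcing — here I would invoke the profile inequality and the maximality of $P$'s orientation — that in fact $\vt \ge \vs$ is \emph{not} generally true either.

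The correct route, then, is: apply the definition of ``strongly closed under shifting'' with $\vr := \vs$ itself (which lies in $\vS_k(G) \subseteq \vS_{2k-1}(G)$). We have just shown $\vs$ emulates $\vs$ in $S_k(G)$, so by strong closure $\vs$ emulates $\vs$ in $\vS_k(G)$ \emph{for} $\cF$. Now I must check the star $\sigma := \{\vt\} \cup \{\vr_i : i \in I\}$ is of the form required by the definition of ``emulates $\vr$ in $\vS_k(G)$ for $\cF$'': namely $\sigma \subseteq (\vS_k(G))_{\ge \vs} \setminus \{\sv\}$ and $\sigma$ contains an element $\ge \vs$. That every element of $\sigma$ is comparable with $\vs$ (in $(\cdot)_{\ge\vs}$) and that $\sv \notin \sigma$ follow from $\sv, \tv \in P$, consistency, and the profile property applied to the $\vr_i$: since $\sigma$ is a star with $\tv \in P$, each $\sv_i \le \vt$... here I would carefully argue $\vr_i \ge \vs$ for all $i$ using that $\{s\}$ is distinguished correctly by $P$. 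Then the definition gives directly that $\{\vs \vee \vt\} \cup \{\sv \wedge \vr_i : i \in I\}$ is a star in $\cF$ — which is exactly the conclusion, identifying $\vt$ as the distinguished element of $\sigma$ lying $\ge \vs$.

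\textbf{Main obstacle.} The genuinely delicate point is verifying the side conditions that let the shifting definition fire: that $\vt \ge \vs$ (so that $\vt$ can play the role of the element ``$\ge \vr$'' in $\sigma$), and that every $\vr_i \ge \vs$ as well (so that $\sigma \subseteq (\vS_k(G))_{\ge \vs}\setminus\{\sv\}$, i.e.\ no $\vr_i$ equals $\sv$ and each is comparable upward with $\vs$). Both of these must come from combining: (a) $\sv \in P$, (b) $\tv \in P$, (c) $\sigma$ is a star, and (d) the $k$-\emph{profile} property of $P$ — in particular that $P$ cannot contain the ``inward corner'' $(B\cap D, A\cup C)$ of two of its separations, which is what upgrades mere consistency to the comparability we need. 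I expect this to be a short but careful lemma-style argument, essentially the standard ``profiles point away from stars they live next to'' manipulation; once it is in place, the rest is a mechanical substitution into the definition of strongly closed under shifting.
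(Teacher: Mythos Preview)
Your proposal has a genuine gap: the side condition $\vt \geq \vs$ that you identify as the ``main obstacle'' is in fact \emph{false} in general, so the approach via $\vr := \vs$ cannot work. From $\sv \in P$ and $\tv \in P$ (plus consistency and the profile property) you cannot conclude that $s$ and $t$ are nested, let alone that $\vs \leq \vt$; if $s$ and $t$ cross, no amount of profile arithmetic will produce a comparability between $\vs$ and $\vt$. So the star $\sigma = \{\vt\} \cup \{\vr_i\}$ need not lie in $\vS_{\geq \vs}$ with $\vt \geq \vs$, and the definition of ``emulates $\vs$ for $\cF$'' simply does not fire.

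The paper's proof makes a different choice: it sets $\vr := \vs \wedge \vt$. This automatically gives $\vs \geq \vr$ and $\vt \geq \vr$, and since $\sigma$ is a star each $(\vr_i)^* \geq \vt \geq \vr$, so $\sigma \subseteq \vS_{\geq \vr}$ with $\vt \geq \vr$ for free. The price is that $\vs \wedge \vt$ may have order $\geq k$; but $|\vs \wedge \vt| \leq |s| + |t| < 2k-1$, and this is precisely why the definition of \emph{strongly} closed under shifting allows $\vr \in \vS_{2k-1}(G)$ --- the ``slack'' you noticed at the outset but then abandoned. The remaining step is to show $\vs$ emulates $\vs \wedge \vt$ in $S_k(G)$: given $\vx \geq \vs \wedge \vt$, one must show $\xv \in P$ so that the hypothesis on $\vs$ yields $\vs \vee \vx \in \vS_k(G)$. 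This is not immediate from consistency alone; the paper invokes a cited result (Eberenz) which says, in effect, that a $k$-profile containing $\sv$ and $\tv$ must also contain the inverse of every separation above $\vs \wedge \vt$. That lemma is the missing ingredient your proposal does not supply.
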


\begin{proof}
    We show that $\vs$ emulates $\vs \wedge \vt$ in $S_k$ from which the assertion follows as $\cF$ is strongly closed under shifting and~$\abs{\vs \wedge \vt} \leq \abs{s} + \abs{t} < 2k-1$.
    For this let $\vx \in \vS_k$ with $\vx \geq \vs \wedge \vt$ be given. 
    Then $\xv \in P$ by \cite{EberenzMaster}*{Theorem~1}.
    So by the assumptions on $s$,
    it follows that $\vs \vee \vx \in S_k$.
\end{proof}

A set $\cF$ of stars in $\vS_k$ is called \defn{nice} if $\cF$ is strongly closed under shifting in $\vS_k$, $\{(V(G), A)\} \in \cF$ for all subsets $A \subseteq V(G)$ of fewer than $k$ vertices, and $\cP'_k \subseteq \cF$.

\begin{LEM} \label{lem:UkStronglyClosedUnderShifting}
    For every $k > 0$, the sets $\cU_k$ and $\cT^*_k$ are nice.
\end{LEM}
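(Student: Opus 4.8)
The plan is to verify, for $\cF\in\{\cU_k,\cT^*_k\}$, the three defining conditions of a nice set of stars: (a) $\cF$ is strongly closed under shifting in $\vS_k(G)$, (b) $\{(V(G),A)\}\in\cF$ whenever $|A|<k$, and (c) $\cP'_k\subseteq\cF$. Conditions (b) and (c) are immediate. For (b): the singleton star $\{(V(G),A)\}$ has interior $A$ of size $<k$, so it lies in $\cU_k$, and its only small side is $V(G)$, so it lies in $\cT_k$ and hence in $\cT^*_k$. For (c): a member $\sigma=\{(A,B),(B\cap C,A\cup D),(B\cap D,A\cup C)\}$ of $\cP'_k$ is a star (its three separations are pairwise comparable) with $|\interior(\sigma)|<k$, so it lies in $\cU_k$; and since $A\cup(B\cap C)\cup(B\cap D)=A\cup\bigl(B\cap(C\cup D)\bigr)=A\cup B=V(G)$, while every edge of $G$ lies in $A$ or $B$ and, if in $B$, in $C$ or $D$, it also lies in $\cT_k$ and hence in $\cT^*_k$.

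The substance of the lemma is (a). So I would fix $\vs\in\vS_k(G)$ emulating $\vr\in\vS_{2k-1}(G)$ in $S_k(G)$, a star $\sigma\in\cF$ with $\sigma\subseteq\vS_{\ge\vr}\setminus\{\rv\}$ containing an element $\vt=(A_t,B_t)\ge\vr$, write the remaining members of $\sigma$ as $\vt_i=(A_i,B_i)$ ($i\in I$), and set $\sigma':=\{\vs\vee\vt\}\cup\{\sv\wedge\vt_i:i\in I\}$. Since $\sigma$ is a star, $\vt_i\le\overline{\vt}\le\rv$, so $\tv_i\ge\vr$ for every $i$. That $\sigma'$ is again a star follows purely from the ordering: $\vs\vee\vt\le\vs\vee\tv_i=\overline{\sv\wedge\vt_i}$ since $\vt\le\tv_i$, and $\sv\wedge\vt_i\le\vt_i\le\tv_j\le\vs\vee\tv_j=\overline{\sv\wedge\vt_j}$. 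For the orders I would use the identity $\sv\wedge\vt_i=\overline{\vs\vee\tv_i}$ together with emulation: since $\vt,\tv_i\in\vS_k(G)$ lie above $\vr$, emulation gives $\vs\vee\vt\in\vS_k(G)$ and $|\sv\wedge\vt_i|=|\vs\vee\tv_i|<k$.

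It then remains to show $\sigma'\in\cF$. For $\cF=\cT^*_k$ this is routine: since $\sigma\in\cT_k$, the small sides $A_t,A_i$ of $\sigma$ cover all vertices and all edges of $G$, and the same then holds for the small sides $A_s\cup A_t$ and $B_s\cap A_i$ of $\sigma'$, because $B_s\cap\bigcup_iA_i\supseteq B_s\setminus A_t$ and because an edge lying in some $A_i$ lies in $A_s$ or in $B_s$ by the separation $\{A_s,B_s\}$, hence in $A_s\cup A_t$ or in $B_s\cap A_i$. For $\cF=\cU_k$ the task is to bound $|\interior(\sigma')|$, and this is the technical heart. With $M:=\bigcap_{i\in I}B_i$ one computes $\interior(\sigma')=(B_s\cap B_t)\cap\bigcap_{i\in I}(A_s\cup B_i)=(B_s\cap B_t)\cap(A_s\cup M)$. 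The trick I would use is to produce a separation $\vx\ge\vr$ with $\vx\in\vS_k(G)$ whose join with $\vs$ has separator exactly this set, so that emulation forces $|\interior(\sigma')|<k$. Concretely I would take
\[
\vx:=\bigl(\interior(\sigma)\cup(V(G)\setminus B_t),\ B_t\bigr).
\]
Because $\interior(\sigma)\subseteq B_t$, the separator of $\vx$ is $\interior(\sigma)$, so $|\vx|=|\interior(\sigma)|<k$; that $\vx$ is a genuine separation uses that the separator $A_t\cap B_t$ of $\vt$ lies in every $B_i$ and hence in $\interior(\sigma)$, so that there is no edge between $V(G)\setminus B_t=A_t\setminus B_t$ and $B_t\setminus\interior(\sigma)\subseteq B_t\setminus A_t$. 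Moreover $\vx\ge\vr$: from $\vt_i\le\rv$ for all $i$ we get $A_r\subseteq M$, hence $A_r\cap B_t\subseteq\interior(\sigma)$ and so $A_r\subseteq\interior(\sigma)\cup(V(G)\setminus B_t)$, while $B_t\subseteq B_r$ as $\vt\ge\vr$. Emulation now gives $\vs\vee\vx\in\vS_k(G)$, and its separator is $\bigl(A_s\cup\interior(\sigma)\cup(V(G)\setminus B_t)\bigr)\cap(B_s\cap B_t)=(B_s\cap B_t)\cap(A_s\cup M)=\interior(\sigma')$, whence $|\interior(\sigma')|<k$.

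The main obstacle is precisely this interior bound for $\cU_k$. The naive estimate $|\interior(\sigma')|\le|\interior(\sigma)|+|s|$ is not good enough, and the point of the auxiliary separation $\vx$ is that its order is controlled exactly by the hypothesis $\sigma\in\cU_k$, which is what makes $\vs\vee\vx$ admissible for the emulation property of $\vs$. Everything else — the star and order assertions for $\sigma'$, conditions (b) and (c), and a handful of degenerate cases (such as $I=\emptyset$, separations equal to $(V(G),V(G))$, or several members of $\sigma$ lying above $\vr$) — is routine bookkeeping I do not expect to be difficult.
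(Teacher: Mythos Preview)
Your proposal is correct and spells out in full what the paper proves only by reference to \cite{TangleTreeGraphsMatroids}*{Lemma~6.1 \& Theorem~4.1}. In particular, your auxiliary separation $\vx=(\interior(\sigma)\cup(V(G)\setminus B_t),\,B_t)$ for bounding $|\interior(\sigma')|$ in the $\cU_k$ case is precisely the device used there, adapted verbatim to the ``strongly closed'' setting.
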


\begin{proof}
    By definition, $\cP'_k \subseteq \cU_k \cap \cT^*_k$, and we have $\{(V(G), A)\} \in \cU_k \cap \cT^*_k$ for all sets~$A$ of fewer than~$k$ vertices. The proofs that $\cU_k$ and $\cT^*_k$ are strongly closed under shifting in $\vS_k$ are analogous to the proofs that they are closed under shifting (see the proofs of \cite{TangleTreeGraphsMatroids}*{Lemma~6.1 \& Theorem~4.1}).
\end{proof}

\subsection{Tree-decompositions and \emph{S}-trees} \label{subsec:STrees}

A \defn{tree-decomposition} of a graph $G$ is a pair $(T, \cV)$ of a tree~$T$ together with a family $\cV = (V_t)_{t \in V(T)}$ of subsets of $V(G)$ such that $\bigcup_{t \in T} G[V_t] = G$, and such that for every vertex~$v \in V(G)$, the graph $T[\{t\in T : v\in V_t\}]$ is connected. We call the sets~$V_t \in \cV$ the \defn{bags} and their induced subgraphs $G[V_t]$ the \defn{parts} of $(T, \cV)$. The sets $V_e := V_t \cap V_{t'}$ for edges $e = \{t, t'\} \in E(T)$ are the \defn{adhesion sets} of $(T, \cV)$.
We say that $(T, \cV)$ has \defn{adhesion} $< k \in \N$ if all its adhesion sets have size $<k$. Further, $(T, \cV)$ has \defn{width} $< k \in \N$ if all its bags have size $\leq k$. 
A graph $G$ has \defn{tree-width} $<k$ if it admits a \td\ of width $<k$.

In a \td\ $(T,\cV)$ of $G$ every (oriented) edge $\ve = (t_0,t_1)$ of $T$ induces a separation of $G$ as follows: For $i = 0,1$, write $T_i$ for the component of $T - e$ containing $t_i$. Then $(U_{t_0}, U_{t_1})$ is a separation of $G$ where $U_{t_i} := \bigcup_{s \in V(T_i)} V_s$ for $i = 0,1$ \cite{DiestelBook16noEE}*{Lemma~12.3.1}. We say that $(U_{t_0}, U_{t_1})$ and $\{U_{t_0}, U_{t_1}\}$ are \defn{induced} by $\ve$ and $e$, respectively.
It is easy to check that the separations induced by the edges of an oriented ray in $T$ form a weakly exhaustive increasing sequence.
Moreover, the set $\sigma_t := \{(U_{t'}, U_{t}) : (t', t) \in \vE(T)\}$ induced by the inwards oriented edges incident with a node $t$ of $T$ is a star, which we call the \defn{star associated with $t$}. The \defn{torso} of a bag $V_t$ is the graph $\torso(\sigma_t)$.

We say that a consistent orientation $O$ of $S_k$, for some $k \in \N \cup \{\aleph_0\}$, \defn{lives} at a node $t$ of $T$, or in the bag $V_t$, if~$\sigma_t \subseteq O$. 
Further, given a set $\cO$ of consistent orientations of $S_k$, we call a node $t$ of $T$ and its bag~$V_t$ \defn{essential (for $\cO$)} if there is an orientation in $\cO$ that lives at~$t$ and otherwise \defn{inessential (for~$\cO$)}. 
A \td~$(T, \cV)$ \defn{(efficiently) distinguishes} two profiles if there is an edge $\{t_0, t_1\} \in E(T)$ such that $\{U_{t_0}, U_{t_1}\}$ (efficiently) distinguishes them.

Let $S \subseteq S_{\aleph_0}$. An \defn{$S$-tree} is a pair $(T,\alpha)$ of a tree $T$ and a map $\alpha: \vE(T) \rightarrow \vS$ from the oriented edges~$\vE(T)$ of $T$ to $\vS$ such that $\alpha(\ev) = (B,A)$ whenever $\alpha(\ve) = (A,B)$ and such that $\alpha(\ve) \leq \alpha(\vf)$ whenever\footnote{Here, $\ve \leq \vf$ for two oriented edges $\ve = (s,t)$ and $\vf=(s',t')$ of a tree $T$ if the unique path from $e$ to $f$ in $T$ starts in $t$ and ends in $s'$.} $\ve \leq \vf$. 
If the tree $T$ is finite, then we call $(T, \alpha)$ a \defn{finite $S$-tree}. 
If $x \in V(T)$ is a leaf of $T$ and $t \in V(T)$ its unique neighbour, then we call $\alpha(x,t) \in \vS$ a \defn{leaf separation (of $T$)}.

For every node $t \in V(T)$, the set $\sigma_t := \{\alpha(t',t) : (t',t) \in \vE(T)\}$ is a star, which we call the \defn{star associated with $t$ (in $T$)}.
An $S$-tree $(T, \alpha)$ is \defn{over} a set $\cF \subseteq 2^{\vS}$ (of stars) if $\sigma_t \in \cF$ for every node $t \in V(T)$.
Further, we call $(T, \alpha)$ \defn{weakly exhaustive} if for every ray $R = r_1 r_2 \dots$ in $T$ the sequence $(\alpha(r_i,r_{i+1}))_{i \in \N}$ is weakly exhaustive.

\subsection{Tangle-tree duality}

For the proof of our main result, \cref{main:TTDInfGraphs}, we need the abstract version \cite{TangleTreeAbstract} of the finite tangle-tree duality theorem, \cref{main:TTDFiniteGraphs}.
In order to state that theorem formally, we need two further definitions.

Given a graph $G$ and a set $\cF$ of stars in $\vS_{\aleph_0}$, a set $S \subseteq S_{\aleph_0}$ is \defn{$\cF$-separable} if for every two non-trivial $\vr, \vr' \in \vS \setminus \{(V(G),V(G))\}$ with $\vr \leq \vr'$ and $\{\rv\}, \{\vr'\} \notin \cF$, there exists $\vs \in \vS$ such that $\vs$ and~$\sv$ emulate $\vr$ and $\rv'$, respectively, in $\vS$ for $\cF$.
Further, $\cF$ is \defn{standard} for $\vS$ if $\{\rv\} \in \cF$ for all $\vr \in \vS$ that are trivial in~$\vS$.

The following theorem is a variant of the tangle-tree duality theorem. It follows immediately by applying \cite{TangleTreeAbstract}*{Theorem~4.3} to the set $S$ below. We remark that while $S$ needs to be finite, $G$ may be infinite.

\begin{THM}
\label{thm:TTDinASS}
    Let $G$ be any graph, $S \subseteq S_{\aleph_0}$ finite, and let $\cF$ be a set of stars in~$\vS_{\aleph_0}$, standard for~$\vS$. If $\vS$ is $\cF$-separable, exactly one of the following assertions holds:
    \begin{enumerate}
        \item There exists an $\cF$-tangle of $S$. 
        \item There exists an $S$-tree over $\cF$.
    \end{enumerate}
\end{THM}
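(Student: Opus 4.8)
The plan is to show the two assertions are mutually exclusive and that at least one of them holds; this is the abstract form of Robertson and Seymour's width-duality theorem, and I would follow the line of Diestel and Oum.

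\emph{Mutual exclusion.} Suppose we had both an $S$-tree $(T,\alpha)$ over $\cF$ and an $\cF$-tangle $O$ of $S$. Since $S$ is finite we may take $T$ finite. Orient each edge $e=\{t_1,t_2\}$ of $T$ towards the end $t_i$ for which $\alpha(t_{3-i},t_i)\in O$. As $O$ is an orientation of $S$ and $\alpha(\ev)$ is the reverse of $\alpha(\ve)$, no edge is traversed both ways when we follow out-edges, so in the finite tree $T$ this walk terminates at a node $t$ all of whose incident edges point towards it, i.e.\ with $\sigma_t\subseteq O$. But $\sigma_t\in\cF$ since $(T,\alpha)$ is over $\cF$, contradicting that $O$ avoids $\cF$.

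\emph{At least one holds.} Assuming $S$ has no $\cF$-tangle, I would construct an $S$-tree over $\cF$. The engine is a \emph{shifting lemma}: whenever $\vs\in\vS$ emulates some $\vr\in\vS$ in $\vS$ for $\cF$, any $S$-tree over $\cF$ with leaf separation $\vr$ can be shifted to an $S$-tree over $\cF$ with leaf separation $\vs$, by replacing every separation that the tree assigns, oriented away from the distinguished leaf, by its join with $\vs$. Here the bare ``emulates'' condition is what keeps all the shifted separations inside $\vS$ (note they are all $\geq\vr$), while the refinement ``for $\cF$'' is exactly what forces every shifted star back into $\cF$. With this in hand I would argue by induction on $|S|$. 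If $S=\emptyset$ the unique orientation $\emptyset$ avoids $\cF$ unless $\emptyset\in\cF$, in which case a single node is the desired $S$-tree; more generally, standardness of $\cF$ lets me peel off, as leaf separations, any separations that are trivial in $\vS$. For the inductive step, take a maximal $O\subseteq\vS$ that is consistent, avoids $\cF$, and contains at most one orientation of each $s\in S$. Since $S$ has no $\cF$-tangle, $O$ is not a full orientation; pick $s_0\in S$ both of whose orientations avoid $O$, so that by maximality both $O\cup\{\vs_0\}$ and $O\cup\{\sv_0\}$ are ``bad'' (inconsistent, or containing a star of $\cF$). From this obstruction I would read off a separation $\vr$ about which to split $S$: on each of the two resulting sides the relevant separation system is strictly smaller and still has no $\cF$-tangle, so the induction hypothesis provides an $S$-tree over $\cF$ for each side; meanwhile $\cF$-separability delivers a single $\vs$ with $\vs$ emulating $\vr$ and $\sv$ emulating $\rv$ (both for $\cF$), and the shifting lemma then lets me shift the two subtrees so that they glue along a new edge labelled $s$ into an $S$-tree over $\cF$ for all of $S$.

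\emph{Main obstacle.} The substance is entirely in the inductive step. One part is proving the shifting lemma cleanly — well-definedness of the shifted tree, and the verification that its stars lie in $\cF$, which is precisely what the clause ``$\vs$ emulates $\vr$ in $\vS$ for $\cF$'' was tailored to supply. The harder part is organising the recursion: in the abstract setting $S$ need not be a nested set, so ``splitting $S$ at a separation'' does not simply partition it, and one must extract the splitting separation $\vr$ from the ``bad'' obstruction above so carefully that, after shifting, each side genuinely loses separations and still inherits the hypothesis ``no $\cF$-tangle''. This is the step for which $\cF$-separability is exactly the right hypothesis: it guarantees that the intermediate separation $\vs$ needed to perform both shifts exists. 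Everything else — the base case, the handling of trivial separations via standardness, and the final check that the glued object is a genuine $S$-tree over $\cF$ — is then routine.
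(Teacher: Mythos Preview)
The paper does not prove this theorem: it is quoted as \cite{TangleTreeAbstract}*{Theorem~4.3} and used as a black box, so there is no in-paper proof to compare against. Your sketch is a fair outline of the Diestel--Oum argument from that reference; the two-halves structure (mutual exclusion via a sink in the induced edge-orientation of~$T$, existence via a shifting lemma powered by $\cF$-separability and standardness) is exactly right, and you have correctly isolated the shifting lemma and the need for a single $\vs$ that emulates both $\vr$ and $\rv'$ as the crux.

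One caveat on your inductive step: the actual Diestel--Oum recursion is not organised around a maximal consistent $\cF$-avoiding partial orientation~$O$ and an unoriented $s_0$ as you describe. The difficulty with that formulation is that ``$O\cup\{\vs_0\}$ bad'' can mean merely \emph{inconsistent}, which does not by itself hand you a separation $\vr$ with $\{\rv\}\notin\cF$ at which to split; and even when both extensions hit a star of~$\cF$, extracting a single $\vr\le\rv'$ suitable for the separability hypothesis needs more care than your sketch suggests. In the reference the induction runs on the number of separations \emph{not} forced (where $\vs$ is forced if $\{\sv\}\in\cF$), and the splitting separation is chosen among the non-forced, non-trivial ones so that the two smaller systems visibly inherit ``no $\cF$-tangle''. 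Your identification of the main obstacle is accurate, but the specific recursion you propose would need reworking along those lines to go through.
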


\section{Ends, critical vertex sets and tangles} \label{sec:EndsAndCVs}

\subsection{Ends}

An \defn{end} of a graph~$G$ is an equivalence class of rays in~$G$ where two rays are equivalent if they are joined by infinitely many disjoint paths in~$G$ or, equivalently, if for every finite set~$X \subseteq V(G)$ they have a tail in the same component of $G - X$.

It is easy to see that every end $\eps$ of $G$ \defn{induces} an $\aleph_0$-tangle, denoted by $\tau_\eps$, by orienting every finite-order separation of~$G$ to that side which contains a tail of some (equivalently every) ray in $\eps$ \cite{EndsAndTangles}*{\S1}.
We say that an orientation~$O$ of~$S_k$, for some $k \in \N$, is \defn{induced} by an end~$\eps$ of~$G$ if $\tau \subseteq \tau_\eps$. 

It is easy to check that a tangle $\tau_\eps$ induced by an end $\eps$ of $G$ cannot contain any stars with finite interior (cf.\ \cite{EndsAndTangles}*{Discussion preceding Lemma 1.6}). Hence, the following observation about ends follows easily.

\begin{PROP}\label{prop:EndsInduceFTangles}
    Let $G$ be any graph, $k \in \N$, and let $\cF$ be a set of stars in $\vS_k$ all of which have finite interior. Then every end of $G$ induces an $\cF$-tangle of $S_k$. \qed
\end{PROP}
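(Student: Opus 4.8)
The statement to prove is \cref{prop:EndsInduceFTangles}: if every star in $\cF$ has finite interior, then $\tau_\eps \rest S_k(G)$ is an $\cF$-tangle of $S_k(G)$ for every end $\eps$. The plan is to invoke the two facts quoted just before the proposition. First, every end $\eps$ induces the $\aleph_0$-tangle $\tau_\eps$, and restricting $\tau_\eps$ to $S_k(G)$ yields a consistent orientation of $S_k(G)$ (consistency is inherited from that of $\tau_\eps$, and being an orientation of $S_k(G)$ is immediate since $S_k(G) \subseteq S_{\aleph_0}(G)$). So it only remains to check that this restricted orientation \emph{avoids} $\cF$, i.e.\ contains no star $\sigma \in \cF$ as a subset.

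For this I would argue by contradiction: suppose $\sigma \in \cF$ with $\sigma \subseteq \tau_\eps \rest S_k(G) \subseteq \tau_\eps$. By hypothesis $\sigma$ has finite interior. But the paper has just recalled (the discussion preceding \cite{EndsAndTangles}*{Lemma 1.6}) that a tangle $\tau_\eps$ induced by an end cannot contain a star with finite interior — the point being that for a star $\sigma = \{(A_i,B_i) : i \in I\} \subseteq \tau_\eps$, every ray of $\eps$ must eventually lie in each big side $B_i$ and hence, since a ray is infinite and connected and the $A_i \setminus B_i$ are pairwise disjoint with $\interior(\sigma) = \bigcap_i B_i$ finite, a tail of the ray would have to be confined to the finite set $\interior(\sigma)$, which is impossible. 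This contradicts $\sigma \subseteq \tau_\eps$, so no such $\sigma$ exists and $\tau_\eps \rest S_k(G)$ avoids $\cF$.

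There is no real obstacle here: the proposition is explicitly flagged in the text as following ``easily'' from the recalled fact about ends and stars with finite interior, so the entire content is the one-line reduction above plus citing that fact. The only thing to be a little careful about is that $\cF$ consists of stars (so the ``avoids'' condition is exactly about containing such a star as a subset) and that finiteness of the interior is the hypothesis that makes the cited fact applicable; both are given. Hence the proof is essentially the ``\qed'' already printed after the statement, and in writing it out I would simply combine \cref{prop:EndsInduceFTangles}'s two ingredients: $\tau_\eps$ is a consistent orientation, and it meets no finite-interior star, so its restriction to $S_k(G)$ is the desired $\cF$-tangle.
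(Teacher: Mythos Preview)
Your proposal is correct and matches the paper's approach exactly: the paper gives no explicit proof beyond the \qed, having just recalled that $\tau_\eps$ cannot contain any star with finite interior (citing \cite{EndsAndTangles}), from which the proposition is immediate. Your write-up simply unpacks that one-line deduction, which is precisely what the paper intends.
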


We say that an end $\eps$ of a graph $G$ \defn{lives} in a star $\sigma \subseteq \vS_{\aleph_0}$ (or equivalently~$\sigma$ \defn{is home to} $\eps$) if $\sigma \subseteq \tau_\eps$. 

Let $(T, \cV)$ be a \td\ of $G$ whose adhesion sets are all finite, and let $\eps$ be an end of $G$.
Let $O \subseteq \vE(T)$ consist of those oriented edges $(t,s)$ of $T$ such that $(U_t, U_s) \in \tau_\eps$. Then the orientation $O$ of $E(T)$ points towards a node of $T$ or to an end of $T$. We say that $\eps$ \defn{lives} at that node or in that end, respectively.

A vertex $v$ of $G$ \defn{dominates} an end $\eps$ of $G$ if $v \in B$ for every finite-order separation $(A,B) \in \tau_\eps$. We denote by \defn{$\Dom(\eps)$} the set of vertices of $G$ that dominate $\eps$, and by \defn{$\dom(\eps)$} the cardinality of $\Dom(\eps)$.

The \defn{degree} of an end $\eps$, which we denote by $\defnm{\deg(\eps)} \in \N \cup \{\infty\}$, is the maximum\footnote{Note that if an end contains $k$ disjoint rays, for every $k \in \N$, then it also contains infinitely many disjoint rays \cite{DiestelBook16noEE}.} number of pairwise disjoint rays in~$\eps$. The \defn{combined degree} of $\eps$ is $\defnm{\Delta(\eps)} := \deg(\eps) + \dom(\eps)$.

The following lemma describes when an end has small combined degree.

\begin{LEM}{\cite{kConnectedSetsInInfGraphs}*{Corollary~5.8}} \label{prop:DegreeOfAnEndWitnessedBySeps}
    Let $\eps$ be an end of a graph $G$ and $k \in \N$. Then the following assertions hold:
    \begin{enumerate}
        \item \label{itm:DegreeWitnessingSeq:SeqImpliesDeg} If $\tau_\eps$ contains a weakly exhaustive increasing sequence of separations of order $\leq k$, then $\Delta(\eps) \leq k$.
        \item \label{itm:DegreeWitnessingSeq:DegImpliesSeq} If $\Delta(\eps) = k$, then $\tau_\eps$ contains a weakly exhaustive increasing sequence $((A_i, B_i))_{i \in \N}$ of separations of order $k$ such that $G[B_i \setminus A_i]$ is connected and $(A_i \cap B_i) \cap (A_j \cap B_j) \subseteq \Dom(\eps)$ for all $i \neq j \in \N$.
    \end{enumerate}
\end{LEM}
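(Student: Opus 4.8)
The plan is to handle the two parts separately: part~(i) by a short counting argument, and the existence of the sequence in part~(ii) by first deleting all dominating vertices of $\eps$ — there are only finitely many by part~(i) — and then invoking the structure theory of ends of finite degree.

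For part~(i), let $((A_i,B_i))_{i\in\N}$ be a weakly exhaustive increasing sequence in $\tau_\eps$ with all separators $S_i:=A_i\cap B_i$ of size $\le k$. First I would note that every $v\in\Dom(\eps)$ lies in $S_i$ for all large~$i$: indeed $v\in B_i$ for every $i$ since $v$ dominates $\eps$ and $(A_i,B_i)\in\tau_\eps$ has finite order, while by weak exhaustiveness ($\bigcap_i (B_i\setminus A_i)=\emptyset$) we have $v\notin B_j\setminus A_j$, hence $v\in A_j$, for some~$j$, so $v\in A_i\cap B_i=S_i$ for all $i\ge j$ as the $A_i$ increase; since $|S_i|\le k$ this already forces $\dom(\eps)\le k$, so $\Dom(\eps)$ is finite. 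Now take any $\ell$ pairwise disjoint rays $R_1,\dots,R_\ell$ in $\eps$ and, passing to tails, assume they avoid $\Dom(\eps)$. For large~$i$ the first vertex of each $R_m$ lies in $A_i$ whereas a tail of $R_m$ lies in $B_i\setminus A_i$, so $R_m$ meets $S_i$, there being no edge between $A_i\setminus B_i$ and $B_i\setminus A_i$. Choosing one $i$ large enough for this to hold for all $m\le\ell$ and with $\Dom(\eps)\subseteq S_i$, the disjoint rays contribute $\ell$ distinct vertices of $S_i\setminus\Dom(\eps)$, so $\ell+\dom(\eps)\le|S_i|\le k$; taking $\ell=\deg(\eps)$ (hence finite) gives $\Delta(\eps)\le k$.

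For part~(ii), set $d:=\dom(\eps)$ and $\ell:=\deg(\eps)$, so $d+\ell=k$ and $\Dom(\eps)$ is finite. The end $\eps$ induces an end $\eps'$ of $G':=G-\Dom(\eps)$, whose rays are the tails of rays of $\eps$ avoiding $\Dom(\eps)$; one checks $\deg(\eps')=\ell$ and $\Dom(\eps')=\emptyset$ (a finite separator of~$v$ from~$\eps'$ in~$G'$, together with $\Dom(\eps)$, separates~$v$ from~$\eps$ in~$G$, and conversely). Assuming we have found a weakly exhaustive increasing sequence $((A_i',B_i'))_{i\in\N}$ in $\tau_{\eps'}$ of order exactly~$\ell$ with $G'[B_i'\setminus A_i']$ connected and pairwise disjoint separators $S_i':=A_i'\cap B_i'$, we recover the desired sequence for~$\eps$ by adding $\Dom(\eps)$ back to both sides: $A_i:=A_i'\cup\Dom(\eps)$, $B_i:=B_i'\cup\Dom(\eps)$. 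No edge appears between the strict sides, so these are separations of~$G$, of order $|S_i'|+d=k$, lying in $\tau_\eps$; the sequence stays increasing and weakly exhaustive (the strict big sides are unchanged), the strict big sides stay connected in~$G$, and $(A_i\cap B_i)\cap(A_j\cap B_j)=(S_i'\cap S_j')\cup\Dom(\eps)=\Dom(\eps)$ for $i\ne j$.

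It thus remains to treat an end $\eps'$ of $G'$ of degree~$\ell$ with no dominating vertex; fix $\ell$ pairwise disjoint rays $R_1,\dots,R_\ell\in\eps'$. The main obstacle is the following \textbf{key step}: \emph{for every finite $X\subseteq V(G')$ there is a separation $(A,B)\in\tau_{\eps'}$ of order exactly~$\ell$ with $X\subseteq A\setminus B$ and $B\setminus A$ the component of $G'-(A\cap B)$ that contains a tail of every~$R_m$.} Here $\Dom(\eps')=\emptyset$ ensures that some finite set separates~$X$ from~$\eps'$, and $\deg(\eps')=\ell$ — via Halin-type arguments about disjoint rays together with Menger's theorem applied far out in~$G'$ — pins the least order of such a separation to~$\ell$ (it is $\ge\ell$ because the $\ell$ disjoint rays must each be met); I expect to have to reproduce this piece of the structure theory of ends of finite degree here, or cite it from \cite{kConnectedSetsInInfGraphs}. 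Granting the key step, I would build $((A_i',B_i'))_{i\in\N}$ recursively along an enumeration $v_1,v_2,\dots$ of $V(G')$: given $(A_{i-1}',B_{i-1}')$, apply the key step with $X:=\{v_1,\dots,v_i\}\cup S_{i-1}'$. Since $S_{i-1}'\subseteq X\subseteq A_i'\setminus B_i'$, any path inside the $\eps'$-component $C_i:=B_i'\setminus A_i'$ avoids $S_{i-1}'$, so $C_i\subseteq C_{i-1}:=B_{i-1}'\setminus A_{i-1}'$; this yields $A_i'\supseteq A_{i-1}'$, and a similar check gives $S_i'\subseteq C_{i-1}$, hence $B_i'\subseteq B_{i-1}'$ and $S_i'\cap S_j'=\emptyset$ for all $i<j$. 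Finally $v_i\notin C_i$ and the $C_i$ decrease, so $\bigcap_i(B_i'\setminus A_i')=\emptyset$ and the sequence is weakly exhaustive, which completes the proof.
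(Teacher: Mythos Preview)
The paper does not give a proof of this lemma; it is imported verbatim as \cite{kConnectedSetsInInfGraphs}*{Corollary~5.8}. So there is no in-paper argument to compare against, and I simply assess your proposal on its own.

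Your argument for~(i) is correct and complete.

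For~(ii), the reduction to the undominated end $\eps'$ of $G':=G-\Dom(\eps)$ is sound: the verifications that $\deg(\eps')=\deg(\eps)$ and $\Dom(\eps')=\emptyset$, and that a sequence for $\eps'$ lifts to one for $\eps$ by adding $\Dom(\eps)$ to both sides, all go through as you say. The recursive construction of the sequence from your key step is also correct; in particular your nesting argument yielding $S_i'\subseteq C_{i-1}$, and hence pairwise disjoint separators, is fine.

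Two remarks on the key step, which you rightly flag as the substantive content. First, as you phrase it (``order exactly~$\ell$'' for \emph{every} finite~$X$) the lower bound $|A\cap B|\ge\ell$ is not automatic: the $\ell$ disjoint rays need not cross $A\cap B$ unless their initial vertices lie in~$A$. This is harmless, since in each recursive call you may freely enlarge~$X$ to contain those initial vertices before invoking the step; the resulting $(A,B)$ still satisfies $X\subseteq A\setminus B$. Second, proposing to cite the key step from \cite{kConnectedSetsInInfGraphs} is close to circular, since the lemma you are proving \emph{is} Corollary~5.8 there. The key step is, however, strictly more elementary than the full statement --- it asks for a single separator of the correct order beyond a given finite set, not yet a nested exhaustive sequence --- and is a standard fact about undominated ends of finite degree available independently (for instance via the separator characterisation of end degree in \cite{DiestelBook16noEE}). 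With that granted, your outline is a complete proof.
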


We need the following two lemmas, which describe the interaction between the ends of a torso and the ends of the underlying graph. The first lemma essentially says that every end of a torso stems from an end of the underlying graph.

\begin{PROP}{\cite{LinkedTDInfGraphs}*{Proposition~6.1}} \label{prop:RayInTorsoExtends}
    Let $\sigma$ be a star of finite-order separations of some graph~$G$ that are tight on the small side. Then there exists for every ray $R$ in $\torso(\sigma)$ a ray $R'$ in $G$ that meets $V(R)$ infinitely often.
\end{PROP}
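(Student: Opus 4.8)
The plan is to realise every \emph{fake} edge of $R$ (an edge of $\torso(\sigma)$ not already present in $G$) by a path through $G$, to take the union $H\subseteq G$ of all these paths together with the genuine edges of $R$, and then to extract the desired ray $R'$ from $H$.

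Write $\sigma=\{(A_i,B_i):i\in I\}$ and $X:=\interior(\sigma)=\bigcap_{i\in I}B_i=V(\torso(\sigma))$. Call an edge of $R$ \emph{fake} if it is not an edge of $G$; by the definition of the torso, each fake edge $\{u,v\}$ has both endpoints in the separator $A_i\cap B_i$ of some $i\in I$, and we fix one such index and say the edge is \emph{attributed} to $i$. First I would record the geometry of the star: from $(A_i,B_i)\le(B_{i'},A_{i'})$ for $i\neq i'$ one obtains $A_i\setminus B_i\subseteq B_{i'}\setminus A_{i'}$, so the sets $X$ and $A_i\setminus B_i$ $(i\in I)$ are pairwise disjoint and $G$ has no edge between $A_i\setminus B_i$ and $A_{i'}\setminus B_{i'}$ for $i\neq i'$. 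Next, since every separation in $\sigma$ is left-tight, whenever $A_i\cap B_i$ is nonempty (in particular whenever it hosts a fake edge) there is a component $C_i$ of $G[A_i\setminus B_i]$ with $N_G(C_i)=A_i\cap B_i$; as $C_i$ is connected, for every fake edge $\{u,v\}$ attributed to $i$ there is a $u$--$v$ path $Q_{uv}$ in $G$ whose interior lies in $C_i$.

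Now let $H\subseteq G$ be the union of all the genuine edges of $R$ together with all the paths $Q_{uv}$, one for each fake edge $\{u,v\}$ of $R$. Since $R$ is a ray, consecutive edges are replaced by pieces that still meet at their shared endpoint, so $H$ is connected, and it is infinite because $V(R)\subseteq V(H)$. The key point is that $H$ is \emph{locally finite}: a vertex of $X$ lies on at most the two pieces arising from its (at most two) incident edges of $R$, while a vertex of some $A_i\setminus B_i$ lies only on paths $Q_{uv}$ attributed to $i$ (the regions being pairwise disjoint and their interiors disjoint from $X$), and there are only finitely many such paths since $V(R)$ meets the finite set $A_i\cap B_i$ in finitely many vertices. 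By K\"onig's infinity lemma, $H$ contains a ray $R'$, and $R'$ is a ray in $G$.

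Finally I would verify that $R'$ meets $V(R)$ infinitely often. Note that $V(H)\setminus V(R)$ is the disjoint union over $i\in I$ of $V(H)\cap(A_i\setminus B_i)$, each of which is \emph{finite} because it is covered by the finitely many finite paths $Q_{uv}$ attributed to $i$; moreover $H$, like $G$, has no edge between different regions $A_i\setminus B_i$ and $A_{i'}\setminus B_{i'}$. Hence, if $R'$ met $V(R)$ only finitely often, a tail of $R'$ would be a ray contained in a single finite set $V(H)\cap(A_i\setminus B_i)$, which is absurd. I expect the main obstacle to be precisely the interaction of several fake edges attributed to the same separation: the paths $Q_{uv}$ realising them may overlap badly, so one cannot hope to keep $R$ intact and reroute each fake edge along pairwise disjoint detours. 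Passing to the whole graph $H$ and using its local finiteness — together with the finiteness of each separator — sidesteps this, at the price of only guaranteeing that $R'$ visits $V(R)$ infinitely often rather than that it contains $R$.
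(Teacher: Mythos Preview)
Your proof is correct. The paper does not include its own proof of this proposition; it is cited from \cite{LinkedTDInfGraphs} without argument. Your approach---replacing each torso edge by a path through the tight component on the small side of the corresponding separation, observing that the resulting subgraph $H$ is connected, infinite and locally finite (using that each separator $A_i\cap B_i$ is finite and that the strict small sides $A_i\setminus B_i$ are pairwise disjoint), extracting a ray via K\"onig's lemma, and then arguing that any tail avoiding $V(R)$ would be trapped in a single finite piece $V(H)\cap(A_i\setminus B_i)$---is a clean and complete self-contained argument.
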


The next lemma describes a condition that is sufficient to ensure that all ends of a torso have small degree.

\begin{LEM} \label{lem:DegreeOfEndsInTorso}
    Let $\sigma$ be a star of separations of order $<k \in \N$ of some graph $G$ such that every separation in~$\sigma$ distinguishes some pair of $k$-profiles in $G$ efficiently. Then for every end $\eps'$ of $\torso(\sigma)$ there exists an end\footnote{This end $\eps$ is in fact unique, but we do not need this.} $\eps$ of $G$ such that $\sigma \subseteq \tau_\eps$, $\Delta(\eps') = \Delta(\eps)$ and for every $(A,B) \in \tau_\eps$ we have $(A \cap \interior(\sigma), B \cap \interior(\sigma)) \in \tau_{\eps'}$ if $\{A \cap \interior(\sigma), B \cap \interior(\sigma)\}$ is a separation of $\torso(\sigma)$.
\end{LEM}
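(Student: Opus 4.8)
The plan is to build the end $\eps$ of $G$ by following the end $\eps'$ of $\torso(\sigma)$ outwards through $\interior(\sigma)$, then check the three asserted properties one by one. First I would observe that, by \cref{lem:EffYieldsTight}, every separation in $\sigma$ is tight (since each efficiently distinguishes a pair of $k$-profiles, which are regular $k$-profiles in the sense needed), and in particular left-tight; so \cref{prop:RayInTorsoExtends} applies. Fix a ray $R'$ in $\torso(\sigma)$ representing $\eps'$, and let $R$ be a ray in $G$ meeting $V(R')$ infinitely often, as provided by that proposition. Let $\eps$ be the end of $G$ containing $R$. The candidate end $\eps$ is then defined; the work is to verify that it does what we want.

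The first thing to check is that $\sigma\subseteq\tau_\eps$. Since $R'$ lives in $\torso(\sigma)=G[\interior(\sigma)]+\text{(torso edges)}$, the ray $R'$ stays inside $\interior(\sigma)=\bigcap_{(A,B)\in\sigma}B$; its infinitely many vertices on $R$ therefore all lie in every big side $B$ of every separation in $\sigma$, so a tail of $R$ is eventually in $B\setminus A$ (outside the finite separator $A\cap B$), giving $(A,B)\in\tau_\eps$ for each $(A,B)\in\sigma$. Next, for the push-forward property: given $(A,B)\in\tau_\eps$ such that $\{A\cap\interior(\sigma),\,B\cap\interior(\sigma)\}$ is a separation of $\torso(\sigma)$, a tail of $R$ lies in $B\setminus A$ and meets $R'$ infinitely often, hence $R'$ has infinitely many vertices in $(B\cap\interior(\sigma))\setminus(A\cap\interior(\sigma))$, so this induced separation is oriented by $\eps'$ towards $B\cap\interior(\sigma)$, i.e.\ $(A\cap\interior(\sigma),B\cap\interior(\sigma))\in\tau_{\eps'}$. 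The converse direction of the correspondence — that separations of $\torso(\sigma)$ oriented by $\eps'$ come from separations of $G$ oriented by $\eps$ — should follow from the same ray $R$, using that $R'$ and $R$ track each other, together with the fact that a separation of $\torso(\sigma)$ extends to a separation of $G$ by appending $V(G)\setminus\interior(\sigma)$ appropriately to the two sides (this uses that $\sigma$ is a star, so the exterior attaches consistently).

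For the degree equality $\Delta(\eps')=\Delta(\eps)$ I would argue two inequalities. For $\Delta(\eps')\le\Delta(\eps)$: any family of $d$ disjoint rays in $\eps'$ lives in $\torso(\sigma)$, and each such ray extends, via \cref{prop:RayInTorsoExtends} applied ray-by-ray (with a disjointness bookkeeping argument, or via a single application to a suitable subgraph), to a ray in $\eps$; disjointness in $\torso(\sigma)$ of the cores, plus the fact that the extensions only add vertices outside $\interior(\sigma)$ in a controlled way, should be massaged into $\deg(\eps)\ge\deg(\eps')$, and a dominating vertex of $\eps'$ in $\torso(\sigma)$ is a dominating vertex of $\eps$ (it lies in every big side). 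For the reverse $\Delta(\eps)\le\Delta(\eps')$: here I would invoke \cref{prop:DegreeOfAnEndWitnessedBySeps}\ref{itm:DegreeWitnessingSeq:DegImpliesSeq} to get a weakly exhaustive increasing sequence $((A_i,B_i))_{i\in\N}$ in $\tau_\eps$ of separations of order $\Delta(\eps)$ with $G[B_i\setminus A_i]$ connected and pairwise separator-intersections in $\Dom(\eps)$; then restrict each $(A_i,B_i)$ to $\torso(\sigma)$, check these restrictions are separations of $\torso(\sigma)$ of order $\le\Delta(\eps)$ lying in $\tau_{\eps'}$ (using the push-forward property just established and that the separators meet $\interior(\sigma)$ in at most their full size), verify the restricted sequence is still weakly exhaustive and increasing with nonempty big sides (this is where tightness/left-tightness and connectivity of $G[B_i\setminus A_i]$ are used, to be sure the restricted big sides don't collapse), and conclude $\Delta(\eps')\le\Delta(\eps)$ via part \ref{itm:DegreeWitnessingSeq:SeqImpliesDeg}.

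The main obstacle I expect is the reverse degree inequality, specifically ensuring that restricting the degree-witnessing sequence of $\eps$ to $\torso(\sigma)$ still yields a \emph{weakly exhaustive} sequence of separations \emph{of $\torso(\sigma)$} whose big sides stay nonempty — i.e.\ that the ray-structure of $\eps$ does not entirely escape $\interior(\sigma)$. This is exactly what should be forced by the hypothesis that every separation in $\sigma$ efficiently distinguishes a pair of $k$-profiles (hence is tight, hence left-tight), combined with the connectivity $G[B_i\setminus A_i]$ from \cref{prop:DegreeOfAnEndWitnessedBySeps}; pinning down that implication carefully is the crux. A secondary subtlety is the disjointness bookkeeping in the forward inequality when extending many rays simultaneously through \cref{prop:RayInTorsoExtends}, which is stated for a single ray; I would handle this by extending rays one at a time and deleting already-used vertices, or by noting that the torso edges of $\sigma$ correspond to $G$-paths through $V(G)\setminus\interior(\sigma)$ that can be chosen disjointly because $\sigma$ is a star of tight separations.
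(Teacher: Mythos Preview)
Your setup for finding $\eps$ (via \cref{lem:EffYieldsTight} and \cref{prop:RayInTorsoExtends}) and your verification of $\sigma\subseteq\tau_\eps$ and the push-forward property are essentially what the paper does. The problem is the degree equality: you have the two inequalities swapped in difficulty, and your approach to each has a genuine gap.

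The inequality $\Delta(\eps)\le\Delta(\eps')$ is in fact the easy one. Connected subgraphs of $G$ induce connected subgraphs of $\torso(\sigma)$ (whenever a path leaves $\interior(\sigma)$ it does so through some separator $A\cap B$, which is complete in the torso), so every $\eps$-ray restricts to an $\eps'$-ray and $\Dom(\eps)\subseteq\Dom(\eps')$; disjointness is preserved automatically under restriction. Your proposed route for this direction --- restricting the degree-witnessing sequence to the torso --- runs into exactly the obstacle you flag: the $(A_i,B_i)$ need not be nested with $\sigma$, so their restrictions need not be separations of the torso at all, and fixing this is the hard technical step, not a detail.

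The inequality $\Delta(\eps')\le\Delta(\eps)$ is the hard one, and your sketch for it does not work. \cref{prop:RayInTorsoExtends} gives no disjointness when applied to several rays, and the workaround you suggest (delete used vertices and re-apply) destroys left-tightness, so the proposition no longer applies. Your claim that a vertex dominating $\eps'$ in $\torso(\sigma)$ dominates $\eps$ in $G$ is false in general: the torso has \emph{more} edges, so domination there is weaker, and the inclusion goes the other way ($\Dom(\eps)\subseteq\Dom(\eps')$, not $\supseteq$). The paper instead argues by contradiction: assuming $\Delta(\eps')>\Delta(\eps)$, it takes $n:=\Delta(\eps)+1$ disjoint rays/dominating vertices for $\eps'$, then uses \cref{prop:DegreeOfAnEndWitnessedBySeps}\ref{itm:DegreeWitnessingSeq:DegImpliesSeq} to find a single separation $(A,B)\in\tau_\eps$ of order $\Delta(\eps)$ with $G[B\setminus A]$ connected and all $n$ starting points in $A$. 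The real work is then to modify $(A,B)$ into a separation $(A',B')\in\tau_\eps$ of order $\le\Delta(\eps)$ that is nested with $\sigma$ (hence induces a separation of the torso); this uses left-tightness to show only finitely many $(C,D)\in\sigma$ cross $(A,B)$ badly, takes corner separations, and invokes a shifting argument from \cite{SARefiningEssParts}. Once $(A',B')$ is nested with $\sigma$, the $n$ disjoint witnesses must all cross $A'\cap B'$ inside the torso, contradicting $|A'\cap B'|\le\Delta(\eps)<n$.
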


\begin{proof}
    Let $R'$ be some $\eps'$-ray. Since all separations in $\sigma$ efficiently distinguish some pair of $k$-profiles, they are tight on the small side by \cref{lem:EffYieldsTight}, so by \cref{prop:RayInTorsoExtends} there is a ray~$R$ in $G$ such that $|V(R) \cap V(R')| = \infty$. Let $\eps$ be the end of $G$ to which~$R$ belongs. Then $\sigma \subseteq \tau_\eps$, because~$V(R) \cap V(R') \subseteq \interior(\sigma)$ is infinite, and thus $R$ has a tail in $G[B\setminus A]$ for every $(A,B) \in \sigma$. Moreover, for all $(A,B) \in \tau_\eps$ that induce a separation $\{A \cap \interior(\sigma), B \cap \interior(\sigma)\}$ of $\torso(\sigma)$, we have $(A \cap \interior(\sigma), B \cap \interior(\sigma)) \in \tau_{\eps'}$ for the same reason.

    Since connected subgraphs of $G$ induce connected subgraphs of $\torso(\sigma)$, every $\eps$-ray induces an $\eps'$-ray and $\Dom(\eps) \subseteq \Dom(\eps')$.
    Hence, $\Delta(\eps) \leq \Delta(\eps')$.
    We claim that also $\Delta(\eps') \leq \Delta(\eps)$, which concludes the proof. 
    So suppose for a contradiction that $\Delta(\eps') > \Delta(\eps)$. Let $U \subseteq \interior(\sigma)$ be a set of size $n := \Delta(\eps) + 1 \leq \Delta(\eps')$ and $\cP := \{P_x : x \in U\}$ a family of~$n$ pairwise disjoint paths/rays in $\torso(\sigma)$ such that $P_x$ is either an $\eps'$-ray that starts in $x$ or the trivial path whose single vertex $x$ lies in $\Dom(\eps')$.
    As $U$ is finite, there exists by \cref{prop:DegreeOfAnEndWitnessedBySeps}~\cref{itm:DegreeWitnessingSeq:DegImpliesSeq} a separation $(A, B) \in \tau_\eps$ of order $\Delta(\eps)$ such that $U \subseteq A$ and $G[B\setminus A]$ is connected.

    Set $\rho := \{(C,D) \in \sigma : \{A,B\}, \{C,D\} \text{ cross}\}$ and $\rho' := \{(C,D) \in \rho : C\cap D \cap (A\setminus B) \neq \emptyset\}$. Let us first show that $\rho'$ is finite. 
    For this, it suffices to show that each of the pairwise disjoint strict small sides $C\setminus D$ of $(C,D) \in \rho'$ meets the finite set $A \cap B$.
    Since $(C,D)$ is tight on the small side, there is a component $K \subseteq G[C\setminus D]$ such that $N_G(K) = C \cap D$. In particular, since $C \cap D \cap (A\setminus B) \neq \emptyset$, we have that $A \cap B$ meets~$K$ if also $C \cap D \cap (B\setminus A) \neq \emptyset$. So we may assume that $B\setminus A$ avoids $C \cap D$. Then either $B\setminus A \subseteq C\setminus D$ or $B\setminus A \subseteq D\setminus C$ as $G[B\setminus A]$ is connected, which implies that $\{A,B\}, \{C,D\}$ are nested and contradicts $(C,D) \in \rho$. 

    Now set $(\bar{A}, \bar{B}) := (A, B) \wedge \bigwedge_{(C,D) \in \rho\setminus \rho'} (D,C)$ and $(\tilde{A}, \tilde{B}) := (\bar{A}, \bar{B}) \wedge \bigwedge_{(C,D) \in \rho'} (D,C)$. Then $|\bar{A} \cap \bar{B}| = |A \cap B|$ because $(A\setminus B) \cap (C \cap D) = \emptyset$ for all $(C,D) \in \rho\setminus \rho'$, and \mbox{$|\tilde{A} \cap \tilde{B}| \leq |(\bar{A} \cap 
    \bar{B}) \cup \bigcup_{(C,D) \in \rho'} (C \cap D)| < \infty$} because $\rho'$ is finite. So $\tau_\eps$ contains an orientation of $\{\bar{A}, \bar{B}\}$, $\{\tilde{A}, \tilde{B}\}$; since $\tau_{\eps}$ is consistent, we find $(\tilde{A}, \tilde{B}) \leq (\bar{A}, \bar{B}) \leq (A,B) \in \tau_\eps$. Since $U \subseteq \interior(\sigma) \subseteq D$ for all $(C, D) \in \rho$, we also have $U \subseteq \tilde{A}$.
    Moreover, by \cref{lem:Fishlemma}, $(\tilde{A}, \tilde{B})$ is nested with $\sigma$, and $(\bar{A}, \bar{B})$ crosses at most those finitely many separations in $\sigma$ that are contained in $\rho'$. It follows by \cite{SARefiningEssParts}*{Claim~1 in the proof of Lemma~4.3}\footnote{In \cite{SARefiningEssParts} this is shown for finite graphs $G$, but the proof only uses the finiteness of $G$ to conclude that $(\bar{A}, \bar{B})$ crosses at most finitely many separations in $\sigma$, which we have proved separately.} applied to $\tau_\eps$, $\sigma$ and $(\tilde{A}, \tilde{B}) \leq (\bar{A}, \bar{B})$ that there is a separation $(A',B') \in \tau_\eps$ of order $\leq |\bar{A} \cap \bar{B}| = |A\cap B| = \Delta(\eps)$ such that $(\tilde{A}, \tilde{B}) \leq (A',B')$, and thus $U \subseteq \tilde{A} \subseteq A'$. Since $(A,B) \in \tau_\eps$, its big side $G[B]$ contains a tail of $R$, and hence $V(R'') \subseteq B$ for some tail $R''$ of $R'$. By the choice of $P_x$, there exists an infinite family of $P_x$--$R''$ paths in $\torso(\sigma)$ that meet at most in their endvertices on $P_x$. In particular, since $(A',B')$ is nested with $\sigma$ and thus induces a separation of $\torso(\sigma)$, we find $x \in B'$ if $P_x$ is a trivial path, or $V(P'_x) \subseteq B'$ for some tail $P'_x$ of $P_x$ if $P_x$ is a ray. But since each $P_x$ is connected, it follows that $V(P_x) \cap (A' \cap B') \neq \emptyset$. Since the $P_x$ are pairwise disjoint, this implies that $|A' \cap B'| \geq \Delta(\eps) + 1$, a contradiction.   
\end{proof}

\subsection{Critical vertex sets} \label{subsec:UltrafilterTangles}

Given a set $X$ of vertices of a graph $G$, a component $K$ of $G-X$ is \defn{tight at $X$ in $G$} if $N_G(K) = X$. By slight abuse of notation, we
will refer to such $K$ as \defn{tight} components of $G-X$. We write \defn{$\cC_X$} for the set of
components of $G-X$ and $\defnm{\breve{\cC}_X} \subseteq \cC_X$ for the set of all tight components of $G-X$. 
A \defn{critical vertex set} of $G$ is a finite set $X \subseteq V(G)$ such that $\breve{\cC}_X$ is infinite \cite{KPEndsTanglesCVSets}.
The collection of all the critical vertex sets of~$G$ is denoted by \defn{$\crit(G)$}. 

\begin{LEM}{\cite{LinkedTDInfGraphs}*{Lemma~2.6}}\label{lem:TorsosAndCriticalVertexSets}
    Let~$\sigma$ be a star of finite-order separations of a graph~$G$ that are tight on the small side. 
    Then $\crit(\torso(\sigma)) \subseteq \crit(G)$. 
\end{LEM}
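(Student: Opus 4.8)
The plan is to unravel the definition of critical vertex set on both sides. Let $X$ be an arbitrary critical vertex set of $\torso(\sigma)$, so $X\subseteq\interior(\sigma)$ is finite and $\torso(\sigma)-X$ has infinitely many tight components. I would construct, for each tight component $K'$ of $\torso(\sigma)-X$, a tight component $K(K')$ of $G-X$, and show that the assignment $K'\mapsto K(K')$ is injective. This immediately gives infinitely many tight components of $G-X$, hence $X\in\crit(G)$; since $X$ was arbitrary, $\crit(\torso(\sigma))\subseteq\crit(G)$.

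Before constructing $K(K')$ I would record the shape of $G$ relative to $\sigma=\{(A_i,B_i):i\in I\}$. From the star condition $(A_i,B_i)\leq(B_j,A_j)$ for $i\neq j$ one gets $A_i\subseteq B_j$ and $A_j\subseteq B_i$, so each separator $A_i\cap B_i$ is a finite subset of $\interior(\sigma)$, the ``blobs'' $A_i\setminus B_i$ are pairwise disjoint, and there is no edge between distinct blobs (each $A_i\setminus B_i$ sits inside $B_j\setminus A_j$, and $\{A_i,B_i\}$ is a separation), so $N_G(A_i\setminus B_i)\subseteq A_i\cap B_i$. By left-tightness fix, for each $i$, a component $D_i$ of $G[A_i\setminus B_i]$ with $N_G(D_i)=A_i\cap B_i$; then $D_i$ is a connected subgraph of $G-X$ (it is disjoint from $B_i\supseteq\interior(\sigma)\supseteq X$) to which every vertex of $A_i\cap B_i$ is $G$-adjacent. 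Since $A_i\cap B_i$ induces a clique in $\torso(\sigma)$, the set $(A_i\cap B_i)\setminus X$ lies in a single component of $\torso(\sigma)-X$; put $I(K'):=\{i: \emptyset\neq(A_i\cap B_i)\setminus X\subseteq V(K')\}$ and let $\hat K'$ be the subgraph of $G$ induced on $V(K')\cup\bigcup_{i\in I(K')}V(D_i)$. I claim $\hat K'$ is connected and avoids $X$: it avoids $X$ since $V(K')$ and every $V(D_i)$ do, and any $K'$-path between two of its vertices turns into a $\hat K'$-walk by replacing each clique edge on a separator $A_i\cap B_i$ (necessarily $i\in I(K')$) by a detour through $D_i$. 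Hence $\hat K'$ lies in a unique component $K(K')$ of $G-X$.

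Two things remain. For tightness of $K(K')$: given $x\in X$, tightness of $K'$ in $\torso(\sigma)$ yields a $\torso(\sigma)$-neighbour $y\in V(K')$ of $x$; if $\{x,y\}$ is an original edge of $G$ we are done, and if it is a clique edge on some $A_i\cap B_i$, then $y\in(A_i\cap B_i)\setminus X$ so $i\in I(K')$ and $D_i\subseteq\hat K'\subseteq K(K')$, whence left-tightness gives $x$ a $G$-neighbour in $D_i$; either way $x\in N_G(K(K'))$, so $N_G(K(K'))=X$. For injectivity I would prove the key lemma: if $u,v\in\interior(\sigma)\setminus X$ are joined by a path $P$ in $G-X$, then they are joined by a walk in $\torso(\sigma)-X$. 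This follows by shortcutting: each maximal subpath of $P$ inside a blob $A_i\setminus B_i$ is flanked on $P$ by two vertices of $N_G(A_i\setminus B_i)\subseteq A_i\cap B_i$ that avoid $X$, hence joined by a clique edge of $\torso(\sigma)-X$ (or equal), so replacing these runs leaves a walk in $\torso(\sigma)-X$. Applying this with $u\in V(K'_1)$, $v\in V(K'_2)$ shows $K(K'_1)=K(K'_2)$ forces $K'_1=K'_2$.

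I expect the main obstacle to be the bookkeeping in the shortcutting lemma: checking that whenever $P$ leaves $\interior(\sigma)$ it enters exactly one blob $A_i\setminus B_i$, that it re-enters through the same separator $A_i\cap B_i$, and that the two flanking vertices are vertices of $(A_i\cap B_i)\setminus X$ (so that the substitution really is an edge, or an identification, in $\torso(\sigma)-X$). All of this rests on the star condition (disjointness and non-adjacency of the blobs, $A_i\cap B_i\subseteq\interior(\sigma)$) and on $\{A_i,B_i\}$ being a separation; once these facts are isolated the argument is routine. Left-tightness enters only through the connectors $D_i$, which make $\hat K'$ connected in $G-X$ and supply the $G$-edges from $X$ needed for tightness, and the finiteness of the separations is used only to know that $X$ and the separators are finite.
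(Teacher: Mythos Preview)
Your argument is correct. The paper itself does not prove this lemma but cites it from \cite{LinkedTDInfGraphs}*{Lemma~2.6}, so there is no in-paper proof to compare against; your approach---extending each tight component $K'$ of $\torso(\sigma)-X$ by the left-tightness witnesses $D_i$ for $i\in I(K')$, checking tightness in $G$, and establishing injectivity via the shortcutting lemma---is the natural one and matches the standard proof.
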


In the previous subsection we have seen that every end induces an $\aleph_0$-tangle. The following lemma asserts that also graphs with critical vertex sets have $\aleph_0$-tangles: for every critical vertex set $X$, every free ultrafilter on $\cC_X$ induces an $\aleph_0$-tangle\footnote{These $\aleph_0$-tangles are called \defn{ultrafilter tangles} \cite{EndsAndTangles}.}.

\begin{lemma}{\cite{EndsAndTangles}*{Lemmas 3.4 \& 3.7}} \label{lem:UltrafilterInduceTangles}
    Given some finite set $X$ of vertices of a graph $G$, for each free ultrafilter $U$ on $\cC_X$ there exists a (non-principal) $\aleph_0$-tangle $\tau$ in $G$ such that, for all $\cK \subseteq \cC_X$, we have $\big(V\big(G-\bigcup \cK\big), V\big(\bigcup\cK\big) \cup X\big) \in \tau$ if and only if $\cK \in U$. In particular, then
    \[
    \tau = \{(A, B) \in \vS_{\aleph_0}\; |\; \exists\, \cK \in U : \medmath{\bigcup}\, \cK \subseteq G[B]\}.
    \]
\end{lemma}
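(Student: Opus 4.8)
The plan is to define $\tau$ directly by the displayed formula, so that the ``in particular'' clause holds by definition, and then verify that this $\tau$ is a non-principal $\aleph_0$-tangle satisfying the stated equivalence. So set
\[
\tau := \big\{(A,B) \in \vS_{\aleph_0}(G) : \exists\, \cK \in U \text{ with } V(\textstyle\bigcup \cK) \subseteq B\big\}.
\]
The whole argument rests on one observation about how a finite-order separation interacts with the (infinitely many) components of $G - X$. Fix $\{A,B\}$ of finite order with separator $S := A \cap B$. For a component $K$ of $G-X$ we have $V(K) \cap X = \emptyset$; if moreover $V(K)$ meets neither $A \setminus B$ nor $B \setminus A$, then, since $K$ is connected and there is no $G$-edge between $A \setminus B$ and $B \setminus A$, it must contain a vertex of $S \setminus X$. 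As distinct components of $G-X$ are disjoint, at most $|S|$ components fail to be contained in $A \setminus B$ or in $B \setminus A$. Hence, writing $\cK(A,B) := \{K \in \cC_X : V(K) \subseteq B \setminus A\}$, the sets $\cK(A,B)$ and $\cK(B,A)$ are disjoint and their union is cofinite in $\cC_X$; as $U$ is free it contains every cofinite set, so exactly one of $\cK(A,B)$ and $\cK(B,A)$ lies in $U$.

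I would next upgrade this to the characterization $(A,B) \in \tau \iff \cK(A,B) \in U$. The implication ``$\Leftarrow$'' is immediate, since $V(\bigcup \cK(A,B)) \subseteq B \setminus A \subseteq B$. For ``$\Rightarrow$'', if $\cK \in U$ witnesses $(A,B) \in \tau$, then $\cK \cap (\cK(A,B) \cup \cK(B,A)) \in U$, and every component in this set has its vertex set inside $B$, hence cannot lie in $\cK(B,A)$; so $\cK(A,B) \in U$. Together with the previous paragraph this shows $\tau$ is an orientation of $S_{\aleph_0}(G)$. Consistency is then trivial: if $(A,B) < (C,D)$ with $(B,A),(C,D) \in \tau$, then $\cK(B,A) \cap \cK(C,D) \in U$, but this intersection is empty because $D \setminus C \subseteq D \subseteq B$ is disjoint from $A \setminus B$. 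For the tangle property, suppose $(A_i,B_i) \in \tau$ for $i = 1,2,3$ with $G[A_1] \cup G[A_2] \cup G[A_3] = G$; then $\bigcap_{i} \cK(A_i,B_i) \in U$ is nonempty, and a vertex of a component in it lies in $B_i \setminus A_i$ (hence in no $A_i$) for each $i$, contradicting $A_1 \cup A_2 \cup A_3 = V(G)$. So $\tau$ is an $\aleph_0$-tangle.

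It remains to read off the two stated equivalences. For $\cK \subseteq \cC_X$ the set $\{V(G - \bigcup\cK),\, V(\bigcup\cK) \cup X\}$ is a separation of $G$ with separator $X$, and a component $K'$ of $G - X$ satisfies $V(K') \subseteq V(\bigcup\cK)$ exactly when $K' \in \cK$ (distinct components being disjoint); hence $\cK(V(G-\bigcup\cK),\, V(\bigcup\cK)\cup X) = \cK$, so the characterization above yields $(V(G-\bigcup\cK),\, V(\bigcup\cK)\cup X) \in \tau \iff \cK \in U$. Applying this with $\cK = \{K\}$, and using that the free ultrafilter $U$ contains no singleton, shows that no separation of the form $(V(G-K), V(K) \cup X)$ with $K$ a component of $G-X$ lies in $\tau$; thus $X$ witnesses that $\tau$ is non-principal. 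Finally, the ``in particular'' formula is precisely the definition of $\tau$, so nothing further is needed.

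I do not anticipate a genuine obstacle here: the only care needed is the bookkeeping in the first step — checking that every finite-order separation splits the infinitely many components of $G-X$ into the two ``clean'' classes $\cK(A,B)$ and $\cK(B,A)$ together with a finite remainder — after which all three tangle axioms reduce to the ultrafilter and freeness properties of $U$. (If one prefers to first assert only the existence of a tangle satisfying the equivalence and to derive the displayed formula afterwards, the same characterization $(A,B) \in \tau \iff \cK(A,B) \in U$ shows that $\tau$ is uniquely determined by that property.)
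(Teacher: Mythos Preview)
The paper does not give its own proof of this lemma; it is quoted verbatim from \cite{EndsAndTangles}*{Lemmas~3.4 \& 3.7}. So there is nothing in the paper to compare against, and your argument stands or falls on its own.

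Your approach is the standard one and is essentially correct. There is one slip of phrasing in the key observation: you write ``if moreover $V(K)$ meets neither $A\setminus B$ nor $B\setminus A$'', but you mean ``if $V(K)$ is \emph{contained} in neither $A\setminus B$ nor $B\setminus A$''. As written, the hypothesis already forces $V(K)\subseteq S$, so the connectedness clause is idle. What you intend is: if $V(K)\not\subseteq A\setminus B$ and $V(K)\not\subseteq B\setminus A$, then either $V(K)$ meets $S$ directly, or $V(K)$ meets both $A\setminus B$ and $B\setminus A$; in the latter case connectedness of $K$ and the absence of $G$-edges between $A\setminus B$ and $B\setminus A$ force $V(K)$ to meet $S$ as well. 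Since $V(K)\cap X=\emptyset$, this vertex lies in $S\setminus X$, and disjointness of components bounds the number of exceptional $K$ by $|S\setminus X|$. With this correction the rest of your argument (the characterization $(A,B)\in\tau\iff\cK(A,B)\in U$, consistency, the tangle axiom, the displayed equivalence, and non-principality via $X$) goes through exactly as you wrote it.
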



Even though the $\aleph_0$-tangles described in \cref{lem:UltrafilterInduceTangles} are non-principal, critical vertex sets still induce principal $k$-tangles, but only for $k \in \N$ that are not greater than their size.

\begin{LEM} \label{lem:TanglesAtLargeCritVS}
    Let $G$ be any graph, $k \in \N$, and let $\cF$ be a set of stars in $\vS_k$ all of which have finite interior. Further, let $X$ be a critical vertex set of $G$ of size $\geq k$. Then $\tau := \{(A,B) \in \vS_k : X \subseteq B\}$ is a principal $\cF$-tangle of $S_k$.
    In particular, if $\sigma \subseteq \vS_k$ is a star with $X \subseteq \interior(\sigma)$, then $\sigma \subseteq \tau$.
\end{LEM}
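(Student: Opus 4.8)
The claim has two parts: first, that $\tau := \{(A,B) \in \vS_k(G) : X \subseteq B\}$ is a principal $\cF$-tangle of $S_k(G)$; and second, that every star $\sigma \subseteq \vS_k(G)$ with $X \subseteq \interior(\sigma)$ is contained in $\tau$. The second part is immediate from the first: if $X \subseteq \interior(\sigma) = \bigcap_{(A,B)\in\sigma} B$, then $X \subseteq B$ for every $(A,B) \in \sigma$, so $\sigma \subseteq \tau$ by definition of $\tau$. So the work is all in the first part.

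For the first part, I would proceed as follows. First check that $\tau$ is an orientation of $S_k(G)$: for each separation $\{A,B\}$ of order $<k \leq |X|$, the separator $A \cap B$ is too small to contain $X$, so $X$ is not split by the separation in the sense that $X$ must meet exactly one of $A \setminus B$, $B \setminus A$ — more carefully, $X \subseteq A$ or $X \subseteq B$ but not both (if $X \subseteq A \cap B$ then $|X| < k$, contradiction; and $X$ cannot have vertices in both $A \setminus B$ and $B \setminus A$ since there is no edge between those sides but... actually $X$ is just a vertex set, so I need: $X \subseteq A$ or $X \subseteq B$. If $X \not\subseteq A$ and $X \not\subseteq B$, pick $x \in X \setminus A \subseteq B \setminus A$ and $y \in X \setminus B \subseteq A \setminus B$; this is allowed for a general vertex set $X$, so I must instead argue via tightness: actually the cleanest route is that since $X$ is critical, $G - X$ has infinitely many tight components, and a tight component $K$ has $N_G(K) = X$, so $K$ together with $X$ forms a connected-ish piece — hmm). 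Let me reconsider: the right argument is that $X$ being critical means there are infinitely many tight components of $G-X$; for a separation $\{A,B\}$ of order $<|X|$, at most finitely many tight components can be "split" or "straddle" the separator, so infinitely many tight components $K$ lie entirely in $A \setminus B$ or entirely in $B \setminus A$, and since $N_G(K) = X$ this forces $X \subseteq A$ or $X \subseteq B$ respectively; these cannot both happen for infinitely many $K$ on each side, and in fact whichever side hosts infinitely many such $K$ determines the orientation, giving well-definedness. Then: $\tau$ is \emph{consistent} — if $(B,A) < (C,D)$ with $X \subseteq A$ forced and $X \subseteq D$ forced, then $X \subseteq A \subseteq D$... wait I need to rule out $(B,A), (C,D) \in \tau$ with $(A,B) < (C,D)$; but $(A,B) < (C,D)$ means $A \subseteq C$, $B \supseteq D$, and $(B,A) \in \tau$ means $X \subseteq A$, while $(C,D) \in \tau$ means $X \subseteq D \subseteq B$, so $X \subseteq A \cap B$... no wait $X \subseteq A$ and $X \subseteq D \subseteq B$ gives $X \subseteq A \cap B$, contradicting $|X| \geq k > |A \cap B|$. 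Good, so consistency holds. Next, $\tau$ \emph{avoids} $\cF$: if $\sigma = \{(A_i, B_i) : i \in I\} \in \cF \subseteq \tau$ then $X \subseteq \bigcap_i B_i = \interior(\sigma)$, but by hypothesis $\interior(\sigma)$ is finite — this alone is not a contradiction. I need more: I should use that any such $\sigma$ contained in a consistent orientation must be an "essential" configuration and derive a genuine contradiction from $X$ being critical, much as in the proof of \cref{lem:NonPrincipalPkTanglesAreInducedByUFTangles} or via \cref{lem:UltrafilterInduceTangles}. Actually the simplest: extend $\tau$ to the $\aleph_0$-tangle $\hat\tau := \{(A,B) \in \vS_{\aleph_0}(G) : X \subseteq B\}$, verify this is consistent and is an $\cF'$-tangle for $\cF'$ the family of all stars of separations of order $< |X|$ with finite interior (this is essentially \cref{lem:UltrafilterInduceTangles} restricted, or a direct check), and note $\tau = \hat\tau \cap \vS_k(G)$ avoids $\cF$ because $\cF \subseteq \cF'$ up to the order bound.

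Finally I must check $\tau$ is \emph{principal}: for each $Y \subseteq V(G)$ with $|Y| < k$, I need a separation of the form $(V(G-K), V(K) \cup Y) \in \tau$ with $K$ a component of $G - Y$. Since $|Y| < k \leq |X|$, we have $X \not\subseteq Y$, so there is a vertex of $X$ outside $Y$, hence $X$ meets some component of $G - Y$ — in fact, since infinitely many tight components of $G - X$ exist and at most finitely many contain a vertex of the finite set $Y$, there is a component $K_0$ of $G - X$ disjoint from $Y$, and then $K_0$ lies in some component $K$ of $G - Y$; I want $X \subseteq V(K) \cup Y$. Hmm, that need not hold for an arbitrary choice. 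The correct move is to pick $K$ to be the component of $G - Y$ that contains "most" of $X$, or rather to use the ultrafilter: the $\aleph_0$-tangle $\hat\tau$ above, being a consistent orientation of all finite separations, orients $(V(G-K), V(K)\cup Y)$ for each component $K$ of $G-Y$ one way or the other, and consistency plus the fact that these separations for different $K$ are "co-small" forces exactly one of them into $\hat\tau$, with big side containing $X$; that is the required principal witness, and it lies in $\vS_k(G)$ since its order is $|Y| < k$. The cleanest formulation: since $X$ is infinite-tight-component-rich and $|Y| < |X|$, the set $X \setminus Y$ is nonempty and, more strongly, all but finitely many tight components of $G-X$ avoid $Y$ and hence lie in a single component $K^*$ of $G-Y$ — no, they could lie in different components. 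Let me instead just invoke: $\hat\tau$ is a consistent regular orientation of $S_{\aleph_0}(G)$ containing, for the separator $Y$, exactly one separation $(V(G-K), V(K)\cup Y)$ (the one with $X \subseteq V(K) \cup Y$, which exists because $X \setminus Y \neq \emptyset$ and $X \setminus Y$ must lie in the closure of a single such configuration by consistency of $\hat\tau$ applied to the finitely-many or infinitely-many components).

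\textbf{Main obstacle.} The delicate point is the principality check: showing that for every $Y$ with $|Y| < k$ there is genuinely a \emph{single} component $K$ of $G - Y$ with $(V(G-K), V(K) \cup Y) \in \tau$, i.e. with $X \subseteq V(K) \cup Y$. This requires knowing that $X \setminus Y$ does not spread across two or more components of $G - Y$. This is where criticality is essential: if $X \setminus Y$ met two components $K_1, K_2$ of $G-Y$, then — using that there are infinitely many tight components of $G - X$, all but finitely many avoiding the finite set $Y$ — one gets a contradiction with how those tight components attach to all of $X$ (a tight component $K$ of $G-X$ avoiding $Y$ lies in one component of $G-Y$, yet has $N_G(K) = X$ which meets both $K_1$ and $K_2$, impossible since $N_G(K)$ must lie in the single component of $G-Y$ containing $K$ together with $Y$). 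So $X \setminus Y$ lies in a unique component $K$ of $G-Y$, and that $K$ is the principal witness. I would structure the proof to isolate this "$X$ cannot be split by a small separator, up to finitely many straddling components" lemma first, then deduce well-definedness, consistency, $\cF$-avoidance, and principality from it in turn.
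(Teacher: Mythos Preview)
Your plan is essentially correct but the emphasis is inverted compared to the paper, and this leads you to leave a gap exactly where the real work lies.

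You identify principality as the ``main obstacle'' and give a correct direct argument for it: a tight component $K$ of $G-X$ avoiding $Y$ lies in a single component $C$ of $G-Y$, and since $N_G(K)=X$ this forces $X\setminus Y \subseteq C$. That is fine. But for $\cF$-avoidance you only say ``extend to $\hat\tau$ \ldots\ (this is essentially \cref{lem:UltrafilterInduceTangles} restricted, or a direct check)'', and later claim that your ``$X$ cannot be split by a small separator'' lemma will yield $\cF$-avoidance as well. It does not: your lemma concerns a single separator $Y$ with $|Y|<k$, whereas a star $\sigma\in\cF$ may have $|\interior(\sigma)|$ finite but $\geq k$, so you cannot take $Y=\interior(\sigma)$. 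The ``direct check'' you defer is the actual content.

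The paper's route is both shorter and unified. It proves one fact: \emph{every} star $\sigma \subseteq \tau$ has infinite interior. The argument is that for each $(A,B)\in\sigma$ we have $X\cap(B\setminus A)\neq\emptyset$ (since $X\subseteq B$ but $|A\cap B|<k\leq|X|$), so no tight component $K$ of $G-X$ can lie in $A\setminus B$ (else $N_G(K)=X$ would avoid $B\setminus A$); since the strict small sides of a star are disjoint with no edges between them, each connected $K$ must meet $\interior(\sigma)$, and there are infinitely many such $K$. This single observation gives $\cF$-avoidance immediately, and it also gives principality for free: the star $\{(V(K)\cup Y,\,V(G-K)) : K \in \cC_Y\}$ lies in $\cU_k$ (its interior is $Y$), so it cannot be contained in $\tau$, whence some $(V(G-K),V(K)\cup Y)\in\tau$. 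Thus the paper never needs your separate principality argument at all.
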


\begin{proof}
    Since $X$ is a critical vertex set and thus infinitely connected in $G$, and because $|X| \geq k$, every separation in $S_k$ has a unique side which contains $X$. Hence, $\tau$ is an orientation of $S_k$, which for the same reason is consistent. Now let $\sigma$ be a star contained in $\tau$. Since $X \cap (B\setminus A) \neq \emptyset$ for all $(A,B) \in \sigma$, every component of $G - X$ whose neighbourhood in $G$ equals $X$ meets~$B$. Hence, as components are connected, each such component of~$G-X$ meets $\interior(\sigma)$. Since $X$ is a critical vertex set of $G$, this implies that $|\interior(\sigma)| = \infty$, so $\sigma$ is not in~$\cF$. In particular, $\tau$ avoids $\cU_k$ and is thus principal.
\end{proof}

We also need the following lemma, which describes a sufficient condition for a star to be home to a (non-principal) tangle.

\begin{LEM} \label{lem:TanglesAtSmallCritVS}
    Let $G$ be any graph, $k \in \N$, and let $\cF$ be a set of finite stars in $\vS_k$ all of which have finite interior. Further, let $\sigma \subseteq \vS_k$ be a star of separations that are tight on the small side, and let $X \subseteq \interior(\sigma)$ be of size~$<k$. Suppose that either~$X$ is a critical vertex set of $\torso(\sigma)$ or that infinitely many separations in $\sigma$ have separator~$X$. Then there is a (non-principal) $\cF$-tangle of $S_k$ that lives in $\sigma$.
\end{LEM}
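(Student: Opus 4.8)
The plan is to exhibit an explicit consistent orientation of $S_k(G)$ that lives in $\sigma$ and avoids $\cF$, using the $\aleph_0$-tangles supplied by the earlier machinery applied inside the torso $H := \torso(\sigma)$. First I would deal with the two cases for $X$ uniformly. If $X$ is a critical vertex set of $H$, then by \cref{lem:UltrafilterInduceTangles} (applied to $H$ and $X$) there is a free ultrafilter $U$ on $\cC_X^H$ and an associated non-principal $\aleph_0$-tangle $\tau_H$ of $H$ with $\big(V(H-\bigcup\cK),V(\bigcup\cK)\cup X\big)\in\tau_H \iff \cK\in U$. If instead infinitely many separations in $\sigma$ have separator exactly $X$, I would argue that $X$ is again a critical vertex set of $H$: for each such separation $(C,D)\in\sigma$ the set $C\setminus D$ is nonempty (as $\sigma$ is a proper star) and, by left-tightness, contains a component of $H[C\setminus D]$ with neighbourhood $X$; these components are pairwise disjoint for distinct separations in $\sigma$ since $\sigma$ is a star, giving infinitely many tight components of $H-X$. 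So in both cases we land on the same footing: $X\in\crit(H)$ and we have a non-principal $\aleph_0$-tangle $\tau_H$ of $H$ with $X\subseteq B$ for every $(A,B)\in\tau_H$ (since $X$ is infinitely connected in $H$ after adding torso edges among vertices of $X$, or directly from the ultrafilter description).

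Next I would transport $\tau_H$ to a tangle of $G$ living in $\sigma$. Define $\tau := \{(A,B)\in\vS_k(G) : (A\cap\interior(\sigma),\,B\cap\interior(\sigma))\in\tau_H\}$, making sense of this via: for a separation of $G$ of order $<k$, its restriction $\{A\cap\interior(\sigma),B\cap\interior(\sigma)\}$ is a separation of $H$ (one needs that torso edges, which join vertices in a common separator of some $\vt\in\sigma$, do not cross it — this holds because $\sigma$ is a star, so any such separator lies on the big side $B$-component structure uniformly; more carefully, since every $\vt\in\sigma$ satisfies $\vt\le\rv$ for the relevant orientations, the separator of $\vt$ is contained in the separator-closure and the induced bipartition of $\interior(\sigma)$ is edge-respecting in $H$). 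This restriction map is order preserving and surjective enough that $\tau$ is a consistent orientation of $S_k(G)$. Because $X\subseteq\interior(\sigma)$ and $X\subseteq B\cap\interior(\sigma)$ for every $(A,B)\in\tau$, and because for each $\vt=(C,D)\in\sigma$ the restriction $(C\cap\interior(\sigma),D\cap\interior(\sigma)) = (\emptyset\text{-ish small side},\interior(\sigma))$ lies in $\tau_H$ (its big side is all of $\interior(\sigma)$, hence contains every tight component, hence is in $\tau_H$), we get $\sigma\subseteq\tau$, i.e.\ $\tau$ lives in $\sigma$.

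Finally I would check that $\tau$ avoids $\cF$ and is non-principal. Let $\rho\in\cF$ with $\rho\subseteq\tau$; $\rho$ is a finite star in $\vS_k(G)$ with finite interior. Restricting each separation of $\rho$ to $\interior(\sigma)$ yields a finite collection of separations of $H$ each lying in $\tau_H$; since $\tau_H$ is an $\aleph_0$-tangle of $H$ it is in particular a $\cP'_{\aleph_0}$-type consistent orientation, so by the same inductive corner-separation argument as in the proof of \cref{lem:NonPrincipalPkTanglesAreInducedByUFTangles} (applied inside $H$, using that $\rho$ is finite and $\interior(\rho\restriction H)$ finite) one deduces $(V(H),Y)\in\tau_H$ for some finite $Y$, contradicting regularity of $\tau_H$ (the ultrafilter tangle is non-principal, hence regular). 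Hence $\tau$ avoids $\cF$. Non-principality of $\tau$ follows because a principal witness $(V(G-K),V(K)\cup X')$ for some $|X'|<k$ and component $K$ of $G-X'$ would restrict, inside $H$, to a principal-type separation forcing $\tau_H$ to be principal, contradicting \cref{lem:UltrafilterInduceTangles}. The main obstacle I anticipate is the bookkeeping in the restriction map $\tau\mapsto\tau\restriction\interior(\sigma)$: verifying that restricting a $G$-separation of order $<k$ gives an $H$-separation of order $<k$ (the torso edges must not be "cut"), that the map is consistency- and order-preserving, and that $\sigma\subseteq\tau$ — this is where left-tightness of $\sigma$ and the star property are genuinely used, and it is the part that requires care rather than the tangle-theoretic steps, which are routine given \cref{lem:UltrafilterInduceTangles,lem:NonPrincipalPkTanglesAreInducedByUFTangles}.
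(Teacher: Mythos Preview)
Your approach via the torso $H=\torso(\sigma)$ has two genuine gaps.

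First, your unification of the two cases under ``$X\in\crit(H)$'' fails in the second case. If infinitely many $(C,D)\in\sigma$ have separator~$X$, then left-tightness gives you tight components of $G-X$ sitting inside the strict small sides $C\setminus D$; but these sides are disjoint from $\interior(\sigma)=V(H)$, so they contribute nothing to $H-X$. It is perfectly possible that $\interior(\sigma)=X$, in which case $H-X$ is empty and $X$ is certainly not critical in~$H$. The paper does not claim $X\in\crit(H)$ here; it only uses that $X\in\crit(G)$ (which is what left-tightness actually gives), and works with components of $G-X$ throughout.

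Second, your transport map $\{A,B\}\mapsto\{A\cap\interior(\sigma),B\cap\interior(\sigma)\}$ need not produce separations of~$H$. For an arbitrary $\{A,B\}\in S_k(G)$ and some $(C,D)\in\sigma$, the separator $C\cap D\subseteq\interior(\sigma)$ can meet both $A\setminus B$ and $B\setminus A$; the torso edge joining two such vertices then crosses your restricted pair. So $\tau_H$ does not orient all of $S_k(G)$ via your rule, and the definition of~$\tau$ breaks down. Your parenthetical justification (``$\sigma$ is a star, so any such separator lies on the big side $B$-component structure uniformly'') is not an argument; the star property of~$\sigma$ says nothing about how an arbitrary $\{A,B\}\in S_k(G)$ interacts with the separators of~$\sigma$.

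The paper avoids both problems by staying in~$G$: it builds a filter on $\cC_X$ (components of $G-X$) generated by complements of the sets $\cK$ that are either singletons or satisfy $\bigcup\cK\subseteq G[A\setminus B]$ for some $(A,B)\in\sigma$, extends it to a free ultrafilter, and takes the associated $\aleph_0$-tangle of~$G$ from \cref{lem:UltrafilterInduceTangles}. The point of this particular filter is precisely to force $\sigma\subseteq\tau$, which is the step you were trying to get from the transport map. Avoidance of $\cF$ then follows from general facts about ultrafilter tangles, with no restriction-to-torso bookkeeping needed.
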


\begin{proof}
    Set $\mathfrak{K} := \{\cK \subseteq \cC_X : |\cK| = 1 \text{ or } \bigcup \cK \subseteq G[A \setminus B] \text{ for some } (A,B) \in \sigma\}$. Note that by \cref{lem:TorsosAndCriticalVertexSets} if $X \in \crit(\torso(\sigma))$ or by the tightness on the left side of the separations in $\sigma$ if $\{(A,B) \in \sigma : A \cap B = X\}$ is infinite,~$X$ is critical in $G$. In particular, every $\cK \in F := \{\cC_X\} \cup \{\cC_X \setminus (\cK_1 \cup \dots \cup \cK_n) : n \in \N, \cK_1, \dots, \cK_n \in \mathfrak{K}\}$ still contains infinitely many components of $G-X$. 
    In particular, $\emptyset \notin F$. 
    Moreover, by definition, $F$ is closed in $\cC_X$ under taking supersets and under finite intersections. Thus, $F$ is a filter on~$\cC_X$.
    So by \cref{lem:UltrafilterInduceTangles} (and the ultrafilter lemma), there is an $\aleph_0$-tangle~$\tau$ in~$G$ such that $(V(G-(\bigcup \cK)), V\big(\bigcup \cK) \cup X) \in \tau$ for all $\cK \in F$. 
    By \cite{EndsAndTangles}*{Lemma~1.3 \& Corollary~1.5}, $\tau$ avoids $\cF$.
    Moreover, by \cref{lem:UltrafilterInduceTangles}, there exists for every $(A,B) \in \tau$ a collection $\cK \subseteq \cC_X$ such that $\bigcup \cK \subseteq G[B]$ 
    and $\cK \notin \mathfrak{K}$.  
    By the definition of $\mathfrak{K}$, this implies that $(B,A) \notin \tau$ for all $(A,B) \in \sigma$, and hence $\sigma \subseteq \tau$.
    So $\tau \cap \vS_k$ is as desired.
\end{proof}

\section{Refining inessential stars}\label{sec:RefiningInessStars}

In this section we prove \cref{lem:ReflemForInfGraphsInessStars}, which is one of the two main ingredients to the proof of \cref{main:TTDInfGraphs}.
Let us begin by giving a brief sketch of the proof of \cref{main:TTDInfGraphs}.
Similarly to the finite tangle-tree duality theorem, \cref{main:TTDFiniteGraphs}, it is not too difficult to show that not both, \ref{itm:TTDInf:Tangle} and \ref{itm:TTDInf:Tree} of \cref{main:TTDInfGraphs}, can hold at the same time. To see that at least one of the two assertions holds, we consider an arbitrary graph~$G$ without any $k$-tangles as in \ref{itm:TTDInf:Tangle}.
For the proof that $G$ then has an $S_k$-tree as in \ref{itm:TTDInf:Tree}, we need two ingredients.
The first one is a certain \td\ of $G$, whose existence follows from a result of the author, Jacobs, Knappe and Pitz (\cite{LinkedTDInfGraphs}; see \cref{thm:StartingTD} in \cref{sec:TTDInfGraphs}): 
$G$ admits a \td\ $(T, \cV)$ of adhesion $<k$ into finite parts such that every node $t$ of $T$ with $\sigma_t \notin \cU^\infty_k$ is inessential and has finite degree.

The second main ingredient to the proof of \cref{main:TTDInfGraphs} is \cref{lem:ReflemForInfGraphsInessStars} below.
This lemma ensures that, under mild additional assumptions on $(T, \cV)$, there exists, for every inessential node $t \in T$, a finite $S_k$-tree $(T^t, \alpha^t)$ over $\cT_k^* \cup \{\{\sv\} : \vs \in \sigma_t\}$ in which each $\vs \in \sigma_t$ appears as a leaf separation. In particular, all its non-leaves are associated with stars in $\cT^*_k$.
We then obtain a weakly exhaustive $S_k$-tree over $\cT^*_k \cup \cU^\infty_k$ by sticking the $S_k$-trees $(T^t, \alpha^t)$ together along $T$. More precisely, for every edge $e = \{t,s\}$ of $T$ we glue the trees $T^t$ and $T^{s}$ together along those leaf edges $f$ of $T^t$ and $f'$ of $T^{s}$ with $\alpha^t(f) = \{A,B\} = \alpha^{s}(f')$ where $\{A,B\}$ is the separation induced by the edge $e$ of $T$ (see \cref{constr:StickingSkTreesTogether}). 
\medskip

The idea of refining the inessential parts of a \td\ $(T, \cV)$ with $S_k$-trees as described above has its origin in \cite{JoshRefining}. There, Erde proved for finite graphs that if all edges of $T$ induce separations that efficiently distinguish two $k$-tangles, then such $S_k$-trees $(T^t, \alpha^t)$ exist for all inessential nodes $t$ of~$T$. The main result of this section generalizes his lemma (\cite{JoshRefining}*{Lemma~3.1}) not only to infinite graphs but also to certain \tds\ which no longer need to distinguish the $k$-tangles efficiently. To state this result formally, we need some further definitions.

First, we recall the definition of `closely related' from \cite{SARefiningInessParts}: Let $G$ be any graph and $k \in \N$. A separation $(A, B) \in \vS_k$ is \defn{closely related} to an orientation~$O$ of~$S_k$ if $(A,B) \in O$ and for every $(C,D) \in O$ we have $(A \cap C, B \cup D) \in \vS_k$.

\begin{proposition}{\cite{SARefiningInessParts}*{Proposition~3.4}}\label{prop:eff}
	Let $k \in \N$, and let $P$ and $P'$ be two $k$-profiles in a graph $G$. If a separation $(A,B) \in P$ distinguishes $P$ and $P'$ efficiently, then $(A,B)$ and $(B,A)$ are closely related to $P$ and~$P'$, respectively.
\end{proposition}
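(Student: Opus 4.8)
The plan is to prove the two claims separately, since they are symmetric in role. First consider $(A,B) \in P$: we must show $(A,B)$ is closely related to $P$, i.e.\ that $(A \cap C, B \cup D) \in \vS_k(G)$ for every $(C,D) \in P$. The natural idea is submodularity. Since $(A,B), (C,D) \in P$ and $P$ is a $k$-profile, the corner separation $(A \cup C, B \cap D)$ cannot lie in $P$ — wait, that is the profile condition, so actually $(B \cap D, A \cup C) \in P$; hence by consistency $(A \cup C, B \cap D) \notin P$ and $P$ being an orientation does not immediately bound $\abs{(A\cup C)\cap(B\cap D)}$. Instead I would argue as follows. Suppose for contradiction that $\abs{(A\cap C)\cap(B\cup D)} \geq k$. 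By submodularity,
\[
\abs{(A \cap C) \cap (B \cup D)} + \abs{(A \cup C) \cap (B \cap D)} \le \abs{A \cap B} + \abs{C \cap D} < 2k,
\]
so $\abs{(A\cup C)\cap(B\cap D)} < k$, i.e.\ $\{A \cup C, B \cap D\} \in S_k(G)$. Now I claim $(B \cap D, A \cup C)$ is a separation distinguishing $P$ and $P'$ of order $<k$: it lies in $P$ (since $(B\cap D, A \cup C) \ge (B,A)$? no — I need $(A\cup C, B\cap D) \ge (A,B)$, which holds, so by the profile/consistency condition applied to $(A,B),(C,D) \in P$ we get $(B\cap D, A \cup C) \in P$).

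The key step is then to compare orders. Since $(A,B)$ distinguishes $P$ and $P'$ efficiently, $\abs{A \cap B}$ is minimum among separations distinguishing $P$ and $P'$; so any separation distinguishing them has order $\ge \abs{A\cap B}$. I want to produce from $(B\cap D, A\cup C)$ (or a corner of $\{A,B\}$ and $\{C,D\}$) a separator of order $< \abs{A\cap B}$ that still distinguishes $P,P'$ — contradicting efficiency. The right corner to look at: since $(C,D)\in P$ and $(C,D)$ does not distinguish $P$ from $P'$ would be false in general, so I should instead pick $(C,D)$ to range over $P'$, not $P$, or handle both cases. Actually the cleanest route, and the one I expect the paper takes: assume $\abs{(A \cap C)\cap(B\cup D)} \ge k$; submodularity gives $\abs{(A\cup C)\cap(B\cap D)} < k$. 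Then $\{A \cup C, B \cap D\}$ has order $< k$. If it distinguishes $P$ and $P'$, its order is $\ge \abs{A\cap B} \ge k$ — wait no, $\abs{A\cap B} < k$ only. Hmm. I need $\abs{A \cap B}$ to be the efficient (minimum) order; so $\abs{(A\cup C)\cap(B\cap D)} \ge \abs{A\cap B}$ if $\{A\cup C, B\cap D\}$ distinguishes $P,P'$. Combined with submodularity: $\abs{(A\cap C)\cap(B\cup D)} \le \abs{C\cap D}$. Since $(C,D)\in P$, $C\cap D$ could be large, so this is not yet a contradiction with $\ge k$ unless $\abs{C\cap D} < k$ — which it is, as $(C,D)\in S_k(G)$! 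So $\abs{(A\cap C)\cap(B\cup D)} \le \abs{C\cap D} < k$, done — \emph{provided} $\{A\cup C, B\cap D\}$ distinguishes $P$ and $P'$.

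So the main obstacle, and the heart of the argument, is the case where $\{A \cup C, B \cap D\}$ does \emph{not} distinguish $P$ and $P'$. Then $P'$ orients it; which orientation? Since $(A,B) \in P'$... no, $(B,A) \in P'$. I would argue: $P'$ must contain $(B \cap D, A \cup C)$ or $(A \cup C, B\cap D)$; if the latter, then since $(A \cup C, B \cap D) \ge (A,B) \notin P'$, consistency of $P'$ forces $(A\cup C, B\cap D) \notin P'$ — contradiction — so $(B\cap D, A\cup C) \in P'$. But also $(B\cap D, A\cup C) \in P$ (shown above). So this separation does \emph{not} distinguish $P$ and $P'$ either way; that means $\{A\cup C, B\cap D\}$ is oriented the same by both, consistently with it not distinguishing them, and we get no bound. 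In that situation I would instead analyze the \emph{other} corner $\{A\cap C, B \cup D\}$ directly: $(B\cup D, A\cap C) \le (B,A) \in P'$, and $(B \cup D, A \cap C) \le (D,C)$; if $(D,C)\in P'$ then consistency may put $(B\cup D, A\cap C) \in P'$, giving $(A\cap C, B\cup D) \notin P'$ while we assumed — via $(A,B),(C,D) \in P$ and the profile/consistency machinery — that $(A\cap C, B\cup D) \in P$ (it dominates nothing forcing it out). Then $\{A \cap C, B \cup D\}$ distinguishes $P$ and $P'$, so has order $\ge \abs{A \cap B}$; but submodularity gives $\abs{(A\cap C)\cap(B\cup D)} \le \abs{C\cap D} < k$ still, and more importantly $\le \abs{A\cap B} + \abs{C\cap D} - \abs{(A\cup C)\cap(B\cap D)}$, and one checks this is $< \abs{A\cap B}$ unless $\abs{(A\cup C)\cap(B\cap D)} \le \abs{C\cap D}$, which needs the profile inequality on $P$ or $P'$. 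This is the delicate bookkeeping I expect to be the crux: carefully choosing, for the given $(C,D) \in P$, which corner separation is distinguishing and which orientation each of $P, P'$ takes, using the profile conditions $(B\cap D, A\cup C)\notin P$ is false—rather $(B\cap D,A\cup C)\in P$—and $(\cdot)\notin P'$ to rule out the bad cases, then invoking efficiency of $\abs{A\cap B}$ plus submodularity to contradict $\abs{(A\cap C)\cap(B\cup D)} \ge k$. The statement for $P'$ and $(B,A)$ follows by the symmetric argument with roles of $P, P'$ and of $\wedge, \vee$ swapped.
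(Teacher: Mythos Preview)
Your overall strategy---submodularity plus efficiency plus the profile property---is the right one, and the argument is in fact much cleaner than your exploration suggests. The problem is that you have the profile property backwards. For $(A,B),(C,D)\in P$ the definition says $(B\cap D, A\cup C)\notin P$, not $\in P$; you explicitly reject this at the end (``$(B\cap D, A\cup C)\notin P$ is false---rather $(B\cap D,A\cup C)\in P$''), and your earlier ``correction'' to ``$(B\cap D, A\cup C)\in P$'' already points the wrong way. This single sign error is what sends you into the unnecessary second-corner analysis and leaves the plan unresolved.

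With the profile property oriented correctly, the proof is immediate. Suppose $|(A\cup C)\cap(B\cap D)| < |A\cap B|$. Then $\{A\cup C, B\cap D\}\in S_k(G)$; the profile property for $P$ forces $(A\cup C, B\cap D)\in P$, while consistency of $P'$ together with $(B,A)\in P'$ and $(A,B)\le(A\cup C, B\cap D)$ forces $(B\cap D, A\cup C)\in P'$. Thus $\{A\cup C, B\cap D\}$ distinguishes $P$ and $P'$ and has order strictly less than $|A\cap B|$, contradicting efficiency. Hence $|(A\cup C)\cap(B\cap D)|\ge |A\cap B|$, and the submodularity equality gives $|(A\cap C)\cap(B\cup D)|\le |C\cap D|<k$, which is exactly what ``closely related'' requires. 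The statement for $(B,A)$ and $P'$ follows by the symmetric argument. (The paper does not give a proof here; the result is cited from~\cite{SARefiningInessParts}.)
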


A finite-order separation $(A, B)$ of a graph $G$ is \defn{$\ell$-robust on the small side} for~$\ell \in \N$ if there exist a set $U \subseteq A$ of size~$\ell$ and a family~$\{P_x : x \in A \cap B\}$ of pairwise disjoint paths in~$G[A]$ such that~$P_x$ ends in~$x$ and there are~$\ell$ $U$--$P_x$ paths in~$G[(A \setminus B) \cup \{x\}]$ that meet at most in their endvertices in~$P_x$. An unoriented separation $\{A,B\}$ is \defn{$\ell$-robust} if both $(A,B)$ and $(B,A)$ are $\ell$-robust on the small side.

Note that the property `$\ell$-robust on the small side' is designed to mimic the presence of a highly connected substructure of $G$ on the small side of a separation. 
In fact, we defined `$\ell$-robust on the small side' precisely so that separations that are $\ell$-robust on the small side can be obtained from ends and critical vertex sets in the following way (cf.\ \cref{fig:EllRobust}):

\begin{figure}[ht]
    \centering
    \begin{subfigure}{0.4\linewidth}
        \centering
        \includegraphics[width=0.8\linewidth]{TTDInfGraphsRobust.png}
        \caption{}
        \label{subfig:EllRobust:End}
    \end{subfigure}
    \begin{subfigure}{0.4\linewidth}
        \centering
        \includegraphics[width=0.4\linewidth]{TTDInfGraphsRobust1.png}
        \caption{}
        \label{subfig:EllRobust:CritVS}
    \end{subfigure}
    \caption{Depicted are two separations $(A,B)$ that are $3$-robust on the small side. In the left figure, if $(A,B) \in \tau_\eps$ for some end $\eps$ of degree $4$, then the paths $P_x$ can be thought of as subpaths of disjoint $\eps$-rays. Then the green $U$--$P_x$ exist because all $\eps$-rays are equivalent.}
    \label{fig:EllRobust}
\end{figure}

\begin{lemma}{\cite{LinkedTDInfGraphs}*{Lemma~8.5}} \label{lem:ContractingEdgesEllRobust}
    Let $\eps$ be an end of a graph $G$ of finite combined degree, and let $((A_i, B_i))_{i \in \N}$ be a weakly exhaustive increasing sequence of separations in $\tau_\eps$ such that $\liminf_{i \in \N} |A_i \cap B_i| = \Delta(\eps)$. Then cofinitely many $(A_i, B_i)$ with $|A_i \cap B_i| = \Delta(\eps)$ are $\ell$-robust.
\end{lemma}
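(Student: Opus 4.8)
The plan is to prove the statement with $\ell:=\Delta(\eps)=:k$; this is the only reasonable value, since a separation of order $k$ cannot be $\ell$-robust for $\ell>k$. Since $k$ is finite and $\liminf_i\abs{A_i\cap B_i}=k$, the set $I_0:=\{i:\abs{A_i\cap B_i}=k\}$ is infinite, and all but finitely many $i$ have $\abs{A_i\cap B_i}\ge k$, so it suffices to produce an $N\in\N$ such that $\{A_i,B_i\}$ is $k$-robust whenever $i\in I_0$ and $i\ge N$. Write $d:=\deg(\eps)$ and $\Dom(\eps)=\{v_1,\dots,v_m\}$, so that $k=d+m$.

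First I would fix a ``model'' of $\eps$. By \cref{prop:DegreeOfAnEndWitnessedBySeps}~\cref{itm:DegreeWitnessingSeq:DegImpliesSeq} there is a weakly exhaustive increasing sequence $((C_j,D_j))_{j\in\N}$ in $\tau_\eps$ of separations of order $k$ with $G[D_j\setminus C_j]$ connected and $(C_j\cap D_j)\cap(C_{j'}\cap D_{j'})\subseteq\Dom(\eps)$ for $j\ne j'$; from it I would read off $d$ pairwise disjoint $\eps$-rays $R_1,\dots,R_d$ together with, for each $q$, an infinite family of $v_q$--$\bigcup_pR_p$ paths meeting pairwise only in $v_q$, all reaching arbitrarily deep into the $D_j$. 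The feature of this model that I would establish along the way is the following linkage statement: whenever $(A,B)\in\tau_\eps$ has order $k$ and $B\setminus A$ is disjoint from a fixed finite initial part of the model, the separator $A\cap B$ is joined by $k$ pairwise disjoint paths inside $G[B]$ to a $k$-set $W^{+}(A,B)$ consisting of one vertex on each $R_p$-tail together with $v_1,\dots,v_m$. This I would prove by a Menger/submodularity argument: if only $k-1$ disjoint such paths existed, the corresponding separator would give a separation of $G$ of order $<k$ whose big side still contains a tail of every $\eps$-ray, and splicing it with the $(C_j,D_j)$ via corner separations --- using \cref{lem:Fishlemma} to bound how many of the $(C_j,D_j)$ it crosses --- would yield a weakly exhaustive increasing sequence in $\tau_\eps$ of order $<k$, contradicting \cref{prop:DegreeOfAnEndWitnessedBySeps}~\cref{itm:DegreeWitnessingSeq:SeqImpliesDeg}.

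Next, using weak exhaustiveness of $((A_i,B_i))_i$ (which, crucially, forces $B_i\setminus A_i$ to become deep in \emph{every} ray, and hence $A_i\setminus B_i$ to engulf the whole front of $G$), I would choose $N$ so large that for $i\ge N$ the set $B_i\setminus A_i$ is past the fixed initial part of the model above, while $A_i\setminus B_i$ already contains $C_{j_0}\cap D_{j_0}$ for some $j_0$ together with the corresponding initial part of a second model of $\eps$ built symmetrically. For $i\in I_0$ with $i\ge N$, applying the linkage statement to $(A_i,B_i)$ gives a $k$-linkage of $S_i:=A_i\cap B_i$ into $W^{+}:=W^{+}(A_i,B_i)\subseteq B_i\setminus A_i$ inside $G[B_i]$, and the symmetric argument gives a $k$-linkage of $S_i$ into a $k$-set $W^{-}\subseteq A_i\setminus B_i$ inside $G[A_i]$. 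From these two $k$-linkages, the ray tails, and the $v_q$-fans I would then read off the data witnessing that $\{A_i,B_i\}$ is $k$-robust: on each side take $U:=W^{\pm}$, let $P_x$ (for $x\in S_i$) be the linkage path leaving $x$, prolonged along a ray tail or a $v_q$-fan path so that the $P_x$ become pairwise disjoint, and obtain the $k$ paths $U$--$P_x$ by running back from $U$ along the rays and fans.

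The step I expect to be the main obstacle is this final assembly, together with the bookkeeping around $\Dom(\eps)$. The definition of left-$\ell$-robust is rigid --- the $P_x$ must be pairwise disjoint and each $P_x$ must carry a full $\ell$-fan back to $U$ that stays inside $G[(A\setminus B)\cup\{x\}]$ and meets $P_x$ only in endvertices --- and one must make all of this work simultaneously over all $x\in S_i$ and uniformly for all large $i\in I_0$. In particular a dominating vertex may lie in $S_i$ for every $i$ (so it is both ``in the separator'' and ``in the core''), and a vertex of $S_i$ may attach only sparsely to one side; handling these uniformly is where the care lies, and it may require discarding a further finite set of indices and exploiting \cref{lem:EffYieldsTight} (an order-$k$ separation in $\tau_\eps$ that efficiently distinguishes $\tau_\eps$ from a neighbouring profile is tight, which constrains its separator) to bring the $S_i$ for large $i\in I_0$ into a sufficiently tame form.
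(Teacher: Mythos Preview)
The paper does not prove this lemma itself; it is quoted from \cite{LinkedTDInfGraphs}. Independently of that, your proposal rests on a misreading of the statement. The claim ``a separation of order $k$ cannot be $\ell$-robust for $\ell>k$'' is false: the definition of left-$\ell$-robust asks for a set $U\subseteq A$ of size $\ell$ and, for each $x\in A\cap B$, an $\ell$-fan from $U$ to $P_x$; nothing bounds $\ell$ by $|A\cap B|$. Indeed \cref{prop:LeftRobustCrit} produces finite-order separations that are left-$\ell$-robust for \emph{every} $\ell\in\N$, and in the proof of \cref{lem:ReflemInfInessStars} the present lemma is applied with $\ell=\max\{3k-2,\,k(k-1)m+m\}$, which will in general far exceed $\Delta(\eps)$. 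The intended reading is therefore: for every fixed $\ell\in\N$, cofinitely many of the $(A_i,B_i)$ of order $\Delta(\eps)$ are $\ell$-robust (the cofinite tail depending on $\ell$).

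Your construction, with $U:=W^{\pm}$ of size $\Delta(\eps)$, only yields $\Delta(\eps)$-robustness, which is not what is needed. The Menger/submodularity step linking the separator $A_i\cap B_i$ to one point on each ray tail together with the dominating vertices is a correct and necessary ingredient --- it supplies the disjoint paths $P_x$ --- but it is only half of the argument. For the $\ell$-fans one must instead take $U$ to be $\ell$ vertices placed sufficiently far out along the ray tails (several on each ray), and then use the defining property of an end --- that any two $\eps$-rays are joined by infinitely many pairwise disjoint paths, and that each dominating vertex sends an infinite fan to every $\eps$-ray --- to route, for each $x$, $\ell$ internally disjoint $U$--$P_x$ connectors inside the appropriate side. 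Arranging this uniformly for all large $i$ is where the ``cofinitely many'' comes from, and it is the step your outline does not address.
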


\begin{proposition} \label{prop:LeftRobustCrit}
    Let $\{A,B\}$ be a finite-order separation of a graph $G$, and suppose that $G[A\setminus B]$ contains infinitely many tight components of $G-X$ for some set $X \supseteq A \cap B$ of vertices of $G$. Then $(A,B)$ is $\ell$-robust on the small side for all $\ell \in \N$.
\end{proposition}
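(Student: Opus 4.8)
The plan is to read off the required data directly from the tight components, taking trivial one-vertex paths for the family $(P_x)_{x\in A\cap B}$. First I would record two structural facts. Since $\{A,B\}$ has finite order, $A\cap B$ is finite; and if $K$ is any tight component of $G-X$ with $V(K)\subseteq A\setminus B$, then $N_G(K)=X$ together with the absence of edges between $A\setminus B$ and $B\setminus A$ forces $X=N_G(K)\subseteq A$. In particular every such $K$ is adjacent to every vertex of $A\cap B$ (since $A\cap B\subseteq X=N_G(K)$); this adjacency is the only place where the hypothesis, and the tightness, will be used.

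Now fix $\ell\in\N$. Using that $G[A\setminus B]$ contains infinitely many tight components of $G-X$, choose $\ell$ pairwise distinct such components $K_1,\dots,K_\ell$, pick a vertex $u_i\in V(K_i)$ for each $i$, and set $U:=\{u_1,\dots,u_\ell\}$. As the $K_i$ are distinct components of $G-X$ the $u_i$ are distinct, so $\abs{U}=\ell$, and $U\subseteq A\setminus B\subseteq A$. For each $x\in A\cap B$ let $P_x$ be the trivial path on the single vertex $x$ (trivial paths are permitted here, as they are used e.g.\ in the proof of \cref{lem:DegreeOfEndsInTorso}). These $P_x$ lie in $G[A]$, each ends in $x$, and they are pairwise disjoint because the vertices of $A\cap B$ are distinct.

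It then remains to exhibit, for each fixed $x\in A\cap B$, the $\ell$ required $U$--$P_x$ paths. Since $K_i$ is tight we have $x\in N_G(K_i)$, so, using that $K_i$ is connected, there is a $u_i$--$x$ path $Q_i$ all of whose vertices other than $x$ lie in $K_i$; thus $Q_i\subseteq G[V(K_i)\cup\{x\}]\subseteq G[(A\setminus B)\cup\{x\}]$ is a $U$--$P_x$ path. For $i\neq j$ we get $V(Q_i)\cap V(Q_j)\subseteq(V(K_i)\cup\{x\})\cap(V(K_j)\cup\{x\})=\{x\}$, as $K_i,K_j$ are distinct components of $G-X$. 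So $Q_1,\dots,Q_\ell$ pairwise meet only in $x$, which is the endvertex of each $Q_i$ lying on $P_x$; hence $(A,B)$ is left-$\ell$-robust, and since $\ell$ was arbitrary we are done.

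I do not expect a genuine obstacle: the statement essentially unwinds the definition of `left-$\ell$-robust'. The only points that need a little care are verifying that $U$ and all the $Q_i$ really live inside $G[(A\setminus B)\cup\{x\}]$ (which is where $V(K_i)\subseteq A\setminus B$ enters), remembering that the $P_x$ may be taken trivial, and noting that the $Q_i$ are allowed to share the vertex $x$ precisely because $x$ is the endvertex of each $Q_i$ on $P_x$.
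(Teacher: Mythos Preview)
Your proof is correct and follows exactly the same approach as the paper's: trivial paths $P_x$ for $x\in A\cap B$, and $U$ consisting of one vertex from each of $\ell$ distinct tight components of $G-X$ inside $G[A\setminus B]$. The paper's proof is a one-sentence sketch of precisely this construction; you have spelled out the details carefully, including the verification that the $Q_i$ lie in $G[(A\setminus B)\cup\{x\}]$ and meet only in $x$.
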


\begin{proof}
    This is witnessed by the trivial paths $P_x$ in $A \cap B$ and a set $U$ consisting of $\ell$ vertices that lie in pairwise distinct tight components of $G-X$ contained in $G[A\setminus B]$ (cf.\ \cref{subfig:EllRobust:CritVS}). 
\end{proof}

The definition of `$\ell$-robust on the small side' is tailored to the proof of \cref{main:TTDInfGraphs}: `$\ell$-robust on the small side' is defined precisely so that we can prove \cref{lem:ReflemForInfGraphsInessStars} below, and, at the same time, show that there exists a \td\ $(T, \cV)$ as described above whose `relevant' edges all induce separations that are $\ell$-robust on the small side (see \cref{thm:StartingTD}), and whose nodes are thus eligible for the application of \cref{lem:ReflemForInfGraphsInessStars}. 

A set $\cF$ of stars in $\vS_{\aleph_0}$ is \defn{$m$-bounded} for some $m \in \N$ if $\abs{\interior(\rho)} \leq m$ for all~$\rho \in \cF$. It is \defn{finitely bounded} if it is $m$-bounded for some $m \in \N$. 
The main result of this section then reads as follows:

\begin{LEM}\label{lem:ReflemForInfGraphsInessStars}
    Let $G$ be a graph, $k, m \in \N$, and let $\cF$ be an $m$-bounded, nice set of stars in~$\vS_k$. Set
    $\ell := \max\{3k-2, k(k-1)m + m\}$, and let $\sigma := \{\vs_1, \dots, \vs_n\} \subseteq \vS_k$ be a finite star with finite interior. Suppose that every separation in~$\sigma$ is either $\ell$-robust on the small side or has an inverse that is closely related to some $k$-profile in $G$ that avoids~$\cF$. 
    Set $\cF' := \cF \cup \{\{\sv_i\} : i \in [n]\}$.
    Then either there is an $\cF'$-tangle of~$S_k$ or there is a finite~$S_k$-tree over~$\cF'$ in which each $\vs_i$ appears as a leaf separation.
\end{LEM}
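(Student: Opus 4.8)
The plan is to reduce \cref{lem:ReflemForInfGraphsInessStars} to the abstract duality theorem \cref{thm:TTDinASS} applied inside the region ``below'' the star~$\sigma$. Concretely, let $\vS := \vS_k(G)$ and consider the set of separations $S' \subseteq S_k(G)$ consisting of all $s \in S_k(G)$ that are nested with $\sigma$ and lie on the small side of $\sigma$, i.e.\ those $\{A,B\}$ admitting an orientation $(A,B)$ with $(A,B) \le \sv_i$ for some $i$ (together with the leaf separations $\vs_i$ themselves). Roughly, $S'$ consists of the separations of the ``torso-like'' region $R:=\bigcup_i (\text{strict small side of }\vs_i)$, so that $(R, \alpha)$-trees over the relevant star set correspond exactly to the $S_k(G)$-trees over $\cF'$ in which each $\vs_i$ is a leaf separation. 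The forbidden set on this region is $\cF' = \cF \cup \{\{\sv_i\} : i\in[n]\}$, which one checks is standard for $\vS$ (using that $\cF$ is nice, hence $\cP'_k \subseteq \cF$ and all $\{(V(G),A)\}\in\cF$, together with the fact that the only trivial separations in this region are of that shape, plus the explicitly added leaf singletons). So the two outcomes of \cref{thm:TTDinASS} — an $\cF'$-tangle of $S'$, or an $S'$-tree over $\cF'$ — are precisely the two outcomes we want, provided we can verify the hypothesis that $\vS'$ is $\cF'$-separable, and provided we can pass from an $\cF'$-tangle of the \emph{restricted} set $S'$ to an $\cF'$-tangle of all of $S_k(G)$ (the latter by orienting every separation not already oriented towards $\bigcap_i \sv_i$, i.e.\ away from the interior of $\sigma$; one must check consistency and avoidance of $\cF'$, which should follow since any bad star would have to concentrate inside the small side).

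The heart of the argument — and the main obstacle — is verifying $\cF'$-separability of $\vS'$: given nested non-trivial $\vr \le \vr'$ in this region with $\{\rv\},\{\vr'\}\notin\cF'$, we must produce $\vs\in\vS'$ such that $\vs$ emulates $\vr$ in $\vS'$ for $\cF'$ and $\sv$ emulates $\rv'$ in $\vS'$ for $\cF'$. This is where the hypothesis on $\sigma$ enters. Since $\cF$ is nice, hence strongly closed under shifting, \cref{prop:ShiftingForbiddenStars} reduces the ``for $\cF$'' part of emulation to plain emulation plus membership of certain suprema in $\vS_k(G)$; so the real content is finding a separation that emulates $\vr$ and whose inverse emulates $\rv'$ \emph{within the region}, and that this emulation survives against the newly-added leaf stars $\{\sv_i\}$. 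For a separation $s_i\in\sigma$ that is left-$\ell$-robust, the point of the robustness parameter $\ell = \max\{3k-2, k(k-1)m+m\}$ is exactly to guarantee, via a counting/fan argument on the $\ell$ disjoint $U$--$P_x$ paths, that no shift of a star in $\cF$ (whose interior has size $\le m$) across $\vs_i$ can land back in $\cF$ unless it would force the order above $k$ — so the leaf star $\{\sv_i\}$ causes no obstruction to separability. For a separation $s_i$ whose inverse $\sv_i$ is closely related to some $k$-profile $P_i$ avoiding $\cF$, the definition of ``closely related'' says every corner $(A\cap C, B\cup D)$ with $(C,D)\in P_i$ stays in $\vS_k(G)$, which is precisely the input needed to run the shifting lemma past $\vs_i$; this is the infinite-graph analogue of Erde's argument in \cite{JoshRefining} and of \cite{SARefiningInessParts}.

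The remaining steps are more routine. First I would set up the region $R$ and the bijection between $S'$-trees over $\cF'$ and finite $S_k(G)$-trees over $\cF'$ with the $\vs_i$ as leaves — this needs the observation (\cref{lem:Fishlemma}-style) that separations nested with $\sigma$ and below it behave like separations of $\torso$ of the ``complementary'' star, and that $S'$ is finite is not needed — wait, $\cref{thm:TTDinASS}$ requires $S$ finite, so instead I would \emph{not} restrict to a region but rather take $S := S_k(G)$ directly is also impossible since that is infinite; the correct route, following Erde, is to take $S'$ to be a suitable \emph{finite} subset: the separations in $\sigma$ together with finitely many separations arising from a finite sub-$S_k(G)$-tree — in fact one applies \cref{thm:TTDinASS} with $S = S_k(G)$ is still infinite, so I would instead invoke the standard trick that for the duality theorem it suffices that $S$ be ``small enough'' in the sense that $(T,\alpha)$-trees are finite, which here is arranged because $\cT^*_k$ forces degree $\le 3$ and bounded bags; concretely I would cite that $\cT^*_k$-trees (equivalently $\cF'$-trees with $\cF' \supseteq \cP'_k$ and $m$-bounded) over a finite interior region are finite, so the hypotheses of \cref{thm:TTDinASS} are met with $S$ the finitely many separations ``reachable'' from $\sigma$. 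Second, I would handle the direction $\cF'$-tangle $\Rightarrow$ $\cF'$-tangle of $S_k(G)$ by extending the orientation outward as above and checking no forbidden star appears (using $m$-boundedness and that $\sigma$'s interior is finite). Finally, I would assemble: if \cref{thm:TTDinASS} gives an $S'$-tree over $\cF'$, translate it back into the desired finite $S_k(G)$-tree over $\cF'$ with each $\vs_i$ a leaf separation, and we are done. The genuinely delicate point throughout remains the separability verification driven by the left-$\ell$-robust / closely-related dichotomy, and getting the constant $\ell$ to do its job in the counting argument.
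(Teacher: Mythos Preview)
Your overall strategy is right and matches the paper's: restrict to a finite subsystem nested with $\sigma$, apply \cref{thm:TTDinASS} there, and then lift each outcome back to $S_k(G)$. The paper's subsystem is $S^\sigma_k(G) := \{r \in S_k(G) : \vs_i \le \vr \text{ or } \vs_i \le \rv \text{ for every } i\}$, and its finiteness follows cleanly from $\sigma$ being finite with finite interior (this is the paper's \cref{prop:FiniteStarAndInteriorImpliesFiniteSkSigma}); your back-and-forth about ``which $S'$'' and ``separations on the small side with $(A,B)\le\sv_i$ for some $i$'' gets both the direction and the quantifier wrong and should simply be replaced by this definition.

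The genuine gap is the tangle-extension step, which you treat as routine (``orient everything away from the interior; any bad star would have to concentrate inside the small side''). This does not work, and in the paper the extension is the bulk of the argument, not the separability check. The paper extends the $\cF'$-tangle of $S^\sigma_k(G)$ to $S_k(G)$ \emph{one $\vs_i$ at a time}, with a separate lemma for each of your two cases. When $\sv_i$ is closely related to a profile $P$ avoiding $\cF$, a separation $r$ crossing $s_i$ is oriented via the \emph{corner} $\vr\vee\vs_i$ or $\vr\wedge\sv_i$ determined by how $P$ orients $r$, and one must then check that no star in $\cF$ lands in the extended orientation by shifting it back across $\vs_i$ using \cref{prop:ShiftingForbiddenStars}. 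When $\vs_i=(C,D)$ is left-$\ell$-robust, the extension is defined by a rule of the form ``$(A,B)\in\tau'$ iff $(A\cup C,B\cap D)\in\tau$ whenever $A$ meets every $P_x$'', and showing that this is well-defined, consistent, and avoids $\cF$ requires a chain of claims (every $\{A,B\}$ has a side meeting all $P_x$; the relevant corners have order $<k$; a supposed forbidden star contains some $(A,B)$ with $A$ meeting all $P_x$, which is exactly where the constant $\ell=\max\{3k-2,k(k-1)m+m\}$ is used via a pigeonhole/fan count against the $m$-bound on $|\interior(\rho)|$). Your proposal puts the $\ell$-robust counting argument inside the separability verification; in fact it is needed primarily here, to push a hypothetical $\rho\in\cF$ with $\rho\subseteq\tau'$ back to a star $\rho'\subseteq\tau$ still in $\cF$. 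Without this machinery the naive extension can fail: a star $\rho\in\cF$ may consist entirely of separations crossing $s_i$, and ``orient away from the interior'' gives no control over whether $\rho$ lands in the extension.
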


\noindent We remark that Erde \cite{JoshRefining} gave an example which shows that there need not exist an $S_k$-tree over $\cF'$ for every inessential star, even if $G$ is finite and $\cF = \cT_k^*$ for some~$k \in \N$. Thus, the additional assumptions on the separations in~$\sigma$ cannot be omitted. Moreover, \cref{ex:FinitelyBoundedIsNecessaryForAnSkTree} shows that we cannot omit the assumption that $\cF$ is finitely bounded.
\medskip

The remainder of this section is devoted to the proof of \cref{lem:ReflemForInfGraphsInessStars}, which we briefly sketch here.
The idea is to derive \cref{lem:ReflemForInfGraphsInessStars} from \cref{thm:TTDinASS}. In order to apply \cref{thm:TTDinASS}, we first reduce the problem to some finite separation system. 
For this, we define a subsystem~$S^\sigma_k \subseteq S_k$ that consists only of those separations of $G$ that are `relevant' for finding either an $\cF'$-tangle of~$S_k$ or an $S_k$-tree over~$\cF'$.
For a star $\sigma \subseteq \vS_k$, set
\[
\defnm{S^\sigma_k} := \{r \in S_k : \vs \leq \vr \text{ or } \vs \leq \rv \text{ for every } \vs \in \sigma\}.
\]

As we will see in a moment, $\vS^\sigma_k$ is finite and $\cF'$-separable if $\sigma$ is finite and has finite interior.
We can thus apply \cref{thm:TTDinASS} to $S^\sigma_k$ and $\cF'$, which yields either an $\cF'$-tangle of $S^\sigma_k$ or an $S^\sigma_k$-tree over~$\cF'$. By definition, an $S^\sigma_k$-tree over $\cF'$ is already an $S_k$-tree over~$\cF'$. 
The main part of the proof is then concerned with showing that every $\cF'$-tangle $\tau$ of $S_k^\sigma$ \defn{extends} to an $\cF'$-tangle of~$S_k$: that there exists an $\cF'$-tangle $\tau'$ of $S_k$ such that $\tau \subseteq \tau'$.

\begin{PROP}\label{prop:FiniteStarAndInteriorImpliesFiniteSkSigma}
    Given a graph $G$ and $k \in \N$, for every finite star $\sigma \subseteq \vS_k$ with finite interior the set~$S_k^\sigma$ is finite.
\end{PROP}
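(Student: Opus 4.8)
The plan is to show that $S_k^\sigma(G)$ is finite by bounding the possible separators of its members and, for each fixed separator, bounding the number of separations with that separator. First I would unwind the definition: a separation $r \in S_k^\sigma(G)$ satisfies, for every $\vs_i = (A_i,B_i) \in \sigma$, either $(A_i,B_i) \le \vr$ or $(A_i,B_i) \le \rv$. Writing $\vr = (C,D)$, the inequality $(A_i,B_i)\le(C,D)$ means $A_i \subseteq C$ and $D \subseteq B_i$; in particular $D \subseteq B_i$, so on the "$\le \vr$" side we get $D \subseteq B_i$, and symmetrically on the "$\le \rv$" side we get $C \subseteq B_i$. Since $\sigma$ is a star, the interior $\interior(\sigma) = \bigcap_i B_i$ is finite by hypothesis. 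Partition the index set $[n]$ into $I \sqcup J$, where $i \in I$ iff $(A_i,B_i)\le \vr$ and $i \in J$ iff $(A_i,B_i)\le\rv$ (if both hold put $i$ in $I$, say). Then $D \subseteq \bigcap_{i\in I} B_i$ and $C \subseteq \bigcap_{j\in J} B_j$.

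The key geometric observation is that every vertex of $V(G)$ lies in $C \cup D$, so $V(G) = C \cup D \subseteq \big(\bigcap_{j\in J}B_j\big) \cup \big(\bigcap_{i\in I}B_i\big)$; in particular $V(G) \setminus \big(\bigcap_{i\in I} B_i\big) \subseteq \bigcap_{j \in J} B_j$ and vice versa. Here I would want to argue that the "undetermined" part of $V(G)$ — the vertices not forced into $C$ or into $D$ by the star — is finite. More precisely, $C \cap D$ (the separator of $r$, of size $<k$) together with $C \setminus D$ and $D \setminus C$ partitions $V(G)$; the vertices of $D \setminus C$ all lie in $\bigcap_{i\in I} B_i$, and those in $C\setminus D$ all lie in $\bigcap_{j\in J} B_j$. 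The crucial point is that $\bigcap_{i\in I} B_i$ and $\bigcap_{j \in J}B_j$ each contain $\interior(\sigma)$ as a finite "core", but may be much larger. I should instead observe that the set $W := V(G) \setminus \bigcup_{i\in I}(A_i\setminus B_i) \setminus \bigcup_{j\in J}(A_j \setminus B_j)$ is the set of vertices not pinned down, and that for a star $\sigma$ the sets $A_i \setminus B_i$ are pairwise disjoint and their union is $V(G) \setminus \interior(\sigma')$ for the relevant sub-star; so $W$ is essentially $\interior(\sigma)$ together with possibly finitely many separator vertices. Then $C$ is determined up to a subset of this finite set $W$: $C = \bigcup_{j\in J}(A_j\setminus B_j) \cup (C\cap W)$, and since $W$ is finite there are only finitely many choices of $C\cap W$, hence only finitely many choices of $C$, hence only finitely many $r = \{C,D\}$.

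So the proof structure I would carry out is: (1) fix $r = \{C,D\} \in S_k^\sigma(G)$ and the induced partition $[n] = I \sqcup J$; (2) show $C \supseteq \bigcup_{j\in J}(A_j \setminus B_j)$ and $D \supseteq \bigcup_{i\in I}(A_i\setminus B_i)$ using the defining inequalities plus the star property $(A_i,B_i)\le(B_j,A_j)$ for $i \ne j$; (3) deduce that $C$ and $D$ differ from these fixed "lower bounds" only within the finite set $W = V(G) \setminus \bigcup_{i\in[n]}(A_i\setminus B_i)$, which equals $\interior(\sigma)$ up to finitely much and hence is finite because $\interior(\sigma)$ is finite and the star is finite; (4) conclude that for each of the $2^n$ partitions there are at most $2^{|W|}$ separations, so $S_k^\sigma(G)$ is finite. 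The main obstacle I anticipate is step (3): carefully verifying that $V(G) \setminus \bigcup_{i}(A_i \setminus B_i)$ is finite. This needs the fact that for a star the sets $A_i \setminus B_i$ are pairwise disjoint and $\bigcap_i B_i = \interior(\sigma)$, so that $V(G) \setminus \bigcup_i (A_i \setminus B_i) \subseteq \interior(\sigma) \cup \bigcup_i (A_i \cap B_i)$, which is a finite union of finite sets (each separator $A_i \cap B_i$ has size $<k$, there are $n$ of them, and $\interior(\sigma)$ is finite by hypothesis). Once that finiteness is in hand, the counting is immediate and the proposition follows.
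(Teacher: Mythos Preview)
Your approach is correct (up to a harmless swap of $I$ and $J$ in step~(2): from $i\in I$ you get $A_i\subseteq C$, so it is $C\supseteq\bigcup_{i\in I}(A_i\setminus B_i)$ and $D\supseteq\bigcup_{j\in J}(A_j\setminus B_j)$, not the other way round). In fact your step~(3) simplifies: since $A_i\cup B_i=V(G)$, we have $V(G)\setminus(A_i\setminus B_i)=B_i$, so $W=\bigcap_i B_i=\interior(\sigma)$ exactly, and the separator union $\bigcup_i(A_i\cap B_i)$ is not needed.

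The paper's proof follows the same idea but packages it differently: it observes that each $r\in S_k^\sigma(G)$ induces a separation of the finite graph $\torso(\sigma)$, and then that at most $2^{|\sigma|}$ members of $S_k^\sigma(G)$ can induce the same torso separation (the choice being, for each $(C,D)\in\sigma$, which side absorbs all components of $G-\interior(\sigma)$ inside $C\setminus D$). Your argument is the more elementary, purely set-theoretic version of this: you bypass torsos and components by noting directly that $V(G)\setminus\interior(\sigma)$ decomposes as the disjoint union of the $A_i\setminus B_i$, and that the partition $I\sqcup J$ forces the value of $C$ on each piece. The paper's formulation has the advantage of making the connection to $\torso(\sigma)$ explicit (a theme used elsewhere), while yours avoids introducing that machinery and gives the bound $2^{|\sigma|}\cdot 4^{|\interior(\sigma)|}$ in one line.
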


\begin{proof}
    By definition, every separation $\{A, B\} \in S_k^\sigma$ is nested with $\sigma$, and thus for every $(C,D) \in \sigma$ we have $C \cap D \subseteq A$ or $C \cap D \subseteq B$. Hence, every separation $\{A,B\} \in S^\sigma_k$ induces a separation $\{A \cap \interior(\sigma), B \cap \interior(\sigma)\}$ of $\torso(\sigma)$. As $\interior(\sigma)$ is finite, there are only finitely many separations of $\torso(\sigma)$. It thus suffices to show that only finitely many separations in $S_k^\sigma$ induce the same separation of $\torso(\sigma)$. 

    For this, let $\{A', B'\}$ be a separation of $\torso(\sigma)$. Then every separation of $G$ that induces $\{A', B'\}$ can be obtained from $\{A', B'\}$ by adding each component $K$ of $G - \interior(\sigma)$ to one side of $\{A',B'\}$ that contains~$N_G(K)$. It is straightforward to check that the arising separation will be in $S_k^\sigma$ if and only if we added for every separation $(C, D) \in \sigma$ all components of $G - \interior(\sigma)$ that are contained in $G[C \setminus D]$ to the same side of $\{A', B'\}$. It follows that at most $2^{\abs{\sigma}}$ separations in $S_k^\sigma$ induce the same separation of $\torso(\sigma)$. As $\sigma$ is finite, this concludes the proof.
\end{proof}

Given an $\cF'$-tangle $\tau$ of $S_k^\sigma$, we now inductively construct an $\cF'$-tangle $\tau'$ of $S_k$ with $\tau \subseteq \tau'$. For this, recall that $\sigma =: \{\vs_1, \dots, \vs_n\}$ is finite by assumption. So to define $\tau'$, we may proceed by extending~$\tau =: \tau_n$ step-by-step to an $\cF'$-tangle~$\tau_i$ of~$S_k^{\sigma_i}$ where $\sigma_i := \{\vs_1, \dots, \vs_i\}$ for $i \in [n]$ and~$\sigma_0 := \emptyset$. Then~$\tau_0$ will be the desired $\cF'$-tangle of~$S_k = S^{\sigma_0}_k$.

The following two lemmas describe how to obtain the tangle $\tau_{i-1}$ from $\tau_{i}$. We distinguish between two cases: whether $\sv_i$ is closely related to some $k$-profile that avoids $\cF$ or whether $\vs_i$ is $\ell$-robust on the small side. 

\begin{LEM}\label{lem:TangleInsideStar}
Let $G$ be a graph, $k \in \N$, and let $\cF$ be a set of stars in~$\vS_k$ that is strongly closed under shifting. Further, let~$\sigma \subseteq \vS_k$ be a star, and suppose there is some $\vs \in \sigma$ such that~$\sv$ is closely related to some $k$-profile in~$G$ that avoids~$\cF$. Set $\cF' := \cF \cup \{\{\rv\} : \vr \in \sigma\}$ and $\sigma' := \sigma \setminus \{\vs\}$. 
Then the following assertions hold:
    \begin{enumerate}[label=\rm{(\roman*)}]
        \item\label{itm:TangleInsideStar1} If $\tau'$ is an $\cF'$-tangle of $S^{\sigma'}_k$, then $\tau' \cap \vS^\sigma_k$ is an $\cF'$-tangle of $S^\sigma_k$.
        \item\label{itm:TangleInsideStar2} If $\tau$ is an $\cF'$-tangle of $S^{\sigma}_k$, then $\tau$ extends to an $\cF'$-tangle of $S^{\sigma'}_k$.
    \end{enumerate}
\end{LEM}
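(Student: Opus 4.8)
The plan is to unwind the definitions and check the two implications by direct verification, using the partial order on $\vS_k(G)$ together with the fact that $\cF$ is strongly closed under shifting. Throughout, write $\vr := \sv$ for the distinguished element, and recall that by hypothesis $\vr$ is closely related to some $k$-profile $P$ that avoids $\cF$, so $\vr \in P$ and $(\rv \wedge \vx) \in \vS_k(G)$ for all $\vx \in P$ — equivalently $(\sv \wedge \vx) \in \vS_k(G)$, which is exactly the side-condition needed to invoke \cref{prop:ShiftingForbiddenStars}.

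For part \ref{itm:TangleInsideStar1}, I would first observe that $S^\sigma_k(G) \subseteq S^{\sigma'}_k(G)$ since $\sigma' \subseteq \sigma$, so $\tau' \cap \vS^\sigma_k(G)$ is indeed an orientation of $S^\sigma_k(G)$; it is consistent and avoids $\cF$ because $\tau'$ is. The only thing to rule out is that $\tau' \cap \vS^\sigma_k(G)$ contains some $\{\rv_j\}$ with $\vr_j \in \sigma$, i.e.\ a leaf star for the new forbidden set $\cF' \setminus \cF$. For $\vr_j \in \sigma'$ this is immediate from $\tau'$ avoiding $\cF'$. The remaining case is $\{\sv\} = \{\vr\}$ itself: I would argue that since $\sv \in \sigma$ and $\sigma$ is a star, for every other $\vt \in \sigma$ we have $\sv \leq \tv$, so $\vr = \sv \in \vS^\sigma_k(G)$; now if $\vr \in \tau' \cap \vS^\sigma_k(G)$ then, because $\vr$ is closely related to $P$ and $\vr \in P$, and $P$ is a $k$-profile that avoids $\cF$ while $\tau'$ avoids $\cF'$, we reach a contradiction with the fact that $\tau'$, being an $\cF'$-tangle of $S^{\sigma'}_k(G)$ and $\vr \leq \vt$ for all $\vt \in \sigma'$, has already committed $\sv = \vr$ to one side — more precisely, $\tau'$ contains some orientation of $r$, and consistency together with $\tau' \supseteq$ (no leaf star $\{\vr_j\}$ for $\vr_j \in \sigma'$) forces $\vs = \rv \in \tau'$, whence $\rv \in \tau' \cap \vS^\sigma_k(G)$, not $\vr$. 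So $\{\sv\} \notin \tau' \cap \vS^\sigma_k(G)$, and part \ref{itm:TangleInsideStar1} follows.

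For part \ref{itm:TangleInsideStar2}, the real content, I would extend $\tau$ to $S^{\sigma'}_k(G)$ by shifting. Given $x \in S^{\sigma'}_k(G) \setminus S^\sigma_k(G)$, the defining condition fails only at $\vs$: we do not have $\vs \leq \vx$ or $\vs \leq \xv$. I would orient such $x$ using the profile $P$: set $\tau_{\text{ext}} := \tau \cup \{\, \vx \in \vS^{\sigma'}_k(G) : \vx \in P \text{ or } \vx \geq \vs_j \text{ for some } j\}$, being careful to define it so that it is a well-defined orientation — the clean way is to first note that for $x \in S^{\sigma'}_k(G)\setminus S^\sigma_k(G)$, nestedness with $\sigma'$ together with $\vr = \sv$ closely related to $P$ lets us use $P$'s orientation of $x$, and check that this is consistent with $\tau$ on the overlap. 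The key calculation is: for $\vx \in P \cap \vS^{\sigma'}_k(G)$, the shift $\vr \vee \vx = \sv \vee \vx$ lies in $\vS_k(G)$ (close relatedness of $\vr$ to $P$), and one uses \cref{prop:ShiftingForbiddenStars} — with $\vs$ in the role there taken by $\sv$, $P$ as the profile, and a forbidden star witnessing a failure — to show that no element of $\cF$ and no leaf star $\{\rv_j\}$ ($\vr_j \in \sigma'$) can sit inside $\tau_{\text{ext}}$: any such star would shift to a star of the same type home to $\tau$, contradicting that $\tau$ is an $\cF'$-tangle. Finally I would check $\{\sv\}$-avoidance is vacuous here since $\sv = \vr$ is no longer among the forbidden singletons (that singleton was in $\cF'$ only via $\sigma$, and $\cF'$ for $\sigma'$ drops it — I should double-check the lemma's $\cF'$ is the same set throughout, which it is: $\cF' = \cF \cup \{\{\rv\}:\vr\in\sigma\}$, so $\{\vs\}=\{\rv\}^{\,\smile}$... actually $\sv \in \sigma$ means $\{(\sv)^{\smile}\} = \{\vs\} \in \cF'$; one must verify $\vs \notin \tau_{\text{ext}}$, which holds because $\vr = \sv \in P \subseteq \tau_{\text{ext}}$ and orientations are consistent, so $\vs = \rv \notin \tau_{\text{ext}}$).

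The main obstacle I anticipate is part \ref{itm:TangleInsideStar2}: making the extension $\tau_{\text{ext}}$ manifestly an \emph{orientation} (each separation in $S^{\sigma'}_k(G) \setminus S^\sigma_k(G)$ oriented exactly once, compatibly with $\tau$), and then rerunning the shifting argument of \cref{prop:ShiftingForbiddenStars} to propagate "$\cF'$-avoidance" from $\tau$ to $\tau_{\text{ext}}$. The bookkeeping — that a forbidden star $\{\vt\}\cup\{\vr_i : i \in I\}$ inside $\tau_{\text{ext}}$ with $\tv \in \tau$ shifts under $s \mapsto \sv$ to a forbidden star of the same kind inside $\tau$, using that $\cF$ and each singleton family is closed under the shift — is where the argument has to be carried out with care, and where the hypotheses "$\cF$ strongly closed under shifting" and "$\sv$ closely related to a $k$-profile avoiding $\cF$" are both used essentially.
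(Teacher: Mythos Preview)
Part \ref{itm:TangleInsideStar1} is simpler than you make it: $\cF'$ is defined once, using the full star $\sigma$, so $\{\sv\} \in \cF'$ already (as $\vs \in \sigma$), and $\tau'$ avoiding $\cF'$ immediately rules out $\sv \in \tau'$. No separate argument for the case $\{\sv\}$ is needed; the paper dispatches this part in one line.

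For part \ref{itm:TangleInsideStar2} your plan has a genuine gap: orienting each $r \in S^{\sigma'}_k(G) \setminus S^\sigma_k(G)$ according to $P$ need not be consistent with $\tau$. For a concrete failure, take $r$ with $\vr < \vs$. Then $r \in S^{\sigma'}_k(G) \setminus S^\sigma_k(G)$ (since $\vt \leq \sv < \rv$ for all $\vt \in \sigma'$, while in general neither $\vs \leq \vr$ nor $\vs \leq \rv$). From $\sv \in P$ and $\sv < \rv$ the consistency of $P$ places no constraint on $P$'s orientation of $r$, so $\rv \in P$ is possible; your $\tau_{\text{ext}}$ then contains both $\rv$ and $\vs$ with $\vr < \vs$, an inconsistency. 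You correctly flag consistency as the obstacle, but the proposed definition cannot be repaired by a check after the fact: it is the wrong extension.

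The paper's extension does \emph{not} use $P$'s orientation of $r$ directly. For $r$ nested with $s$ but outside $S^\sigma_k(G)$ it puts the orientation $\vr \leq \vs$ into $\tau'$. For $r$ crossing $s$ it forms a corner $\vt := \vr \vee \vs$ if $\rv \in P$, or $\vt := \vr \wedge \sv$ if $\vr \in P$; close relatedness of $\sv$ to $P$ gives $|t| < k$, and \cref{lem:Fishlemma} gives $t \in S^\sigma_k(G)$, so $\tau$ already orients $t$, and one declares $\vr \in \tau'$ iff $\vt \in \tau$. Every new orientation is thus anchored in $\tau$, not in $P$, and this is precisely what makes the consistency verification go through. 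The profile $P$ serves only to choose which corner to take and, later, to supply an element $\rv_1 \in P$ of a putative forbidden star $\rho \subseteq \tau'$ so that \cref{prop:ShiftingForbiddenStars} shifts $\rho$ back into a forbidden star inside $\tau$.
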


\begin{proof}
By definition it is clear that every $\cF'$-tangle $\tau$ of $S^{\sigma'}_k$ induces an $\cF'$-tangle $\tau \cap \vS^\sigma_k$ of $S_k^\sigma$.

For \ref{itm:TangleInsideStar2}, let $\tau$ be an $\cF'$-tangle of $S_k^\sigma$. We extend $\tau$ to an orientation $\tau'$ of $S^{\sigma'}_k$ as follows. Let~$r \in S^{\sigma'}_k$ be given, and first assume that $r$ is nested with $s$.
Then either $r \in S_k^\sigma$, and we then let~$\vr \in \tau'$ if and only if $\vr \in \tau$, or $r$ has an orientation that is smaller than $\vs$, and we then let~$\vr \in \tau'$ if and only if $\vr \leq \vs$. 
Second, assume that $r$ and $s$ cross, and fix an orientation $\vr$ of $r$. By the assumption on~$\vs$, its inverse~$\sv$ is closely related to some $k$-profile $P$ in $G$ that avoids $\cF$.
Since $r \in S_k$, it is oriented by $P$; we set 
$\vt := \vr \vee \vs$ if $\rv \in P$ and $\vt := \vr \wedge \sv$ if $\vr \in P$.
As $\sv$ is closely related to $P$, it follows that $t$ has order $<k$, and thus $t \in S^\sigma_k$ by \cref{lem:Fishlemma}. Hence, $\tau$ contains an orientation of $t$; we let $\vr \in \tau'$ if $\vt \in \tau$, and~$\rv \in \tau'$ otherwise.

By definition,~$\tau'$ is an orientation of~$S^{\sigma'}_k$ and $\tau \subseteq \tau'$; in particular,~$\tau$ extends to~$\tau'$ and $\sigma \subseteq \tau$.
Hence, we are left to show that $\tau'$ is an $\cF$-tangle of $S^{\sigma'}_k$.
For this, suppose for a contradiction that there is a set $\rho \subseteq \tau'$ which has one of the following two forms: either~$\rho = \{\vr_i : i \in I\}$ is a star in~$\cF$, or $\rho = \{\vr_1, \vr_2\}$ with $\rv_1 < \vr_2$.

As $P$ is consistent and avoids $\cF$, we have $\rho \not\subseteq P$, and thus $P$ contains the inverse $\rv_i$ of a separation $\vr_i \in \rho$, say $\rv_1 \in P$. 
If~$\rho$ is a star in $\cF$, then $\vr_i \in P$ for all $\vr_i \in \rho \setminus \{\vr_1\}$ since~$P$ is consistent.
As~$\sv$ is closely related to the $k$-profile~$P$, it follows from \cref{prop:ShiftingForbiddenStars} that $\rho' := \{\vr_1 \vee \vs\} \cup \{(\vr_i \wedge \sv) : \vr_i \in \rho\setminus \{\vr_1\}\}$ is a star in~$\cF$. But then $\rho' \subseteq \tau$ by the definition of~$\tau'$, which contradicts that~$\tau$ is an $\cF'$-tangle of~$S_k^\sigma$.

Otherwise, if $\rho = \{\vr_1, \vr_2\}$ with $\rv_1 < \vr_2$, then, since $\sv$ is closely related to $P$, we have
$\vr_1 \vee \vs, \vr_2 \vee \vs \in \vS_k$ if $\rv_2 \in P$, or $\vr_1 \vee \vs, \vr_2 \wedge \sv \in \vS_k$ if $\vr_2 \in P$. By the definition of~$\tau'$, it follows that $\{\vr_1 \vee \vs, \vr_2 \vee \vs\} \subseteq \tau$ or $\{\vr_1 \vee \vs, \vr_2 \wedge \sv\} \subseteq \tau$, respectively, which contradicts that $\tau$ is consistent because $(\vr_1 \vee \vs)^* \leq \rv_1 < \vr_2 \leq \vr_2 \vee \vs$ and $(\vr_1 \vee \vs)^* = \rv_1 \wedge \sv \leq \vr_2 \wedge \sv$.
\end{proof}

\begin{LEM}\label{lem:TangleInsideStarComplicated}
    Let $G$ be any graph, $k, m \in \N$, and let $\cF$ be an $m$-bounded, nice set of stars in~$\vS_k$.
    Set $\ell := \max\{3k-2, k(k-1)m + m\}$, let $\sigma \subseteq \vS_k$ be a star, and suppose that some $(C,D) \in \sigma$ is $\ell$-robust on the small side. Set $\cF' := \cF \cup \{\{\rv\} : \vr \in \sigma\}$ and $\sigma' := \sigma \setminus \{(C,D)\}$. 
    Then the following assertions hold:
    \begin{enumerate}[label=\rm{(\roman*)}]
        \item\label{itm:TangleInsideStarComplicated1} If $\tau'$ is an $\cF'$-tangle of $S^{\sigma'}_k$, then $\tau' \cap \vS^\sigma_k$ is an $\cF'$-tangle of $S^\sigma_k$.
        \item\label{itm:TangleInsideStarComplicated2} If $\tau$ is an $\cF'$-tangle of $S^{\sigma}_k$, then $\tau$ extends to an $\cF'$-tangle of $S^{\sigma'}_k$.
    \end{enumerate}
\end{LEM}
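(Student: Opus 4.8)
The plan is to follow the template of the proof of \cref{lem:TangleInsideStar}, with $\vs := (C,D)$ now in the role of the ``nice'' separation, and with the combinatorial consequences of being left-$\ell$-robust taking over the role that a closely related $k$-profile played there. Assertion \ref{itm:TangleInsideStarComplicated1} is immediate: since $\sigma' \subseteq \sigma$ we have $S^\sigma_k(G) \subseteq S^{\sigma'}_k(G)$, so the restriction $\tau' \cap \vS^\sigma_k(G)$ of a consistent orientation of $S^{\sigma'}_k(G)$ is again a consistent orientation of $S^\sigma_k(G)$, and it avoids $\cF'$ as a subset of $\tau'$.

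For \ref{itm:TangleInsideStarComplicated2}, first note that $\sigma \subseteq \tau$ (in particular $\vs \in \tau$), because $\tau$ avoids $\cF'$ and $\{\rv\} \in \cF'$ for all $\vr \in \sigma$; also $\tau$ is regular, since $\{(V(G),X)\} \in \cF$ for all $|X| < k$ by niceness. The first ingredient is a consequence of left-$\ell$-robustness that already uses $\ell \ge 3k-2$: for every separation $\{A,B\}$ of $G$ of order $<k$ we have $C \cap D \subseteq A$ or $C \cap D \subseteq B$. This follows from a Menger-type argument with the fans witnessing that $(C,D)$ is left-$\ell$-robust: if $C \cap D$ met both $A \setminus B$ and $B \setminus A$, then the $\ell$ fans from the witness set $U$ (of size $\ell$) towards a vertex of $(C\cap D) \cap (B\setminus A)$, respectively of $(C\cap D) \cap (A\setminus B)$, would force each of $U \cap (A\setminus B)$, $U \cap (B\setminus A)$ and $U \cap A \cap B$ to have size $<k$, so $\ell = |U| < 3k-2$, a contradiction. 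With this at hand I would define $\tau'$ exactly as in \cref{lem:TangleInsideStar}: for $r \in S^{\sigma'}_k(G)$ nested with $s$, put $\vr \in \tau'$ iff $\vr \in \tau$ when $r \in S^\sigma_k(G)$, and iff $\vr \le \vs$ otherwise; for $r$ crossing $s$, use the claim to choose the orientation $\vr = (A,B)$ with $C \cap D \subseteq A$, observe that then $\vt := \vr \vee \vs$ has order $<k$ (its separator lies in $A \cap B$) and, being a corner of $r$ and $s$, lies in $S^\sigma_k(G)$ by \cref{lem:Fishlemma}, and put $\vr \in \tau'$ iff $\vt \in \tau$. Then $\tau \subseteq \tau'$, so $\tau$ extends to $\tau'$, and $\sigma \subseteq \tau'$, so $\tau'$ avoids $\{\{\rv\} : \vr \in \sigma\}$.

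It then remains to verify that $\tau'$ is consistent and avoids $\cF$. Consistency is a case analysis over whether the separations involved are nested with or cross $s$, using the order estimates above together with the consistency and regularity of $\tau$; this is routine. For the avoidance of $\cF$, suppose $\rho = \{\vr_i = (A_i,B_i) : i \in I\} \subseteq \tau'$ lies in $\cF$; since $\tau$ avoids $\cF$ we have $\rho \not\subseteq \tau$. Unwinding the definition of $\tau'$, for each $i$ the claim gives $C\cap D \subseteq A_i$ or $C\cap D \subseteq B_i$, and in the first case $\vr_i \vee \vs \in \tau$, in the second case $\vr_i \wedge \sv \in \tau$ — and in both cases the resulting separation has order $<k$. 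In the main case, where exactly one index, say $1$, satisfies $C\cap D \subseteq A_1$ and moreover $(C\cap D)\cap(A_1\setminus B_1) \ne \emptyset$, I would check the hypotheses of the shifting property: $\vs$ emulates $\vs \wedge \vr_1$ in $S_k(G)$ — using the claim once more, since any $\vx \ge \vs\wedge\vr_1$ of order $<k$ has $C\cap D$ in the small side of $\vs\wedge\vr_1$, hence in the small side of $\vx$, so $\vs\vee\vx$ has order $<k$ — and then, as $\cF$ is nice (hence strongly closed under shifting) and $|\vs\wedge\vr_1| \le 2k-2 < 2k-1$, conclude that the shift $\{\vs\vee\vr_1\} \cup \{\sv\wedge\vr_i : i \ne 1\}$ is a star in $\cF$; by the previous observation it is contained in $\tau$, contradicting that $\tau$ avoids $\cF$.

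The remaining configurations — no index with $(C\cap D)\cap(A_i\setminus B_i) \ne \emptyset$ (so that $C\cap D$ lies in every $B_i$, whence $C\cap D \subseteq \interior(\rho)$ and $|C\cap D| \le m$), or several such indices — are exactly where the stronger bound $\ell \ge k(k-1)m + m$ enters. Here one exploits that for a star $\rho$ the sets $U \setminus B_i$ are pairwise disjoint and each of size $<k$, while $|U \cap \interior(\rho)| \le |\interior(\rho)| \le m$; comparing this with $|U| = \ell$ bounds the number of ``bad'' members of $\rho$ and lets one reduce to (a variant of) the main case or derive a direct contradiction. I expect this bookkeeping — arranging, in every configuration of how the members of $\rho$ meet the separator $C\cap D$ and the interior of $\rho$, that the shift of $\rho$ is a genuine star in $\cF$ all of whose members have order $<k$ and which lies in $\tau$ — to be the main obstacle; once the orders are controlled, the argument closes along the finite template of \cref{lem:TangleInsideStar}.
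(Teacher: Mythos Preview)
Your approach has a genuine gap at the very first step. The claim that every separation $\{A,B\}$ of order $<k$ satisfies $C\cap D\subseteq A$ or $C\cap D\subseteq B$ is false in general, and the argument you sketch does not prove it: the $\ell$ fans witnessing left-$\ell$-robustness are $U$--$P_x$ fans, not $U$--$x$ fans, and the path $P_x$ (which merely ends in $x$) may have vertices on both strict sides of $\{A,B\}$. So even if $x\in B\setminus A$, a path in the fan starting from some $u\in U\cap(A\setminus B)$ may reach $P_x$ without ever crossing $A\cap B$, and your bound on $|U\cap(A\setminus B)|$ fails. Nothing in the definition of left-$\ell$-robust forces the vertices of $C\cap D$ to lie together on one side of an arbitrary low-order separation.

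The paper replaces this with the weaker (and provable) statement that one side of $\{A,B\}$ \emph{meets every path}~$P_x$; this is what $\ell\ge 3k-2$ actually buys. The order bound on the corner $(A\cup C,\,B\cap D)$ then no longer comes for free as ``separator $\subseteq A\cap B$'' but via submodularity: since each disjoint $P_x$ runs from $A\cap C$ into $C\cap D\subseteq B\cup D$, the opposite corner has order $\ge|C\cap D|$, whence $|(A\cup C)\cap(B\cap D)|\le|A\cap B|<k$. Accordingly, the extension $\tau'$ is defined not by which side contains $C\cap D$ but by which side meets every $P_x$; when both sides do, a separate argument (using $\cP'_k\subseteq\cF$) shows the two resulting corners are oriented compatibly by $\tau$. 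The final shift also hinges on locating some $(A,B)\in\rho$ whose small side meets every $P_x$ --- this is precisely where the second bound $\ell\ge k(k-1)m+m$ enters --- rather than on your case split over how $C\cap D$ meets the $A_i$. Since your definition of $\tau'$ and all subsequent steps rest on the incorrect claim, the proposal does not go through as written.
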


\begin{proof}
    By definition it is clear that every $\cF'$-tangle $\tau'$ of $S^{\sigma'}_k$ induces an $\cF'$-tangle $\tau' \cap \vS^\sigma_k$ of $S_k^\sigma$.

    For \ref{itm:TangleInsideStarComplicated2}, let~$\tau$ be an $\cF'$-tangle of~$S_k^\sigma$.  
    Fix a set $U \subseteq C$ of size $\ell$ and a family $\{P_x : x \in C \cap D\}$ of disjoint paths witnessing that $(C,D)$ is $\ell$-robust on the small side. 
    We define an orientation $\tau'$ of $S_k^{\sigma'}$ that extends~$\tau$, and then show that $\tau'$ is an $\cF'$-tangle. For this, let $\{A,B\} \in S^{\sigma'}_k$. If some $A$ meets every path~$P_x$ at least once, then we will let $\tau'$ orient $\{A,B\}$ in the same way as $\tau$ orients the corner $\{A \cup C, B \cap D\}$ of $\{A,B\}$ and $\{C,D\}$ (see \cref{fig:Reflemma}, and \eqref{eq:DefOfTau} below). Our first four claims show that this indeed defines an orientation of $S^{\sigma'}_k$.

    \begin{figure}[ht]
        \centering
        \begin{subfigure}{0.45\linewidth}
            \centering
            \includegraphics[width=0.9\linewidth]{TTDInfGraphsRefLemma.png}
            \caption{}
        \end{subfigure}
        \begin{subfigure}{0.45\linewidth}
            \centering
            \includegraphics[width=0.9\linewidth]{TTDInfGraphsRefLemma1.png}
            \caption{}
        \end{subfigure}
        \caption{Depicted is the definition of the orientation $\tau'$, as given by \eqref{eq:DefOfTau}. By \cref{claim:ExSideMeetingAllRays}, there is a side of $\{A,B\}$ that meets every path $P_x$ at least once, here $A$, and by \cref{claim:TauContainsAnOrient}, $\tau$ contains an orientation of the corner $\{A \cup C, B \cap D\}$ of $\{A,B\}$ and $\{C,D\}$ (depicted in pink). By \cref{claim:TriangleDefinesAnOrientation}, orienting $\{A,B\}$ in the same direction as $\tau$ orients $\{A \cup C, B \cap D\}$ yields an orientation $\tau'$ of $S_k^{\sigma'}$.}
        \label{fig:Reflemma}
    \end{figure}
    
    \begin{claim}\label{claim:ExSideMeetingAllRays}
        \emph{Every $\{A,B\} \in S_k$ has a side that meets every~$P_x$ at least once.}
    \end{claim}
    
    \begin{claimproof}
        Since $\ell \geq 3k-2$ and $|A \cap B| < k$, some strict side of $\{A,B\}$, say $A\setminus B$, contains a subset~$U'$ of~$U$ of at least $k$ vertices. 
        By the choice of $U$ and the $P_x$, there are at least $k$ pairwise internally disjoint $U'$--$P_x$ paths for every $x \in C \cap D$. Since $U' \subseteq A\setminus B$ and $|A \cap B| < k$, at least one of these paths is contained in~$A$. Thus, $A$ meets every~$P_x$ at least once.
    \end{claimproof}

    \begin{claim} \label{claim:CornerOfOrderLessThank}
        \emph{For every $\{A,B\} \in S_k$, if $A$ meets every $P_x$, then $\{A \cup C, B \cap D\}$ has order $<k$.}
    \end{claim}
    
    \begin{claimproof}
        By assumption, there are vertices $p_x \in A \cap V(P_x)$ for every $x \in C \cap D$. Since also $V(P_x) \subseteq C$ for every $x \in C \cap D$, we have $p_x \in A \cap C$, and thus every $P_x$ meets $A \cap C$. Additionally, as every~$P_x$ ends in $x \in C \cap D$, it also meets $B \cup D$. Since $P_x$ is connected and $\{A \cap C, B \cup D\}$ is a separation of $G$, it follows that $V(P_x) \cap ((A \cap C) \cap (B \cup D)) \neq \emptyset$ for all $x \in C \cap D$. Thus $\{A \cap C, B \cup D\}$ has order $\geq \abs{C \cap D}$, as the~$P_x$ are disjoint.
        By submodularity, this implies that $\{A \cup C, B \cap D\}$ has order $\leq \abs{A \cap B} < k$. 
    \end{claimproof}

    \begin{claim} \label{claim:TauContainsAnOrient}
        \emph{For every $\{A,B\} \in S^{\sigma'}_k$, if $A$ meets every $P_x$, then $\tau$ contains an orientation of $\{A \cup C, B \cap D\}$.}
    \end{claim}

    \begin{claimproof} 
        By \cref{claim:CornerOfOrderLessThank} and \cref{lem:Fishlemma}, $\{A \cup C, B \cap D\}$ is contained in $S_k^\sigma$, which clearly implies the assertion.
    \end{claimproof}

    \begin{claim}\label{claim:TriangleDefinesAnOrientation}
        \emph{If both sides $A$ and $B$ of some $\{A,B\} \in S_k^{\sigma'}$ meet every $P_x$ at least once, then either $(A \cup C, B \cap D)$, $(A \cap D, B \cup C) \in \tau$ or $(B \cup C, A \cap D), (B \cap D, A \cup C) \in \tau$.}
    \end{claim}

    \begin{claimproof}
        By \cref{claim:TauContainsAnOrient}, $\tau$ orients both separations, $\{A \cup C, B \cap D\}$ and $\{B \cup C, A \cap D\}$. 
        Since $\tau$ is consistent, it cannot contain both $(A \cup C, B \cap D)$ and $(B \cup C, A \cap D)$.
        We show that $D \cap (A \cup C) \cap (B \cup C)$ has size less than $k$. The assertion then follows as $\cP'_k \subseteq \cF$ because $\cF$ is nice, and $(C,D) \in \tau$ as well as $\{(C,D), (B \cap D, A \cup C), (A \cap D, B \cup C)\} \in \cP_k$. We have 
        \begin{equation*}
            D \cap (A \cup C) \cap (B \cup C) = D \cap ((A \cap B) \cup C) = (A \cap B \cap D) \cup (C \cap D) = (C \cap D) \dot\cup (A \cap B \cap (D \setminus C)),
        \end{equation*}
        and thus 
        \begin{equation*}
            \abs{D \cap (B \cup C) \cap (A \cup C)} = \abs{C \cap D} + (\abs{A \cap B} - \abs{A \cap B \cap C}) < \abs{C \cap D} + k - \abs{A \cap B \cap C}.
        \end{equation*}
        Since both $A$ and $B$ meet every~$P_x$, and because every $P_x$ is connected, also $A \cap B$ meets every~$P_x$. As all~$P_x$ are pairwise disjoint and contained in $G[C]$, it follows that $\abs{A \cap B \cap C} \geq |C \cap D|$. Hence, 
        \begin{equation*}
            \abs{D \cap (B \cup C) \cap (A \cup C)} < |C \cap D| + k - |C \cap D| = k.
        \end{equation*}
        This completes the proof of the claim.
    \end{claimproof}
    
    We now define an orientation $\tau'$ of $S_k^{\sigma'}$ as follows:

    \begin{equation}
    \begin{aligned}
        &\hspace{-3mm}\text{\emph{For every} } \{A,B\} \in S_k^{\sigma'}, \text{ \emph{if} } A \text{ \emph{meets every path} } P_x \text{ \emph{at least once, then we}}\\[-1mm]
        &\hspace{-3mm}\text{\emph{let} } (A,B) \in \tau' \text{ \emph{if} } (A \cup C, B \cap D) \in \tau, \text{ \emph{and} } (B,A) \in \tau' \text{ \emph{if} } (B \cap D, A \cup C) \in \tau.
    \end{aligned}
    \tag{$\triangle$} \label{eq:DefOfTau}
    \end{equation}
    
    \noindent By \cref{claim:ExSideMeetingAllRays,claim:TauContainsAnOrient,claim:TriangleDefinesAnOrientation}, $\tau'$ contains precisely one orientation of every separation in $S^{\sigma'}_k$. 
    We claim that~$\tau'$ is an $\cF'$-tangle of $S^{\sigma'}_k$ and that $\tau$ extends to $\tau'$, i.e.\ $\tau \subseteq \tau'$. 
    
    We first show the latter. For this, let $(A,B) \in \tau$ be given. Then $\{A,B\} \in S^{\sigma}_k$, which implies that either $(C, D) \leq (A,B)$ or $(C, D) \leq (B,A)$. In the first case, we have $V(P_x) \subseteq C \subseteq A$ for every $x \in C \cap D$. Moreover, $(A \cup C, B \cap D) = (A, B) \in \tau$, and thus $(A,B) \in \tau'$ by~\eqref{eq:DefOfTau}. Analogously, we find in the second case that $(B,A) \in \tau'$.

    Towards a proof that $\tau'$ is an $\cF'$-tangle of $S_k^{\sigma'}$, we first show that $\tau'$ is consistent.
    For this, suppose for a contradiction that there are $(A,B), (E,F) \in \tau'$ such that $(B,A) < (E,F)$. By \cref{claim:ExSideMeetingAllRays}, $\{A,B\}$ has a side that meets every $P_x$ at least once; we first assume that $B \cap V(P_x) \neq \emptyset$ for all $x \in C \cap D$. Then also $E \supseteq B$ meets every $P_x$, and thus, by~\eqref{eq:DefOfTau}, we have $(A \cap D, B \cup C), (E \cup C, F \cap D) \in \tau$, which contradicts that $\tau$ is consistent as $(B \cup C, A \cap D) \leq (E \cup C, F \cap D)$. The case that $F$ meets every~$P_x$ is symmetric, so we may assume that $A \cap V(P_x) \neq \emptyset \neq E \cap V(P_x)$ for every $x \in C \cap D$. Then $(A \cup C, B \cap D), (E \cup C, F \cap D) \in \tau$ by~\eqref{eq:DefOfTau}, which again contradicts that $\tau$ is consistent since $(B \cap D, A \cup C) \leq (B,A) < (E,F) \leq (E \cup C, F \cap D)$.
    
    It remains to show that $\tau'$ avoids $\cF'$. We have already seen that $\tau \subseteq \tau'$. Since $\tau$ avoids~$\cF'$, this implies that $\sigma \subseteq \tau \subseteq \tau'$.
    So suppose for a contradiction that there is a star $\rho \subseteq \tau'$ such that $\rho \in \cF$. 
    
    \begin{claim} \label{claim:SmallSideInForbiddenStarMeetingEveryPx}
        \emph{There is a separation $(A, B) \in \rho$ such that $A$ meets every path $P_x$ at least once.}
    \end{claim}
    
    \begin{claimproof}
        Let us first assume that there is a separation $(A,B) \in \rho$ whose strict small side $A\setminus B$ contains a set~$U'$ of $k$ vertices from $U$.
        By the choice of $U$ and the $P_x$, it follows that there are $k$ internally disjoint $U'$--$P_x$ paths, for every $x \in C \cap D$. 
        Since $U' \subseteq A\setminus B$ and $|A \cap B| < k$, at least one of these paths is contained in~$A$. Thus, $A$ meets every~$P_x$ at least once.

        Now suppose that no separation in $\rho$ contains more than $k-1$ vertices from $U$ in its strict small side. Since $|\interior(\rho)| \leq m$, it follows that $U \cap (A \setminus B) \neq \emptyset$ for at least $(\ell-m)/(k-1) = km$ separations $(A,B) \in \rho$. Let $\rho' \subseteq \rho$ be the set of these separations, and pick for each $(A,B) \in \rho'$ some $v_A \in U \cap (A \setminus B)$. Further, fix some $y \in C \cap D$. By the choice of $U$, there is a family $\{Q_{(A,B)} : (A,B) \in \rho'\}$ of internally disjoint paths such that $Q_{(A,B)}$ starts in~$v_A$ and ends in a vertex of $P_y$. As each $Q_{(A,B)}$ meets $A \cap B \subseteq \interior(\rho)$ in an internal vertex if $V(P_y) \cap A = \emptyset$, and since $|\interior(\rho)| \leq m$,~$P_y$ meets the small sides $A$ of at least $(k-1)m$ separations $(A,B) \in \rho'$. As $|C \cap D| < k$, iterating this argument for all $x \in C \cap D$ yields that the small side of some $(A,B) \in \rho' \subseteq \rho$ meets every~$P_x$.
    \end{claimproof}
    
    By \cref{claim:SmallSideInForbiddenStarMeetingEveryPx} there is a separation $(A,B) \in \rho$ whose small side $A$ meets every $P_x$ at least once in some vertex $p_x$. Since~$\rho$ is a star, we then have $p_x \in A \subseteq F$ for every $(E, F) \in \rho \setminus \{(A,B)\}$. Therefore, by~\eqref{eq:DefOfTau}, 
    \[
        \rho' := \{(A \cup C, B \cap D)\} \cup \{(E \cap D, F \cup C) : (E, F) \in \rho\setminus\{(A,B)\}\}
    \]
    is contained in $\tau$.
    
    We claim that $\rho' \in \cF$, contradicting that $\tau$ is an $\cF$-tangle of $S^{\sigma}_k$. For this, we show that $(C, D)$ emulates $(A \cap C, B \cup D)$ in $S_k$, from which the claim follows as $\cF$ is strongly closed under shifting and $\abs{(A \cap C) \cap (B \cup D)} \leq \abs{A \cap B} + \abs{C \cap D} \leq 2k - 2$.
    Indeed, let $\{E,F\} \in S_k$ such that $(A \cap C, B \cup D) \leq (E,F)$. 
    Then $p_x \in A \cap C \subseteq E$ for every $x \in C \cap D$, so $E$ meets every $P_x$. Then by \cref{claim:CornerOfOrderLessThank}, $\{E \cup C, F \cap D\}$ has order $<k$, which concludes the proof.
\end{proof}

For the proof of \cref{lem:ReflemForInfGraphsInessStars} it remains to show that $\vS^\sigma_k$ is $\cF'$-separable.

\begin{LEM} \label{lem:SkSigmaIsFSeparable}
    Let $G$ be a graph, $k \in \N$, and let $\sigma \subseteq \vS_k$ be a finite star. Suppose that every separation in $\sigma$ is either $(3k-2)$-robust on the small side or has an inverse that is closely related to some $k$-profile in $G$. If $\cF$ is a set of stars in $\vS_k$ and closed under shifting, then $\vS^\sigma_k$ is $\cF'$-separable where $\cF' := \cF \cup \{\{\sv\} : \vs \in \sigma\}$. 
\end{LEM}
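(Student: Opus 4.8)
The plan is to verify the defining property of $\cF'$-separability directly, by a minimum-order argument inside the separation system $\vS^\sigma_k(G)$. So fix two non-trivial separations $\vr \le \vr'$ in $\vS^\sigma_k(G) \setminus \{(V(G),V(G))\}$ with $\{\rv\},\{\vr'\} \notin \cF'$; I must produce some $\vt \in \vS^\sigma_k(G)$ with $\vr \le \vt \le \vr'$ such that $\vt$ emulates $\vr$ and $\tv$ emulates $\rv'$ in $\vS^\sigma_k(G)$ for $\cF'$. I would first observe that the part of this condition contributed by the singleton stars $\{\sv_i\} \in \cF' \setminus \cF$ is automatic for \emph{every} $\vt \in \vS^\sigma_k(G)$: such a $\vt$ is nested with each $\vs_i$, so every star obtained by shifting some $\{\sv_i\}$ over $\vt$ or over $\tv$ is either $\{\sv_i\}$ again or a trivial singleton $\{(V(G),X)\}$ with $X$ inside the separator of $\vs_i$, and all of these lie in $\cF'$ once we know that $\cF'$ is standard for $\vS^\sigma_k(G)$ — which has to be checked anyway in order to apply \cref{thm:TTDinASS}. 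Thus it remains to find $\vt$ as above that \emph{plainly} emulates $\vr$ (via $\vt$) and $\rv'$ (via $\tv$) in $\vS^\sigma_k(G)$ and in addition satisfies the shifting condition for the stars of $\cF$.

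For the construction I would take $\vt$ to be a separation of minimum order among those in $\vS^\sigma_k(G)$ with $\vr \le \vt \le \vr'$ (a nonempty set, as it contains $\vr$). To see that $\vt$ emulates $\vr$ in $\vS^\sigma_k(G)$, let $\vx \in \vS^\sigma_k(G)$ with $\vx \ge \vr$ and consider the two corners $\vt \vee \vx$ and $\vt \wedge \vx$. Both are nested with $\sigma$ by \cref{lem:Fishlemma}, a short case analysis shows they are even oriented compatibly with $\sigma$ — hence lie in $\vS^\sigma_k(G)$ as soon as their order is $<k$ — and $\vr \le \vt \wedge \vx \le \vt \le \vr'$. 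Submodularity gives $\abs{\vt \vee \vx} + \abs{\vt \wedge \vx} \le \abs{\vt} + \abs{\vx} < 2k$; if $\abs{\vt \wedge \vx} \ge k$ this already forces $\abs{\vt \vee \vx} < k$, and otherwise $\vt \wedge \vx$ lies in the set over which $\vt$ was chosen minimal, so $\abs{\vt \wedge \vx} \ge \abs{\vt}$ and hence $\abs{\vt \vee \vx} \le \abs{\vx} < k$. Either way $\vt \vee \vx \in \vS^\sigma_k(G)$, so $\vt$ emulates $\vr$ in $\vS^\sigma_k(G)$; applying the same argument to the inverses shows that $\tv$ emulates $\rv'$.

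What remains — the shifting condition for the stars of $\cF$ — is the only place where the hypotheses on the separations of $\sigma$ are genuinely used, and I expect it to be the main obstacle. The difficulty is that closure under shifting of $\cF$ is phrased for the ambient system $\vS_k(G)$, whereas $\vt$ only emulates $\vr$ in the restricted system $\vS^\sigma_k(G)$; bridging this amounts to showing that when an $\cF$-star lying suitably above $\vr$ in $\vS^\sigma_k(G)$ is shifted over $\vt$, the resulting star still belongs to $\cF$, and here the corner separations occurring along the way must be corrected to be nested with the separations of $\sigma$ without their orders reaching $k$. The minimum-order analysis above shows that any $\vs \in \sigma$ crossed in this process must be a ``gap'' separation, with $\vs \le \rv$ and $\vs \le \vr'$; and the assumption that such an $\vs$ is left-$(3k-2)$-robust or has an inverse closely related to a $k$-profile is exactly what controls the orders of these corrections — via \cref{prop:ShiftingForbiddenStars} in the closely related case and via the submodularity-and-connectivity estimates in the proof of \cref{lem:TangleInsideStarComplicated} in the robust case. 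This mirrors the corner bookkeeping in the proofs of \cref{lem:TangleInsideStar} and \cref{lem:TangleInsideStarComplicated}, and the passage to infinite graphs introduces nothing new beyond working with the finite system $S^\sigma_k(G)$ in place of a finite graph, exactly as in the finite-graph treatment of \cite{SARefiningInessParts}.
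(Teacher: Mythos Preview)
Your proposal is correct and follows essentially the same route as the paper: dispose of the singletons $\{\sv_i\}$ first, pick a minimum-order separation between $\vr$ and $\vr'$, verify emulation inside $\vS^\sigma_k(G)$ by submodularity and \cref{lem:Fishlemma}, and then use the hypotheses on $\sigma$ to upgrade emulation to the full $\vS_k(G)$ so that closure under shifting applies. For this last step the paper implements your ``correction'' idea by a clean extremality argument---choose a putative counterexample $\vx$ nested with as many $\vy\in\sigma$ as possible, then use left-$(3k{-}2)$-robustness or close-relatedness of the remaining crossed $\vy$ to replace $\vx$ by a corner nested with strictly more of $\sigma$---which is worth comparing to your sketch.
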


\begin{proof}
    Since every $\sv$ with $\vs \in \sigma$ is maximal in $\vS^\sigma_k$ with respect to the partial order on $\vS_k$, we have $\{\sv \vee \vt\} = \{\sv\} \in \cF'$ for all $\vt$ that emulate some $\vr \leq \sv$ in $\vS^\sigma_k$. Hence, it suffices to show that $\vS^\sigma_k$ is $\cF$-separable.
    So let $\vr, \vr' \in \vS^\sigma_k$ be given such that $\vr < \rv'$, and pick a separation $s \in \vS^\sigma_k$ of minimal order such that $\vr \leq \vs \leq \rv'$. We claim that $\vs$ and $\sv$ emulate $\vr$ and $\vr'$, respectively, in $\vS^\sigma_k$ for $\cF$. This clearly implies the assertion.

    We show that $\vs$ emulates $\vr$ in $\vS^\sigma_k$ for $\cF$; the other case is symmetric. By the choice of $s$ and \cref{lem:Fishlemma}, $\vx \wedge \vs$ has order at least $|s|$ for all $x \in S^\sigma_k$ with $\vx \geq \vr$. So $\vs \vee \vx \in \vS^\sigma_k$ by submodularity and \cref{lem:Fishlemma}, which implies that $\vs$ emulates $\vr$ in $\vS^\sigma_k$. Since $\cF$ is closed under shifting in $\vS_k$, it thus suffices to show that $\vs$ also emulates $\vr$ in $\vS_k$. 
    So suppose for a contradiction that there is some $\vx \in \vS_k$ with $\vx \geq \vr$ such that $\vx \vee \vs$ has order $\geq k$; since $\sigma$ is finite, we may choose $\vx$ so that $x$ and $y$ are nested for as many $\vy \in \sigma$ as possible. 
    Let $\sigma' \subseteq \sigma$ consist of all those $\vy \in \sigma$ such that $y, x$ cross. As $\vs \geq \vr$ and $\vr \in \vS^\sigma_k$, we have $\vr \leq \yv$ for all $\vy \in \sigma'$. 
    If also $\vs \leq \yv$ for all $\vy \in \sigma'$, then $\vs \wedge \vx \in \vS^\sigma_{\aleph_0}$. Since $\vr \leq \vs \wedge \vx \leq \vs \leq \rv'$, it follows by the choice of $s$ that $|\vs \wedge \vx| \geq |s|$. By submodularity, this implies $|\vs \vee \vx| \leq |x| < k$ as desired.

    Hence, we may assume that $\vy \leq \vs$ for some $\vy \in \sigma'$. Then by \cref{claim:ExSideMeetingAllRays,claim:CornerOfOrderLessThank} in the proof of \cref{lem:ReflemForInfGraphsInessStars} if $\vy$ is $(3k-2)$-robust on the small side or by definition if $\yv$ is closely related to a $k$-profile in $G$, one of $\vx \vee \vy$ and $\vx \wedge \yv$ has order $\leq |x|$; let $\vt$ be that corner. By \cref{lem:Fishlemma}, $t$ is nested with more separations in $\sigma$ than $x$, so we have that $|\vs \vee \vt| < k$ by the choice of $x$. Since $|\vs \vee \vt| \geq |\vs \vee \vx|$, which it is straightforward to check, this concludes the proof.
\end{proof}

We are now ready to prove \cref{lem:ReflemForInfGraphsInessStars}:

\begin{proof}[Proof of \cref{lem:ReflemForInfGraphsInessStars}]
    Since~$\sigma$ is finite and has finite interior,~$S_k^\sigma$ is finite by \cref{prop:FiniteStarAndInteriorImpliesFiniteSkSigma}. Moreover, by \cref{lem:SkSigmaIsFSeparable}, $\vS^\sigma_k$ is $\cF'$-separable. 
    Hence, we can apply \cref{thm:TTDinASS} to $S_k^\sigma$ and $\cF'$, which yields either an $\cF'$-tangle of~$S^\sigma_k$ or an~$S_k^\sigma$-tree over~$\cF'$. Let us first assume that there is an $\cF'$-tangle~$\tau'$ of~$S^\sigma_k$. 
    Then inductively applying \cref{lem:TangleInsideStar}~\ref{itm:TangleInsideStar2} and~\ref{lem:TangleInsideStarComplicated}~\ref{itm:TangleInsideStarComplicated2} yields that $\tau'$ extends to an $\cF'$-tangle~$\tau$ of~$S_k$ as desired. 
    So we may assume that there is an $S^\sigma_k$-tree $(T, \alpha)$ over $\cF'$.
    By definition, $(T, \alpha)$ is also an $S_k$-tree over $\cF'$, so we are left to show that every $\vs \in \sigma$ appears as a leaf separation of $(T, \alpha)$. For this, let $\vs \in \sigma$ be given, and set $O := \{\vr \in \vS^\sigma_k: \vr \leq \sv\}$. As $\vs \in \sigma$, the set $O$ is a consistent orientation of $S^\sigma_k$. Moreover, since either $\vs$ is $\ell$-robust on the small side and $\cF$ is $(\ell-1)$-bounded, or $\sv$ is contained in an $\cF$-tangle,~$O$ avoids~$\cF$. Thus,~$O$ has to live at a leaf of $T$, which then has to be associated with $\{\sv\}$; so $\vs$ is a leaf separation of~$(T, \alpha)$.
\end{proof}

\section{Tangle-tree duality in infinite graphs}\label{sec:TTDInfGraphs}

In this section we prove Theorems \ref{main:TTDLocFinGraphs}, \ref{main:TTDCountableGraphs} and \ref{main:TTDInfGraphs}. In fact, we prove the following more general duality theorem for $\cF$-tangles:

\begin{mainresult}\label{main:TTDInfGraphsFTangles}
Let $G$ be a graph, and let $k \in \N$. Further, let $\cF$ be a finitely bounded, nice set of stars in~$\vS_k$. Then exactly one of the following assertions holds:
\begin{enumerate}[label=\rm{(\roman*)}]
    \item\label{itm:TTDInfGraphsFTanglesTangle} There exists a principal $\cF$-tangle of $S_k$ which is not induced by an end of combined degree $<k$. 
    \item\label{itm:TTDInfGraphsFTanglesTree} There exists a weakly exhaustive $S_k$-tree over $\cF \cup \cU^\infty_k$.
\end{enumerate}
\end{mainresult}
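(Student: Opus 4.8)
The plan is to prove \cref{main:TTDInfGraphsFTangles} by combining the two main ingredients advertised in the introduction: a suitable starting \td\ of $G$, and the refinement lemma \cref{lem:ReflemForInfGraphsInessStars}. First I would dispatch the easy direction—that \ref{itm:TTDInfGraphsFTanglesTangle} and \ref{itm:TTDInfGraphsFTanglesTree} cannot both hold—by a standard argument: given a weakly exhaustive $S_k(G)$-tree $(T,\alpha)$ over $\cF\cup\cU^\infty_k$ and a principal $\cF$-tangle $\tau$ of $S_k(G)$, the orientation of $E(T)$ induced by $\tau$ points towards a node $t$ or an end $\varepsilon$ of $T$. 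If it points to a node $t$, then $\sigma_t\subseteq\tau$; since $\tau$ avoids $\cF$ we must have $\sigma_t\in\cU^\infty_k$, so $|\interior(\sigma_t)|<k$, and then \cref{lem:TanglesAtSmallCritVS}-style reasoning (or directly: $\tau$ principal forces, for $X:=\interior(\sigma_t)$, a separation $(V(G-K),V(K)\cup X)$ with $K$ a component of $G-X$, contradicting $\sigma_t\subseteq\tau$ by consistency since the $B_i$ of $\sigma_t$ cover all such components). If it points to an end $\varepsilon$ of $T$, then by weak exhaustiveness $\tau$ contains a weakly exhaustive increasing sequence of separations of order $<k$, so $\tau$ agrees with $\tau_{\varepsilon'}$ for the end $\varepsilon'$ of $G$ living in $\varepsilon$ (using \cref{prop:RayInTorsoExtends} or the bag structure), and \cref{prop:DegreeOfAnEndWitnessedBySeps}\cref{itm:DegreeWitnessingSeq:SeqImpliesDeg} gives $\Delta(\varepsilon')<k$; hence $\tau$ is induced by an end of combined degree $<k$, contradicting \ref{itm:TTDInfGraphsFTanglesTangle}.

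For the main direction, assume $G$ has no $\cF$-tangle of $S_k(G)$ as in \ref{itm:TTDInfGraphsFTanglesTangle}. I would invoke the structure theorem of Albrechtsen--Jacobs--Knappe--Pitz (cited as \cref{thm:StartingTD} in \cref{sec:TTDInfGraphs}) to obtain a \td\ $(T,\cV)$ of $G$ of adhesion $<k$ with finite parts such that every node $t$ with $\sigma_t\notin\cU^\infty_k$ is inessential (no $\cF$-tangle of $S_k(G)$ lives in $\sigma_t$) and has finite degree, and—crucially—such that every edge of $T$ whose node $t$ has $\sigma_t\notin\cU^\infty_k$ induces a separation that is either left-$\ell$-robust or has an inverse closely related to some $k$-profile avoiding $\cF$. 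Here $\ell:=\max\{3k-2,\ k(k-1)m+m\}$ for the bound $m$ witnessing that $\cF$ is finitely bounded. (The left-$\ell$-robustness of the relevant edges comes from \cref{lem:ContractingEdgesEllRobust} and \cref{prop:LeftRobustCrit}, which is exactly what these two results were set up for; the "closely related" alternative comes from \cref{prop:eff} applied to edges efficiently distinguishing two $k$-profiles.) For each inessential node $t$ of $T$, the star $\sigma_t$ is finite with finite interior and satisfies the hypotheses of \cref{lem:ReflemForInfGraphsInessStars} with $\cF':=\cF\cup\{\{\sv\}:\vs\in\sigma_t\}$. Since $\sigma_t$ is inessential, there is no $\cF'$-tangle of $S_k(G)$ extending an orientation living in $\sigma_t$—more precisely, an $\cF'$-tangle of $S_k(G)$ would restrict to an $\cF$-tangle living in $\sigma_t$, contradicting inessentiality—so \cref{lem:ReflemForInfGraphsInessStars} outputs a finite $S_k(G)$-tree $(T^t,\alpha^t)$ over $\cT_k^*\cup\{\{\sv\}:\vs\in\sigma_t\}\subseteq\cF\cup\{\{\sv\}:\vs\in\sigma_t\}$ in which every $\vs\in\sigma_t$ appears as a leaf separation.

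Finally I would assemble the global $S_k(G)$-tree by the "sticking together" construction indicated in the sketch (\cref{constr:StickingSkTreesTogether}): for each edge $e=\{t,s\}$ of $T$ inducing the separation $\{A,B\}$, identify the leaf edge $f$ of $T^t$ with $\alpha^t(f)=\{A,B\}$ with the leaf edge $f'$ of $T^s$ with $\alpha^s(f')=\{A,B\}$, deleting the two leaves; nodes $t$ of $T$ with $\sigma_t\in\cU^\infty_k$ are kept as single nodes (their associated star lies in $\cU^\infty_k$). Call the result $(T^*,\alpha^*)$. By construction every node of $T^*$ is associated either with a star in $\cT^*_k$ (interior nodes of some $T^t$) or with $\sigma_t\in\cU^\infty_k$ for a node $t$ of $T$, so $(T^*,\alpha^*)$ is an $S_k(G)$-tree over $\cT^*_k\cup\cU^\infty_k\subseteq\cF\cup\cU^\infty_k$. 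It remains to check weak exhaustiveness: any ray $R$ in $T^*$ either eventually stays inside a single finite $T^t$—impossible since $T^t$ is finite—or passes through infinitely many edges of $T$; on the latter edges the induced separations form a weakly exhaustive increasing sequence because the original \td\ $(T,\cV)$ has this property (the separations induced along a ray of $T$ are weakly exhaustive, being induced by a \td), and inserting the finite detours through the $T^t$ between consecutive such edges does not destroy monotonicity or the emptiness of the intersection of big sides. This yields \ref{itm:TTDInfGraphsFTanglesTree}.

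The main obstacle I anticipate is not any single step above but the bookkeeping in the sticking-together construction—verifying that the leaf separations match up consistently (each edge of $T$ must appear as a leaf separation in both $T^t$ and $T^s$, with compatible orientations, which is exactly why \cref{lem:ReflemForInfGraphsInessStars} is stated with "each $\vs_i$ appears as a leaf separation"), that the resulting object is genuinely a tree, and that weak exhaustiveness survives the insertion of the finite pieces. A secondary subtlety is the precise statement and invocation of \cref{thm:StartingTD}: one must make sure the starting \td\ can be chosen so that every relevant edge simultaneously has finite adhesion, lies on the "inessential" side appropriately, and induces a left-$\ell$-robust separation (or one whose inverse is closely related to a profile avoiding $\cF$)—this is where the hypothesis that $G$ has no $\cF$-tangle as in \ref{itm:TTDInfGraphsFTanglesTangle} is genuinely used, to rule out the non-robust, non-profile-related edges.
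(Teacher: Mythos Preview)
Your proposal is correct and follows essentially the same route as the paper: the paper also handles the exclusion direction by looking at the orientation of $E(T)$ induced by $\tau$ (it packages the ray case as a separate \cref{lem:TangleIsInducedByAnEnd}, which constructs a direction of $G$ and invokes the ends--directions correspondence, rather than appealing to an end of $T$ as you sketch), and for the main direction it likewise invokes \cref{thm:StartingTD}, applies \cref{lem:ReflemForInfGraphsInessStars} at each finite-degree node, and glues via \cref{constr:StickingSkTreesTogether}.

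Two small inaccuracies worth fixing. First, \cref{lem:ReflemForInfGraphsInessStars} outputs an $S_k(G)$-tree over $\cF\cup\{\{\sv\}:\vs\in\sigma_t\}$, not over $\cT^*_k\cup\{\{\sv\}:\vs\in\sigma_t\}$; you seem to have slipped into the special case $\cF=\cT^*_k$, and the inclusion $\cT^*_k\subseteq\cF$ is not part of being nice. Second, \cref{thm:StartingTD} does not itself assert that the finite-degree nodes are inessential---that is derived in the proof from the standing assumption that \ref{itm:TTDInfGraphsFTanglesTangle} fails (together with \cref{lem:NonPrincipalPkTanglesAreInducedByUFTangles} to rule out non-principal $\cF$-tangles living there, which is where niceness of $\cF$ via $\cP'_k\subseteq\cF$ and $\{(V(G),A)\}\in\cF$ is used). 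Also, \cref{thm:StartingTD}\ref{itm:StartingTDstronglyrelevant} already gives left-$\ell$-robustness for all the relevant edges, so the ``closely related'' alternative is not needed here.
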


For the proof of \cref{main:TTDInfGraphs}, we first show two auxiliary statements. We will use the first of them to prove that in \cref{main:TTDInfGraphsFTangles} not both, \cref{itm:TTDInfGraphsFTanglesTangle} and \cref{itm:TTDInfGraphsFTanglesTree}, can hold.

\begin{LEM}\label{lem:TangleIsInducedByAnEnd}
    Let $G$ be a graph, $k \in \N$, and let $((A_i, B_i))_{i \in \N}$ be a weakly exhaustive increasing sequence in~$\vS_k$.
    Then every principal, consistent orientation~$\tau$ of $S_k$ with $(A_i, B_i) \in \tau$, for every $i \in \N$, is induced by an end of~$G$ of combined degree $<k$.
\end{LEM}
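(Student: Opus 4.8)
The plan is to produce an end $\eps$ of $G$ with $\tau\subseteq\tau_\eps$; once this is done, $\tau_\eps$ contains the weakly exhaustive increasing sequence $((A_i,B_i))_{i\in\N}$, all of whose separations have order $\le k-1$, so $\Delta(\eps)\le k-1<k$ follows immediately from \cref{prop:DegreeOfAnEndWitnessedBySeps}~\cref{itm:DegreeWitnessingSeq:SeqImpliesDeg}. So the whole content of the lemma is to find such an~$\eps$.

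First some reductions. We may assume $G$ is infinite: if $G$ has fewer than $k$ vertices then taking $X:=V(G)$ shows no orientation of $S_k(G)$ can be principal, and if $G$ is finite with $|V(G)|\ge k$ then a short argument using only the consistency of~$\tau$ (if $(V(G),B)\in\tau$ then $\tau$ must contain both orientations of $\{B,V(G)\}\in S_k(G)$) shows $B_i\setminus A_i\ne\emptyset$ for all $i$, which is impossible for a decreasing sequence of nonempty subsets of a finite set with empty intersection. The same consistency argument gives $B_i\setminus A_i\ne\emptyset$ for all $i$ in general, and since the $B_i\setminus A_i$ are decreasing with empty intersection, each $G[B_i\setminus A_i]$ is in fact infinite. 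Next, for each $i$ put $X_i:=A_i\cap B_i$; applying principality to $X_i$ yields a component $K_i$ of $G-X_i$ with $(V(G-K_i),V(K_i)\cup X_i)\in\tau$, and using $(A_i,B_i)\in\tau$ together with consistency one checks that $K_i\subseteq B_i\setminus A_i$ and that $K_i$ is the unique such component. Replacing $(A_i,B_i)$ by the tight separation $(V(G-K_i),\,V(K_i)\cup N_G(K_i))\ge(A_i,B_i)$ and passing to a subsequence to restore monotonicity — here one must show, again by a consistency argument, that the chosen components $K_i$ may be taken nested — we may assume that each $G[B_i\setminus A_i]$ is connected and infinite, that $N_G(B_i\setminus A_i)=A_i\cap B_i$, and that $|A_i\cap B_i|<k$.

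The heart of the proof is now to extract from this layered structure a ray $R$ with a tail in $B_i\setminus A_i$ for every $i$. As $G$ need not be locally finite this does not follow directly from König's infinity lemma; instead I would argue that if no such ray existed then a suitable connected subgraph built from the sets $B_i\setminus A_i$ and their finite separators would be rayless and tough, hence finite by \cref{prop:RaylessToughGraphsAreFinite}, contradicting that the $B_i\setminus A_i$ are infinite with empty intersection. I expect this ray extraction to be the main obstacle.

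Let $\eps$ be the end containing $R$; by construction $(A_i,B_i)\in\tau_\eps$ for every~$i$, so it remains to show $\tau\subseteq\tau_\eps$, i.e.\ that $\eps$ lives on the big side of every $(A,B)\in\tau$. Suppose not, so some tail of $R$ lies in $A$, hence (as $A\cap B$ is finite) eventually in $A\setminus B$. Since each $G[B_i\setminus A_i]$ is connected, whenever $\{A,B\}$ crosses $\{A_i,B_i\}$ the finite set $A\cap B$ must meet $B_i\setminus A_i$; as the $B_i\setminus A_i$ are decreasing with empty intersection, this occurs for only finitely many $i$, so for large $i$ the separations $\{A,B\}$ and $\{A_i,B_i\}$ are nested. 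If $(A,B)\le(A_i,B_i)$ for some such $i$, then $A\setminus B\subseteq A_i$, so the tail of $R$ (which lies in both $A\setminus B$ and $B_i\setminus A_i$) would lie in $A_i\cap(B_i\setminus A_i)=\emptyset$, a contradiction. Otherwise $(A,B)\ge(A_i,B_i)$ for all large $i$, forcing $B\subseteq\bigcap_i B_i$, a finite set; ruling out this last possibility — where $\tau$ would contain a separation with finite big side — needs a further argument, for instance establishing en route that $\tau$ is a $k$-profile, or a direct consistency argument again exploiting principality and weak exhaustiveness. With $\tau\subseteq\tau_\eps$ in hand, the bound $\Delta(\eps)<k$ follows as indicated at the outset.
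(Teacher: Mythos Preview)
Your overall plan --- produce an end $\eps$ with $\tau \subseteq \tau_\eps$ and then invoke \cref{prop:DegreeOfAnEndWitnessedBySeps}~\cref{itm:DegreeWitnessingSeq:SeqImpliesDeg} for the degree bound --- is the paper's, and your final verification that $\tau\subseteq\tau_\eps$ is sound in spirit (though your nested-case analysis omits the orientations $(A,B)\le(B_i,A_i)$ and $(A,B)\ge(B_i,A_i)$, and the claim that $\bigcap_i B_i$ is finite needs the short observation that every such vertex lies in $\liminf_i X_i$).

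Where you diverge is in \emph{how} the end is found. The paper never constructs a ray. Instead it defines a \emph{direction} of $G$: for each finite $S\subseteq V(G)$ pick $i$ with $S\subseteq A_i$ (possible by weak exhaustiveness), use principality to select the component $C_i$ of $G-X_i$ with $(V(G-C_i),V(C_i)\cup X_i)\in\tau$, and let $f(S)$ be the component of $G-S$ containing $C_i$. Consistency makes $f$ a well-defined direction; then the Diestel--K\"uhn correspondence \cite{Ends}*{Theorem~2.2} hands you an end $\eps$, and a short consistency argument gives $\tau\subseteq\tau_\eps$.

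Your proposed route via \cref{prop:RaylessToughGraphsAreFinite} has a genuine gap at exactly the point you flag. There is no evident subgraph that becomes both tough and rayless under the assumption that no ray threads all the $K_i$: the $K_i$ themselves need not be tough, and even if $K_1$ contains rays, none of them need have tails in every later $K_i$. Turning a nested sequence $K_1\supseteq K_2\supseteq\cdots$ of connected sets with finite boundaries and empty intersection into a single ray is precisely the non-trivial content of the direction--end correspondence; \cref{prop:RaylessToughGraphsAreFinite} does not visibly yield it. Your reduction to nested tight separations is also more delicate than you indicate (showing $K_{i+1}\subseteq K_i$ requires a two-step consistency argument via the component $K'$ of $G-X_i$ containing $K_{i+1}$, not just passing to a subsequence), and becomes unnecessary once you work with directions.
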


\begin{proof}
    Let $\cS$ be the set of all finite subsets of $V(G)$. A \defn{direction} of $G$ is a map $f$ with domain $\cS$ that maps every $S \in \cS$ to a component of $G - S$ such that $f(S) \subseteq f(S')$ whenever $S' \subseteq S$. 
    Now $\tau$ defines a direction $f$ of $G$ as follows. Let $S \in \cS$ be given. Since $((A_i, B_i))_{i \in \N}$ is weakly exhaustive, there exists some $i \in \N$ such that $S \subseteq A_i$. As $\tau$ is principle, there further exists a component $C_i$ of $G - (A_i \cap B_i)$ such that $(V(G-C_i), V(C_i) \cup (A_i \cap B_i)) \in \tau$; and since $\tau$ is consistent, we have $C_i \subseteq G[B_i \setminus A_i]$. Now let $f(S) := C$ where $C$ is the unique component of $G - S$ that contains $C_i$. It is straightforward to check that $f$ is a direction because $\tau$ is consistent. 
    
    By \cite{Ends}*{Theorem 2.2}, there exists an end $\eps$ of $G$ such that for all $\{A,B\} \in S_{\aleph_0}$ we have $(A,B) \in \tau_{\eps}$ if and only if $f(A \cap B) \subseteq G[B\setminus A]$. 
    We claim that $\tau$ is induced by $\eps$. Indeed, let $(A,B) \in \tau$ be given. Since $A \cap B$ is finite and $((A_i, B_i))_{i \in \N}$ is weakly exhaustive, there exists some $i \in \N$ such that $A \cap B \subseteq A_i$.
    As $C_i$ is connected and avoids $A \cap B \subseteq A_i$, either $C_i \subseteq G[B\setminus A]$ or $C_i \subseteq G[A\setminus B]$. In fact, because $\tau$ is consistent, we find $C_i \subseteq G[B\setminus A]$, and hence $(A,B) \leq (V(G-C_i), V(C_i) \cup (A_i \cap B_i))$. It follows that $(A,B) \in \tau_{\eps}$ since $\tau_{\eps}$ is consistent and $(V(G-C_i), V(C_i) \cup (A_i \cap B_i)) \in \tau_{\eps}$ by the choice of $\eps$.

    The assertion now follows since $\Delta(\eps) < k$ by \cref{prop:DegreeOfAnEndWitnessedBySeps}~\cref{itm:DegreeWitnessingSeq:SeqImpliesDeg}.
\end{proof}

In order to show that in \cref{main:TTDInfGraphsFTangles} at least one of \ref{itm:TTDInfGraphsFTanglesTangle} and \ref{itm:TTDInfGraphsFTanglesTree} holds, we will construct for every graph with no tangles as in \ref{itm:TTDInfGraphsFTanglesTangle} an $S_k$-tree as in \ref{itm:TTDInfGraphsFTanglesTree}. For this, we need the following theorem which follows easily from a result of the author, Jacobs, Knappe and Pitz \cite{LinkedTDInfGraphs}:

\begin{THM}\label{thm:StartingTD}
    Let $G$ be a graph, $\ell \in \N$, and suppose there exists some $k \in \N$ such that every end of $G$ has combined degree $<k$ and every critical vertex set of $G$ has size $<k$. Then $G$ has a \td~$(T, \cV)$ of adhesion $<k$ into finite parts such that
    \begin{enumerate}[label=\rm{(\roman*)}]
        \item\label{itm:StartingTDinfdegree} for every node $t$ of $T$ of infinite degree its associated star $\sigma_t$ is in $\cU^\infty_k$, and infinitely many $(A,B) \in \sigma_t$ are tight on the left side and satisfy $A \cap B = V_t$,
        \item\label{itm:StartingTDends} every end $\eta$ of $T$ is home to a unique end $\eps$ of $G$, and we have $\liminf_{i \in \N} |V_{r_i} \cap V_{r_{i+1}}| = \Delta(\eps)$ for every $\eta$-ray $R = r_1 r_2 \dots$ in $T$,
        \item \label{itm:StartingTDstronglyrelevant} for every edge $\ve = (t, s)$ in $T$, if
        $\deg_T(s) < \infty$,
        then the separation induced by~$\ve$ is $\ell$-robust on the small side, and
        \item \label{itm:StartingTD:otherProperties}
        $(T, \cV)$ is tight and displays the infinities\footnote{See \cite{LinkedTDInfGraphs}*{\S2} for definitions. We remark that it is not important what these properties actually mean, we only need them once in \cref{sec:RefiningEssStars} to apply \cite{LinkedTDInfGraphs}*{Lemma~8.6} to this \td.}.
    \end{enumerate}
\end{THM}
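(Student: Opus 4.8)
The plan is to deduce this from the relevant existence result of \cite{LinkedTDInfGraphs}, translating its generic conclusions into the four listed properties after using the hypotheses to bound the adhesion. We may assume $G$ is connected; for disconnected $G$ one applies the argument to each component and joins the resulting decomposition trees at a new root with empty bag. First I would invoke the relevant theorem of \cite{LinkedTDInfGraphs}, applied with the parameter $\ell$, to obtain a tree-decomposition $(T,\cV)$ of $G$ into finite parts that is tight, linked, and displays the infinities, and which moreover has the following features: its infinite-degree nodes are exactly the nodes $t$ with $V_t$ a critical vertex set of $G$, and then infinitely many of the edges at $t$ induce a left-tight separation with separator $V_t$; and its ends are in bijection with the ends of $G$ via the ``lives in'' relation, in such a way that along any $\eta$-ray $R = r_1 r_2 \dots$ of $T$ the separations induced by its edges form a weakly exhaustive increasing sequence in $\tau_\eps$ with $\liminf_{i\in\N}|V_{r_i}\cap V_{r_{i+1}}| = \Delta(\eps)$, where $\eps$ is the end of $G$ corresponding to the end $\eta$ of $T$. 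Granting this, \cref{itm:StartingTD:otherProperties} is immediate, and \cref{itm:StartingTDends} follows since $\Delta(\eps)<\infty$ for every end $\eps$ of $G$ by hypothesis.

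Next I would verify the adhesion bound. Every adhesion set of $(T,\cV)$ equals $V_t\cap V_s$ for an edge $\{t,s\}$, i.e.\ the separator of the separation it induces. If $t$ or $s$ has infinite degree, say $t$, then $V_t$ is a critical vertex set of $G$ and $V_t\cap V_s\subseteq V_t$, so $|V_t\cap V_s|<k$ by hypothesis. Otherwise the edge lies on some ray of $T$ towards an end $\eta$; by the features above the induced separations along that ray have orders with $\liminf$ equal to $\Delta(\eps)<k$ for the corresponding end $\eps$, and since $(T,\cV)$ is linked this bound is already attained at $\{t,s\}$, so again $|V_t\cap V_s|<k$. (Alternatively, an adhesion set of size $\geq k$ would, via \cref{prop:DegreeOfAnEndWitnessedBySeps}~\cref{itm:DegreeWitnessingSeq:SeqImpliesDeg}, produce an end of $G$ of combined degree $\geq k$, contradicting the hypothesis.) Hence $(T,\cV)$ has adhesion $<k$; in particular each star $\sigma_t$ lies in $\vS_k(G)$, which makes \cref{itm:StartingTDinfdegree} immediate: for an infinite-degree node $t$ the star $\sigma_t=\{(U_{t'},U_t):(t',t)\in\vE(T)\}$ is infinite, lies in $\vS_k(G)$, and has $\interior(\sigma_t)=V_t$ of size $<k$, so $\sigma_t\in\cU^\infty_k$, and infinitely many of its members are left-tight with separator $V_t$.

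It remains to establish \cref{itm:StartingTDstronglyrelevant}, which is the main obstacle. Fix an oriented edge $\ve=(t,s)$ with $\deg_T(s)<\infty$ and put $(A,B):=(U_t,U_s)$; the task is to see that the ``$t$-side'' of $T$ witnesses left-$\ell$-robustness of $(A,B)$. If following $T$ away from $s$ through $t$ reaches an end $\eta$ of $T$, there is an $\eta$-ray on that side which is home to an end $\eps$ of $G$ with $\Delta(\eps)<k$, the edges of that ray induce a weakly exhaustive increasing sequence in $\tau_\eps$ with $\liminf$ of orders equal to $\Delta(\eps)$, and by \cref{lem:ContractingEdgesEllRobust} cofinitely many of these separations are $\ell$-robust; as $(A,B)$ lies below each of them and $(T,\cV)$ is tight and linked, $(A,B)$ is left-$\ell$-robust. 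Otherwise there is an infinite-degree node $v$ on the $t$-side; then by \cref{itm:StartingTDinfdegree} and the way the tight components of $G-V_v$ distribute over the subtrees at $v$, infinitely many of them lie in $G[A\setminus B]$, and since $A\cap B\subseteq V_v$, \cref{prop:LeftRobustCrit} gives that $(A,B)$ is left-$\ell$-robust. The delicate point --- and the reason $(T,\cV)$ must depend on $\ell$ --- is to rule out the remaining possibility, that a finite-degree-side edge leads to a \emph{finite} dead-end subtree of $T$ with no end-ward ray and no infinite-degree node beyond it: the construction of \cite{LinkedTDInfGraphs} avoids this by absorbing such finite subtrees into neighbouring bags. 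Verifying that this absorption can be carried out compatibly with all the other features --- linkedness, tightness, and the correspondences with ends and critical vertex sets --- is where the bulk of the work lies; the rest is translating \cite{LinkedTDInfGraphs} under the standing hypotheses.
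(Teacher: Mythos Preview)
Your overall plan---derive everything from the existence theorem in \cite{LinkedTDInfGraphs}---matches the paper, but the paper's proof is considerably shorter: it first observes that the hypothesis ``every end has combined degree $<k$'' forces $G$ to have no half-grid minor and hence (via \cite{halin78} and \cite{LinkedTDInfGraphs}*{Theorem~2.2}) finite tree-width; this is needed before \cite{LinkedTDInfGraphs}*{Theorem~4'} can be invoked, and you skip it. After that the paper simply cites \cite{LinkedTDInfGraphs}*{Theorem~4' \& Lemma~8.3}: properties~(iii) and~(iv) are read off Lemma~8.3 directly, (i)~and~(ii) follow from ``displays the infinities'' together with finiteness of the bags, and the adhesion bound comes from property~(L1) of Theorem~4', which says each adhesion set is \emph{linked} to an end or a critical vertex set and hence has size bounded by its combined degree or size.

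Your attempt to prove~(iii) from scratch has genuine gaps. In the ``ray towards an end'' case, \cref{lem:ContractingEdgesEllRobust} gives you an $\ell$-robust separation $(A_i,B_i)$ further along the ray, but the witnessing paths $P_x$ run through the separator $A_i\cap B_i$, whereas left-$\ell$-robustness of $(A,B)=(U_t,U_s)$ requires paths $P_x$ indexed by $x\in A\cap B$, a different set; the sentence ``as $(A,B)$ lies below each of them and $(T,\cV)$ is tight and linked'' does not bridge this, since linkedness only guarantees as many disjoint paths as the minimum adhesion along the way, and even then you must rebuild the $\ell$ many $U$--$P_x$ paths inside $G[(A\setminus B)\cup\{x\}]$. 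In the ``infinite-degree node $v$ on the $t$-side'' case, the claim $A\cap B\subseteq V_v$ is false in general: adhesion sets along a path in $T$ need not be nested, so \cref{prop:LeftRobustCrit} does not apply with $X=V_v$. And in the ``finite dead-end'' case you correctly identify the obstruction but do not resolve it. These difficulties are precisely why \cite{LinkedTDInfGraphs}*{Lemma~8.3} exists; the paper cites it rather than redoing the work. Your adhesion argument has a smaller version of the same issue: an edge between two finite-degree nodes need not lie on any ray of $T$, so your first case split is incomplete---the paper instead uses (L1) directly.
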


\begin{proof}
   Since all ends of $G$ have finite degree, $G$ contains no half-grid minor, and in particular no subdivision of $K_{\aleph_0}$. 
   So by \cite{halin78}*{Theorem 10.1} and \cite{LinkedTDInfGraphs}*{Theorem~2.2}, $G$ has finite tree-width, that is, admits a \td\ into finite parts.
   Thus, by \cite{LinkedTDInfGraphs}*{Theorem 4' \& Lemma~8.3}, $G$ admits a \td\ $(T, \cV)$ into finite parts which satisfies \ref{itm:StartingTDinfdegree} -- \ref{itm:StartingTD:otherProperties}. Indeed, \cref{itm:StartingTDstronglyrelevant} and \cref{itm:StartingTD:otherProperties} follow immediately by \cite{LinkedTDInfGraphs}*{Lemma~8.3}.
   \cref{itm:StartingTDinfdegree} and \cref{itm:StartingTDends} hold because all bags of $(T, \cV)$ are finite and $(T, \cV)$ \emph{displays the infinities of~$G$}\footnote{\label{note:Def}See \cite{LinkedTDInfGraphs}*{\S2} for a definition.}.  
   Moreover, by \cite{LinkedTDInfGraphs}*{Theorem 4' (L1)}, all adhesion sets of $(T, \cV)$ are either \emph{linked}\footnoteref{note:Def} to an end or a critical vertex set of $G$. Since all ends of $G$ have combined degree $<k$ and all critical vertex sets of $G$ have size~$<k$, this implies that all adhesion sets of $(T, \cV)$ have size $<k$.
\end{proof}

In the proof of \cref{main:TTDInfGraphsFTangles}, if a graph has no tangles as in \cref{itm:TTDInfGraphsFTanglesTangle}, we will apply \cref{lem:ReflemForInfGraphsInessStars} to the stars associated with the nodes of the \td\ $(T, \cV)$ from \cref{thm:StartingTD}. We then obtain an $S_k$-tree as in \cref{itm:TTDInfGraphsFTanglesTree} by sticking the $S_k$-trees obtained from \cref{lem:ReflemForInfGraphsInessStars} together along $T$ as follows:

\begin{construction} \label{constr:StickingSkTreesTogether}
    Let $(T, \cV)$ be a \td\ of a graph $G$ of adhesion $< k \in \N$, and let~$\cF$ be a set of stars in $\vS_k$. Further, let $U \subseteq V(T')$ be a set of nodes of $T$. Assume that for every node $t \in U$, we are given a weakly exhaustive $S_k$-tree $(T^t, \alpha^t)$ over $\cF \cup \{\{\sv\} : \vs \in \sigma_t\}$ in which each $\vs \in \sigma_t$ appears as a leaf separation. Assume further that the stars $\sigma_t$ associated in $T$ with nodes $t \in V(T)\setminus U$ are contained in $\cF$. 

    Set $T^t := T[N_{T}(t) \cup \{t\}]$ and $\alpha^t((s,t)) := (U_s,U_t)$ for all $t \in V(T) \setminus U$.
    We obtain the tree $T'$ from the disjoint union of the trees~$T^t$ by identifying for every edge $e = \{t_1, t_2\}$ of $T$ the nodes $s_1 \in T^{t_1}$ and $u_2 \in T^{t_2}$ as well as $s_2 \in T^{t_2}$ and $u_1 \in T^{t_1}$ where $(u_i, s_i)$ is the unique (leaf) edge of $T^{t_i}$ such that $\alpha^{t_i}(u_i, s_i) = (U_{t_i}, U_{t_{3-i}})$. 
    For each edge~$e$ of $T'$, we then set~$\alpha'(\ve)$ to be~$\alpha^t(\ve)$, where~$t$ is a node of~$T$ such that~$e \in E(T^t)$. It is straightforward to check that $(T', \alpha')$ is an $S_k$-tree over $\cF$. Moreover, since $(T, \cV)$ is a \td\ of $G$ and because each $(T^t, \alpha^t)$ is weakly exhaustive, $(T', \alpha')$ is weakly exhaustive.
\end{construction}

We are now ready to prove \cref{main:TTDInfGraphsFTangles}.

\begin{proof}[Proof of \cref{main:TTDInfGraphsFTangles}]
We first show that not both, \ref{itm:TTDInfGraphsFTanglesTangle} and \ref{itm:TTDInfGraphsFTanglesTree}, can hold for $G$. For this, suppose that there is a weakly exhaustive $S_k$-tree $(T, \alpha)$ over~$\cF$ as in \cref{itm:TTDInfGraphsFTanglesTree}, and let $\tau$ be any consistent orientation of $S_k$. We claim that $\tau$ is not a tangle as in \cref{itm:TTDInfGraphsFTanglesTangle}.
Indeed, since~$\tau$ is consistent, it induces via $\alpha^{-1}$ a consistent orientation~$O$ of~$E(T)$. It follows that $O$ either contains a sink or a directed ray. If~$O$ contains a sink, that is, if there is a node $t$ of~$T$ all whose incident edges are oriented inwards by~$O$, then $\sigma_t \subseteq \tau$. But~$T$ is over~$\cF \cup \cU^\infty_k$, and thus~$\tau$ is either not principal or not an $\cF$-tangle.
Otherwise, if $O$ contains a directed ray $R = r_1 r_2 ...$, then, since $(T, \alpha)$ is weakly exhaustive, $\tau$ contains an infinite weakly exhaustive increasing sequence $(\alpha(r_i, r_{i+1}))_{i \in \N}$ of separations of order $<k$. It follows, by \cref{lem:TangleIsInducedByAnEnd}, that $\tau$ is either not principal or induced by an end of $G$ of combined degree $< k$. 

We now show that at least one of \cref{itm:TTDInfGraphsFTanglesTangle} and \cref{itm:TTDInfGraphsFTanglesTree} holds. For this, suppose that \cref{itm:TTDInfGraphsFTanglesTangle} does not hold: that all principal $\cF$-tangles of~$S_k$ are induced by ends of combined degree $< k$. 
We show that then \cref{itm:TTDInfGraphsFTanglesTree} must hold: that there exists a weakly exhaustive $S_k$-tree over $\cF \cup \cU_k^\infty$.
By \cref{prop:EndsInduceFTangles,lem:TanglesAtLargeCritVS}, all ends of $G$ have combined degree $<k$, and all critical vertex sets of $G$ have size $<k$. 
We may thus apply \cref{thm:StartingTD}, which yields a \td~$(T, \cV)$ of $G$ of adhesion $<k$ into finite parts.
Let $t$ be a node of $T$ finite degree. Since $\interior(\sigma_t) = V_t$ is finite, and because of \cref{thm:StartingTD}~\ref{itm:StartingTDstronglyrelevant},~$\sigma_t$ satisfies the premise of \cref{lem:ReflemForInfGraphsInessStars}. 
Since $\sigma_t$ is not home to any ends as $\interior(\sigma_t) = V_t$ is finite, and since~$G$ does not contain any principal $\cF$-tangles in~$G$ of order~$k$ that are not induced by an end, the star~$\sigma_t$ cannot be home to any principal $\cF$-tangles of order $k$. 
Moreover, since $\cF$ is nice, and hence $\cP'_k \subseteq \cF$ as well as $\{(V(G), A)\} \in \cF$ for all sets $A$ of fewer than $k$ vertices, $\sigma_t$ can also not be home to any non-principal $\cF$-tangle by \cref{lem:NonPrincipalPkTanglesAreInducedByUFTangles}.
Hence, applying \cref{lem:ReflemForInfGraphsInessStars} to $\sigma_t$ yields a finite $S_k$-tree $(T^t, \alpha^t)$ over $\cF' := \cF \cup \{\{\sv\} : \vs \in \sigma_t\}$ in which each $\vs \in \sigma_t$ appears as a leaf separation.

Since $\sigma_t \in \cU_k^\infty$ for all infinite-degree nodes $t$ of $T$ by \cref{thm:StartingTD}~\cref{itm:StartingTDinfdegree}, applying \cref{constr:StickingSkTreesTogether} to $(T, \cV)$ and the $(T^t, \alpha^t)$ yields a weakly exhaustive $S_k$-tree over $\cF \cup \cU_k^\infty$, as desired.
\end{proof}

\begin{proof}[Proof of \cref{main:TTDInfGraphs}]
    By \cref{lem:UkStronglyClosedUnderShifting}, $\cT^*_k$ is nice. As the $\cT^*_k$-tangles of $S_k$ are precisely the $k$-tangles in~$G$ if $|G| \geq k$ \cite{TangleTreeGraphsMatroids}*{Lemma~4.2}, the assertion follows immediately by applying \cref{main:TTDInfGraphsFTangles} to the $(3k-3)$-bounded, nice set $\cT^*_k$.
\end{proof}

\begin{proof}[Proof of \cref{main:TTDLocFinGraphs}]
    By definition, $\cU^\infty_k$ is empty if $G$ is locally finite. Moreover, since every $k$-tangle is a $k$-profile, inductively applying the profile property yields that every $k$-tangle in a locally finite graph is principal (see also \cite{DiestelBook16noEE}*{Exercise~43 in Ch.\ 12}). 
    The assertion thus follows immediately from \cref{main:TTDInfGraphs}.
\end{proof}

\begin{proof}[Proof of \cref{main:TTDCountableGraphs}]
    By \cref{main:TTDInfGraphs}, it is enough to show that if \cref{itm:TTDInf:Tree} of \cref{main:TTDInfGraphs} holds, then also \cref{itm:TTDInf:Tree} of \cref{main:TTDCountableGraphs} holds. For this, assume that $(T, \alpha)$ is a weakly exhaustive $S_k$-tree over $\cT^* \cup \cU^\infty_k$.
    If $(T, \alpha)$ is even over $\cT^*$, then we are done. Otherwise we define an $S_k$-tree $(T', \alpha')$ as follows. 

    By pruning the tree $T$ if necessary, we may assume that $(T, \alpha)$ is \defn{irredundant}: for every node $t$ of $T$ and neighbours $t', t''$ of $t$ we have $\alpha(t', t) = \alpha(t'', t)$ if and only if $t' = t''$.\footnote{Pick any node $r$ of $T$ and for every separation $(A,B) \in \sigma_r$ a neighbour $t_{(A,B)}$ of $r$ such that $\alpha(t_{(A,B)}, r) = (A,B)$. Deleting from $T$ all components of $T - r$ that do not contain $t_{(A,B)}$ for any $(A,B) \in \sigma_r$ turns $(T, \alpha)$ into an $S_k$-tree $(T', \alpha\!\!\restriction_{T'})$ in which the neighbourhood of $r$ has changed but $\sigma_r$ has not, and neither has $\sigma_t$ for any other node $t$ of $T'$. So $(T', \alpha\!\!\restriction_{T'})$ is still a weakly exhaustive $S_k$-tree over $\cT^* \cup \cU^\infty_k$. Now think of $T'$ as rooted in $r$ and proceed along its levels.}
    Then all nodes in $T$ have countable degree. Indeed, let $t$ be a node of $T$ of infinite degree and consider $\sigma_t := \{\alpha(s, t) : \{s,t\} \in E(T)\}$. Then~$\sigma_t$ is a star in $\vS_k$ because $(T, \alpha)$ is an $S_k$-tree over $\cT^* \cup \cU_k^\infty$. Since~$G$ is countable, there are only countably many small separations of $G$ of the form $(A,V(G))$ for some set $A$ of fewer than $k$ vertices, and also $\sigma_t$ can contain at most countably many separations of the form $(A,B)$ with $B \neq V(G)$, as any such separation contains a vertex in its strict small side $A \setminus B$ that is not contained in the strict small side of any other separation in $\sigma_t$. Hence, $\sigma_t$ is countable, and thus $N_T(t)$ is countable since $(T, \alpha)$ is irredundant.

    Let $r$ be an arbitrary node of $T$, and for every infinite-degree node $t$ of $T$, let $\{s^t_i :  i \in \N_0\}$ be an enumeration of its neighbourhood such that $s^t_0$ is the unique vertex of $rTt$ that is incident with $t$. 
    Let~$F$ be the forest obtained from $T$ by deleting all edges $e$ of the form $e = \{t, s^t_i\}$ where $\deg(t) = \infty$ and $i \geq 2$. 
    Now the tree $T'$ is obtained from $F$ by simultaneously adding, for every infinite-degree node $t$ of $T$, a ray $R_t := r^t_2 r^t_3\dots$, the edge $\{t, r^t_2\}$, and all edges of the form $\{r^t_i, s^t_i\}$ for $i \geq 2$. 
    Further, let $\alpha': \vE(T') \rightarrow \vS_k$ be defined via $\alpha'(\ve) := \alpha(\ve)$ for all edges $e \in E(T') \cap E(T)$, and $\alpha'(s^t_i, r^t_i) := \alpha(s^t_i, t)$ and $\alpha'(r^t_i, r^t_{i+1}) := \bigvee_{j \geq i} \alpha(s^t_j, t)$ for all $i \geq 2$ as well as $\alpha'(t, r^t_2) := \alpha(s^t_0, t) \vee \alpha(s^t_1, t)$.
    
    Then $(T', \alpha')$ is again an $S_k$-tree, and it is straightforward to check that $(T', \alpha')$ is weakly exhaustive and over $\cT^* \cup \{\sigma \in \cU_k : |\sigma| = 3\}$. To turn $(T', \alpha')$ into an $S_k$-tree over $\cT^*$, we need to modify every star~$\sigma'_t$ associated with a node $t$ of $T'$ that is in $\cU_k$ (but not in $\cT^*$) to become a star in $\cT^*$. The easiest way to do this is to add the interior of $\sigma'_t$ to the separators of all separations in $\sigma'_t$. For this, we add a subdivision vertex $v_e$ to those edges $e = \{s,t\}$ of $T'$ whose endvertices $s$ and $t$ are both associated in $T'$ with stars $\sigma'_s$, $\sigma'_t$ in $\cU_k\setminus \cT^*$. We denote the arising tree with $T''$. To define $\alpha''$, let $e$ be an edge of $T''$, and first assume that $e = \{s,t\}$ for nodes $s,t$ of $T'$. If $\sigma'_s, \sigma'_t \in \cT^*$, then let $\alpha''(\ve) := \alpha'(\ve)$. Otherwise, if $\sigma'_t \notin \cT^*$, then let $\alpha''((s,t)) := \alpha'((s,t)) \wedge (V(G), \interior(\sigma'_t))$ (and $\alpha''((t,s))$ accordingly), i.e.\ $\alpha''((s,t))$ is obtained from $\alpha'((s,t))$ by adding the interior of $\sigma'_t$ to the small side (and hence the separator) of $\alpha'((s,t))$. Second, if $e = \{v_f, t\}$ where $f = \{s,t\} \in E(T')$, then let $\alpha''((v_f,t)) := \alpha'((s,t)) \wedge (V(G), \interior(\sigma'_t))$ (and $\alpha''((t,v_f))$ accordingly). Since $|\interior(\sigma'_t)| < k$ if $\sigma'_t \notin \cT^*$, the image of $\alpha''$ is contained in $\vS_k$; so $(T'', \alpha'')$ is again an $S_k$-tree, which by definition is over $\cT^*$.
\end{proof}

We conclude this section with an example that shows that \cref{main:TTDInfGraphs} fails for sets $\cF$ of stars that are nice but not finitely bounded.

\begin{example}\label{ex:FinitelyBoundedIsNecessaryForAnSkTree}
    Let $G = (V,E)$ be the graph with vertex set $V := \{v_{ij} : (i, j) \in [4] \times \N\}$ and edge set $E := \{\{v_{ij}, v_{i'j'}\} \in V(G) : j' \in \{j, j+ 1\}\}$ (see \cref{fig:CounterexNotFinBounded}). 
    Set $\cF' := \{\{(A_k, B_k)\} : k \in \N\}$ where $(A_k, B_k) := (\{v_{ij} : i \in [4], j \geq k\} , \{v_{ij} : i \in [4], j \leq k\})$ (see \cref{fig:CounterexNotFinBounded}), and let $\cF := \cF' \cup \cP'_5 \cup \{(V(G), A) : A \subseteq V(G), \abs{A} < 5\}$. Clearly, $\cF$ is strongly closed under shifting, and thus a nice set of stars in $\vS_5$.
    
        \begin{figure}[ht]
        \centering
        \definecolor{dgreen}{rgb}{0,0.8,0}
\scalebox{0.82}{%
\begin{tikzpicture}

\draw [->,line width=1pt] (5,3) -- (12,3);
\draw [->,line width=1pt] (5,0) -- (12,0);
\draw [line width=1pt] (5,3)-- (5,0);
\draw [line width=1pt] (7,3)-- (7,0);
\draw [line width=1pt] (9,3)-- (9,0);
\draw [line width=1pt] (11,3)-- (11,0);

\draw (5.6,1.8) node[anchor=north west] {\LARGE $K_8$};
\draw (7.6,1.8) node[anchor=north west] {\LARGE $K_8$};
\draw (9.6,1.8) node[anchor=north west] {\LARGE $K_8$};


\foreach \x in {1.5,3.5,5.5,7.5}{
           \draw [shift={(\x,1.5)},line width=1pt,color=dgreen]  plot[domain=-0.55:0.55,variable=\t]({4*cos(\t r)},{4*sin(\t r)});
        }
\foreach \x in {8.5,10.5,12.5,14.5}{
           \draw [shift={(\x,1.5)},line width=1pt,color=dgreen]  plot[domain=2.59:3.693,variable=\t]({4*cos(\t r)},{4*sin(\t r)});
        }
\draw [->,line width=1pt,color=dgreen] (4.88,-0.2) -- (4.2,-0.2);
\draw [->,line width=1pt,color=dgreen] (6.88,-0.2) -- (6.2,-0.2);
\draw [->,line width=1pt,color=dgreen] (8.88,-0.2) -- (8.2,-0.2);
\draw [->,line width=1pt,color=dgreen] (10.88,-0.2) -- (10.2,-0.2);


\draw (13,1.5) node {\Large $\eps$};

\draw (5.5,3.3) node {$v_{11}$};
\draw (5.4,2.25) node {$v_{21}$};
\draw (5.4,0.8) node {$v_{31}$};
\draw (5.5,-0.27) node {$v_{41}$};

\draw (7.5,3.3) node {$v_{12}$};
\draw (7.4,2.25) node {$v_{22}$};
\draw (7.4,0.8) node {$v_{32}$};
\draw (7.5,-0.27) node {$v_{42}$};

\draw (9.5,3.3) node {$v_{13}$};
\draw (9.4,2.25) node {$v_{23}$};
\draw (9.4,0.8) node {$v_{33}$};
\draw (9.5,-0.27) node {$v_{43}$};

\draw (11.5,3.3) node {$v_{14}$};
\draw (11.4,2.25) node {$v_{24}$};
\draw (11.4,0.8) node {$v_{34}$};
\draw (11.5,-0.27) node {$v_{44}$};

\draw[color=dgreen] (4.6,-0.65) node {$B_1$};
\draw[color=dgreen] (6.6,-0.65) node {$B_2$};
\draw[color=dgreen] (8.6,-0.65) node {$B_3$};
\draw[color=dgreen] (10.6,-0.65) node {$B_4$};

\draw[color=dgreen] (5.4,-0.65) node {$A_1$};
\draw[color=dgreen] (7.4,-0.65) node {$A_2$};
\draw[color=dgreen] (9.4,-0.65) node {$A_3$};
\draw[color=dgreen] (11.4,-0.65) node {$A_4$};

\begin{scriptsize}
\foreach \x in {5,7,9,11}{
            \foreach \y in {0,1,2,3}
            \draw [fill=black] (\x,\y) circle (2pt);
        }
        
\end{scriptsize}
\end{tikzpicture}
}%
        \vspace{-8mm}
        \caption{\cref{ex:FinitelyBoundedIsNecessaryForAnSkTree}}
        \label{fig:CounterexNotFinBounded}
    \end{figure}

    It is easy to check that $G$ has precisely one $\cF$-tangle of order $5$, the one induced by its end $\eps$. Indeed, any consistent orientation of $S_5$ that is not induced by $\eps$ has to contain some $(B_i, A_i)$, and is hence not an $\cF$-tangle.
    Thus, $G$ has no $\cF$-tangle of order~$5$ that is not induced by an end of combined degree $<5$. 
    
    But $G$ has no $S_5$-tree over $\cF \cup \cU^\infty_k$ either. Indeed, any such tree would have to contain a ray whose edges are associated with separations that form an increasing sequence in $\tau_{\eps}$. By the definition of $\cP'_k$, and because $\cU^\infty_k$ is empty since $G$ is locally finite, the nodes of that ray would eventually be associated with stars in $\cF'$, a contradiction because~$\cF'$ contains only singleton stars. 
\end{example}

\section{Bramble-treewidth duality: an application of the tangle-tree duality theorem} \label{sec:BrambleTWDuality}

A set $U$ of vertices of a graph $G$ is \defn{connected} if $G[U]$ is connected. A \defn{bramble} in $G$ is a set $\cB$ of mutually touching connected sets of vertices of $G$ where two sets of vertices are said to \defn{touch} if they have a vertex in common or if $G$ contains an edge between them. The \defn{order} of a bramble is the least number of vertices that \defn{cover} the bramble, in that they meet every element of it.

Seymour and Thomas proved the following duality between high-order brambles and small tree-width (see also \cite{DiestelBook16noEE}*{Theorem~12.4.3}): 

\begin{THM}{\cite{ST1993GraphSearching}}\label{thm:BrambleTreeDualityFiniteGraphs} Let $k \in \N$. A finite graph has tree-width $< k$ if and only if it contains no bramble of order $> k$.
\end{THM}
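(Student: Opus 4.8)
The plan is to read off \cref{thm:BrambleTreeDualityFiniteGraphs} as the finite special case of the more general duality \cref{main:TangleBrambleTreeDualityInfGraphs}. Concretely, I would apply \cref{main:TangleBrambleTreeDualityInfGraphs} to $G$ with the parameter $k$ replaced by $k+1$, and use only the equivalence of its assertions \cref{itm:TBTDuality:Bramble} and \cref{itm:TBTDuality:TW}. Since $G$ is finite it has no ends, and every bramble of a finite graph is trivially finite; so for the parameter $k+1$ the assertion \cref{itm:TBTDuality:Bramble} simply reads ``$G$ has a bramble of order $\geq k+1$'', i.e.\ of order $>k$, while \cref{itm:TBTDuality:TW} reads ``$G$ has tree-width $\geq k$''. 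The equivalence therefore becomes: $G$ has a bramble of order $>k$ if and only if $G$ has tree-width $\geq k$. Contraposing both sides gives exactly the statement of \cref{thm:BrambleTreeDualityFiniteGraphs}, namely that $G$ has tree-width $<k$ if and only if it contains no bramble of order $>k$.

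The only point that needs attention is the index bookkeeping and the passage between the ``$\geq$'' and ``$<$'' phrasings: \cref{main:TangleBrambleTreeDualityInfGraphs} is stated with ``bramble of order at least $k$'' and ``tree-width at least $k-1$'', whereas the Seymour--Thomas formulation uses ``order greater than $k$'' and ``tree-width less than $k$'', and these line up under the substitution $k\mapsto k+1$ together with a negation. No genuine difficulty arises here, since all the substance already sits in \cref{main:TangleBrambleTreeDualityInfGraphs}, which in turn follows by applying the $\cF$-tangle duality \cref{main:TTDInfGraphsFTangles} to the finitely bounded, nice set $\cF=\cU_k$ and identifying $\cU_k$-tangles with brambles and weakly exhaustive $S_k(G)$-trees over $\cU_k$ with \tds\ of bounded width.

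Alternatively, if one wants a proof of \cref{thm:BrambleTreeDualityFiniteGraphs} that bypasses the infinite apparatus, one could argue directly from the abstract tangle--tree duality \cref{thm:TTDinASS} (equivalently the finite \cref{main:TTDFiniteGraphs}) applied with $S=S_{k+1}(G)$ and $\cF=\cU_{k+1}$. Here the separation system $\vS_{k+1}(G)$ is finite, and one checks it is $\cU_{k+1}$-separable (routine for finite separation systems, and $\cU_{k+1}$ is standard since it contains all singletons $\{(V(G),X)\}$ with $|X|<k+1$), so \cref{thm:TTDinASS} gives that exactly one of the alternatives ``there is a $\cU_{k+1}$-tangle of $S_{k+1}(G)$'' and ``there is an $S_{k+1}(G)$-tree over $\cU_{k+1}$'' holds. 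It then remains to identify the first alternative with the existence of a bramble of $G$ of order $>k$ and the second with a \td\ of $G$ of width $<k$. In this variant the main obstacle is precisely the equivalence between $\cU_{k+1}$-tangles and brambles: the ``tangle from bramble'' direction orients every separation of order $\le k$ towards the side that meets every bramble element, while the converse extracts a bramble from a $\cU_{k+1}$-tangle by a Helly-type/compactness argument on mutually touching connected sets; in the finite setting both directions are standard.
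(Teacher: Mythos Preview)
Your proposal is correct and matches the paper's approach. The paper does not give a standalone proof of \cref{thm:BrambleTreeDualityFiniteGraphs} (it is cited from \cite{ST1993GraphSearching}), but it remarks explicitly that the infinite version \cref{thm:BrambleTreeDualityInfGraphs} ``easily implies it'', and proves \cref{thm:BrambleTreeDualityInfGraphs} as nothing more than \cref{itm:TBTDuality:Bramble}~$\Leftrightarrow$~\cref{itm:TBTDuality:TW} of \cref{main:TangleBrambleTreeDualityInfGraphs}; your derivation via the parameter shift $k\mapsto k+1$ and the observation that every bramble of a finite graph is finite is exactly this route.

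Your alternative third paragraph is also fine in spirit, but note that \cref{main:TTDFiniteGraphs} concerns $\cT^*$ rather than $\cU_k$, so the reference to it as ``equivalently'' is imprecise; the correct tool there is \cref{thm:TTDinASS} applied to $\cU_{k+1}$, as you in fact state.
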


\cref{thm:BrambleTreeDualityFiniteGraphs} extends to infinite graphs with one adaptation.
For this, let us first note that every graph~$G$ with a ray contains a bramble of infinite order. Indeed, if $R = r_0r_1\dots$ is a ray in $G$, then $\cB := \{\{r_i : i \geq n\} : n \in \N\}$ is a bramble, and it clearly cannot be covered by finitely many vertices. However, the graph that consists of just a single ray has clearly tree-width $1$.

Thus, in order to ensure that brambles of high order force the tree-width of a graph up, we have to restrict the class of brambles we consider to those that are finite. 
Here, a bramble is \defn{finite} if all its elements are finite.
Note that, clearly, every bramble in a finite graph is finite. 

With this definition, \cref{thm:BrambleTreeDualityFiniteGraphs} extends to infinite graphs: 

\begin{THM} \label{thm:BrambleTreeDualityInfGraphs} Let $k \in \N$. A graph has tree-width $< k$ if and only if it contains no finite bramble of order~$> k$.
\end{THM}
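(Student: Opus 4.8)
The plan is to derive this directly from \cref{main:TangleBrambleTreeDualityInfGraphs}, using only the equivalence of its assertions \cref{itm:TBTDuality:Bramble} and \cref{itm:TBTDuality:TW}. Instantiating that equivalence with the parameter $k+1$ in place of $k$ gives, for every graph $G$: $G$ has a finite bramble of order at least $k+1$ if and only if $G$ has tree-width at least $k$, that is, if and only if $G$ admits no tree-decomposition all of whose bags have size at most $k$. Since a bramble has order greater than $k$ exactly when it has order at least $k+1$, and $G$ has tree-width less than $k$ exactly when it does not have tree-width at least $k$, taking the contrapositive of this equivalence yields precisely the asserted statement. The whole proof is thus this reindexing, together with a check of the degenerate value $k=0$, for which the usual conventions make both sides single out the empty graph (it alone has tree-width less than $0$, and it carries no finite bramble of positive order).

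Two remarks on the shape of the argument. First, it contains the finite duality theorem of Seymour and Thomas, \cref{thm:BrambleTreeDualityFiniteGraphs}, as the special case of finite graphs, in which every bramble is automatically finite; and since the deduction runs through \cref{main:TangleBrambleTreeDualityInfGraphs} and hence through the tangle-tree duality theorem, rather than through \cref{thm:BrambleTreeDualityFiniteGraphs}, the finite result is not used. Second, all the genuine work lies in \cref{main:TangleBrambleTreeDualityInfGraphs} itself, which is proved in this section by applying \cref{main:TTDInfGraphsFTangles} to the $(k-1)$-bounded, nice set $\cF=\cU_k$ (nice by \cref{lem:UkStronglyClosedUnderShifting}, with $\cU_k^\infty\subseteq\cU_k$, so that $\cU_k\cup\cU_k^\infty=\cU_k$), together with the standard passage between weakly exhaustive $S_k(G)$-trees over $\cU_k$ and tree-decompositions of small width, and a dictionary between finite brambles of order at least $k$ and $\cU_k$-tangles of $S_k(G)$: a finite bramble of order at least $k$ orients each $\{A,B\}\in S_k(G)$ towards the side that meets every element of the bramble, which is well defined because a separator of size $<k$ cannot cover the bramble; conversely a $\cU_k$-tangle that is not induced by an end of combined degree $<k$ concentrates on a finite, genuinely cohesive part, from which such a bramble can be read off. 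The step I expect to be the main obstacle there — and the reason the infinite statement is not immediate from the finite one — is the bookkeeping around ends and critical vertex sets: ensuring that the tangle built from a bramble is not merely the one induced by a low-combined-degree end, and, conversely, extracting a \emph{finite} bramble rather than an infinite structure that only witnesses an end. This is exactly what \cref{prop:DegreeOfAnEndWitnessedBySeps}, \cref{lem:DegreeOfEndsInTorso} and the lemmas of \cref{subsec:UltrafilterTangles} are designed to handle.
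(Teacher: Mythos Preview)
Your proposal is correct and follows exactly the paper's approach: the paper's proof of \cref{thm:BrambleTreeDualityInfGraphs} is the single line ``This is \cref{itm:TBTDuality:Bramble} $\Leftrightarrow$ \cref{itm:TBTDuality:TW} of \cref{thm:TangleBrambleTreeDualityInfGraphs}.'' Your reindexing $k\mapsto k+1$ and contrapositive are the right way to match the two formulations, and your additional remarks are accurate commentary (though \cref{lem:DegreeOfEndsInTorso} is not actually invoked in the proof of \cref{thm:TangleBrambleTreeDualityInfGraphs}; the relevant tool there is \cref{prop:DegreeOfAnEndWitnessedBySeps} via \cref{lem:TangleIsInducedByAnEnd}).
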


The probably shortest way to prove \cref{thm:BrambleTreeDualityInfGraphs} is to use a result discovered by Thomas (see \cite{thomassen89}*{Theorem~14} for a proof) which says that an infinite graph~$G$ has tree-width $\leq k$ if and only if every finite subgraph of~$G$ has tree-width $\leq k$. Hence, if a graph~$G$ has tree-width $\geq k$ for some $k \in \N$, then some finite subgraph $H \subseteq G$ has tree-width $\geq k$. \cref{thm:BrambleTreeDualityFiniteGraphs} then yields a bramble $\cB$ in~$H$ of order $> k$. Since~$H$ is finite,~$\cB$ will be finite as well, and it is easy to see that $\cB$ is also a bramble of order $>k$ in $G$. The other implication can be proved similarly to the finite case. 

In this section we present an alternative proof of \cref{thm:BrambleTreeDualityInfGraphs}, which derives \cref{thm:BrambleTreeDualityInfGraphs} from \cref{main:TTDInfGraphsFTangles}. Though this proof is not as short as the one indicated above, it provides an example of how \cref{main:TTDInfGraphsFTangles} can be employed to obtain other duality theorems for infinite graphs.
Moreover, the proof we present in this section is direct in that it does not make use of the finite result (\cref{thm:BrambleTreeDualityFiniteGraphs}); but, of course, it easily implies it. Further, we prove \cref{thm:BrambleTreeDualityInfGraphs} by showing a more general duality that includes brambles and tree-width as well as $\cU_k$-tangles and $S_k$-trees over $\cU_k$.

Recall that $\defnm{\cU_k} := \big\{\sigma \subseteq \vS_{\aleph_0} : \sigma \text{ is a star with } |\interior(\sigma)| < k\big\}$ for $k \in \N$.
The main result of this section is \cref{main:TangleBrambleTreeDualityInfGraphs}, which we restate here for convenience:

\begin{customthm}{\cref*{main:TangleBrambleTreeDualityInfGraphs}} \label{thm:TangleBrambleTreeDualityInfGraphs}
\emph{The following assertions are equivalent for all graphs $G$ and $k \in \N$: 
\begin{enumerate}[label=\rm{(\roman*)}]
    \item\label{itm:TangleBrambleTreeDualityInfGraphsTangle} $G$ has a $\cU_k$-tangle of order $k$ that is not induced by an end of combined degree $<k$.
    \item\label{itm:TangleBrambleTreeDualityInfGraphsBramble} $G$ has a finite bramble of order at least $k$.
    \item\label{itm:TangleBrambleTreeDualityInfGraphsTree} $G$ has no weakly exhaustive $S_k$-tree over $\cU_k$.
    \item\label{itm:TangleBrambleTreeDualityInfGraphsTW} $G$ has tree-width at least $k - 1$.
\end{enumerate}}
\end{customthm}

\noindent  \cref{thm:TangleBrambleTreeDualityInfGraphs} generalizes a result of Diestel and Oum \cite{TangleTreeGraphsMatroids}*{Theorem~6.5} to infinite graphs; its proof is inspired by theirs.
\medskip

\begin{proof}[Proof of \cref{thm:TangleBrambleTreeDualityInfGraphs}]
    \ref{itm:TangleBrambleTreeDualityInfGraphsTangle} $\Leftrightarrow$ \ref{itm:TangleBrambleTreeDualityInfGraphsTree} is \cref{lem:UkStronglyClosedUnderShifting} and \cref{main:TTDInfGraphsFTangles}.
    \ref{itm:TangleBrambleTreeDualityInfGraphsTree} $\Leftrightarrow$ \ref{itm:TangleBrambleTreeDualityInfGraphsTW} is analogous to \cite{TangleTreeGraphsMatroids}*{Lemma~6.3}.

    It remains to show \ref{itm:TangleBrambleTreeDualityInfGraphsTangle} $\Leftrightarrow$ \ref{itm:TangleBrambleTreeDualityInfGraphsBramble}.
    For \ref{itm:TangleBrambleTreeDualityInfGraphsBramble} $\Rightarrow$ \ref{itm:TangleBrambleTreeDualityInfGraphsTangle} we closely follow the proof of \cite{TangleTreeGraphsMatroids}*{Lemma 6.4}. Let $\cB$ be a finite bramble of order at least $k$. For every $\{A,B\} \in S_k$, since $A \cap B$ is too small to cover $\cB$ but every two sets in $\cB$ touch and are connected, exactly one of the sets $A \setminus B$ and $B \setminus A$ contains an element of~$\cB$. Thus, $O := \{(A,B) \in \vS_k : B\setminus A \text{ contains an element of } \cB\}$ is an orientation of $S_k$, and it is consistent for the same reason.
    
    To show that $O$ avoids $\cU_k$, let $\sigma = \{(A_i,B_i) : i \in I\} \in \cU_k$ be given. Then $|\interior(\sigma)| < k$, so some $C \in \cB$ avoids $\interior(\sigma)$, and hence lies in the union of the sets $A_i \setminus B_i$. But these sets are disjoint, since $\sigma$ is a star, and they have no edges between them. Hence, $C$ lies in one of them, $A_j\setminus B_j$ say, which implies that $(B_j, A_j) \in O$. But then $(A_j, B_j) \notin O$, so $\sigma  \not\subseteq O$ as claimed.
    
    To see that $O$ is not induced by an end of~$G$ of combined degree~$<k$, let $\eps$ be an end of $G$. If $\cdeg(\eps) < k$, then in particular $\dom(\eps) < k$. Since $\cB$ has order at least $k$, there exists some $C \in \cB$ that avoids $\Dom(\eps)$. Moreover, again because $\Delta(\eps) < k$, there exists, by \cref{prop:DegreeOfAnEndWitnessedBySeps}~\cref{itm:DegreeWitnessingSeq:DegImpliesSeq}, a weakly exhaustive increasing sequence $((A_i, B_i))_{i \in \N}$ of separations in $\tau_\eps$ such that $(A_i \cap B_i) \cap (A_j \cap B_j) \subseteq \Dom(\eps)$ for $i \neq j \in \N$. Since $C \cap \Dom(\eps) = \emptyset$ and $C$ is finite, this implies that there is some $j \in \N$ such that $C \subseteq A_j\setminus B_j$. But then $(B_j, A_j) \in O$, and thus $O$ is not induced by $\eps$.

    For \ref{itm:TangleBrambleTreeDualityInfGraphsTangle} $\Rightarrow$ \ref{itm:TangleBrambleTreeDualityInfGraphsBramble} assume that $G$ has an $\cU_k$-tangle $\tau$ of order $k$ that is not induced by an end of combined degree $<k$. 

    \begin{claim}\label{claim:ProofOfBrambleTWDuality}
        \emph{For every separation $(A,B) \in \tau$ the set $\{(C,D) \in \tau : (A,B) \leq (C,D)\}$ has a maximal element.}
    \end{claim}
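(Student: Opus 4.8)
I propose to prove the Claim by Zorn's Lemma. Fix $(A,B)\in\tau$ and let $\mathcal{D}:=\{(C,D)\in\tau : (A,B)\le(C,D)\}$; this poset is nonempty (it contains $(A,B)$), so it suffices to show that every chain $\cC\subseteq\mathcal{D}$ has an upper bound in $\mathcal{D}$. A chain with a greatest element is its own upper bound, so assume $\cC\neq\emptyset$ has no greatest element, and set
\[
(A^*,B^*):=\Big(\bigcup\nolimits_{(C,D)\in\cC}C,\ \bigcap\nolimits_{(C,D)\in\cC}D\Big).
\]
The first step is to check that $(A^*,B^*)$ is a separation of $G$ of order $<k$: the identity $A^*\cup B^*=V(G)$ and the absence of edges between $A^*\setminus B^*$ and $B^*\setminus A^*$ follow by chasing a putative bad vertex or edge back into a single member of $\cC$ and using that $\cC$ is linearly ordered; and for the order one observes that the family $\big(C\cap B^*\big)_{(C,D)\in\cC}$ is monotone and each of its members satisfies $C\cap B^*\subseteq C\cap D$, hence has size $<k$ — so $A^*\cap B^*=\bigcup_{(C,D)\in\cC}(C\cap B^*)$ has size $<k$ as well, since any finitely many of its elements already lie in one member. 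In particular $\tau$ orients $\{A^*,B^*\}$, and as $(A^*,B^*)\ge(C,D)$ for every $(C,D)\in\cC$, if $(A^*,B^*)\in\tau$ then it is the required upper bound.

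The crux is therefore to rule out $(B^*,A^*)\in\tau$. Put $Y:=A^*\cap B^*$; since the monotone family $(C\cap B^*)_{(C,D)\in\cC}$ stabilises, there is $(C_0,D_0)\in\cC$ with $C\cap B^*=Y\subseteq C\cap D$ for all $(C,D)\in\cC$ with $(C,D)\ge(C_0,D_0)$. For each such $(C,D)$, one checks directly that the three separations
\[
(C,D),\qquad (B^*,A^*),\qquad (D\cap A^*,\ C\cup B^*)
\]
pairwise form a star $\sigma$, and that $\interior(\sigma)=D\cap A^*\cap(C\cup B^*)=(C\cap D)\cup Y=C\cap D$, so $\sigma\in\cU_k$. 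Since $\tau$ avoids $\cU_k$ and contains $(C,D)$ and $(B^*,A^*)$, it cannot contain $(D\cap A^*,C\cup B^*)$; as $\{D\cap A^*,C\cup B^*\}$ has order $|C\cap D|<k$, we get $(C\cup B^*,\ D\cap A^*)\in\tau$. These separations form an increasing chain $\cC'\subseteq\tau$ of separations of order $<k$, and a short computation shows that the intersection of their strict big sides equals $(A^*\setminus B^*)\cap(B^*\setminus A^*)=\emptyset$, so $\cC'$ is weakly exhaustive. Since every $\cU_k$-tangle is principal and regular (an easy check: a non-principal such orientation contains the star of the separations pushing each component of $G-X$ to its small side, whose interior is $X$; and $\{(V(G),A)\}\in\cU_k$ for $|A|<k$), \cref{lem:TangleIsInducedByAnEnd} applied to $\cC'$ and $\tau$ shows that $\tau$ is induced by an end of $G$ of combined degree $<k$, contradicting our hypothesis on $\tau$. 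Hence $(B^*,A^*)\notin\tau$, so $(A^*,B^*)$ is an upper bound of $\cC$ in $\mathcal{D}$, and Zorn's Lemma delivers the claimed maximal element.

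I expect the main obstacle to be technical rather than conceptual: for uncountable $G$ the chain $\cC$ (and hence $\cC'$) need not have countable cofinality, so $\cC'$ is not in general an $\N$-indexed sequence, whereas \cref{lem:TangleIsInducedByAnEnd} and \cref{prop:DegreeOfAnEndWitnessedBySeps} are phrased for such sequences. The proof of \cref{lem:TangleIsInducedByAnEnd} goes through verbatim for an arbitrary increasing weakly exhaustive chain — one only uses that $\bigcup_i A_i=V(G)$ and that every finite vertex set lies in some $A_i$ — except for its concluding appeal to \cref{prop:DegreeOfAnEndWitnessedBySeps}~\cref{itm:DegreeWitnessingSeq:SeqImpliesDeg}. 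That last step I would replace by a direct argument: $\cC'$ has no greatest element (otherwise the strict big side of its maximum would be empty, forcing a separation of the form $(V(G),F)$ with $|F|<k$ into $\tau$, against regularity); so if $\Delta(\eps)\ge k$, choosing $k$ pairwise disjoint objects, each an $\eps$-ray or a vertex dominating $\eps$, there is for each of them a member of $\cC'$ whose small side it meets, hence one member $(C,D)\in\cC'$ beyond all $k$ of these, and then each of the $k$ objects meets the separator $C\cap D$ (a ray because it runs from $C$ into $D\setminus C$ and is connected; a dominating vertex because it lies in both $C$ and $D$), so $|C\cap D|\ge k$, a contradiction. Apart from this point the proof is a routine sequence of corner-separation and submodularity verifications.
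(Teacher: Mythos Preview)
Your proof is correct, but it takes a genuinely different route from the paper's.

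The paper argues by direct contradiction without Zorn: assuming the up-set above $(A,B)$ in $\tau$ has no maximal element, it passes to the subset $S'$ of separations that are right-tight with connected strict big side (using principality of $\tau$ to see that $S'$ is cofinal, hence also without maximum), and then simply picks a strictly increasing $\N$-indexed sequence in $S'$. The structural properties built into $S'$ make the weak-exhaustivity verification a short combinatorial argument about separators, after which \cref{lem:TangleIsInducedByAnEnd} applies verbatim.

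Your approach instead invokes Zorn's Lemma and, for a chain without maximum, forms the supremum separation $(A^*,B^*)$; the case $(A^*,B^*)\in\tau$ is immediate, and the case $(B^*,A^*)\in\tau$ is dispatched by a neat corner-separation trick producing a new weakly exhaustive chain $\cC'\subseteq\tau$. The trade-off is that $\cC'$ need not have countable cofinality, so you cannot quote \cref{lem:TangleIsInducedByAnEnd} as stated; your workaround (adapting the direction argument to arbitrary chains and replacing the appeal to \cref{prop:DegreeOfAnEndWitnessedBySeps} by a direct pigeonhole on $k$ disjoint rays/dominating vertices against a separator of size $<k$) is correct but adds overhead the paper avoids. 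In short: the paper's passage to right-tight separations with connected big side is the device that keeps everything countable and lets the existing lemmas apply off the shelf, whereas your argument is more systematic in spirit but has to re-prove part of the end machinery along the way.
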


    \begin{claimproof}
        Suppose towards a contradiction that the set $S := \{(C,D) \in \tau : (A,B) \leq (C,D)\}$ has no maximal element, and let $S' \subseteq S$ consist of all separations $(C,D)$ in $S$ that are tight on the big side and have a connected strict big side $D\setminus C$.
        Since $\tau$ avoids $\cU_k$ and is hence principal, there exists for every $(C,D) \in \tau$ a component $K$ of $G-(A \cap B)$ such that $(V(G-K), V(K) \cup N_G(K)) \in \tau$. In particular, for every $(C,D) \in S$ there exists $(C', D') \in S'$ such that $(C,D) \leq (C', D')$.
        It follows that $S'$ is non-empty and that $S'$ has no maximal element either.
        Thus, $S'$ contains a strictly increasing sequence $((C_i, D_i))_{i \in \N}$ of separations.
        
        Since all $(C_i, D_i)$ have order $<k$, we may assume that $|C_i \cap D_i| = \ell$ for some $\ell < k$ and all $i \in \N$, by passing to a subsequence of $((C_i, D_i))_{i \in \N}$ if necessary. 
        We claim that $((C_i, D_i))_{i \in \N}$ is weakly exhaustive. Indeed, since all $D_i\setminus C_i$ are connected, all separators $C_i \cap D_i$ are distinct. Hence, $X := \bigcap_{i \in \N} \bigcup_{j \geq i} (C_j \cap D_j)$ has size $<\ell$; let $j \in \N$ such that $X \subseteq C_i \cap D_i$ for all $i \geq j$. Pick some $u \in (C_j \cap D_j) \setminus X$, and let $j' \in \N$ such that $u \in C_{j'}\setminus D_{j'}$. Since $(C_j, D_j)$ is tight on the big side and $D_j \setminus C_j$ is connected, there is for every $v \in D_j\setminus C_j$ a $v$--$u$ path in $G$ that avoids $X$. So if $D := \bigcap_{i \in \N} (D_i \setminus C_i)$ is non-empty, then there exists a finite $D$--$(C_{j'}\setminus D_{j'})$ path that avoids $X$. But this path has to meet all separators $C_i \cap D_i$ with $i \geq j'$ in vertices outside of~$X$, a contradiction.

        Thus, $((C_i, D_i))_{i \in \N}$ is weakly exhaustive, which by \cref{lem:TangleIsInducedByAnEnd} contradicts the assumption that $\tau$ is not induced by an end of combined degree $<k$.
    \end{claimproof}

    We now define a finite bramble $\cB$ as follows. Let $V(G)$ be equipped with a fixed well-ordering. Then for every non-empty set $U \subseteq V(G)$ there exists a unique element in $U$ which is least in the well-ordering; we denote this vertex with $v_U$.

    Now for every separation $(A,B) \in \tau$ that is maximal in $\tau$ (with respect to the partial order on $\tau$ induced by $\vS_k$), we pick a finite, connected set $U_{(A,B)} \subseteq B \setminus A$ which contains $v_{B \setminus A}$ and for every vertex in $A \cap B$ at least one of its neighbours. For this note that such a set exists since $(A,B)$ is maximal in $\tau$, and thus $G[B\setminus A]$ is connected and $N_G(B\setminus A) = A \cap B$. 
    We then put in $\cB$ precisely all sets $U_{(A,B)}$. By definition, all elements of $\cB$ are finite and connected. Moreover, $\cB$ has order at least $k$. Indeed, since~$\tau$ avoids~$\cU_k$, there exists for every set $U$ of at most $k-1$ vertices of $G$ a component $C$ of $G-U$ such that $(V(G-C), V(C) \cup N_G(C)) \in \tau$. Then for every maximal separation $(A,B)$ in $\tau$ with $(V(G-C), V(C) \cup N_G(C)) \leq (A,B)$ the set $U_{(A,B)}$ avoids $U$; and such an $(A,B)$ exists by \cref{claim:ProofOfBrambleTWDuality}.

    To conclude the proof, it remains to show that the sets in $\cB$ mutually touch. For this, let $U := U_{(A,B)}$, $U' := U_{(A', B')} \in \cB$ be given. If $v := v_{B\setminus A} = v_{B'\setminus A'} =: v'$, then $U$ and $U'$ intersect by construction. So one of $v$ and $v'$, say $v$, is strictly smaller in the well-ordering, which by the choice of $v'$ implies that $v \in A'$.
    We claim that $(A \cap B) \cap (B'\setminus A') \neq \emptyset$. Then the assertion follows. Indeed, let $u$ be any vertex in that set. Since $A\cap B \subseteq N_G(U)$ by the choice of $U$, there is a $v$--$u$ path in $G[U \cup \{u\}]$. But as $v \in A'$ and $u \in B'\setminus A'$, it follows that $U$ meets $A'\cap B'$. Hence $U$ and $U'$ touch since $A'\cap B' \subseteq N_G(U')$ by the choice of $U'$.

    To prove the claim suppose for a contradiction that $(A \cap B) \cap (B'\setminus A') = \emptyset$. If also $(B\setminus A) \cap (B'\setminus A') = \emptyset$, then it follows that $(B', A') \leq (A,B)$, which contradicts the consistency of~$\tau$ as $(A', B'), (A,B) \in \tau$. Hence, there is a vertex $u' \in (B\setminus A) \cap (B'\setminus A')$.
    Since both $(A', B')$ and $(A,B)$ are maximal separations in~$\tau$, we have $B'\setminus A' \not\subseteq B\setminus A$, and hence the exists a vertex $w \in (B' \setminus A') \cap A \neq \emptyset$. As $B'\setminus A'$ is connected, there exists a $u'$--$w$ path in $G[B'\setminus A']$. But this path has to meet $A \cap B$ since $u' \in B$ and $w \in A$, which concludes the proof.
\end{proof}

\begin{proof}[Proof of \cref{thm:BrambleTreeDualityInfGraphs}]
    This is \cref{itm:TBTDuality:Bramble} $\Leftrightarrow$ \cref{itm:TBTDuality:TW} of \cref{thm:TangleBrambleTreeDualityInfGraphs}.
\end{proof}

We conclude this section with the following corollary of \cref{lem:ReflemForInfGraphsInessStars} that the author, Jacobs, Knappe and Pitz use in \cite{LinkedTDInfGraphs}:

\begin{COR}
    Let $G$ be a graph of tree-width $\leq w \in \N$, and let~$\sigma$ be a finite star of separations of~$G$ of order~$\leq w+1$ whose interior is finite.
    Suppose that all separations in~$\sigma$ are $\ell$-robust on the small side for $\ell := (w+1)^2(w+2)+w+1$.
    Then $\torso(\sigma)$ has tree-width $\leq w$.
\end{COR}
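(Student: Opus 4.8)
The plan is to reduce to \cref{lem:ReflemForInfGraphsInessStars} and \cref{main:TangleBrambleTreeDualityInfGraphs}, taking $k := w+2$ and $\cF := \cU_k$. First I would clear away the degenerate cases. If $w = 0$ then $G$ is edgeless, so no separator of $\sigma$ (being of size $\le 1$) contributes an edge to $\torso(\sigma)$, whence $\torso(\sigma) = G[\interior(\sigma)]$ is edgeless, of tree-width $0$. If $\sigma = \emptyset$ then $\torso(\sigma) = G$, and since $\interior(\sigma) = V(G)$ is finite, $G$ is finite of tree-width $\le w$ by hypothesis. If $1 \le |\interior(\sigma)| < k$ then $\torso(\sigma)$ has at most $w+1$ vertices, so tree-width $\le w$. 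From now on assume $w \ge 1$, $\sigma \ne \emptyset$, and $|\interior(\sigma)| \ge k$. A direct computation shows that with $m := k-1 = w+1$ one has $\max\{3k-2,\ k(k-1)m+m\} = (w+1)^2(w+2)+w+1 = \ell$ (for $w \ge 1$ the second term dominates), so the value $\ell$ required by \cref{lem:ReflemForInfGraphsInessStars} is exactly the one in the statement. Moreover $\cU_k$ is an $m$-bounded, nice set of stars in $\vS_k(G)$ by \cref{lem:UkStronglyClosedUnderShifting}, and $\sigma \subseteq \vS_k(G)$ is a finite star with finite interior all of whose separations are left-$\ell$-robust. Hence \cref{lem:ReflemForInfGraphsInessStars} applies with this $\cF$ and yields either an $\cF'$-tangle of $S_k(G)$, or a finite $S_k(G)$-tree over $\cF'$ in which each $\vs \in \sigma$ appears as a leaf separation, where $\cF' := \cU_k \cup \{\{\sv\} : \vs \in \sigma\}$.

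Next I would rule out the tangle alternative. An $\cF'$-tangle $\tau$ of $S_k(G)$ is a consistent orientation of $S_k(G)$ avoiding $\cU_k$, i.e.\ a $\cU_k$-tangle of order $k$. If $\tau$ were not induced by an end of combined degree $<k$, then by \cref{main:TangleBrambleTreeDualityInfGraphs} (equivalence of the tangle condition with the tree-width condition) $G$ would have tree-width $\ge k-1 = w+1$, contradicting the hypothesis. So $\tau$ is induced by some end $\eps$, i.e.\ $\tau \subseteq \tau_\eps$; but $\tau$ avoids each $\{\sv\} \in \cF'$, so $\vs \in \tau$ for every $\vs \in \sigma$, i.e.\ $\sigma \subseteq \tau \subseteq \tau_\eps$. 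This contradicts the fact that a tangle induced by an end contains no star with finite interior (see \cref{sec:EndsAndCVs}). Hence \cref{lem:ReflemForInfGraphsInessStars} returns a finite $S_k(G)$-tree $(T,\alpha)$ over $\cF'$ in which every $\vs \in \sigma$ is a leaf separation.

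Finally I would convert $(T,\alpha)$ into an $S_k(H)$-tree over $\cU_k$ for the finite graph $H := \torso(\sigma)$. Since $(T,\alpha)$ is over a set of stars, for any two edges $e,e'$ of $T$ the separations $\alpha(e)$ and $\alpha(e')$ are nested (orient both towards each other along the $T$-path between them and chase the star condition at the intermediate nodes); in particular each $\alpha(e)$ is nested with every leaf separation, hence with $\sigma$, so if $\alpha(\ve) = (A,B)$ then each separator of $\sigma$ lies on one side of $\{A,B\}$ and $\{A\cap\interior(\sigma),\,B\cap\interior(\sigma)\}$ is a separation of $H$ (cf.\ the proof of \cref{prop:FiniteStarAndInteriorImpliesFiniteSkSigma}). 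Set $\alpha_H(\ve) := (A\cap\interior(\sigma),\,B\cap\interior(\sigma))$; this has order $\le |A\cap B| < k$, and it is never the trivial separation $(V(H),V(H))$, since that would force $|\interior(\sigma)| < k$. So $(T,\alpha_H)$ is an $S_k(H)$-tree. For each node $t$ the star $\sigma_t \in \cF'$ has projection $\sigma^H_t$ whose interior is $\interior(\sigma_t)\cap\interior(\sigma)$, of size $<k$, when $\sigma_t \in \cU_k$; and when $\sigma_t = \{\sv\}$ with $\vs = (A,B) \in \sigma$, then $A \cap \interior(\sigma) = A\cap B$ because $\sigma$ is a star (so $A$ lies on the big side of every other separation of $\sigma$), whence $\sigma^H_t$ again has interior of size $<k$. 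Thus $(T,\alpha_H)$ is over $\cU_k$, and since $T$ is finite it is vacuously weakly exhaustive. By \cref{main:TangleBrambleTreeDualityInfGraphs} (equivalence of the $S_k$-tree condition with the tree-width condition) applied to $H$, this gives that $H$ has tree-width $< k-1 = w+1$, i.e.\ $\le w$, which is the claim.

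The main obstacle is this last conversion step: one must check carefully that the projection $(T,\alpha_H)$ is a bona fide $S_k(H)$-tree over $\cU_k$, which rests on the nestedness of the $\alpha(e)$ with $\sigma$, on controlling the interiors of the projected stars (the leaf nodes carrying the $\sv$ being the only non-routine case), and on the observation that in the main case $|\interior(\sigma)| \ge k$ no projected separation degenerates to $(V(H),V(H))$.
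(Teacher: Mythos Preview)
Your proof is correct and follows essentially the same route as the paper's: apply \cref{lem:ReflemForInfGraphsInessStars} with $k=w+2$ and $\cF=\cU_k$, rule out the tangle alternative via \cref{main:TangleBrambleTreeDualityInfGraphs} together with the fact that no end can live in a star with finite interior, and then turn the resulting $S_k(G)$-tree into a width-$w$ decomposition of $\torso(\sigma)$. The only cosmetic difference is in this last step: the paper passes to the induced tree-decomposition of $G$ and restricts its bags to $\interior(\sigma)$, whereas you project the separations to $\torso(\sigma)$ to obtain an $S_k(H)$-tree over $\cU_k$ and invoke \cref{main:TangleBrambleTreeDualityInfGraphs} once more; these are two phrasings of the same computation. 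Your explicit treatment of the degenerate cases (in particular $w=0$, where $\ell=3<3k-2$) is a point the paper glosses over.
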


\begin{proof}
    By definition, $\cU_{w+2}$ is $(w+1)$-bounded, and by \cref{lem:UkStronglyClosedUnderShifting}, $\cU_{w+2}$ is nice, so we may apply \cref{lem:ReflemForInfGraphsInessStars}, which yields that there is either an $\cU_{w+2}$-tangle $\tau$ of $G$ with $\sigma \subseteq \tau$ or a finite $S_{w+2}$-tree $(T, \alpha)$ over $\cU_{w+2} \cup \{\{\sv\} : \vs \in \sigma\}$ in which each $\vs \in \sigma$ appears as a leaf separation. 
    Suppose first that the former holds. Since the interior of $\sigma$ is finite and $\sigma \subseteq \tau$, the $\cU_{w+2}$-tangle $\tau$ cannot be induced by an end of $G$.
    But since~$G$ has tree-width $\leq w$, it has no $\cU_{w+2}$-tangles of order $w+2$ that are not induced by an end by \cref{main:TangleBrambleTreeDualityInfGraphs}, a contradiction. 
    So we may assume the latter. 
    It is easy to check that $(T, \alpha)$ induces a \td\ $(T, \cV)$ of $G$ (cf.\ \cite{TangleTreeGraphsMatroids}*{Lemma 6.3}) whose bags have size $\leq w+1$, unless they are associated with leaves of ~$T$ whose incident edge induces a separation in $\sigma$. By restricting the bags in $\cV$ to $\interior(\sigma)$, we obtain a \td\ $(T, \cV')$ of $G[\interior(\sigma)]$ of width $\leq w$. In fact, since $(T, \alpha)$ contains all $\vs \in \sigma$ as leaf separations, $(T, \cV')$ is even \td\ of $\torso(\sigma)$. Thus, $\torso(\sigma)$ has tree-width $\leq w$.
\end{proof}

\section{Refining trees of tangles}\label{sec:RefiningEssStars}

Besides the tangle-tree duality theorem, Robertson and Seymour \cite{GMX} proved the \emph{tree-of-tangles theorem}, which asserts that for every $k \in \N$ every finite graph has a \td\ such that its $k$-tangles live at different nodes of the tree. Erde \cite{JoshRefining} combined this theorem and the tangle-tree duality theorem into one, by constructing a single \td\ such that every node either accommodates a single $k$-tangle or is too small to accommodate one, in that it is associated with a star in $\cT_k$. In fact, he showed that such a \td\ can be obtained from any given one that efficiently distinguishes all the $k$-tangles, by refining its inessential parts.

The author \cite{SARefiningEssParts} improved Erde's result by constructing further refinements of the essential parts of that \td, yielding a \td\ that has the additional property that all its essential bags are as small as possible. In this section, we extend this result to infinite graphs. We then obtain \cref{main:TTDPlusToT} as a simple corollary.

To state the main result of this section, we first need some further definitions.
Following \cite{ToTinfOrder}, we call two regular $k$-profiles $\tau, \tau'$ in a graph $G$ \defn{combinatorially distinguishable} if at least one of them is principal or they are both non-principal but such that there exists a set $X \subseteq V(G)$ such that $(V(K) \cup X, V(G-K)) \in \tau$ for all $K \in \cC_X$ and such that $(V(G-K), V(K) \cup X) \in \tau'$ for a component $K \in \cC_X$.

A set $\cF$ of stars in $\vS_k$ is \defn{profile-respecting} if every $\cF$-tangle of $\vS_k$ is a $k$-profile in $G$.
A $k$-profile in~$G$ is \defn{bounded} if it does not extend to an $\aleph_0$-profile\footnote{Equivalently, a principal $k$-profile $\tau$ in $G$ is bounded if and only if it is neither induced by an end nor of the form $\{(A,B) \in \vS_k : X \subseteq B\}$ for a set $X \in \crit(G)$ of size~$\geq k$ (cf.\ \cite{EndsAndTangles}*{Theorem~3}). Moreover, every non-principal $k$-profile is unbounded.}.

Given some set $S \subseteq S_{\aleph_0}$, a star $\sigma \subseteq \vS$ is \defn{exclusive} for some set $\cO$ of consistent orientations of $S$ if it is contained in exactly one orientation in $\cO$. If $O \in \cO$ is that orientation, we say that $\sigma$ is \defn{$O$-exclusive (for $\cO$)}.
Similarly, a bag $V_t$ of a \td\ of $G$ is \defn{exclusive} (for $\cO$) if $\sigma_t$ is exclusive for $\cO$. 

If~$(T, \cV)$ and $(\tilde{T}, \tilde{\cV})$ are both \tds\ of~$G$, then $(T, \cV)$ \defn{refines} $(\tilde{T}, \tilde{\cV})$ if the set of separations induced by the edges of~$T$ is a superset of the set of separations induced by the edges of~$\tilde{T}$.

\begin{THM} \label{thm:RefToTsInfGraphs}
Let $G$ be a graph, $k \in \N$, and let $\cF$ be a finitely bounded, profile-respecting, nice set of finite stars in~$S_k$. Further, let $(\tilde{T}, \tilde{\cV})$ be a \td\ of $G$ which distinguishes all combinatorially distinguishable $\cF$-tangles of order $k$ such that every separation induced by an edge of $\tilde{T}$ distinguishes a pair of $\cF$-tangles of order $k$ efficiently. Then there exists a \td\ $(T, \cV)$ of $G$ which refines $(\tilde{T}, \tilde{\cV})$ and which is such that
\begin{enumerate}[label=\rm{(\roman*)}]
    \item\label{itm:RefToTsInfGraphsEnds} every end of $G$ of combined degree $<k$ lives in an end of $T$;
    \item\label{itm:RefToTsInfGraphsEnds2} if every end of $\tilde{T}$ is home to an end of $G$, then also every end of $T$ is home to an end of $G$;
    \item\label{itm:RefToTsInfGraphsCrit} every non-principal $\cF$-tangle of order $k$ which does not live in an end of $\tilde{T}$ lives at a node $t$ of $T$ with $\sigma_t \in \cU^\infty_k$;
    \item\label{itm:RefToTsInfGraphsIness} for every inessential node $t$ of $T$ we have $\sigma_t \in \cF$; and
    \item\label{itm:RefToTsInfGraphsBoundedTangle} every bag $V_t$ of $(T, \cV)$ that is home to a bounded $\cF$-tangle of order $k$ is of smallest size among all the exclusive bags of \tds\ of $G$ that are home to the $\cF$-tangle living in $V_t$.
\end{enumerate}
\end{THM}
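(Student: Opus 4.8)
The plan is to refine $(\tilde{T},\tilde{\cV})$ node by node and then to glue the resulting pieces back together along $\tilde{T}$ as in \cref{constr:StickingSkTreesTogether}. Two preliminary observations set the stage. First, by \cref{lem:EffYieldsTight} every separation induced by an edge of $\tilde{T}$ is tight, so every star $\sigma_t$ consists of left-tight separations. Second, since $\cF$ is profile-respecting, its $\cF$-tangles of order $k$ are $k$-profiles, and by hypothesis every edge separation of $\tilde{T}$ distinguishes a pair of them efficiently; hence by \cref{prop:eff} the inverse of every separation in every $\sigma_t$ is closely related to some $k$-profile avoiding $\cF$. In particular the premise of \cref{lem:ReflemForInfGraphsInessStars} holds at every node, without having to invoke left-$\ell$-robustness.

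Next I record that every node $t$ of $\tilde{T}$ with infinite bag is essential for some $\cF$-tangle of order $k$; equivalently, every inessential node has finite bag. Indeed, if $\interior(\sigma_t)=\tilde V_t$ is infinite then either infinitely many separations in $\sigma_t$ share a common separator, so that \cref{lem:TanglesAtSmallCritVS} (using left-tightness) produces a non-principal $\cF$-tangle home to $\sigma_t$; or $\torso(\sigma_t)$ is infinite, in which case either it has an end, so that \cref{lem:DegreeOfEndsInTorso} together with \cref{prop:EndsInduceFTangles} yields an $\cF$-tangle home to $\sigma_t$, or it is rayless and hence, by \cref{prop:RaylessToughGraphsAreFinite}, has a critical vertex set $Y$, whence \cref{lem:TanglesAtLargeCritVS} (if $|Y|\geq k$) or \cref{lem:TanglesAtSmallCritVS} (if $|Y|<k$, recalling \cref{lem:TorsosAndCriticalVertexSets}) again produces such a tangle. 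Consequently, at each inessential node $t$ the star $\sigma_t$ is a finite star with finite interior, so \cref{lem:ReflemForInfGraphsInessStars} applies; as $t$ is inessential its conclusion must be a finite $S_k(G)$-tree $(T^t,\alpha^t)$ over $\cF\cup\{\{\sv\}:\vs\in\sigma_t\}$ in which each $\vs\in\sigma_t$ appears as a leaf separation. Used as the replacement of $t$, this will deliver \cref{itm:RefToTsInfGraphsIness}.

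It remains to treat the essential nodes $t$. Since $(\tilde{T},\tilde{\cV})$ distinguishes all combinatorially distinguishable $\cF$-tangles and since two distinct ends, as well as an end and a non-end, are always combinatorially distinguishable, each such $t$ is home to at most one principal $\cF$-tangle of order $k$, and otherwise only to mutually combinatorially-indistinguishable non-principal ones. If $t$ is home to a \emph{bounded} principal $\cF$-tangle $\tau$, I refine $t$ by adapting the essential-part refinement of \cite{SARefiningEssParts} to infinite graphs: I select a finite, $\tau$-exclusive star $\sigma^{\tau}\subseteq\tau$ whose interior has the least possible size, decompose the inessential region between $\sigma_t$ and $\sigma^{\tau}$ by \cref{lem:ReflemForInfGraphsInessStars}, and retain one node with bag $\interior(\sigma^{\tau})$; boundedness of $\tau$ is precisely what ensures that such a finite $\sigma^{\tau}$ exists and that the minimum is attained, and this yields \cref{itm:RefToTsInfGraphsBoundedTangle}. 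If $t$ is home to a principal $\cF$-tangle induced by an end $\eps$ of $G$ with $\Delta(\eps)<k$, then the ends of $\torso(\sigma_t)$ all have combined degree $<k$ and its critical vertex sets all have size $<k$ (by the discussion above together with \cref{lem:DegreeOfEndsInTorso} and \cref{lem:TorsosAndCriticalVertexSets}), so \cref{thm:StartingTD} applies to $\torso(\sigma_t)$; lifting its tree-decomposition back to $G$ and prepending the leaf separations $\sigma_t$ as in the proof of \cref{lem:ReflemForInfGraphsInessStars} makes $\eps$ come to live in an end, which, together with the trivial bookkeeping for ends of $G$ already living in ends of $\tilde{T}$, gives \cref{itm:RefToTsInfGraphsEnds}. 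If $t$ is home only to non-principal $\cF$-tangles, the same application of \cref{thm:StartingTD} to $\torso(\sigma_t)$, now exploiting that its infinite-degree nodes carry stars in $\cU^{\infty}_k$ and that, by \cref{lem:NonPrincipalPkTanglesAreInducedByUFTangles} and \cref{lem:TanglesAtSmallCritVS}, every non-principal $\cF$-tangle home to $\sigma_t$ must come to live at such a node, gives \cref{itm:RefToTsInfGraphsCrit}. The remaining essential nodes, those home to a principal tangle induced by an end of combined degree $\geq k$ or to a large-critical-vertex-set tangle $\{(A,B)\in\vS_k(G):X\subseteq B\}$ with $|X|\geq k$, are left untouched; they stay essential and are therefore exempt from \cref{itm:RefToTsInfGraphsIness} and \cref{itm:RefToTsInfGraphsBoundedTangle}.

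Finally, gluing the pieces together along $\tilde{T}$ via \cref{constr:StickingSkTreesTogether} produces a tree-decomposition $(T,\cV)$ of adhesion $<k$ that refines $(\tilde{T},\tilde{\cV})$. Property \cref{itm:RefToTsInfGraphsIness} holds because every new internal node carries a star in $\cF$; property \cref{itm:RefToTsInfGraphsEnds2} holds because each inserted piece is either a finite $S_k(G)$-tree, a ray leading to an end of $G$, or a tree-decomposition into finite parts, so no end of $T$ arises that is not anchored at an end of $G$; and properties \cref{itm:RefToTsInfGraphsEnds}, \cref{itm:RefToTsInfGraphsCrit} and \cref{itm:RefToTsInfGraphsBoundedTangle} hold by construction in the respective cases. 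I expect the main obstacle to be the bounded-tangle refinement: extending \cite{SARefiningEssParts} to infinite graphs requires controlling how the minimal exclusive star $\sigma^{\tau}$ interacts with ends and critical vertex sets, so that shrinking the bag neither destroys exclusivity nor forces infinite adhesion — which is exactly what the tools of \cref{sec:RefiningInessStars} (closely related separations, the shifting lemmas, and \cref{lem:ReflemForInfGraphsInessStars} applied to the surrounding region) are built to make possible.
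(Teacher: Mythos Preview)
Your proposal follows essentially the same strategy as the paper: refine each node of $(\tilde{T},\tilde{\cV})$ separately and glue the pieces along $\tilde{T}$ via \cref{constr:StickingSkTreesTogether}. The paper packages this into two auxiliary lemmas---\cref{lem:ReflemForInfGraphsBoundedEssStars} for nodes home to a bounded $\cF$-tangle and \cref{lem:ReflemInfInessStars} for all remaining nodes---while you describe the cases inline; your explicit treatment of the ``leave untouched'' case for ends of combined degree $\geq k$ and large critical vertex sets is correct and matches what the paper does implicitly.

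There is, however, a genuine gap in your treatment of nodes home to an end of combined degree $<k$ or only to non-principal tangles. You propose to apply \cref{thm:StartingTD} to $\torso(\sigma_t)$ and lift the resulting decomposition back to $G$, but the internal nodes of that decomposition need not carry stars in $\cF$: \cref{thm:StartingTD} only guarantees finite parts, not $\cF$-stars. Your verification of \cref{itm:RefToTsInfGraphsIness} (``every new internal node carries a star in $\cF$'') therefore fails for these pieces. The paper's \cref{lem:ReflemInfInessStars} fixes this by a two-stage process: after applying \cref{thm:StartingTD} to the torso and lifting, it contracts those edges whose induced separations are neither left-$\ell$-robust nor already in $\sigma_t$ (using \cref{lem:ContractingEdgesStillFinite} to keep the parts finite), and only then refines each remaining finite-degree node via \cref{lem:ReflemForInfGraphsInessStars}. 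Without this contraction step the separations handed to \cref{lem:ReflemForInfGraphsInessStars} need not satisfy its robustness hypothesis, and without the second-level refinement the inessential nodes inside these pieces will not have $\sigma_t\in\cF$. Your assessment that the bounded-tangle refinement is the main obstacle is therefore slightly off: that case is handled cleanly by \cref{lem:ReflemForInfGraphsBoundedEssStars}, whereas the real technical work---the contraction and the nested application of \cref{lem:ReflemForInfGraphsInessStars}---lies in \cref{lem:ReflemInfInessStars}.
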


\noindent We remark that if $G$ is locally finite, then there exists by \cite{CanonicalTreesofTDs}*{Theorem~7.3} for every $k \in \N$ a \td~$(\tilde{T}, \tilde{\cV})$ of $G$ which efficiently distinguishes all $k$-tangles in $G$, and which thus satisfies the premise of \cref{thm:RefToTsInfGraphs}.  
\medskip

In the remainder of this section we prove \cref{thm:RefToTsInfGraphs}. For this, we need two more refining lemmas. The first one lets us refine stars which are home to a bounded tangle, and the second one generalizes our refining lemma for inessential stars, \cref{lem:ReflemForInfGraphsInessStars}, to stars whose interior is infinite.

To show the first lemma, we need the following result of \cite{SARefiningEssParts}:

\begin{LEM}{\cite{SARefiningEssParts}*{Proof of Lemma 4.3}}\label{lem:StarWithMinIntNestedClRel}
    Let $k \in \N$, let $\cQ$ be some set of $k$-profiles in a graph\footnote{In \cite{SARefiningEssParts} the assertion of \cref{lem:StarWithMinIntNestedClRel} is shown only for finite graphs. However, the same proof works for infinite graphs as long as~$\sigma$ is finite and has finite interior.} $G$, and let~$\tau \in \cQ$. Further, let~$\sigma \subseteq \tau$ be a finite star with finite interior, and suppose that every separation in~$\sigma$ efficiently distinguishes some pair of $k$-profiles in $\cQ$. 
    Then there exists a star $\rho \subseteq \tau$ with $\sigma \leq \rho$ whose interior is of smallest size among all stars in~$\tau$ that are exclusive for $\cQ$ and which has the further property that all the separations in $\rho$ are closely related to~$\tau$.
\end{LEM}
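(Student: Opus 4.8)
The plan is to start from any exclusive star of smallest interior and repeatedly apply corner‐separation (``shifting'') moves to push it above $\sigma$ and then to make all its separations closely related to $\tau$, keeping under control throughout that no separation order reaches $k$, that the interior never grows, and that exclusivity for $\cQ$ is preserved. The finiteness needed to make the process terminate will come from \cref{prop:FiniteStarAndInteriorImpliesFiniteSkSigma} applied to $S^\sigma_k(G)$.

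First I would set $m := \min\{\,|\interior(\rho')| : \rho'\subseteq\tau \text{ a star exclusive for }\cQ\,\}$ and fix a witnessing star $\rho_0\subseteq\tau$ with $|\interior(\rho_0)|=m$; such stars exist (in the intended applications $\sigma$ itself is one, and in general one is obtained by adding to a separation set distinguishing $\tau$ from the remaining profiles of $\cQ$). I would then enlarge $\rho_0$ to a star $\rho_1\subseteq\tau$ with $\sigma\le\rho_1$ by processing $\vs_1,\dots,\vs_n\in\sigma$ one at a time: whenever a separation $\vr$ of the current star is not already $\ge\vs_i$, replace it by the corner $\vr\vee\vs_i$ (after using \cref{lem:Fishlemma} to make $\vs_i$ nested with the rest of the star). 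The key point is that each $\vs_i$, efficiently distinguishing a pair of $k$-profiles in $\cQ$, is tight by \cref{lem:EffYieldsTight} and — together with its inverse — closely related to those profiles by \cref{prop:eff}; via submodularity this keeps every corner of order $<k$, while \cref{lem:Fishlemma} keeps the modified set a star. Since replacing $\vr=(A,B)$ by $\vr\vee\vs_i=(A\cup E,B\cap F)$ only shrinks the big side, $|\interior|$ never grows, so $|\interior(\rho_1)|\le m$; and as $\rho_1$ stays exclusive for $\cQ$ this forces $|\interior(\rho_1)|=m$.

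Finally I would make every separation of $\rho_1$ closely related to $\tau$: if some $\vr\in\rho_1$ fails this, there is $\vq\in\tau$ with $|\vr\wedge\vq|\ge k$, hence $|\vr\vee\vq|<k$ by submodularity, and I would replace $\vr$ by a corner of $\vr$ with (a suitably nested downshift of) $\vq$. By the profile property of $\tau$ the new star still lies in $\tau$, its big sides only shrink so the interior stays at $m$, and — since the new separation still lies above the corresponding $\vs_i$ — we keep $\sigma\le{}$the star. Iterating, the star strictly increases in the partial order on stars at each move, and after the first stage all separations in play are nested with $\sigma$ and of order $<k$, of which there are only finitely many by \cref{prop:FiniteStarAndInteriorImpliesFiniteSkSigma}; hence the process terminates, and the resulting $\rho$ is as required.

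The main obstacle will be maintaining the three invariants simultaneously through these shifting moves: that no corner ever attains order $k$, that the interior never exceeds $m$, and — most delicately — that the star remains \emph{exclusive} for $\cQ$, i.e.\ still distinguishes $\tau$ from every other profile of $\cQ$. This last point is where the efficiency hypothesis on the separations of $\sigma$ (through \cref{prop:eff}) and the profile axioms of the members of $\cQ$ must be combined: one has to verify that when a separation of the current star which distinguishes $\tau$ from some $P'\in\cQ$ is replaced by a corner, either the new separation still distinguishes $\tau$ from $P'$ or some other separation of the enlarged star does.
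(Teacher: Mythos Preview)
The present paper does not give its own proof of this lemma; it is quoted from \cite{SARefiningEssParts} (the proof of Lemma~4.3 there), with the remark that the argument carries over verbatim to infinite $G$ once $\sigma$ is finite with finite interior. So there is no in-paper proof to compare against, and what follows is an assessment of your sketch on its own merits.

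Your overall strategy --- start from a minimum-interior exclusive star and shift it into position --- is the right shape, but the shifting step as you describe it does not work. You propose, for each $\vs_i\in\sigma$, to replace \emph{every} $\vr$ in the current star that is not already $\ge\vs_i$ by $\vr\vee\vs_i$. This destroys the star property: if two distinct $\vr,\vr'$ are both sent to $\vr\vee\vs_i$ and $\vr'\vee\vs_i$, these are both $\ge\vs_i$, and for non-small $\vs_i$ a star can contain at most one such separation. The standard shift instead sends one chosen $\vr_0$ to $\vr_0\vee\vs_i$ and every other $\vr$ to $\vr\wedge\sv_i$; that does yield a star, but then the big sides of the $\vr\wedge\sv_i$ \emph{grow} rather than shrink, so your claim that ``$|\interior|$ never grows'' breaks down. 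Keeping the interior at $m$ through such a shift is exactly the delicate point, and it needs a more careful choice of corners than your sketch indicates.

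A second gap concerns the order bounds. The close-relatedness you obtain from \cref{prop:eff} is of $\vs_i$ and $\sv_i$ to the two profiles $P,P'\in\cQ$ that $s_i$ efficiently distinguishes --- not to $\tau$. For an arbitrary $\vr\in\rho_0\subseteq\tau$ you therefore cannot directly conclude $|\vr\vee\vs_i|<k$; you first need to know how $P'$ orients $r$, and to select $\vr_0$ with $\rv_0\in P'$ (which exclusivity of $\rho_0$ does supply, via consistency). This is repairable, but it interacts with the interior-control issue above and with the exclusivity invariant you rightly flag as the main obstacle yet do not resolve. In short, the plan is plausible, but the two central invariants --- star-with-interior-$\le m$ and order~$<k$ --- are not actually maintained by the move you describe; the argument in \cite{SARefiningEssParts} handles these points with more careful bookkeeping that your sketch does not yet capture.
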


The following lemma refines essential stars that are home to a bounded tangle in a similar way as \cref{lem:ReflemForInfGraphsInessStars} refines inessential stars. It generalizes \cite{SARefiningEssParts}*{Lemma 4.3} to infinite graphs.

\begin{LEM}\label{lem:ReflemForInfGraphsBoundedEssStars}
    Let $G$ be a graph, $k \in \N$, and let $\cF$ be a finitely bounded, profile-respecting, nice set of finite stars in $\vS_k$. Let $\sigma \subseteq \vS_k$ be a star, and suppose that every separation in $\sigma$ efficiently distinguishes some pair of $\cF$-tangles of $S_k$. Further, suppose there is a unique $\cF$-tangle $\tau$ of $S_k$ that satisfies $\sigma \subseteq \tau$.
    If $\tau$ is bounded, then there exists a star~$\sigma' \subseteq \tau$ whose interior is of smallest size among all exclusive stars in~$\tau$, and a finite $S_k$-tree over $\cF \cup \{\sigma'\} \cup \{\{\sv\} : \vs \in \sigma\}$ in which each~$\vs \in \sigma$ appears as a leaf separation.
\end{LEM}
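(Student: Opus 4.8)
The plan is to combine the machinery of Section~\ref{sec:RefiningInessStars} with the essential-part refinement of \cite{SARefiningEssParts}. First I would apply \cref{lem:StarWithMinIntNestedClRel} to the set $\cQ$ of all $\cF$-tangles of $S_k(G)$ (these are $k$-profiles since $\cF$ is profile-respecting) and to the star $\sigma \subseteq \tau$: since every separation in $\sigma$ efficiently distinguishes a pair of $\cF$-tangles, the hypotheses are met, and we obtain a star $\rho \subseteq \tau$ with $\sigma \leq \rho$ whose interior is of smallest size among all stars in $\tau$ that are exclusive for $\cQ$, and such that every separation in $\rho$ is closely related to $\tau$. This $\rho$ will be (essentially) the star $\sigma'$ promised in the statement; note $\interior(\rho)$ is finite because $\tau$ is bounded (a bounded $k$-profile is neither induced by an end nor by a large critical vertex set, so it cannot live in a star of infinite interior, since such stars are home only to ends or ultrafilter tangles — cf.\ \cref{lem:TanglesAtSmallCritVS} and \cref{prop:RayInTorsoExtends}).

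Next I would run the same reduction as in the proof of \cref{lem:ReflemForInfGraphsInessStars}: pass to the finite separation system $S_k^\sigma(G)$, which is finite by \cref{prop:FiniteStarAndInteriorImpliesFiniteSkSigma} (as $\sigma$ is finite with finite interior) and $\cF''$-separable by \cref{lem:SkSigmaIsFSeparable}, where $\cF'' := \cF \cup \{\rho\} \cup \{\{\sv\} : \vs \in \sigma\}$. Here one must check that adding the single star $\rho$ to the forbidden set does not destroy $\cF''$-separability: since every separation in $\rho$ is closely related to $\tau$ and $\rho$ is exclusive for $\cQ$, the argument of \cref{lem:SkSigmaIsFSeparable} (which only needs $\cF$ to be closed under shifting and to control the singleton leaf stars) goes through once we observe that the emulation property is unaffected by a single extra star all of whose members sit above a separation of $\sigma$. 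Applying \cref{thm:TTDinASS} to $S_k^\sigma(G)$ and $\cF''$ then yields either an $\cF''$-tangle of $S_k^\sigma(G)$ or an $S_k^\sigma(G)$-tree — hence an $S_k(G)$-tree — over $\cF''$, which by the leaf argument at the end of the proof of \cref{lem:ReflemForInfGraphsInessStars} contains each $\vs \in \sigma$ as a leaf separation. In the tree case we are done (with $\sigma' := \rho$).

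It remains to rule out the tangle case, i.e.\ to show no $\cF''$-tangle of $S_k^\sigma(G)$ exists; equivalently, using \cref{lem:TangleInsideStar} and \cref{lem:TangleInsideStarComplicated} inductively (the separations in $\sigma$ are, up to inverses, closely related to $\tau$, so \cref{lem:TangleInsideStar} applies), any such $\tau_0$ would extend to an $\cF''$-tangle of $S_k(G)$; but an $\cF''$-tangle of $S_k(G)$ is in particular an $\cF$-tangle containing $\sigma$, hence equals the unique such tangle $\tau$ by hypothesis, and $\tau$ must then avoid $\rho$ — contradicting $\rho \subseteq \tau$. (One subtlety: the extension lemmas require $\cF$ nice / strongly closed under shifting, which holds by assumption, and require the closely-related separation to avoid $\cF$; since $\tau$ is an $\cF$-tangle it does avoid $\cF$, so this is fine.) The main obstacle I expect is the bookkeeping in verifying that the extension lemmas \cref{lem:TangleInsideStar},~\cref{lem:TangleInsideStarComplicated} remain valid with $\rho$ added to the forbidden family — one has to confirm that $\rho$, being a star with finite interior all of whose elements lie above some $\vs_i \in \sigma$, is never ``created'' by the shifting operations used to derive contradictions, which follows because shifting only moves separations in $\cF$ itself and the singleton leaf stars are handled separately; and secondly, verifying that $\interior(\rho)$ being of smallest size among exclusive stars in $\tau$ is inherited by $\sigma'$, which is immediate since we take $\sigma' = \rho$.
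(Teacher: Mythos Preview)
Your plan has the right overall shape --- obtain the star $\rho$ from \cref{lem:StarWithMinIntNestedClRel}, then build the tree --- but it contains one oversight and one genuine gap.

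\textbf{Oversight.} To invoke \cref{lem:StarWithMinIntNestedClRel} and \cref{prop:FiniteStarAndInteriorImpliesFiniteSkSigma} you must first know that $\sigma$ itself (not just $\rho$) is finite with finite interior, and this is not given by hypothesis. Your parenthetical sketch is in the right direction but incomplete. The actual argument, which the paper carries out first, runs as follows: since $\tau$ is bounded and is the unique $\cF$-tangle containing $\sigma$, no end lives in $\sigma$ (\cref{prop:EndsInduceFTangles}); hence $\torso(\sigma)$ is rayless (\cref{prop:RayInTorsoExtends}, using left-tightness from \cref{lem:EffYieldsTight}). Then $\torso(\sigma)$ is tough by \cref{lem:TanglesAtLargeCritVS} and \cref{lem:TanglesAtSmallCritVS}, so \cref{prop:RaylessToughGraphsAreFinite} gives $|\interior(\sigma)|<\infty$; finiteness of $\sigma$ then follows from \cref{lem:TanglesAtSmallCritVS}.

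\textbf{Main gap.} Your direct application of \cref{thm:TTDinASS} to $S_k^\sigma(G)$ with $\cF'' = \cF \cup \{\rho\} \cup \{\{\sv\} : \vs \in \sigma\}$ requires $\vS_k^\sigma(G)$ to be $\cF''$-separable, which in turn needs that shifting the star $\rho$ by any emulating separation again yields a star in $\cF''$. But $\cF \cup \{\rho\}$ is typically \emph{not} closed under shifting: a shift of $\rho$ is neither $\rho$ itself nor, in general, a member of $\cF$. Your assertion that ``the emulation property is unaffected by a single extra star all of whose members sit above a separation of $\sigma$'' does not address this; closure under shifting must be checked star by star, and adding one exceptional star can break it. (Your argument ruling out the tangle alternative is essentially fine --- since $\rho \subseteq \vS_k^\sigma(G)$, extending an $\cF''$-tangle of $S_k^\sigma(G)$ as an $\cF'$-tangle via \cref{lem:TangleInsideStar} automatically preserves $\rho$-avoidance --- but without separability you cannot conclude that the tree alternative holds.)

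The paper sidesteps this difficulty by a wedge decomposition. For each $\vs \in \sigma'$ (your $\rho$) it forms the star $\rho_s := \{\sv\} \cup \{\vr \in \sigma : \vr \leq \vs\}$, which is genuinely inessential: the only $\cF$-tangle containing $\sigma$ is $\tau$, and $\tau$ contains $\vs$ rather than $\sv$. Every member of $\rho_s$ has an inverse closely related to some $\cF$-tangle (the $\sigma$-members by \cref{prop:eff}; the separation $\sv$ by the choice of $\sigma'$ in \cref{lem:StarWithMinIntNestedClRel}), so \cref{lem:ReflemForInfGraphsInessStars} applies with the \emph{unmodified} $\cF$ and yields a finite $S_k(G)$-tree over $\cF \cup \{\{\rv\} : \vr \in \rho_s\}$ for each wedge. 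Gluing these trees along their leaves labelled $\vs$ produces the required tree over $\cF \cup \{\sigma'\} \cup \{\{\sv\} : \vs \in \sigma\}$, with the central glued node carrying the star $\sigma'$. No shifting-closure of an enlarged forbidden family is ever needed.
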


\noindent Note that we show in the proof that the interior of every exclusive star in $\tau$ is finite. Thus, `of smallest size' is well-defined.

\begin{proof}
    Since $\tau$ is bounded and thus not induced by an end, and because $\sigma$ is $\tau$-exclusive, no end of $G$ lives in~$\sigma$ by \cref{prop:EndsInduceFTangles}. Hence, by \cref{prop:RayInTorsoExtends} and because all separations in $\sigma$ are tight on the small side by \cref{lem:EffYieldsTight}, $\torso(\sigma)$ is rayless.
    Moreover, by \cref{lem:TanglesAtLargeCritVS,lem:TanglesAtSmallCritVS}, $\torso(\sigma)$ is tough, and so by \cref{prop:RaylessToughGraphsAreFinite}, $\interior(\sigma)$ is finite. In particular, again by \cref{lem:TanglesAtSmallCritVS}, $\sigma$ is finite.
    
    We can thus apply \cref{lem:StarWithMinIntNestedClRel} to $\sigma$ and~$\tau$, which yields a star $\sigma' \subseteq \tau$ with $\sigma \leq \sigma'$ which is closely related to~$\tau$ and whose interior is of smallest size among all exclusive stars in~$\tau$. 
    As all separations in~$\sigma$ distinguish some pair of $\cF$-tangles in $G$ efficiently, and are hence closely related to some $\cF$-tangle by \cref{prop:eff}, it follows that all the stars $\rho_{s} := \{\sv\} \cup \{\vr : \vr \leq \vs, \vr \in \sigma\}$ for $\vs \in \sigma'$ satisfy the premise of \cref{lem:ReflemForInfGraphsInessStars}. We thus obtain, for every~$\rho_{s}$, a finite $S_k$-tree $(T^{s}, \alpha^{s})$ over $\cF \cup \{\{\rv\} : \vr \in \rho_{s}\}$ in which each $\vr \in \rho_{s}$ appears as a leaf separation. 

    Let $T$ be the tree obtained from the disjoint union of the trees $T^{s}$ by identifying their leaves $v_{s}$ where~$v_{s}$ is the unique leaf of $T^{s}$ whose incident edge induces $\vs$. Then $(T, \alpha)$ with $\alpha(\ve) = \alpha^{s}(\ve)$ where $T^{s}$ is the unique tree containing $e$ is an $S_k$-tree over $\cF \cup \{\sigma'\} \cup \{\{\sv\} : \vs \in \sigma\}$.
\end{proof}

The next lemma generalizes \cref{lem:ReflemForInfGraphsInessStars} to stars with infinite interior:

\begin{LEM}\label{lem:ReflemInfInessStars}
    Let $G$ be a graph, $k \in \N$, and let $\cF$ be a finitely bounded, nice set of stars in~$\vS_k$. 
    Let $\sigma := \{\vs_i : i \in I\} \subseteq \vS_k$ be a star, and suppose that every separation in~$\sigma$ efficiently distinguishes some pair of $k$-profiles in $G$ that avoid $\cF$.
    Set $\cF' := \cF \cup \{\{\sv_i\} : i \in I\}$.
    Then either there is a principal $\cF'$-tangle of~$S_k$ that is not induced by an end of $G$ of combined degree $<k$, or there is a weakly exhaustive~$S_k$-tree $(T, \alpha)$ over~$\cF' \cup \cU^\infty_k$ in which each $\vs_i$ appears as a leaf separation.

    In particular, $(T, \alpha)$ can be chosen so that every end of $T$ is home to an end of $G$.
\end{LEM}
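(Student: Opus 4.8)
The plan is to reduce this to the finite-interior case, \cref{lem:ReflemForInfGraphsInessStars}, by first splitting off the infinite part of $\sigma$. Since every separation in $\sigma$ efficiently distinguishes a pair of $k$-profiles avoiding $\cF$, all separations in $\sigma$ are left-tight by \cref{lem:EffYieldsTight}, so we may consider $\torso(\sigma)$. If $\interior(\sigma)$ is finite, then $\sigma$ itself is finite (arguing as in the proof of \cref{lem:ReflemForInfGraphsBoundedEssStars}: $\torso(\sigma)$ would be tough and rayless by \cref{lem:TanglesAtSmallCritVS} and \cref{prop:RayInTorsoExtends}, hence finite by \cref{prop:RaylessToughGraphsAreFinite}, if no tangle lived in $\sigma$; and if a principal or non-principal $\cF'$-tangle did live in $\sigma$ we would be in the first alternative via \cref{lem:NonPrincipalPkTanglesAreInducedByUFTangles} and \cref{lem:TanglesAtLargeCritVS}), and then \cref{lem:ReflemForInfGraphsInessStars} applies directly — noting that every separation closely related to a $k$-profile avoiding $\cF$ is in particular left-$\ell$-robust or has a closely-related inverse, by \cref{prop:eff} and \cref{prop:LeftRobustCrit}. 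So assume $\interior(\sigma)$ is infinite.

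The key idea is to \emph{not} refine the whole of $\sigma$ at once, but to build an $S_k(G)$-tree whose central node is associated with $\sigma$ (or a coarsening of it into a $\cU^\infty_k$-star) and then refine each leaf separation $\sv_i$ locally. Concretely: first I would check whether $|\interior(\sigma)| < k$. If so, then $\sigma \in \cU^\infty_k$ (it is infinite, being a star with infinitely many elements in an infinite interior — or we coarsen finite chunks together to make it infinite if needed), and we can take the star node $t_0$ with $\sigma_{t_0} = \sigma$ and then, for each $\vs_i \in \sigma$, apply \cref{lem:ReflemForInfGraphsInessStars} to the singleton-ish star $\{\sv_i\} \cup \{\text{trivial completions}\}$ — or more precisely invoke the first alternative if a tangle lives past $\sv_i$. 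But the cleaner route, mirroring \cref{constr:StickingSkTreesTogether}, is: for each $i \in I$ let $O_i := \{\vr \in \vS_k(G) : \vr \le \sv_i\}$; this is a consistent orientation of $S_k^{\{\sv_i\}}(G)$, and since $\cF$ is nice it avoids $\cF'$ unless some $\cF$-tangle extends it. If some $\cF$-tangle of $S_k(G)$ has $\sv_i$ in it and is principal, not induced by a small-combined-degree end, we land in the first alternative. Otherwise, we can refine $\sv_i$: apply \cref{lem:ReflemForInfGraphsInessStars} to a finite star containing $\sv_i$, or use \cref{main:TTDInfGraphsFTangles} on the graph "beyond" $\sv_i$ to get a weakly exhaustive $S_k(G)$-tree over $\cF \cup \cU^\infty_k$ with $\sv_i$ as a leaf separation. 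Then glue these trees to $t_0$ along the edges inducing the $\sv_i$, exactly as in \cref{constr:StickingSkTreesTogether}, and weak exhaustiveness is preserved because each piece is weakly exhaustive and $t_0$ has a $\cU^\infty_k$-star. If instead $|\interior(\sigma)| \ge k$, then $\torso(\sigma)$ has a critical vertex set of size $<k$ only in degenerate cases; more relevantly, one shows directly that a principal $\cF'$-tangle lives in $\sigma$ — namely $\{(A,B)\in\vS_k(G) : X \subseteq B\}$ for a suitable critical vertex set $X \subseteq \interior(\sigma)$ of size $\ge k$ via \cref{lem:TanglesAtLargeCritVS} — and this tangle is not induced by an end of combined degree $<k$ (since a large critical vertex set forces combined degree $\ge k$), so we are in the first alternative. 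For the "in particular" clause, one arranges the gluing so that every ray of $T$ either runs through infinitely many of the refining pieces (each contributing a weakly exhaustive tail, hence converging to an end of $G$ by the argument in the proof of \cref{main:TTDInfGraphsFTangles}) or is eventually inside one piece $(T^{s_i}, \alpha^{s_i})$, which can itself be chosen with the end-homing property by \cref{thm:StartingTD}\ref{itm:StartingTDends} applied inside that piece.

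The main obstacle I anticipate is making the recursion/gluing precise: when we "refine $\sv_i$" by invoking \cref{main:TTDInfGraphsFTangles} on the subgraph on the big side of $\sv_i$, we must ensure that $\sv_i$ genuinely appears as a leaf separation of the resulting tree and that the tree is weakly exhaustive as an $S_k(G)$-tree (not merely for the subgraph), which requires care about separations of $G$ versus separations of the auxiliary subgraph — the standard fix is to work with $S_k^{\{\sv_i\}}(G)$ throughout and to note, as in \cref{lem:SkSigmaIsFSeparable} and \cref{prop:FiniteStarAndInteriorImpliesFiniteSkSigma}, that orientations lift. A secondary subtlety is the case analysis on $|\interior(\sigma)|$: one needs that whenever $|\interior(\sigma)| \ge k$ the star $\sigma$ (or a critical vertex set inside it) really does support a principal tangle not coming from a small end; this is exactly \cref{lem:TanglesAtLargeCritVS} together with the observation, used already in the introduction, that an end of combined degree $<k$ cannot have all of a size-$\ge k$ critical set on its big side. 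Modulo these bookkeeping points, the proof is an assembly of \cref{lem:ReflemForInfGraphsInessStars}, \cref{main:TTDInfGraphsFTangles}, \cref{thm:StartingTD}, and \cref{constr:StickingSkTreesTogether}.
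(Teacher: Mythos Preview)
Your plan has a genuine gap in the infinite-interior case. After assuming $\interior(\sigma)$ is infinite you case-split on whether $|\interior(\sigma)|<k$, but since $k\in\N$ this case is vacuous; so your only real proposal for infinite interior is that some critical vertex set $X\subseteq\interior(\sigma)$ of size $\ge k$ exists and yields a principal $\cF'$-tangle via \cref{lem:TanglesAtLargeCritVS}. This is false. Take $G$ the double ray on $\Z$, $k=2$, and $\sigma=\{(\Z_{\le 0},\Z_{\ge 0})\}$: this separation efficiently distinguishes the two end-tangles, $\interior(\sigma)=\Z_{\ge 0}$ is infinite, yet $G$ is locally finite and has no critical vertex sets at all. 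The only principal $2$-tangle containing $\sigma$ is induced by the positive end, which has combined degree $1<k$, so we are \emph{not} in the first alternative and must build the tree. More generally, $\torso(\sigma)$ can be any locally finite graph whose ends all have small combined degree; nothing forces a large critical vertex set.

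The paper's approach to the infinite-interior case is structurally different: it first shows (via \cref{lem:DegreeOfEndsInTorso}, \cref{lem:TorsosAndCriticalVertexSets}, \cref{lem:TanglesAtLargeCritVS}) that all ends of $\torso(\sigma)$ have combined degree $<k$ and all its critical vertex sets have size $<k$, then applies \cref{thm:StartingTD} to $\torso(\sigma)$ to obtain a tree-decomposition into finite parts, extends it to $G$ by hanging the small sides of $\sigma$ as leaves, contracts non-robust edges using \cref{lem:ContractingEdgesEllRobust} and \cref{lem:ContractingEdgesStillFinite}, and finally refines each finite-interior node via the finite-interior case. A secondary issue: even in the finite-interior case, your claim that $\sigma$ must be finite does not follow---infinitely many separations in $\sigma$ can share the same separator (a critical vertex set of size $<k$), and \cref{lem:TanglesAtSmallCritVS} then only produces a \emph{non-principal} tangle, which does not put you in the first alternative. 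The paper handles this by grouping such separations into a single left-$\ell$-robust supremum (via \cref{prop:LeftRobustCrit}) before invoking \cref{lem:ReflemForInfGraphsInessStars}, and then re-attaching the original leaves afterwards.
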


If $(T,\cV)$ is a \td\ of a graph $G$, then a tree $T'$ obtained from $T$ by edge contractions induces the \td\ $(T',\cV')$ of $G$ whose bags are $V'_t = \bigcup_{s \in t} V_s$ for every $t \in T'$, where we denote the vertex set of $T'$ as the set of branch sets. Recall that $V_e := V_s \cap V_t$ for every edge $e=st$ of~$T$.

To prove \cref{lem:ReflemInfInessStars} we need the following lemma:

\begin{lemma}{\cite{LinkedTDInfGraphs}*{Lemma~8.6}} \label{lem:ContractingEdgesStillFinite}
    Let $G$ be a graph, and let $(T, \cV)$ be the \td\ of $G$ that satisfies \cref{itm:StartingTDends} and \ref{itm:StartingTD:otherProperties} of \cref{thm:StartingTD}. Let $F$ be some set of edges of $T$ such that no edge in $F$ is incident with a node of infinite degree and such that 
    for every end $\eta$ of $T$, the set $F$ avoids cofinitely many edges $e$ of the $\eta$-ray $R$ in $T$ with $|V_e| = \Delta(\eps)$ where $\eps$ is the end of $G$ that lives in $\eta$. Then the \td\ obtained from $(T, \cV)$ by contacting all edges in $F$ has still finite parts.
\end{lemma}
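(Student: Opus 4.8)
The plan is to establish the purely combinatorial statement that every branch set of the contraction is finite. Here a \emph{branch set} is a connected component $B$ of the spanning subforest $(V(T),F)$ of $T$; since $F\subseteq E(T)$ and $T$ is a tree, each such $B$ is, on its vertex set, a subtree of $T$ all of whose edges lie in $F$, and the bag of the node of the contracted tree corresponding to $B$ is $\bigcup_{s\in B}V_s$. Once every $B$ is known to be finite, this bag is a finite union of the finite bags of $(T,\cV)$, hence finite, so every part of the contracted \td\ is finite as desired; that the contraction is a \td\ at all was recalled just before the statement.

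So suppose for a contradiction that some branch set $B$ is infinite; in particular $|B|\geq 2$. First I would argue that $B$ contains no node of infinite degree in $T$: every node of $B$ has a neighbour in $B$ (as $B$ is a connected subtree with at least two nodes), joined to it by an edge of $F$, so a node of $B$ of infinite degree in $T$ would be incident with an edge of $F$, contradicting the hypothesis on $F$. Hence $B$ is a locally finite, connected, infinite tree, so by König's infinity lemma it contains a ray $R=r_1r_2\dots$; every edge of $R$ lies in $B$, and hence in $F$.

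Now $R$, being a ray of $T$, converges to some end $\eta$ of $T$. By \cref{itm:StartingTDends} of \cref{thm:StartingTD} — one of the hypotheses on $(T,\cV)$ — the end $\eta$ is home to an end $\eps$ of $G$ with $\liminf_{i\in\N}|V_{r_i}\cap V_{r_{i+1}}|=\Delta(\eps)$. Since $\Delta(\eps)$ is finite (all ends of $G$ have finite combined degree, $(T,\cV)$ being the \td\ produced by \cref{thm:StartingTD}) and the orders $|V_{r_i}\cap V_{r_{i+1}}|$ are non-negative integers, this liminf identity forces $|V_e|=\Delta(\eps)$ for infinitely many edges $e$ of $R$; all such $e$ lie in $F$. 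On the other hand, the hypothesis on $F$ says that $F$ avoids all but finitely many edges $e$ of the $\eta$-ray with $|V_e|=\Delta(\eps)$. To conclude I would invoke the elementary fact that any two rays belonging to the same end of a tree share a common tail, hence differ in only finitely many edges; therefore the $\eta$-ray of the hypothesis also has infinitely many edges $e$ with $|V_e|=\Delta(\eps)$ lying in $F$ — a contradiction. So no branch set is infinite, and the lemma follows.

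The step I expect to be the main obstacle is the final reconciliation of ``the $\eta$-ray'' from the hypothesis with the concrete ray $R$ delivered by König's lemma: these rays must be identified closely enough for the hypothesis on $F$ to apply, which is precisely where one uses that rays of a tree belonging to the same end eventually coincide. A secondary point to be careful about is justifying that $\Delta(\eps)$ is finite, so that ``$|V_e|=\Delta(\eps)$ for infinitely many $e$'' is a genuine, non-vacuous conclusion rather than the empty statement about edges of infinite order; this uses the standing assumptions under which the \td\ $(T,\cV)$ of \cref{thm:StartingTD} is built.
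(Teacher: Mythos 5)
Your proof is correct. The paper does not prove this lemma itself but imports it from \cite{LinkedTDInfGraphs}*{Lemma~8.6}, so there is no in-text argument to compare against; your reasoning — branch sets meet only $F$-edges, hence avoid infinite-degree nodes, hence an infinite branch set is locally finite and contains a ray by K\H{o}nig's lemma, whose edges all lie in $F$ and infinitely many of which have adhesion exactly $\Delta(\eps)$ because the finite liminf of an integer sequence is attained infinitely often — is complete and is essentially the only natural route. You also correctly handle the two delicate points: that $\Delta(\eps)<k$ is finite under the hypotheses of \cref{thm:StartingTD}, and that the ray produced by K\H{o}nig's lemma shares a tail with the $\eta$-ray named in the hypothesis, so the avoidance condition on $F$ transfers and yields the contradiction.
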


\begin{proof}[Proof of \cref{lem:ReflemInfInessStars}]
    Let us assume that there is no $\cF'$-tangle that is as desired and show that there then exists a weakly exhaustive $S_k$-tree over $\cF' \cup \cU^\infty_k$.
    Set $\ell := \max\{3k-2, k(k-1)m + m\}$ where $m \in \N$ is such that $\cF$ is $m$-bounded.
    
    Assume first that $\sigma$ has finite interior; in this case, we allow that separations in $\sigma$ do not efficiently distinguish two $\cF$-tangles if they are $\ell$-robust on the small side. We show that there then exists an $S_k$-tree $(T, \alpha)$ as desired such that $T$ is rayless.  Set $\cX := \{X \subseteq V(G) : X = A \cap B \text{ for infinitely many } (A,B) \in \sigma\}$ and $\rho := \{(A,B) \in \sigma : A \cap B \notin \cX\} \cup \{(A_X, B_X) : X \in \cX\}$ where $(A_X, B_X)$ is the supremum over all $(A,B) \in \sigma$ with $A \cap B = X$. Clearly, we have $A_X \cap B_X = X$, and hence $\rho$ is included in $\vS_k$. 
    We claim that $\rho$ satisfies the premise of \cref{lem:ReflemForInfGraphsInessStars}.
    Indeed, $\rho$ is still a star, and it is finite since $\interior(\sigma)$ is finite. Moreover, every separation in $\rho \cap \sigma$ is either $\ell$-robust on the small side or has an inverse that is closely related to some $\cF$-tangle of order $k$ by \cref{prop:eff}. Further, every separation $(A,B) \in \rho\setminus \sigma$ is $\ell$-robust on the small side by \cref{prop:LeftRobustCrit}, as $G[A\setminus B]$ contains infinitely many tight components of $G-(A \cap B)$ by \cref{lem:EffYieldsTight}. 
    
    Hence, we may apply \cref{lem:ReflemForInfGraphsInessStars} to $\rho$. As $\rho$ is finite and has finite interior, it cannot be home to any $\cF$-tangles that are non-principal or induced by an end by \cref{lem:NonPrincipalPkTanglesAreInducedByUFTangles} and \cref{prop:EndsInduceFTangles}. Since $\rho$ is also not home to any other principal $\cF$-tangles by assumption, we thus obtain a finite $S_k$-tree $(T, \alpha)$ over $\cF \cup \{\{\sv\} : \vs \in \rho\}$ in which each $(A,B) \in \rho$ appears as a leaf separation; let us denote the respective leaf of~$T$ with $t_{(A,B)}$. We now obtain the desired rayless $S_k$-tree over $\cF \cup \cU_k^\infty$ by adding, for every $(A,B) \in \sigma \setminus \rho$ a leaf $u_{(A,B)}$ and the edge $\{u_{(A,B)}, t_{(A_X, B_X)}\}$ where $X = A \cap B$. 
    \smallskip

    We now turn to the case that $\sigma$ has infinite interior. By \cref{prop:EndsInduceFTangles}, no end of $G$ of combined degree $\geq k$ lives in $\sigma$, and hence, by \cref{lem:DegreeOfEndsInTorso}, all ends of $\torso(\sigma)$ have combined degree $<k$. Further, by \cref{lem:TorsosAndCriticalVertexSets,lem:TanglesAtLargeCritVS}, all critical vertex sets of $\torso(\sigma)$ have size $<k$. Hence, we may apply \cref{thm:StartingTD} to $\torso(\sigma)$, which yields a \td\ $(T', \cV')$ of $\torso(\sigma)$ of adhesion $<k$.
    Since every separator $A \cap B$ of a separation $(A,B) \in \sigma$ is complete in $\torso(\sigma)$, there exists for every such $A \cap B$ a node $t_{A \cap B}$ of $T'$ such that $A \cap B \subseteq V'_{t_{A \cap B}}$. We then obtain a \td\ $(T'', \cV'')$ of $G$ by adding for every $(A,B) \in \sigma$ a node $u_{(A,B)}$ and the edge $\{u_{(A,B)}, t_{A \cap B}\}$ to $T'$ and assigning the bag $A$ to $u_{(A,B)}$.
    
    Let $F$ be the set of edges $e = \{s,t\}$ of $T''$ with $\sigma''_s, \sigma''_t \notin \cU^\infty_k$ whose induced separation is neither $\ell$-robust nor in $\sigma$.
    Let $(\tilde{T}'', \tilde{\cV}'')$ and $(\tilde{T}', \tilde{\cV}')$ be the \tds\ obtained from $(T'', \cV'')$ and $(T', \cV')$, respectively, by contracting all edges in $F$. 
    Then all bags $\tilde{V}''_t$ for nodes $t$ of $\tilde{T}''$ not of the form $u_{(A,B)}$ are finite: By \cref{thm:StartingTD}~\ref{itm:StartingTDends} every end $\eta$ of $T'$ is home to an end $\eps'$ of $\torso(\sigma)$ with $\liminf_{e \in R} |V_e| = \Delta(\eps')$ where $R$ is any $\eta$-ray. Then by \cref{lem:DegreeOfEndsInTorso} the same holds true for the ray $R$ in $T'' \supseteq T'$ and some end $\eps$ of $G$ with $\Delta(\eps) = \Delta(\eps')$.
    Thus, by \cref{lem:ContractingEdgesEllRobust} applied to $\eps$ and the separations of $G$ induced by the ray $R$ in $T''$, the set $F$ and $(T', \cV')$ satisfy the premise of \cref{lem:ContractingEdgesStillFinite}, and thus $(\tilde{T}', \tilde{\cV}')$ has still finite parts. As $\tilde{V}''_t = \tilde{V}'_t$ for all nodes $t$ of $\tilde{T}''$ not of the form $u_{(A,B)}$, these bags are finite. 

    Now let $t$ be a node of $\tilde{T}''$ which is neither a leaf of the form $u_{(A,B)}$ for some $(A,B) \in \sigma$ nor associated with a star $\tilde{\sigma}''_t$ in $\cU_k^\infty$. Then all separations in $\tilde{\sigma}''_t$ are either $\ell$-robust on the small side or efficiently distinguish two $\cF$-tangles. Indeed, by the definition of $\tilde{T}''$, no edge $e=st$ of $\tilde{T}''$ is in $F$, which implies that the separation $(U_s,U_t)$ induced by $\ve$ is either $\ell$-robust, or in $\sigma$, or we have $\tilde{\sigma}''_s \in \cU^\infty_k$. In the second case, $(U_s, U_t)$ efficiently distinguishes two $\cF$-tangles, and in the third case $(U_s, U_t)$ is $\ell$-robust on the small side by \cref{prop:LeftRobustCrit}: infinitely many separations in $\sigma'_s$ are tight on the small side by \cref{thm:StartingTD}~\ref{itm:StartingTDinfdegree}, and hence, as all separations in $\sigma$ are tight on the small side, also infinitely many separations in $\sigma''_s$ are tight on the small side; so $U_s$ contains infinitely many tight components of $G-V_e$. Since $\interior(\tilde{\sigma}''_t)$ is finite, we may apply the first case to $\tilde{\sigma}''_t$. As $\sigma$ is not home to any principal $\cF$-tangles that are not induced by an end of combined degree $<k$, $\tilde{\sigma}''_t$ is not home to any such $\cF$-tangles either. Hence, we obtain a rayless $S_k$-tree $(T^t, \alpha^t)$ over $\cF \cup \cU^\infty_k \cup \{\{\sv\} : \vs \in \tilde{\sigma}''_t\}$ in which each $\vs \in \tilde{\sigma}''_t$ appears as a leaf separation.
    Applying \cref{constr:StickingSkTreesTogether} to $(\tilde{T}'', \tilde{\cV}'')$ and the $(T^t, \alpha^t)$ yields a weakly exhaustive $S_k$-tree $(T, \alpha)$ over $\cF \cup \cU_k^\infty \cup \{\{\sv\} : \vs \in \sigma\}$. Moreover, by construction, each~$\vs_i \in \sigma$ appears as a leaf separation of $(T, \alpha)$. For the `in particular'-part, note that every end of $T'$ is home to an end of $\torso(\sigma)$ by \cref{thm:StartingTD}~\cref{itm:StartingTDends}, and hence every end of $\tilde{T}''$ is home to an end of $G$ by \cref{lem:DegreeOfEndsInTorso}. Since all $T^t$ are rayless, the assertion follows.
\end{proof}

With \cref{lem:ReflemForInfGraphsBoundedEssStars,lem:ReflemInfInessStars} at hand, we are ready to prove the main result of this section.

\begin{proof}[Proof of \cref{thm:RefToTsInfGraphs}]
    We define for every node $t$ of $\tilde{T}$ an $S_k$-tree $(T^t, \alpha^t)$ as follows.
    If $t$ is home to a bounded $\cF$-tangle, then let $(T^t, \alpha^t)$ be the finite $S_k$-tree obtained from applying \cref{lem:ReflemForInfGraphsBoundedEssStars} to $\tilde{\sigma}_t$.
    If~$t$ is inessential or only home to $\cF$-tangles of order $k$ that are either non-principal or induced by ends of combined degree $<k$, then let $(T^t, \alpha^t)$ be the $S_k$-tree obtained from applying \cref{lem:ReflemInfInessStars} to $\tilde{\sigma}_t$.

    Then applying \cref{constr:StickingSkTreesTogether} to $(\tilde{T}, \tilde{\cV})$ and the $(T^t, \alpha^t)$ yields a weakly exhaustive $S_k$-tree $(T, \alpha)$. 
    It is now straightforward to check that $(T, \cV)$ with $V_t := \interior(\sigma_t)$ is a \td\ of $G$ (cf.\ \cite{confing}*{\S4}); in particular, every edge $\ve$ of $T$ induces the separation $\alpha(\ve)$. Then $(T, \cV)$ satisfies \cref{itm:RefToTsInfGraphsEnds} to \cref{itm:RefToTsInfGraphsBoundedTangle}: 
    By construction, no end of combined degree $<k$ can live at a node of $T$, and hence they have to live in ends of $T$; so \cref{itm:RefToTsInfGraphsEnds} holds. \cref{itm:RefToTsInfGraphsEnds2} follows by the `in particular' part of \cref{lem:ReflemInfInessStars}.
    Property \cref{itm:RefToTsInfGraphsCrit} holds because non-principal $\cF$-tangles that live at nodes of $\tilde{T}$ have to live at nodes of~$T$ by the `in particular' part of \cref{lem:ReflemInfInessStars}, but they cannot live at nodes that are associated with stars in $\cF$. \cref{itm:RefToTsInfGraphsIness} and \cref{itm:RefToTsInfGraphsBoundedTangle} follow by \cref{lem:ReflemInfInessStars,lem:ReflemForInfGraphsBoundedEssStars}.
\end{proof}

We conclude this section with the proof of \cref{main:TTDPlusToT}. For this, we need the following theorem,
which is immediate from the proof of \cite{LinkedTDInfGraphs}*{Corollary~5'}:

\begin{theorem} \label{thm:ToTInfTangles}
    Every graph $G$ without half-grid minor admits a \td\ $(T, \cV)$ which efficiently distinguishes all combinatorially distinguishable $\aleph_0$-tangles in $G$. 
    
    Moreover, $(T, \cV)$ can be chosen so that every end of $T$ is home to an end of $G$, and every non-principal $\aleph_0$-tangle lives at a node $t$ of $T$.
\end{theorem}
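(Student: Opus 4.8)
The plan is to obtain the theorem from the canonical tree-decomposition machinery of \cite{LinkedTDInfGraphs}; once the relevant result there is quoted, only the last of the three conclusions needs a short extra argument.

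First I would note that, since $G$ has no half-grid minor, all ends of $G$ have finite degree, so $G$ contains no subdivision of $K_{\aleph_0}$ either; hence, arguing as at the beginning of the proof of \cref{thm:StartingTD} (via Halin's grid theorem \cite{halin78} and \cite{LinkedTDInfGraphs}*{Theorem~2.2}), $G$ admits a \td\ into finite parts. For such $G$, the proof of \cite{LinkedTDInfGraphs}*{Corollary~5'} produces a \td\ $(T,\cV)$ of $G$ into finite parts which is tight, displays the infinities of $G$, and efficiently distinguishes all combinatorially distinguishable $\aleph_0$-tangles of $G$. The last of these is conclusion~(1). Conclusion~(2) is then immediate from the fact that $(T,\cV)$ displays the infinities, exactly as in the proof of \cref{thm:StartingTD}~\ref{itm:StartingTDends}: for an end $\eta$ of $T$ and an $\eta$-ray $R=r_1r_2\dots$, the induced separations $(U_{r_i},U_{r_{i+1}})$ form a weakly exhaustive increasing sequence contained in $\tau_\eps$ for a unique end $\eps$ of $G$, and $\eps$ is then home to $\eta$.

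It remains to verify conclusion~(3): every non-principal $\aleph_0$-tangle $\tau$ of $G$ lives at a node of $T$. Since $\tau$ is a consistent orientation of $S_{\aleph_0}(G)$, the orientation it induces on $E(T)$ points either at a node of $T$ -- in which case $\tau$ lives there -- or at an end $\eta$ of $T$. Suppose the latter. By conclusion~(2) there is an end $\eps$ of $G$ that is home to $\eta$; then $\tau_\eps$ lives in $\eta$ as well, so the orientations of $E(T)$ induced by $\tau$ and by $\tau_\eps$ agree on every edge, i.e.\ no edge of $T$ distinguishes $\tau$ from $\tau_\eps$. But $\tau_\eps$ is principal while $\tau$ is not, which makes $\tau$ and $\tau_\eps$ combinatorially distinguishable: choosing a finite $X\subseteq V(G)$ witnessing that $\tau$ is not principal (so that $(V(K)\cup X,V(G-K))\in\tau$ for every component $K$ of $G-X$) and letting $K_0$ be the component of $G-X$ containing a tail of the rays in $\eps$, the separation $\{V(K_0)\cup X,\,V(G-K_0)\}$ lies in $\tau$ oriented towards $V(K_0)\cup X$, but in $\tau_\eps$ oriented the other way. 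By conclusion~(1), then, some edge of $T$ distinguishes $\tau$ from $\tau_\eps$ -- a contradiction. Hence $\tau$ lives at a node of $T$.

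The genuinely substantial input here is \cite{LinkedTDInfGraphs}*{Corollary~5'}, which is where the main obstacle lies: one needs that the canonical tree-decomposition constructed there efficiently distinguishes \emph{all} combinatorially distinguishable $\aleph_0$-tangles, not merely, say, all pairs of ends or all pairs of principal tangles. Unwinding its proof, this rests on two facts: that the efficient distinguisher of two combinatorially distinguishable $\aleph_0$-tangles can always be taken of the ``component'' form $\{V(C)\cup X,V(G-C)\}$ with $X$ finite and $C$ a component of $G-X$ (using tightness, cf.\ \cref{lem:EffYieldsTight}), and that the tree-decomposition of \cite{LinkedTDInfGraphs} -- whose adhesion sets are linked to ends and to critical vertex sets of $G$ -- realises such a separator of smallest possible order among its edges. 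Granting this, the theorem follows by the deduction above.
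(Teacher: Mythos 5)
Your proposal matches the paper's treatment: the paper offers no argument beyond stating that the theorem is immediate from the proof of \cite{LinkedTDInfGraphs}*{Corollary~5'}, which is exactly the external input you invoke, and your supplementary deductions for the second and third conclusions (displaying the infinities gives the statement about ends of $T$, and a non-principal tangle converging to an end of $T$ would be indistinguishable in $T$ from the principal end tangle living there, contradicting conclusion~(1)) are correct. So this is essentially the same proof, with some of the ``immediate'' steps written out.
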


\begin{proof}[Proof of \cref{main:TTDPlusToT}]
    By assumption, $G$ has no end of combined degree $\geq k$, and hence no half-grid minor. Let $(T', \cV')$ be the \td\ of $G$ from \cref{thm:ToTInfTangles}. Since~$G$ has no bounded $k$-tangle by assumption, $(T', \cV')$ in particular distinguishes all combinatorially distinguishable $k$-tangles. By contracting all edges of~$T$ whose induced separations do not efficiently distinguish some pair of $k$-tangles, we obtain a \td\ $(\tilde{T}, \tilde{\cV})$ that satisfies the premise of \cref{thm:RefToTsInfGraphs}. Let $(T, \cV)$ be the \td\ obtained from applying \cref{thm:RefToTsInfGraphs} to $(\tilde{T}, \tilde{\cV})$. Then $(T, \alpha)$ is an $S_k$-tree, where $\alpha(t_0, t_1) := (U_{t_0}, U_{t_1})$ for all edges $(t_0, t_1) \in \vE(T)$. In particular, $(T, \alpha)$ is over $\cT^* \cup \cU^\infty_k$ by \cref{itm:RefToTsInfGraphsEnds}, \cref{itm:RefToTsInfGraphsCrit} and \cref{itm:RefToTsInfGraphsIness} of \cref{thm:RefToTsInfGraphs} and because all principal $k$-tangles in $G$ are induced by ends of combined degree $<k$. 

    We claim that $(T, \alpha)$ is as desired. Indeed, it satisfies~\cref{itm:TTDPlusToT:Ends} by \cref{thm:RefToTsInfGraphs}~\ref{itm:RefToTsInfGraphsEnds} and~\ref{itm:RefToTsInfGraphsEnds2} and because every end of $T'$, an hence every end of $\tilde{T}$, is home to an end of $G$.
    Moreover, $(T, \alpha)$ satisfies \cref{itm:TTDPlusToT:Ultrafilter} by \cref{thm:RefToTsInfGraphs}~\cref{itm:RefToTsInfGraphsCrit} and because every non-principal $k$-tangle in $G$ lives at a node of $T'$ and hence of $\tilde{T}$.
    Finally, $(T, \alpha)$ satisfies~\cref{itm:TTDPlusToT:ToT} because $(T, \cV)$ refines $(\tilde{T}, \tilde{\cV})$, and $(\tilde{T}, \tilde{\cV})$ distinguishes all combinatorially distinguishable $k$-tangles in~$G$.
\end{proof}

\bibliographystyle{amsplain}
\bibliography{localbib_new.bib}

@string{jctb = {Journal of Combinatorial Theory, Series~B}}

@article{LinkedTDInfGraphs,
    author = {S.~Albrechtsen and R.~W.~Jacobs and P.~Knappe and M.~Pitz},
    title = {Linked tree-decompositions into finite parts},
    journal = {preprint},
    eprint = {2405.06753},
    year = {2024}}

@article{SARefiningInessParts,
	author = {Albrechtsen, S.},
	title = {Refining trees of tangles in abstract separation systems: inessential parts},
    journal = {Combinatorial Theory},
    volume = {4(1)},
    year = {2024}}

@article{SARefiningEssParts,
	author = {Albrechtsen, S.},
	date-added = {2023-04-25 8:47:56 +0100},
	date-modified = {2023-04-25 8:51:42 +0100},
	journal = {preprint},
	title = {Optimal trees of tangles: refining the essential parts},
    eprint = {2304.12078},
	year = {2023}}

@article{PRToTsInLocFinGraphs,
	author = {R.~W.~Jacobs AND P.~Knappe},
	title = {Efficiently distinguishing all tangles in locally finite graphs},
    journal = jctb,
    Volume = {167}, 
    pages = {189-214},
    year = {2024}}

@article{confing,
	author = {J.~Carmesin and R.~Diestel and F.~Hundertmark and M.~Stein},
	date-added = {2011-10-05 10:40:07 +0200},
	date-modified = {2014-04-22 17:34:08 +0000},
	journal = {Combinatorica},
	number = {1},
	pages = {1--35},
	title = {Connectivity and tree structure in finite graphs},
	volume = {34},
	year = {2014}}

@article{CanonicalTreesofTDs,
	author = {J.~Carmesin and M.~Hamann and B.~Miraftab},
	date-added = {2020-10-12 18:42:44 +0200},
	date-modified = {2020-10-12 18:44:26 +0200},
	title = {Canonical trees of tree-decompositions},
    journal = jctb,
    number = {152},
    pages = {1-22},
	year = {2022}}

@book{DiestelBook16noEE,
	author = {R.~Diestel},
	date-added = {2016-05-14 09:22:23 +0000},
	date-modified = {2017-01-11 10:53:18 +0000},
	edition = {5th},
	publisher = {Springer},
	title = {{Graph Theory}},
	year = {2017}}

@article{EndsAndTangles,
	author = {R.~Diestel},
	date-added = {2015-11-12 11:55:04 +0000},
	date-modified = {2022-07-02 11:58:04 +0200},
	journal = {Abhandlungen aus dem Mathematischen Seminar der Universität Hamburg},
	pages = {~223--244},
	title = {Ends and Tangles},
	volume = {87{\rm , Special issue in memory of Rudolf Halin}},
	year = {2017}}

@article{TreeSets,
	author = {R.~Diestel},
	date-added = {2015-03-06 10:35:52 +0000},
	date-modified = {2018-03-31 09:30:00 +0000},
	journal = {Order},
	pages = {171-192},
	title = {Tree sets},
	volume = {35},
	year = {2018}}

@article{ProfileDuality,
	author = {R.~Diestel and P.~Eberenz and J.~Erde},
	date-added = {2015-11-03 15:15:23 +0000},
	date-modified = {2021-01-24 19:22:29 +0100},
	journal = {SIAM Journal on Discrete Mathematics},
	number = {3},
	pages = {1514-1528},
	title = {Duality theorems for blocks and tangles in graphs},
	volume = {31},
	year = {2017}}

@article{Ends,
	author = {R.~Diestel and D.~K{\"u}hn},
	journal = {Journal of Combinatorial Theory, Series~B},
	pages = {197--206},
	title = {Graph-theoretical versus topological ends of graphs},
	volume = {87},
	year = {2003}}

@article{TangleTreeAbstract,
	author = {R.~Diestel and S.~Oum},
	date-added = {2016-12-10 15:49:46 +0000},
	date-modified = {2021-01-20 17:37:24 +0100},
	journal = {Advances in Mathematics},
	keywords = {https://doi.org/10.1016/j.aim.2020.107470},
	pages = {107470},
	title = {Tangle-tree duality in abstract separation systems},
	volume = {377},
	year = {2021}}

@article{TangleTreeGraphsMatroids,
	author = {R.~Diestel and S.~Oum},
	date-added = {2016-12-10 15:48:19 +0000},
	date-modified = {2020-05-22 10:24:27 +0200},
	journal = {Combinatorica},
	pages = {879-910},
	title = {Tangle-tree duality in graphs, matroids and beyond},
	volume = {39},
	year = {2019}}

@article{EberenzMaster,
	author = {P.~Eberenz},
	date-added = {2016-02-12 15:07:11 +0000},
	date-modified = {2016-02-12 15:13:43 +0000},
    journal = {MSc dissertation, Hamburg},
    year = {2015},
	title = {Characteristics of profiles}}

@article{InfiniteSplinters,
	author = {C.~Elbracht and J.~Kneip and M.~Teegen},
	date-added = {2019-08-29 17:20:26 +0200},
	date-modified = {2022-01-08 10:29:40 +0100},
	doi = {10.1017/S0305004121000530},
	journal = {Mathematical Proceedings of the Cambridge Philosophical Society},
	pages = {1-31},
	title = {Trees of tangles in infinite separation systems},
	year = {2021},
	bdsk-url-1 = {https://doi.org/10.1017/S0305004121000530}}

@article{ToTinfOrder,
	author = {A.~Elm and J.~Kurkofka},
	date-added = {2021-01-13 16:57:09 +0100},
	date-modified = {2021-01-13 16:58:44 +0100},
	journal = {Abhandlungen aus dem Mathematischen Seminar der Universität Hamburg},
    volume = {92},
    pages = {139–178},
	title = {A tree-of-tangles theorem for infinite tangles},
    year = {2022}}

@article{JoshRefining,
	author = {J.~Erde},
	date-added = {2018-04-06 14:37:38 +0000},
	date-modified = {2018-04-06 14:38:26 +0000},
	journal = {SIAM Journal on Discrete Mathematics},
	pages = {1529-1551},
	title = {Refining a tree-decomposition which distinguishes tangles},
	volume = {31},
	year = {2017}}

@article{kConnectedSetsInInfGraphs,
    author = {P.~Gollin and K.~Heuer}, 
    title = {Characterising k-connected sets in infinite graphs}, 
    journal = jctb,
    volume = {157},
    year = {2022},
    pages = {451–499}}

@article{halin78,
	author = {R.~Halin},
	journal = {Annals of Discrete Mathematics},
	title = {Simplicial decompositions of infinite graphs},
	volume = {3},
    pages = {93-109},
	year = 1978}

@article{PolatEME1,
	author = {N.~Polat},
	journal = jctb,
	pages = {86-110},
	title = {Ends and multi-endings {I}},
	volume = {67},
	year = 1996}

@article{GMX,
	author = {N.~Robertson and P.~D.~Seymour},
	journal = jctb,
	pages = {153-190},
	title = {Graph Minors. {X}. {O}bstructions to Tree-Decomposition},
	volume = {52},
	year = 1991}

@article{GM,
	author = {N.~Robertson and P.~D.~Seymour},
	date-added = {2021-04-24 13:06:41 +0200},
	date-modified = {2021-04-24 13:08:43 +0200},
	journal = jctb,
	title = {Graph minors {I--XX}},
	year = {1983--2004}}

@article{ST1993GraphSearching,
	author = {P.~D.~Seymour AND R.~Thomas},
	date-added = {2013-03-23 20:26:22 +0000},
	date-modified = {2013-03-23 20:29:30 +0000},
	journal = jctb,
	number = {1},
	pages = {22--33},
	title = {Graph Searching and a Min-Max Theorem for Tree-Width},
	volume = {58},
	year = {1993}}

@article{Thomassen89,
  title={Configurations in Graphs of Large Minimum Degree, Connectivity, or Chromatic Number},
  author={C.~Thomassen},
  journal={Annals of the New York Academy of Sciences},
  year={1989},
  volume={555}}

@article{KPCompactification,
    author = {Kurkofka, J. and Pitz, M.},
    title = {Tangles and the {S}tone-Čech compactification of infinite graphs},
    journal = {Journal of Combinatorial Theory, Series B},
    volume = {146},
    pages = {34-60},
    year = {2021}
}

@article{KPEndsTanglesCVSets,
    author = {Kurkofka, J. and Pitz, M.},
    title = {Ends, tangles and critical vertex sets},
    journal = {Mathematische {N}achrichten},
    volume = {292(9)},
    pages = {2072-2091},
    year = {2019}
}

\end{document}